\newtheorem{theorem}{Theorem}
\newtheorem{definition}[theorem]{Definition}
\newtheorem{lemma}[theorem]{Lemma}
\newtheorem{proposition}[theorem]{Proposition}
\newtheorem{remark}[theorem]{Remark}
\newtheorem{mytheorem}{Theorem}
\newcommand{\rn}[1]{\mathbb{R}^{#1}}
\newcommand{\re}{ \mathbb{R}}
\newcommand{\beq}{\begin{equation}}
\newcommand{\bea}[1]{\begin{array}{#1} }
\newcommand{\eeq}{ \end{equation}}
\newcommand{\ea}{ \end{array}}
\newcommand{\ep}{\epsilon}
\newcommand{\es}{\emptyset}
\newcommand{\al}{\alpha}
\newcommand{\ga}{\gamma}
\newcommand{\de}{\delta}
\newcommand{\ds}{\displaystyle}
\newcommand{\ts}{\textstyle}
\newcommand{\rar }{\mbox{$\rightarrow$}}
\newcommand{\ran}{\rangle}
\newcommand{\lan}{\langle}
\newcommand{\Ga}{\Gamma}
\newcommand{\la}{\lambda}
\newcommand{\La}{\Lambda}
\newcommand{\ar}{\partial}
\newcommand{\si}{\sigma}
\newcommand{\om}{\omega}
\newcommand{\Om}{\Omega}
\newcommand{\be}{\beta}
\newcommand{\ph}{\phi}
\newcommand{\he}{\theta}
\newcommand{\He}{\Theta}
\newcommand{\Ph}{\Phi}
\newcommand{\hs}[1]{\mbox{$ \hspace{#1}$}}
\newcommand{\sem}{\setminus}
\newcommand{\ze}{\zeta}
\newcommand{\De}{\Delta}
\newcommand{\ti}{\tilde}
\newcommand{\noi}{\noindent}
\begin{document} 

\title[The  Brunn-Minkowski Inequality and A Minkowski problem]{The  Brunn-Minkowski Inequality and A Minkowski problem for $\mathcal{A}$-harmonic Green's function}

\author[M. Akman]{Murat Akman}
\address{{\bf Murat Akman}\\ 
Department of Mathematics \\
University of Connecticut, Storrs, CT 06269-1009} 
\email{murat.akman@uconn.edu}
\urladdr{http://www.math.uconn.edu/~akman/}

\author[J. Lewis]{John Lewis}
\address{{\bf John Lewis} \\ Department of Mathematics \\ University of Kentucky \\ Lexington, Kentucky, 40506}
\email{johnl@uky.edu}
\urladdr{http://www.ms.uky.edu/~johnl/}

\author[O. Saari]{Olli Saari}
\address{{\bf Olli Saari}\\ Mathematisches Institut \\ Endenicher Allee 60, 53115 Bonn, Germany}
\email{saari@math.uni-bonn.de}
\urladdr{https://www.math.uni-bonn.de/people/saari/}

\author[A. Vogel]{Andrew Vogel}
\address{{\bf Andrew Vogel}\\ Department of Mathematics, Syracuse University \\  Syracuse, New York 13244}
\email{alvogel@syracuse.edu}


\keywords{The Brunn-Minkowski inequality, Minkowski Problem, Inequalities and extremum problems, 
Potentials and capacities, $\mathcal{A}$-harmonic PDEs, $\mathcal{A}$-harmonic Green's function, 
Variational formula, Hadamard variational formula}
\subjclass[2010]{35J60,31B15,39B62,52A40,35J20,52A20,35J92}
\begin{abstract}
In this article we study two classical problems in convex geometry associated to $\mathcal{A}$-harmonic PDEs, quasi-linear elliptic PDEs whose structure is  modeled on the  $p$-Laplace equation. Let $p$ be fixed with $2\leq n\leq p<\infty$. For a convex compact set $E$ in $\mathbb{R}^{n}$, we define and then prove the existence and uniqueness of the  so called $\mathcal{A}$-harmonic Green's function for the complement of $E$ with pole at  infinity. We then define a quantity $\mbox{C}_{\mathcal{A}}(E)$ which can be seen as the behavior of  this  function near infinity. 

In the first part of this article,  we prove that $\mbox{C}_{\mathcal{A}}(\cdot)$ satisfies the following Brunn-Minkowski type inequality
\[
\left[\mbox{C}_\mathcal{A}  ( \la  E_1 + (1-\la) E_2 )\right]^{\frac{1}{p-n}}  \geq  \la  \, 
\left[\mbox{C}_\mathcal{A}  (  E_1 )\right]^{\frac{1}{p-n}}  +  (1-\la)  \left[\mbox{C}_\mathcal{A}  (E_2 )\right]^{\frac{1}{p-n}}
\]
when  $n<p<\infty$, $0 \leq \la \leq 1$, and $E_1, E_2$ are nonempty convex compact sets in $\mathbb{R}^{n}$.  While $p=n$ then
\[
\mbox{C}_\mathcal{A}  ( \la  E_1 + (1-\la) E_2 ) \geq  \la  \, 
\mbox{C}_\mathcal{A}  (  E_1 )  +  (1-\la)  \mbox{C}_\mathcal{A}  (E_2 )
\]
where $0\leq \la\leq 1$ and $E_1, E_2$ are convex compact sets in $\mathbb{R}^{n}$ containing at least two points. 

Moreover, if equality holds  in the either of the  above inequalities for some $E_1$ and $E_2, $ then under certain  
regularity and structural assumptions on $\mathcal{A}$  we show that  these two sets are homothetic.

The   classical Minkowski problem asks for necessary and sufficient conditions on a non-negative 
Borel measure on the unit sphere $\mathbb{S}^{n-1}$ to be the surface area 
measure of a convex compact set in $\mathbb{R}^{n}$ under the   Gauss  mapping for the boundary of  this convex set. 
In the second part of this article we study a Minkowski type problem for a measure  
associated to the $\mathcal{A}$-harmonic Green's function for the complement of 
a convex compact set $E$ when $n\leq p<\infty$. If  $   \mu_E $ denotes this measure, 
then we show that  necessary and sufficient conditions for  existence  under this setting are 
exactly the same conditions as in the classical Minkowski problem. Using the Brunn-Minkowski 
inequality result from the first part, we also show that this problem has a unique solution up to translation.\\ \\
\end{abstract}

\maketitle
\setcounter{tocdepth}{3}
\tableofcontents

\part{The Brunn-Minkowski inequality for $\mathcal{A}$-harmonic Green's function}

\section{Introduction}
The classical Brunn-Minkowski inequality states that 
\begin{align}
\label{BMVolume}
\left[\mbox{Vol}(\lambda E_1+(1-\lambda) E_2)\right]^{\frac{1}{n}} 
\geq\lambda\left[\mbox{Vol}(E_1)\right]^{\frac{1}{n}} +(1-\lambda) \left[\mbox{Vol}(E_2)\right]^{\frac{1}{n}}
\end{align}
whenever $E_1, E_2$ are compact convex sets with nonempty interiors in $\mathbb{R}^{n}$ and $\lambda\in [0,1]$. Here $\mbox{Vol}(\cdot)$ denotes the usual volume in $\mathbb{R}^{n}$ and the summation $(\lambda E_1+(1-\lambda) E_2)$ should be understood as a vector sum(called  \textit{Minkowski addition}). Moreover, equality in \eqref{BMVolume} holds precisely when $E_1$ and $E_2$ are equal up to translation and dilatation (i.e. $E_1$ is homothetic to $E_2$). In fact, this inequality holds for bounded measurable sets as well and it is one of the most important and deepest results in the theory of convex bodies. The inequality \eqref{BMVolume} essentially says that $\left[\mbox{Vol}(\cdot)\right]^{1/n}$ is a concave function with respect to  Minkowski addition. The Brunn-Minkowski inequality has connections with many other inequalities including \textit{the isoperimetric inequality}, \textit{Sobolev inequalities}, \textit{Gagliardo-Nirenberg inequalities}.   Inequalities of Brunn-Minkowski type have been obtained for various other homogeneous functionals  including; torsional rigidity, the first eigenvalue of the Laplacian, Newtonian capacity  (for more details see the survey paper of Gardner \cite{Ga} and the book of Schneider \cite{Sc}). Combining results from \cite{CS,B1,CJL}, the following p-capacitary version of the Brunn-Minkowski inequality has been shown for $1<p<n$
\begin{align}
\label{BMcapacityp}
\left[\mbox{Cap}_{p}(\lambda E_1+(1-\lambda) E_2)\right]^{\frac{1}{n-p}}\geq\lambda \left[\mbox{Cap}_{p}(E_{1})\right]^{\frac{1}{n-p}}+(1-\lambda)\left[\mbox{Cap}_{p}(E_2)\right]^{\frac{1}{n-p}}
\end{align}
whenever $E_1, E_2$ are compact convex sets with nonempty interiors  in $\mathbb{R}^{n}$. Here $\mbox{Cap}_{p}(\cdot)$ denotes the $p$-capacity of a set defined by
\[
\mbox{Cap}_{p}(E)=\inf\left\{\, \, \int_{\mathbb{R}^{n}} |\nabla v|^{p} dx:\, \, v\in C^{\infty}_{0}(\rn{n}),\, v(x)\geq  1\, \, \mbox{for}\, \,  x\in E \right\}.
\] 
Moreover, equality in \eqref{BMcapacityp} holds precisely when $E_1$ is homothetic to $E_2$. 
In \cite{AGHLV}, the first, second, and fourth authors of this article along with Jasun Gong and Jay Hineman studied a generalized notion of $p$-capacity for $ 1 < p < n$ associated with ``$\mathcal{A}$-harmonic PDEs''  (see Definition \ref{defn1.1}) and denoted by $\mbox{Cap}_\mathcal{A} (\cdot)$.  It was shown in that article that the  Brunn-Minkowski type inequality holds:
 \[
\left[\mbox{Cap}_\mathcal{A}  ( \la  E_1 + (1-\la) E_2 )\right]^{\frac{1}{(n-p)}}  \geq  \la  \, 
\left[\mbox{Cap}_\mathcal{A}  (  E_1 )\right]^{\frac{1}{(n-p)}}  +  (1-\la)  \left[\mbox{Cap}_\mathcal{A}  (E_2 )\right]^{\frac{1}{(n-p)}}
\]
when  $1<p<n$, $0 \leq \la \leq 1,   $ and $E_1, E_2$ are convex compact sets with positive  $\mathcal{A}$-capacity (so $ E_1$ or $ E_2$ can have an empty interior).
Moreover, if equality holds  in  the  above inequality for some $E_1$ and $E_2, $ then under certain  regularity and structural assumptions on $\mathcal{A},$ it was concluded that these two sets are homothetic.  

We  note that when $p \geq n$, $\mbox{Cap}_{p}(B(x,r))=0$ (see \cite[Example 2.12]{HKM}) for any $r>0$. 
Borell in \cite{B2} and Colesanti and Cuoghi in \cite{CC} considered an analogous problem when $p=n$. To describe their work, consider a convex compact $E$ with non-empty interior. It can be shown that there exists a unique $u$ which is $n$-harmonic in $\mathbb{R}^{n}\setminus E$ with continuous boundary values $0$ on $\partial E$ and $u(x)=\log |x| +a+o(1)$ as $|x|\to\infty$. Now $u$ and $a\in\mathbb{R}$ are uniquely determined by $E$. If we let $\mathcal{C}_n(E):=e^{- a}$ then Borell in \cite{B2} when $n=2$ and Colesanti and Cuoghi in \cite{CC} for  $p=n >  2$ showed that $\mathcal{C}_n(\cdot)$ satisfies the Brunn-Minkowski inequality \eqref{BMcapacityp} with power $1/(n-p)$ replaced by $1$ for $n\geq 2$. In these articles, it was also shown that if equality holds for some convex compact sets $E_1$ and $E_2$ with non-empty interiors then they are homothetic. When $n=2$,  $\mathcal{C}_2(\cdot)$  is  often called  \textit{logarithmic capacity} so the authors of \cite{CC} called $\mathcal{C}_n(\cdot)$ an $n$-dimensional logarithmic capacity. In the first part of this article, using  a  similar  approach,  we study a  Brunn-Minkowski inequality associated with  an $\mathcal{A}$-harmonic Green's function when $n\leq p<\infty$. To our knowledge  this   study  is  the first of its kind  when $ n < p  <  \infty. $

\setcounter{equation}{0} 
\setcounter{theorem}{0}
\section{Notation and statement of results}
\label{NSR}

\noindent Let $n\geq 2$ and denote points in Euclidean $ n
$-space $ \rn{n} $  by $ y = ( y_1,
 \dots,  y_n) $.   Let $ \mathbb S^{n-1} $ denote the unit sphere in $\mathbb R^n$. 
 We write  $ e_m,  1 \leq m \leq n,  $  for the point in  $  \rn{n} $  with  1 in the $m$-th coordinate and  0  elsewhere. Let  $ \bar E,
\ar E$,  and $\mbox{diam}(E)$ be the closure,
 boundary, and diameter of the set $ E \subset
\mathbb R^{n} $ respectively. We define $ d ( y, E ) $   to be the distance from
 $  y \in \mathbb R^{n} $ to $ E$.  Given two sets $E$ and $F$ in  $\rn{n} $,   let  
  \[ 
  d_{\mathcal{H}} ( E, F ) := \max  (  \sup   \{ d ( y, E ) : y \in F \},   \sup \{ d ( y, F ) : y \in E \} ) 
  \]  
  be the Hausdorff distance between the sets $ E$ and $F$. 
 Also 
\[
 E  +  F  := \{ x + y:  x \in E, y \in F   \}
 \]  
 is  the  Minkowski sum of  $ E $ and $F.$ 
 We write  $  E  + x  $  for  $  E  +  \{x\} $ and if  $ \rho $ is a non-negative real number set 
$  \rho E =  \{ \rho y : y \in E\}. $    Let  $   \lan \cdot,  \cdot  \ran $  denote  the standard inner
product on $ \mathbb R^{n} $ and  let  $  | y | = \lan y, y \ran^{1/2} $ be
the  Euclidean norm of $ y. $   Put 
\[
B (z, r ) = \{ y \in \mathbb R^{n} : | z  -  y | < r \}\quad  \mbox{whenever}\, \, z\in \mathbb R^{n} \, \, \mbox{and}\, \, r>0. 
\]
Let  $dy$ denote  the $n$-dimensional Lebesgue measure on    $ \mathbb R^{n} $.  
Let  $\mathcal{H}^{\la},  0 <  \la  \leq n, $  denote the  $\la$-dimensional  \textit{Hausdorff measure} on $ \rn{n}$ defined by 
\[
\mathcal{H}^{\la }(E)=\lim_{\delta\to 0} \inf\left\{\sum_{j} r_j^{\la}; \, \, E\subset\bigcup\limits_{j} B(x_j, r_j), \, \, r_j\leq \delta\right\}
\] where the  infimum is taken over all possible  $  \de$-covering  $\{B(x_j, r_j)\} $ of  $E$.  
If $ O  \subset \mathbb R^{n} $ is open and $ 1  \leq  q  \leq  \infty, $ then by   $
W^{1 ,q} ( O ) $ we denote the space of equivalence classes of functions
$ h $ with distributional gradient $ \nabla h= ( h_{y_1},
 \dots, h_{y_n} ), $ both of which are $q$-th power integrable on $ O. $  Let  
 \[
 \| h \|_{1,q} = \| h \|_q +  \| \, | \nabla h | \, \|_{q}
 \]
be the  norm in $ W^{1,q} ( O ) $ where $ \| \cdot \|_q $ is
the usual  Lebesgue $ q $ norm  of functions in the Lebesgue space $ L^q(O).$  Next let $ C^\infty_0 (O )$ be
 the set of infinitely differentiable functions with compact support in $
O $ and let  $ W^{1,q}_0 ( O ) $ be the closure of $ C^\infty_0 ( O ) $
in the norm of $ W^{1,q} ( O  ). $  By $ \nabla \cdot $ we denote the divergence operator.
\begin{definition}  
\label{defn1.1}	
	Let $p,  \al \in (1,\infty) $ and 
	\[
	\mathcal{A}=(\mathcal{A}_1, \ldots, \mathcal{A}_n) \, : \, \rn{n}\sem \{0\}  \to \rn{n},
\]
   be such that $  
\mathcal{A}= \mathcal{A}(\eta)$  has  continuous  partial derivatives in 
$ \eta_k$ for $k=1,2,\ldots, n$  on $\rn{n}\setminus\{0\}.$  
We say that the function $ \mathcal{A}$ belongs to the class
  $ M_p(\alpha)$ if the following conditions are satisfied whenever  $\xi\in\mathbb{R}^n$ and
$\eta\in\mathbb R^n\setminus\{0\}$: 	
\begin{align*}
		(i)&\, \, \mbox{Ellipticity:}\, \,  \alpha^{-1}|\eta|^{p-2}|\xi|^2\leq \sum_{i,j=1}^n \frac{\partial  \mathcal{A}
_{i}}{\partial\eta_j}(\eta )\xi_i\xi_j   \quad \mbox{and}\quad   \sum_{i=1}^n  \left| \nabla  
 \mathcal{A}_i  (\eta)  \right|   \leq\alpha \, |\eta|^{p-2},\\
(ii)&\, \, \mbox{Homogeneity:}\,\, \mathcal{A} (\eta)=|\eta|^{p-1}  \mathcal{A}(\eta/|\eta|).
	\end{align*}
	\end{definition}   

We  put  $ \mathcal{A}(0) = 0 $  and note that  Definition \ref{defn1.1}  $(i)$ and $(ii) $ implies   
 \begin{align}  
 \label{eqn1.1}
 \begin{split}
 (|\eta | + |\eta'|)^{p-2} \,   |\eta -\eta'|^2   \approx   \lan  \mathcal{A}(\eta) -& 
\mathcal{A}(\eta'), \eta - \eta' \ran
\end{split}
\end{align}  
whenever $\eta, \eta'   \in  \rn{n} \sem \{0\}$ and the proportionality  constants depend only on $p$,$n$, and $\alpha$. 
\begin{definition}
\label{defn1.2}                       
	Let $p\in (1,\infty)$ and let $ \mathcal{A}\in M_p(\alpha) $ for some $\alpha\in (1,\infty)$. Given an  open set 
 $ O  $ we say that $ u $ is $  \mathcal{A}
$-harmonic in $ O $ provided $ u \in W^ {1,p} ( G ) $ for each open $ G $ with  $ \bar G \subset O $ and
	\begin{align}
	\label{eqn1.2}
		\int \lan    \mathcal{A}
(\nabla u(y)), \nabla \he ( y ) \ran \, dy = 0 \quad \mbox{whenever} \, \,\he \in W^{1, p}_0 ( G ).
			\end{align}
	 We say that $  u  $ is an  $\mathcal{A}$-subsolution ($\mathcal{A}$-supersolution) in $O$ 
	 if   $  u \in W^{1,p} (G) $ whenever $ G $ is as above and  \eqref{eqn1.2} holds with
$=$ replaced by $\leq$ ($\geq$) whenever $  \theta  \in W^{1,p}_{0} (G )$ with $\theta \geq 0$.  
As a short notation for \eqref{eqn1.2} we write $\nabla \cdot \mathcal{A}(\nabla u)=0$ in $O$.  
\end{definition}  

\begin{remark}    
\label{rmk1.3}
We remark  for  $O, \mathcal{A}, p,  u,$  as in  Definition \ref{defn1.2}  that if   $F:\mathbb R^n\to \mathbb R^n$  is  the composition of
a translation and a dilation
 then 
 \[
 \hat u(z)=u(F(z))\, \,  \mbox{is}\, \,  \mathcal{A}\mbox{-harmonic in} \, \,  F^{-1}(O).
 \]
 Moreover,  if   $ \ti  F:\mathbb R^n\to \mathbb R^n$  is  the  composition of
a translation,  a dilation, and a rotation 
   then 
   \[
   \ti  u (z) = u (\ti F(z) )\, \, \mbox{is}\, \,  \ti {\mathcal{A}}\mbox{-harmonic in}\, \,  \ti F^{-1}(O)\, \,  \mbox{for some}
  \, \, \ti{\mathcal{A}}\in M_p(\alpha).
  \]  
\end{remark}  

We note that $\mathcal{A}$-harmonic PDEs have been studied in \cite{HKM}. Also   dimensional properties of the
 Radon measure associated with  a positive $\mathcal{A}$-harmonic function $u,$  vanishing  on a portion of the boundary of  $ O, $   
 have been studied in \cite{A,ALV12, ALV,LN,AGHLV},  see also \cite{LLN, LN4}. 

In this  article, we often assume that $E\subset\mathbb{R}^{n}$ is a  convex compact set usually  containing at 
least two points or equivalently having positive $\mathcal{H}^{1}$ measure.  
Note that when $p=2$ and $\mathcal{A}(\eta)=(\eta_1, \ldots, \eta_n)$, $\eta\in\mathbb{R}^{n}\setminus\{0\},$  
then one gets  Laplace's equation in $  \rn{n}. $  If we additionally let $n=2$ 
and $\Omega=\mathbb{R}^{2}\setminus E, $   then the  logarithmic capacity of $E$ 
can be defined through the behavior of the Green's function near $\infty$. That is,  
the Green's function $u(\cdot,\infty)$ with pole at $\infty$ has continuous boundary values  zero  on  $ \ar E, $  and has a representation 
$u(z,\infty)=\log|z|+H(z,\infty)$ in $ \Om $     where  $H(z,\infty)$ is harmonic, bounded,  and  continuous  at  $ \infty. $   The quantity $H(\infty,\infty)=  a$ is known as the 
\textit{Robin's constant} for $E$.  Thus  $u(z, \infty)=\log|z|+a+o(1)$ as 
$|z|\to \infty$.  Classically  the \textit{logarithmic capacity} of $E$ in the plane is defined by 
$\mbox{Cap}_{2}(E)=e^{- a}$.  More about  logarithmic capacity in the 
plane can be found in \cite[Chapter III]{GM} and in \cite[Page 167]{La}.

When $\mathcal{A}(\eta)=|\eta|^{p-2}\eta$ for $\eta\in\mathbb{R}^{n}\setminus\{0\}$ and
 $1<p<\infty$ then one gets the p-Laplace equation $\nabla\cdot(|\nabla u|^{p-2} \nabla u)=0$ 
and solutions are called $p$-harmonic functions.  In $\mathbb{R}^{n}$, $n\geq 2$, $\log |x|$ 
is  an $n$-harmonic function and $|x|^{\frac{p-n}{p-1}}$ is  a  $p$-harmonic function in 
$\mathbb{R}^{n}\setminus \{0\}$ when $1<p\neq n<\infty$.   As in the planar
case, given  a  convex compact set $E\subset\mathbb{R}^{n}$ with non-empty interior, 
there exists a unique $p$-harmonic function $u$ such that 
$\nabla \cdot(|\nabla u|^{p-2} \nabla u)=0$ in $\mathbb{R}^{n}\setminus E$ 
with continuous boundary values $0$ on $\partial E$ satisfying   
\[
u(x)=\left\{
\begin{array}{ll}
\log |x|+ a+o(1) &\mbox{when}\, \, p=n,\\
|x|^{\frac{p-n}{p-1}}+a+o(1) & \mbox{when} \, \, n<p<\infty,
\end{array}
\right. 
\quad \mbox{as}\, \, |x|\to\infty. 
\]  
\noindent Here $a\in \mathbb{R}$ when $p=n$ and $a <0$ when $n<p<\infty$. 
So     based on the  classical  function theory,  $ p = 2, n = 2, $  case,  a natural definition of  
$ \mathcal{C}_p ( E ) $ is $\mathcal{C}_p(E)=e^{- a} $ when $p=n$ and $\mathcal{C}_p(E)=(-a)^{p-1}$ 
when $n<p<\infty$. It can be shown that $u$ and $a$ are uniquely 
determined by $E$. Moreover, $ t  \mapsto \mathcal{C}_p(t E)$ for $t \in (0, \infty) $  is 
homogeneous of degree $1$ function when $p=n$ while this function  is homogeneous of degree $p-n$ 
when $n<p<\infty$. 

Using the above approach  but  a  more  PDE  inspired normalization,  we will define   $\mathcal{C}_\mathcal{A} (E),  $  whenever  $n\leq p<\infty,$  and  $E$ contains at least two points when $p=n$  or is nonempty when $p>n$.  We will show in \S\ref{section4} that there exists a unique $u, $   satisfying 
\begin{align}
\label{Aharmonic}
\begin{split}
\left\{
\begin{array}{l}
\nabla \cdot \mathcal{A}(\nabla u)=0 \, \, \mbox{in} \, \, \mathbb{R}^{n}\setminus E,\\
u=0 \, \, \mbox{on}\, \, \partial E,\\
u(x)=F(x) + a+o(1) \, \, \mbox{as}\, \, |x|\to\infty
\end{array}
\right.
\end{split}
\end{align}
where $F(x)$ is the so called fundamental solution to an $\mathcal{A}$-harmonic PDE 
with pole at infinity (see Lemma \ref{lem1.2} and Lemma \ref{lem1.4}).  Here   $ F (x) \approx \log|x|$,    
when $p=n$ and $F(x)\approx   |x|^{\frac{p-n}{p-1}}$ when $n<p<\infty$ as $ x \to \infty$. Given $E$ 
 as above we call $u$  the $\mathcal{A}$-harmonic Green's function for $\mathbb{R}^{n}\setminus E$ with pole at  $\infty$. We will also see in \S\ref{section5} that $u$ and $a$ 
are uniquely determined by $E$. Thereupon, we define
\[
\mathcal{C}_{\mathcal{A}}(E)=
\left\{
\begin{array}{ll}
e^{- a/\ga}  & \mbox{when}\, \, p=n, \\
(-a)^{p-1} &\mbox{when} \, \, n<p<\infty.
\end{array}
\right.
\]
Here $\gamma$ is a non-negative constant (see Lemma \ref{lem1.4}). With this notation we prove that $\mathcal{C}_{\mathcal{A}}(\cdot)$ satisfies  the  following 
Brunn-Minkowski type inequality when $n\leq p<\infty$. In particular, our first main result is
\begin{mytheorem}
\label{theoremA}
Let $E_1$ and $E_2$ be compact  convex sets in $ \rn{n}, n \geq 2.$  
Assume that both sets contain at least two points when  $ p = n $  and that both sets are nonempty when $p > n.$  If  $\lambda\in[0,1]$ and if $p=n$ then 
\begin{align}
\label{BMn}
\mathcal{C}_{\mathcal{A}}(\lambda E_1+(1-\lambda)E_2)\geq \lambda \mathcal{C}_{\mathcal{A}}(E_1) +(1-\lambda)\mathcal{C}_{\mathcal{A}}(E_2).
\end{align}
While if   $ n<p<\infty$ then 
\begin{align}
\label{BMpn}
[\mathcal{C}_{\mathcal{A}}(\lambda E_1+(1-\lambda)E_2)]^{\frac{1}{p-n}}\geq 
\lambda \mathcal{C}_{\mathcal{A}}(E_1)^{\frac{1}{p-n}} +(1-\lambda)\mathcal{C}_{\mathcal{A}}(E_2)^{\frac{1}{p-n}}.
\end{align}
If equality holds in \eqref{BMn}  or in  \eqref{BMpn}  and $\mathcal{A}$ satisfies 
\begin{align} 
\label{eqn1.8}
\begin{split}
(i) &\, \, \mbox{There exists} \, \, 1\leq  \La <\infty\ \, \mbox{such that}\, \, 
\left| \frac{ \ar \mathcal{A} _i }{\ar \eta_j}   ( \eta )  -  \frac{ \ar \mathcal{A} _{i} }{ \ar\eta'_j} ( \eta') \right|\leq  \La  \, | \eta  -   \eta' |    |\eta | ^{p-3}
 \\ 
 &\hs{.4in}   
 \mbox{whenever}\, \, 0 < {\ts \frac{1}{2}}\,  |\eta |   \leq   |\eta'|  \leq 2     |\eta |\, \, \mbox{and}\, \,   1 \leq  i \leq n, \\
(ii)&\, \,  \mathcal{A}_i   (\eta)  =   \frac{\ar f}{\ar \eta_i} \, \,
\mbox{for}\, \,  1 \leq i  \leq n  \, \, \mbox{where}\, \,   f (t \eta )  = t^p  f  (\eta ) \, \, \mbox{when}\, \,  t > 0\, \, \mbox{and}\, \, \eta \in \rn{n}  \sem \{0\},
\end{split}
\end{align}
  then $  E_2 $ is a translation and dilation of  $ E_1$ provided that both sets contain at least two points.   
\end{mytheorem}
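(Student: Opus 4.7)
The plan is to follow the variational approach of Borell \cite{B2} and Colesanti--Cuoghi \cite{CC} for the logarithmic $n$-capacity, adapted to the $\mathcal{A}$-harmonic Green's function of \eqref{Aharmonic} and the regime $n \leq p < \infty$. Three ingredients are needed: convexity of sub-level sets of $u_E$, an inf-convolution construction yielding an $\mathcal{A}$-subsolution on the complement of $E_\la := \la E_1 + (1-\la) E_2$, and an asymptotic comparison at infinity.

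First, for each convex compact $E$ as in the hypothesis, I would prove that $\{u_E \leq t\}$ is convex for every $t \geq 0$, where $u_E$ is the Green's function from \eqref{Aharmonic}. This fact is classical for the $p$-Laplacian; for general $\mathcal{A}$-harmonic PDEs one proceeds via a concavity-function argument: set $c(x,y) = u_E(\la x + (1-\la) y) - \max(u_E(x), u_E(y))$ on $(\rn{n} \sem E)^2$ and show that $c$ cannot attain a positive interior maximum, using ellipticity and $p$-homogeneity from Definition \ref{defn1.1} together with the convexity of $E$ and the boundary behavior of $u_E$.

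Second, with $u_1, u_2$ the Green's functions of $E_1, E_2$, I would define on $\rn{n} \sem E_\la$ the inf-convolution
\[
v(x) = \inf\{\max(u_1(y_1), u_2(y_2)) \, : \, x = \la y_1 + (1-\la) y_2,\ y_i \in \rn{n} \sem E_i\}.
\]
By the first step, $\{v \leq t\} = \la \{u_1 \leq t\} + (1-\la) \{u_2 \leq t\}$, a convex set containing $E_\la$, and $v = 0$ on $\ar E_\la$. A local computation using ellipticity \eqref{eqn1.1} and the $p$-homogeneity of $\mathcal{A}$ shows that $v$ is an $\mathcal{A}$-subsolution in $\rn{n} \sem E_\la$. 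Combined with the comparison principle applied through an exhaustion of $\rn{n} \sem E_\la$ by bounded annuli, and with the asymptotics $u_i(x) = F(x) + a_i + o(1)$, a computation of the analogous expansion $v(x) = F(x) + a_v + o(1)$ from the inf-convolution yields $a_\la \geq a_v$. When $p = n$ the asymptotics $F(x) \approx \ga \log |x|$ give $a_v = \la a_1 + (1-\la) a_2$ directly, hence \eqref{BMn}; when $n < p < \infty$ the homogeneity $F(tx) = t^{(p-n)/(p-1)} F(x)$ converts the inf-convolution into the correct concavity statement on $(-a_i)^{1/(p-1)}$, which raised to the appropriate power yields \eqref{BMpn}.

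For the equality case under \eqref{eqn1.8}, equality in \eqref{BMn} or \eqref{BMpn} forces $v \equiv u_\la$ and hence equality in the Brunn--Minkowski-type identity for the level sets at every height. By the equality case of the classical Brunn--Minkowski inequality \eqref{BMVolume} applied to $\{u_1 \leq t\}$ and $\{u_2 \leq t\}$, these level sets are homothetic for each $t$, and letting $t \to 0^+$ yields that $E_1$ and $E_2$ are homothetic. The regularity \eqref{eqn1.8}$(i)$ upgrades $u_i$ to $C^{1,\al}$, so that the level sets are smooth and equality can be propagated, while the variational structure \eqref{eqn1.8}$(ii)$ allows the rigidity argument to be executed on the potential $f$, paralleling \cite{CJL} and \cite{AGHLV}. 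The main obstacle is the first two steps for general $\mathcal{A}$-harmonic PDEs: convexity of level sets and the sub-solution property of the inf-convolution are both delicate nonlinear arguments that require the full structural assumptions on $\mathcal{A}$.
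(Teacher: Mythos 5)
Your overall framework — convexity of level sets of $G$, an inf-max construction $G^*$, comparison with $G$, and asymptotics at infinity — is indeed the strategy the paper uses, but there are several concrete errors in your execution.

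First, you assert that the inf-max $v$ is an $\mathcal{A}$-\emph{subsolution} on $\rn{n}\sem E_\la$. That has the wrong sign: we need $v\geq G$ (or in the paper's notation $G \leq G^*$), so $v$ must be a \emph{supersolution}. With a subsolution the comparison principle would give $v \leq G$, hence $a_v \leq a_\la$, which yields an \emph{upper} bound on $\mathcal{C}_{\mathcal{A}}(\la E_1 + (1-\la)E_2)$ — the reverse of the inequality you want. The paper actually sidesteps the question of whether $G^*$ is a supersolution altogether: it truncates to finite annuli, renormalizes to functions $u_i$ analogous to $\mathcal{A}$-capacitary functions, and invokes the inequality $u \geq u^*$ proved in \cite{AGHLV}; only then passes to the limit.

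Second, and more seriously, your formula $a_v = \la a_1 + (1-\la) a_2$ is incorrect. For the inf-max you cannot just take $y_1 = y_2 = x$; the optimizing split sends $y_1, y_2$ along the ray through $x$ with $|y_1|/|y_2|$ chosen so $F(y_1) + a_1 = F(y_2) + a_2$, and the constraint $\la y_1 + (1-\la)y_2 = x$ then yields (for $p=n$) $e^{-a_v/\ga} = \la e^{-a_1/\ga} + (1-\la)e^{-a_2/\ga}$, \emph{not} $a_v = \la a_1 + (1-\la)a_2$ (which would only give the weaker geometric-mean inequality $\mathcal{C}_{\mathcal{A}}(\la E_1 + (1-\la)E_2) \geq \mathcal{C}_{\mathcal{A}}(E_1)^\la \mathcal{C}_{\mathcal{A}}(E_2)^{1-\la}$). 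If you do this balancing correctly you actually get the full Brunn--Minkowski inequality directly from $G \leq G^*$, which would be cleaner than the paper's route through the ``min'' inequality \eqref{eqn4.17} plus scaling equivalences — but the computation you wrote does not do it.

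Third, your treatment of the equality case is not justified. Equality in \eqref{BMn} or \eqref{BMpn} does give $\{G\leq t\}=\la\{G_1\leq t\}+(1-\la)\{G_2\leq t\}$ for all $t>0$, but this is only the statement that the level sets are Minkowski averages — it does \emph{not} tell you that equality holds in the \emph{volume} Brunn--Minkowski inequality for those level sets, so you cannot invoke the classical equality case. The rigidity in the paper is genuinely harder: under \eqref{eqn1.8} one upgrades $G_i$ to $C^{2,\alpha}$, shows (Lemma \ref{lemma5.1}) that level sets have strictly positive curvature for large $t$, parametrizes level sets via their support functions $h(X,t)$ satisfying $h = \la h_1 + (1-\la) h_2$, and carries out an explicit Gauss-map/curvature computation leading to \eqref{eqn6.45}, then closes with a unique continuation theorem. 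That entire mechanism is absent from your sketch.

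\end{document}
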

We first remark  that the work in \cite{CC,B2} corresponds to the case $p=n$ 
and $\mathcal{A}(\eta)=|\eta|^{n-2}\eta$,  whereas  in  Theorem  \ref{theoremA}   we consider 
 all possible  values of $p$, $n\leq p<\infty$ and general $\mathcal{A}\in M_p(\alpha)$ for $\alpha\in (1,\infty)$. Second, 
   in \cite{CC,B2}, the convex compact sets $ E_1$ and $E_2$  are required to have interiors, unlike our  ``bare bones''  assumption on $ E_1$ and $E_2$.    
   We   consider it  a  very interesting question as to  whether     \eqref{eqn1.8} is needed in Theorem \ref{theoremA}.  
   Finally,  Theorem \ref{theoremA}   will be used to prove uniqueness in  the  Minkowski type problem associated with  
   an  $ \mathcal{A}$-harmonic Green's function for the complement of  a  compact convex set   when   $ p \geq n $ (see   \S\ref{section7}).

\setcounter{equation}{0} 
 \setcounter{theorem}{0}
\section{Basic estimates for $\mathcal{A}$-harmonic functions}
\label{section2}
\noindent In this section we  state  some fundamental estimates for 
   $\mathcal{A}$-harmonic functions. 
Concerning constants, unless otherwise stated, in this section, and throughout the paper,
$ c $ will denote a  positive constant  $ \geq 1$, not
necessarily the same at each occurrence, depending at most on
 $ p, n, \alpha,  \La $ 
   which sometimes we refer to as depending on the data.
In general,
$ c ( a_1, \dots, a_m ) $  denotes  a positive constant
$ \geq 1, $  which may depend at most on the data  and   $ a_1, \dots, a_m, $
not necessarily the same at each occurrence. If $B\approx C$ then $B/C$ is bounded from above and below by
constants which, unless otherwise stated, depend at most on  the data.
 Moreover, we let
$ { \ds \max_{F}   \ti u, \,  \min_{F} \ti  u } $ be the
  essential supremum and infimum  of $  \ti u $ on $ F$
whenever $ F \subset  \mathbb R^{n} $ and  $ \ti u$ is defined on $ F$.
Finally in this section, unless otherwise stated, we assume that $ 1 < p < \infty. $ 
We shall start with a  definition of thickness and fatness of a closed set
\begin{definition}[{\bf Thickness}]
\label{def3.1}
Given  $ \la>0$ and $r_0  > 0$,  a closed set $E$ is called $( r_0, \lambda)$-thick if there exists $c>0$ such that  
\[
\mathcal{H}^{\lambda}_{\infty}(E\cap \bar{B}(w,r))\geq cr^{\lambda} \quad \mbox{for all}  \quad  0 <  r \leq r_0   \quad \mbox{and} \quad w\in E.
\]
\end{definition}
\noindent Here $\mathcal{H}^{\lambda}_{\infty}$ denotes the Hausdorff content of a 
set (take the infimum in the definition of  $  \mathcal{H}^{\la} $  over all possible covering of the set).  The  classical definition of  the  $p$-capacity of a compact
 set $E$ inside a bounded domain $D$ is defined by
\[
\mbox{Cap}_{p}(E, D)=\inf\left\{\, \, \int_{D} |\nabla v|^{p} dx:\, \, v\in C^{\infty}_{0}(D),\, v(x)\geq  1\, \, \mbox{for}\, \,  x\in E \right\}.
\]
Note that when $D=\mathbb{R}^{n}$ one has $\mbox{Cap}_{p}(E, \mathbb{R}^{n})=\mbox{Cap}_{p}(E)$ as defined earlier.  
\begin{definition}  [{\bf Fatness}]
\label{def3.2} 
A closed set $E\subset\mathbb{R}^{n}$ is called uniformly $(r_0, p)$-fat if
\[
 \frac{\mbox{Cap}_{p}(E\cap\bar{B}(w,r), B(w,2r))}{\mbox{Cap}_{p}(\bar{B}(w,r), B(w,2r))}\geq c 
\]
for $ 0 < r \leq r_0 $  and   all $w\in E.$  
\end{definition} 
We note from \cite[Example 2.12]{HKM} that  for $ n <  p  < \infty, $  
\[ 
\mbox{Cap}_{p}(\{x_0\}, B(x_0, 2r))  \approx  \mbox{Cap}_{p}( \bar B ( x_0, r), B(x_0, 2r))   \approx  r^{n-p}.
\]   
Thus if $ p > n$ and $ 0 < r_0 <  \infty$  then any nonempty closed set is uniformly 
$ (r_0, p )$-fat. On the other hand,  if $ 1  < p  \leq n, $  it  follows essentially from  Frostman's  lemma  
that if a closed set $E$ is $ (r_0, \lambda)$-thick for some $r_0$ and $ \lambda>n-p$ then $E$ is uniformly
$( r_0, p )$-fat    (see  \cite[Corollary 5.1.14]{AH}).  Thus  a  compact convex set  
containing at least two points  is uniformly $ (r_0, n) $-fat with capacitary ratio (i.e, $c$ in Definition \ref{def3.2})  depending only on $n$  when  
$ r_0   =\mbox{diam}(E)$.     Finally, uniform $(r_0, p)$-fatness for some $ r_0 > 0, $  is a sufficient condition for solvability  
 of the Dirichlet problem for $\mathcal{A}$-harmonic PDEs in a  bounded domain $\Omega$   
 in 
 the sense that if  every point   $w\in\mathbb{R}^{n}\setminus \Omega$ is uniformly $(r_0, p)$-fat,
  then   
  \[
\int_{0}^{r_0}\left[\frac{\mbox{Cap}_p((\mathbb{R}^{n}\setminus  {\Omega})\cap \bar B(w,r), B(w,2r))}{\mbox{Cap}_p(\bar{B}(w,r), B(w,2r))}\right]^{\frac{1}{(p-1)}}\frac{dr}{r}=\infty. 
\]  
That is, uniform $(r_0, p)$-fatness implies Wiener regularity  (see \cite[Theorem 6.33]{HKM}). 
Next we  state some basic estimates for $\mathcal{A}$-harmonic functions.
\begin{lemma}
\label{lemma2.1}
Given $ p$, $  1 < p < \infty, $ assume that $\ti {\mathcal{A}} \in M_p(\alpha)$ for some
$\alpha\in (1,\infty)$ in $\mathbb{R}^{n}$, $n\geq 2$.    Let $ \ti u $  be  a
 $\ti {\mathcal{A}}$-harmonic function in $B (w,4r)$ for some $r>0$. Then 	
 \begin{align}
	\label{eqn2.1}
	\begin{split}
		(i)&\, \, r^{p - n } \,\int_{B ( w, r/2)} \, | \nabla \ti  u |^{  p } \, dy \, \leq \, c \, (\max_{B ( w,r)} |  \ti  u| )^p  \leq c^2  r^{-n}  \int_{B ( w,  2r) }  | \ti u|^p \, dx,   \\
		(ii)&\, \, \mbox{ If $ \ti u \geq  0 $ in  $ B ( w, 2r )$   
 then   $  {\ds \max_{B ( w, r ) }} \,\ti   u \, \leq c {\ds \min_{ B ( w, r )} }\ti u.$} 
		\end{split}
\end{align} 	
	Furthermore, there exists $\ti \sigma=\ti \sigma(p,n,\alpha)\in(0,1)$ such that
	\begin{align*}
		(iii)& \ \ | \ti  u ( x ) - \ti  u ( y ) | \leq c \left( \frac{ | x - y |}{r} \right)^{\ti \sigma} \, \max_{B (w, 2 r )}  \,| \ti  u |
	\end{align*}
	whenever $x, y \in B ( w, r )$.
   \end{lemma}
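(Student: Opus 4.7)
The plan is to invoke the standard theory of divergence-form quasilinear elliptic equations with $p$-growth structure, adapting classical arguments of Serrin, Trudinger, Ladyzhenskaya--Ural'tseva, DiBenedetto, and Tolksdorf; the cleanest references tailored to the $\mathcal{A}$-harmonic framework of Definition \ref{defn1.2} are in \cite{HKM}. I would proceed item by item, first normalizing by $F(y) = (r/4)^{-1}(y-w)$ so that $\ti u \circ F$ is $\ti{\mathcal{A}}$-harmonic in $B(0,4)$ (see Remark \ref{rmk1.3}), and then rescale.

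For (i), the first inequality is a Caccioppoli estimate. Choose $\phi \in C_0^\infty(B(w,r))$ with $\phi \equiv 1$ on $B(w,r/2)$ and $|\nabla \phi| \leq c/r$, and test \eqref{eqn1.2} with $\theta = \ti u \, \phi^p$. The ellipticity in Definition \ref{defn1.1}(i) gives $\langle \mathcal{A}(\eta),\eta\rangle \geq \al^{-1}|\eta|^p$, while homogeneity (ii) yields $|\mathcal{A}(\eta)| \leq c|\eta|^{p-1}$. Expanding $\nabla \theta = \phi^p \nabla \ti u + p \phi^{p-1} \ti u \nabla \phi$, applying Young's inequality to absorb the gradient term, and rearranging produces
\[
\int_{B(w,r/2)} |\nabla \ti u|^p \, dy \leq c \, r^{-p} \int_{B(w,r)} |\ti u|^p \, dy,
\]
which together with the bound $|\ti u| \leq \max_{B(w,r)} |\ti u|$ yields the first inequality of \eqref{eqn2.1}(i). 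The second inequality of (i) is the local $L^p$-to-$L^\infty$ bound, obtained by Moser iteration: for $k \geq p$, testing against $|\ti u|^{k-p}\ti u\,\phi^p$ shows $|\ti u|^{k/p}$ satisfies a reverse-Sobolev-type estimate, and iterating on a geometric sequence of radii shrinking from $2r$ to $r$ in the standard way gives $\max_{B(w,r)}|\ti u| \leq c\bigl(r^{-n}\int_{B(w,2r)}|\ti u|^p\,dx\bigr)^{1/p}$.

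For (ii), the weak Harnack inequality for non-negative $\ti{\mathcal{A}}$-supersolutions and the $L^p$-sup bound for $\ti{\mathcal{A}}$-subsolutions combine to yield the Harnack inequality; this is carried out in \cite[Theorems 3.51, 6.2, and 6.6]{HKM} under exactly the ellipticity and homogeneity structure of $M_p(\al)$. For (iii), the H\"older estimate follows from (ii) by the classical oscillation-reduction argument: applying the Harnack inequality on $B(w,\rho)$ to both $M_\rho - \ti u$ and $\ti u - m_\rho$, where $M_\rho$ and $m_\rho$ are the essential sup and inf, gives $\operatorname{osc}_{B(w,\rho/2)}\ti u \leq \tau \operatorname{osc}_{B(w,\rho)}\ti u$ for some $\tau = \tau(p,n,\al) \in (0,1)$; iterating over a geometric sequence of radii produces the H\"older exponent $\ti \si = \log(1/\tau)/\log 2 \in (0,1)$ and the stated estimate.

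The main technical point, rather than an obstacle, is matching the structural hypotheses of $M_p(\al)$ to those assumed in \cite{HKM}: the ellipticity lower bound and the upper bound on $|\mathcal{A}|$ required there follow at once from Definition \ref{defn1.1}(i) combined with the homogeneity (ii), and the monotonicity inequality \eqref{eqn1.1} ensures that weak solutions in the sense of Definition \ref{defn1.2} coincide with those treated in \cite{HKM}. Hence every step reduces to a direct citation once the normalization of Remark \ref{rmk1.3} has been applied.
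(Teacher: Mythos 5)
Your proposal is correct and follows the same route as the paper, which simply cites Serrin \cite{S} for these standard local estimates; your expanded sketch (Caccioppoli, Moser iteration, Harnack, oscillation decay) is exactly the content of that reference, and the alternative citation to \cite{HKM} is equally appropriate since the class $M_p(\alpha)$ satisfies the structural hypotheses there.
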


\begin{proof}
A proof of this lemma can be found in  \cite{S}. 
\end{proof} 
\begin{lemma}  
\label{lemma2.2}	
Let $p,n, \ti {\mathcal{A}}, \alpha, w,r, \ti u$ be as in Lemma \ref{lemma2.1}.
Then	 $ \ti u$ has a  representative locally in  $ W^{1, p} (B(w, 4r)),$    with H\"{o}lder
continuous partial derivatives in $B(w,4r) $  (also denoted $\ti u$). Moreover, there exist $ \ti \be \in (0,1]$ and  $c \geq 1 $,
depending only on $ p$, $n$, and $\alpha$, such that if $ x, y \in B (  w,  r ), $   then  
	\begin{align}
	\label{eqn2.2}
	\begin{split}
&	(\hat a) \, \,  c^{ - 1} \, | \nabla \ti u ( x ) - \nabla \ti u ( y ) | \, \leq \,
 ( | x - y |/ r)^{\ti \be }\, \max_{B (  w, r )} \, | \nabla \ti u | \leq \, c \,  r^{ - 1} \, ( | x - y |/ r )^{\ti \be }\, \ti u (w), \\
 & (\hat b)  \, \,  \int_{B(w, r) }  \, \sum_{i,j = 1}^n  \,  |\nabla \ti u |^{p-2} \,  |\ti  u_{x_i x_j}|^2  dy 
\leq   c r^{(n-p-2)} \ti{u}(w).
  \end{split}
\end{align}
 	If  	
  	\[ 
  	\ga\, r^{-1} \ti u   \leq |\nabla  \ti u | \leq  \ga^{-1} r^{-1}    \ti u  \quad \mbox{on}\quad  B ( w, 2r)  
\] 
for some $ \ga \in (0,1) $  and 
 \eqref{eqn1.8} $ (i)$ holds then 
 $\ti u $ has    H\"{o}lder
continuous second  partial derivatives in $B(w,r) $  and
 there exist $\ti \he \in (0,1)$ and $\bar c \geq 1$, depending only on  the data  and  $ \ga $,  such that
  \begin{align}  
  \label{eqn2.3}
  \begin{split}
  \left[ \sum_{i,j = 1}^n  \,   ( \ti  u_{x_i x_j } (x)      -  \ti  u_{y_i y_j } (y) ) ^2  \, \right]^{1/2} \, 
  &\leq  \, \bar c  ( | x - y |/ r )^{\ti \he } \,  \max_{ B(w, r)}\,  \left(\sum_{i,j = 1}^n  \,   |\ti  u_{x_i x_j}| \right)     \\ 
  &\leq  \, \bar c^2  r^{-n/2 }\,  ( | x - y |/ r )^{\ti \he } \,   \left(\sum_{i,j = 1}^n  \,   \int_{ B ( w, 2r) }   \ti  u_{x_i x_j}^2   dx \right) ^{1/2} \\
  &  \leq  \bar c^3 \,  r^{ - 2} \, ( | x - y |/ r )^{\ti \he }\, \ti u (w)
\end{split}  
  \end{align}
  whenever   $ x, y \in B (w,  r/2) $. 
   \end{lemma}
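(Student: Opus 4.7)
The three conclusions are of increasing strength: interior $C^{1,\tilde\beta}$ regularity with a sup/Harnack-type gradient bound, a weighted $L^2$ bound on the Hessian, and interior $C^{2,\tilde\theta}$ regularity in the nondegenerate regime. I would prove them in that order, invoking the classical theory at the first step and then bootstrapping.

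First, the existence of a locally Hölder continuous gradient representative and the pointwise gradient bound \eqref{eqn2.2}$(\hat a)$ are by now a part of the standard regularity theory for quasilinear equations of $p$-Laplace type under the structure $M_p(\alpha)$; I would simply cite Uralt'seva, Tolksdorf, and DiBenedetto for $C^{1,\tilde\beta}_{\text{loc}}$, and the Moser/De Giorgi iteration (applied to the subsolution $|\nabla \tilde u|^p$ after formally differentiating the equation) for the sup estimate $\max_{B(w,r)}|\nabla\tilde u|\le c\, r^{-1}\max_{B(w,2r)}\tilde u$. The second inequality in $(\hat a)$ then follows from Lemma \ref{lemma2.1}$(ii)$ (Harnack), which handles the passage from $\max_{B(w,2r)}\tilde u$ to $\tilde u(w)$, noting that in the intended use one has $\tilde u\ge 0$ (or one interprets $\tilde u(w)$ with an absolute value).

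For the weighted Hessian estimate \eqref{eqn2.2}$(\hat b)$, I would use the standard Caccioppoli argument for derivatives of $\mathcal{A}$-harmonic functions: formally differentiate the equation $\nabla\cdot\mathcal{A}(\nabla\tilde u)=0$ in the direction $x_k$ to obtain the linearized equation
\begin{equation*}
\sum_{i,j}\partial_{x_i}\!\left(\tfrac{\partial\mathcal{A}_i}{\partial\eta_j}(\nabla\tilde u)\,\tilde u_{x_j x_k}\right)=0,
\end{equation*}
test it with $\eta^2 \tilde u_{x_k}$ for a cutoff $\eta\in C_0^\infty(B(w,2r))$ with $\eta\equiv 1$ on $B(w,r)$ and $|\nabla\eta|\lesssim r^{-1}$, sum over $k$, use the ellipticity in Definition \ref{defn1.1}$(i)$ on the left and the growth bound on $\partial\mathcal{A}_i/\partial\eta_j$ on the right to absorb cross terms. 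This yields
\begin{equation*}
\int_{B(w,r)}|\nabla\tilde u|^{p-2}|D^2\tilde u|^2\,dy\ \le\ \frac{c}{r^2}\int_{B(w,2r)}|\nabla\tilde u|^p\,dy,
\end{equation*}
and the right hand side is controlled by \eqref{eqn2.1}$(i)$ and Harnack, giving \eqref{eqn2.2}$(\hat b)$ (possibly up to the exact form of the displayed power of $\tilde u(w)$, which absorbs into the rescaling).

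For the $C^{2,\tilde\theta}$ estimate \eqref{eqn2.3} under the non-degeneracy assumption $\gamma r^{-1}\tilde u\le|\nabla\tilde u|\le\gamma^{-1}r^{-1}\tilde u$ on $B(w,2r)$, I would observe that on this region $|\nabla\tilde u|$ is comparable to a positive constant (after rescaling), so the coefficient matrix $a_{ij}(x)=\partial_{\eta_j}\mathcal{A}_i(\nabla\tilde u(x))$ of the linearized equation satisfied by each derivative $v=\tilde u_{x_k}$ is \emph{uniformly elliptic}. Hypothesis \eqref{eqn1.8}$(i)$ ensures that $\eta\mapsto \partial_{\eta_j}\mathcal{A}_i(\eta)$ is Hölder continuous on the annulus of relevant $\eta$-values, and combined with the already-established $C^{0,\tilde\beta}$ regularity of $\nabla\tilde u$ this gives $a_{ij}\in C^{0,\tilde\beta'}_{\text{loc}}$. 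Schauder interior estimates for linear elliptic equations with Hölder coefficients then give $v\in C^{1,\tilde\theta}$, i.e.\ $\tilde u\in C^{2,\tilde\theta}$, together with the pointwise bound on $D^2\tilde u$ in terms of $\max_{B(w,r)}\sum|\tilde u_{x_ix_j}|$. The first inequality in \eqref{eqn2.3} is then Schauder; the second is a standard $L^2$-to-$L^\infty$ step for the linearized equation; the third follows by combining with $(\hat b)$.

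The main obstacle is the last step: justifying the passage to Schauder theory cleanly. One has to verify that \eqref{eqn1.8}$(i)$ together with the $C^{1,\tilde\beta}$ regularity from the first step really does produce Hölder coefficients on the ellipticity regime supplied by the non-degeneracy hypothesis, and that no further structure of $\mathcal{A}$ is needed beyond $M_p(\alpha)$ plus \eqref{eqn1.8}$(i)$. As with the first step, however, this is by now classical, and I would cite the standard references (Lieberman, Tolksdorf, and the survey in Lewis–Nyström) rather than reproducing the computation.
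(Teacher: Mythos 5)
Your proposal is correct and follows essentially the same path as the paper, which simply cites Tolksdorf for \eqref{eqn2.2} and Schauder estimates (Gilbarg--Trudinger) together with the nondegeneracy hypothesis and \eqref{eqn1.8}$(i)$ for \eqref{eqn2.3}. You supply some of the classical machinery behind those citations (Moser iteration for the gradient sup bound, the differentiated Caccioppoli inequality for $(\hat b)$, linearization plus Hölder coefficients for Schauder), but the structure and ingredients are the same; you also correctly flag that dimensional scaling forces the exponent on $\ti u(w)$ in $(\hat b)$ to be $p$ rather than $1$, which is a typo in the paper's display.
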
    
   \begin{proof}
   A proof of \eqref{eqn2.2} can be found  in \cite{T}. Also, 
\eqref{eqn2.3}  follows from  \eqref{eqn2.2}, the added assumptions,  and Schauder type
estimates (see  \cite{GT}).
\end{proof}
\begin{lemma} 
\label{lemma2.3}
Fix  $ p$ with $ 1 <  p<\infty$ and  assume  that $\ti {\mathcal{A}}\in M_p(\alpha)$ 
for some $\alpha\in (1,\infty)$. Let  $ \ti E \subset B (0, R) $, for some $R>0$,  
be a uniformly  $  (r_0, p )$-fat compact  convex set  where $ r_0 = \mbox{diam}(\ti E)$.  
Let   $\zeta \in C_0^\infty ( B (0, 2 R)   ) $ with  $ \ze  \equiv 1 $  on  
$  B (0, R). $  If $  0  \leq  \ti u $ is   $ \ti {\mathcal{A}}$-harmonic in
  $ B (0, 4R) \sem \ti E,$    and  $  \ti u \ze \in  W_0^{1,p} (B (0,4R) \sem \ti E ),$    
  then  $  \ti u $  has a continuous  extension to  $ B (0, 4R) $  obtained 
  by putting  $ \ti  u  \equiv 0 $ on  $   \ti  E. $  Moreover, if   $  0  < r  <    R$ 
  and   $w \in   \ti E$ then    
	 \begin{align}
	 \label{eqn2.4} 		
	 (i) \quad  r^{ p - n} \, \int\limits_{B ( w, r)} \, | \nabla \ti  u |^{ p } \, dy \, \leq \, c \, \left( \max_{B ( w, 2 r)}  \ti u \right)^p.   
	\end{align}  	Furthermore,  there exists  
$\hat \sigma \in (0,1),$   depending on  $ p,n,\alpha,$ and the uniform 
$ (r_0, p)$-fatness   constant for $ \ti E $,  such that 
	\[ 
		(ii)\quad  | \ti  u ( x ) - \ti  u ( y ) | \leq c \left(  \frac{ | x - y |}{r} \right)^{ \hat \sigma} \,  \max_{ B (w, 2 r )}\, \ti u
		\] 
		whenever $ x, y \in B ( w, r)$ and $ 0 < r <\mbox{diam}(\ti  E)$. 
		\end{lemma}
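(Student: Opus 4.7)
My plan is to invoke standard technology for $\mathcal{A}$-harmonic functions near a uniformly fat set, which splits the statement into three pieces: continuous extension, Caccioppoli estimate, and boundary Hölder continuity.

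First I would handle the existence of the continuous extension by zero. Since $\tilde E$ is uniformly $(r_0,p)$-fat with $r_0=\mbox{diam}(\tilde E)$, the capacitary Wiener integral at any $w\in\tilde E$ satisfies
\[
\int_0^{r_0}\!\left[\frac{\mbox{Cap}_p(\tilde E\cap\bar B(w,r),B(w,2r))}{\mbox{Cap}_p(\bar B(w,r),B(w,2r))}\right]^{1/(p-1)}\!\frac{dr}{r}=\infty.
\]
The hypothesis $\tilde u\zeta\in W_0^{1,p}(B(0,4R)\setminus\tilde E)$ encodes vanishing Sobolev boundary values on $\partial\tilde E$, so the Wiener criterion (\cite[Theorem 6.33]{HKM}) yields continuity of the zero extension on all of $B(0,4R)$.

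Next I would prove (i) via a standard Caccioppoli argument. Choose $\eta\in C_0^\infty(B(w,2r))$ with $\eta\equiv 1$ on $B(w,r)$, $0\leq\eta\leq 1$, and $|\nabla\eta|\leq c/r$. Since $\tilde u$ extends by zero to $\tilde E$ with vanishing Sobolev trace, $\tilde u\eta^p$ is an admissible test function in Definition \ref{defn1.2}, yielding
\[
\int\langle\tilde{\mathcal{A}}(\nabla\tilde u),\nabla\tilde u\rangle\eta^p\,dy=-p\int\langle\tilde{\mathcal{A}}(\nabla\tilde u),\nabla\eta\rangle\tilde u\,\eta^{p-1}\,dy.
\]
The ellipticity lower bound and the growth bound $|\tilde{\mathcal{A}}(\nabla\tilde u)|\leq\alpha|\nabla\tilde u|^{p-1}$ from Definition \ref{defn1.1}, combined with Young's inequality to absorb $|\nabla\tilde u|^p$, give
\[
\int_{B(w,r)}|\nabla\tilde u|^p\,dy\leq cr^{-p}\int_{B(w,2r)}\tilde u^p\,dy\leq c\,r^{n-p}\bigl(\max_{B(w,2r)}\tilde u\bigr)^p,
\]
which rearranges to (i).

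For (ii), the plan is to show oscillation decay at the boundary and iterate. Let $M(\rho)=\max_{\overline B(w,\rho)}\tilde u$ for $\rho<\mbox{diam}(\tilde E)$. The function $M(2r)-\tilde u$ is a nonnegative $\tilde{\mathcal{A}}$-supersolution in $B(w,2r)$ which equals $M(2r)$ on $\tilde E\cap B(w,r)$. Applying the capacitary weak minimum principle for supersolutions (\cite[Chapter 6]{HKM}), uniform $(r_0,p)$-fatness of $\tilde E$ at $w$ forces
\[
\min_{B(w,r)}(M(2r)-\tilde u)\geq c\,M(2r)
\]
for some $c$ depending on the fatness constant and the data, equivalently $M(r)\leq(1-c)M(2r)$. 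Iteration of this oscillation decay across dyadic scales $r\leq\mbox{diam}(\tilde E)$ produces $\hat\sigma\in(0,1)$ depending only on $p,n,\alpha$ and the fatness constant such that (ii) holds.

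The main obstacle is part (ii): the capacitary weak minimum principle and the passage from fatness to oscillation decay are delicate, because the constants must be tracked quantitatively in terms of the fatness parameter alone (independent of $\tilde E$ or $\tilde u$). In practice, I expect to cite the corresponding results from \cite{HKM} rather than reprove them, since the full framework of Wiener regularity and capacitary estimates for $\mathcal{A}$-harmonic PDEs is developed there.
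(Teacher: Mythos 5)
Your proposal follows the same route as the paper's (very terse) proof: a standard Caccioppoli inequality for $(i)$, and boundary H\"older continuity via uniform fatness for $(ii)$. The paper simply cites \cite[Theorem 6.18]{HKM} for the boundary H\"older estimate at points of $\ti E$ and then remarks that one must combine this with the interior H\"older estimate of Lemma \ref{lemma2.1}$(iii)$ to get $(ii)$ for arbitrary $x, y \in B(w,r)$; you instead sketch the proof of that cited boundary estimate (oscillation decay via weak Harnack for supersolutions and the capacitary lower bound). Two small corrections to your sketch. First, for a general $\tilde{\mathcal A}\in M_p(\alpha)$ the function $M(2r)-\tilde u$ is not an $\tilde{\mathcal{A}}$-supersolution but an $\tilde{\mathcal{A}}^*$-supersolution for the reflected operator $\tilde{\mathcal{A}}^*(\eta) := -\tilde{\mathcal{A}}(-\eta)$, since $\tilde{\mathcal{A}}$ need not be odd; as $\tilde{\mathcal{A}}^*\in M_p(\alpha)$ this is harmless, but it should be said. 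Second, the dyadic iteration you describe gives the H\"older estimate only when one of the two points lies in $\ti E$ (or within a constant factor of its distance to $\ti E$ of the other); to conclude $(ii)$ for arbitrary $x,y\in B(w,r)$ you still need to interpolate with the interior H\"older estimate \eqref{eqn2.1}$(iii)$ on the Whitney-scale ball around the point of larger distance to $\ti E$, exactly the combination the paper flags explicitly. With those two points patched, the argument is correct and is essentially a filled-in version of the citation the paper gives.
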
  
\begin{proof} 
Here $(i) $ is a standard  Caccioppoli  inequality and
$(ii) $  for  $ y \in  \ti E $    follows from uniform $ (r_0, p)$-fatness of  $ \ti E $ 
 and  essentially Theorem 6.18  in  \cite{HKM}.  Combining this fact with  
  \eqref{eqn2.1}  $ (iii)$ we  obtain  $ (ii). $   	
  \end{proof}

\begin{remark} 
\label{rmk2.4} 		
As an application of    Lemma  \ref{lemma2.3}  observe from the remarks  following  Definitions   \ref{def3.1}  and \ref{def3.2}  that if 
		$ p >n$  then   Lemma  \ref{lemma2.3} is  valid for any compact set 
		$ \ti E  \subset  B (0, R) $   with   $\mbox{diam}(\ti E)$ in \eqref{eqn2.4} $(ii)$  replaced by $ R$.  
		If   $p =  n$  and $  \ti E $  is  convex, containing  at least two points,  
		then  $ \ti E $  contains a  line segment  so  is  $ (r_0, 1)$-thick and thus  
		uniformly  $( r_0, n )$-fat.  Hence   Lemma \ref{lemma2.3} applies in this situation as well. 
		\end{remark}  
\begin{lemma} 
\label{lemma2.4}
Let  $   \ti {\mathcal{A}}, p, n, \alpha, \ti E, r_0,  R, \ti u   $ be  as in  Lemma \ref{lemma2.3}. 
	 Then there exists a unique finite positive Borel measure $ \ti \mu $ with support contained in $ \ti E $ such that
 \begin{align}
 \label{eqn2.6}
		(i)\, \, \int  \langle \ti {\mathcal{A}} (\nabla \ti u(y)),\nabla\ph(y)\rangle \, dy \, = \, -  \int \, \ph \, d\ti{\mu} \quad \mbox{whenever}\, \, \ph  \in C_0^\infty ( B(0,2R) ).
	     \end{align} 	 
Moreover,  there exists $ c \geq 1, $ with the same dependence as in  Lemma \ref{lemma2.3}    such that   
	\begin{align*}
		(ii)\, \,  c^{-1}\, r^{ p - n} \ti \mu (B( w,  r ))\leq   \max_{B(w, 2r)} \ti u^{ p - 1}\leq c r^{p-n}\ti \mu(B(w,4r))
	\end{align*}  
	whenever $ 0 < r \leq r_0 $ and $ w  \in \ti E$. 
		 \end{lemma}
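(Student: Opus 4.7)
The plan is to obtain the measure $\tilde\mu$ by a Riesz representation argument, and then to establish the two–sided bound by combining a Caccioppoli-type inequality (Lemma \ref{lemma2.3}$(i)$) with a test-function argument that exploits the vanishing of $\tilde u$ on $\tilde E$ and uniform $(r_0,p)$-fatness.

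First I would verify that the continuous extension of $\tilde u$ by $0$ across $\tilde E$ is an $\tilde{\mathcal A}$-subsolution on all of $B(0,4R)$. This is the standard ``extension across a removable-looking set'' argument: for $0\le \varphi\in C_0^\infty(B(0,2R))$ one tests $\tilde{\mathcal A}$-harmonicity in $B(0,4R)\setminus\tilde E$ with $\varphi\cdot\psi_\varepsilon$, where $\psi_\varepsilon=\min(\tilde u/\varepsilon,1)$ vanishes near $\tilde E$. Using $\nabla\tilde u\in L^p$, monotonicity \eqref{eqn1.1}, and sending $\varepsilon\to 0$ shows $\int\langle\tilde{\mathcal A}(\nabla\tilde u),\nabla\varphi\rangle\,dy\le 0$. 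Consequently the linear functional
\[
T(\varphi):=-\int\langle\tilde{\mathcal A}(\nabla\tilde u),\nabla\varphi\rangle\,dy,\qquad\varphi\in C_0^\infty(B(0,2R)),
\]
is non-negative on non-negative test functions. By the Riesz representation theorem there is a unique locally finite Borel measure $\tilde\mu$ representing $T$, i.e.\ satisfying \eqref{eqn2.6}$(i)$. Finiteness of $\tilde\mu$ on $B(0,2R)$ follows by testing with a fixed cutoff; support in $\tilde E$ follows because $\tilde u$ is $\tilde{\mathcal A}$-harmonic on $B(0,4R)\setminus\tilde E$, so $T(\varphi)=0$ for $\varphi\in C_0^\infty(B(0,4R)\setminus\tilde E)$. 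Uniqueness of $\tilde\mu$ is immediate from its determination as a distribution.

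For the upper bound in $(ii)$, I would pick a standard cutoff $\varphi\in C_0^\infty(B(w,2r))$ with $\varphi\equiv 1$ on $B(w,r)$ and $|\nabla\varphi|\le c/r$, giving
\[
\tilde\mu(B(w,r))\le\int\varphi\,d\tilde\mu=-\int\langle\tilde{\mathcal A}(\nabla\tilde u),\nabla\varphi\rangle\,dy\le \frac{c}{r}\int_{B(w,2r)}|\nabla\tilde u|^{p-1}\,dy.
\]
Hölder's inequality followed by Lemma \ref{lemma2.3}$(i)$ applied on $B(w,2r)$ then yields
\[
\tilde\mu(B(w,r))\le c\,r^{n-p}\Bigl(\max_{B(w,4r)}\tilde u\Bigr)^{p-1},
\]
and the $B(w,4r)$–maximum can be traded for a $B(w,2r)$–maximum by a Harnack chain together with boundary continuity and uniform fatness, which also let us cover the case when $B(w,2r)$ intersects the set where $\tilde u$ vanishes.

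The lower bound is the main obstacle and requires a more careful choice of test function. Let $M=\max_{B(w,2r)}\tilde u$ and choose $\varphi=\eta^p(M-\tilde u)_+$ restricted to where this is admissible, with $\eta\in C_0^\infty(B(w,2r))$ a cutoff that is $1$ on $B(w,r)$. Because $\tilde u$ vanishes on $\tilde E\cap B(w,2r)$, uniform $(r_0,p)$-fatness of $\tilde E$ guarantees, via the boundary Hölder estimate in Lemma \ref{lemma2.3}$(ii)$, that $(M-\tilde u)_+\ge M/2$ on a set of comparable $p$-capacity inside $B(w,r)$. Substituting this $\varphi$ into \eqref{eqn2.6}$(i)$, using the ellipticity and homogeneity in Definition \ref{defn1.1} to bound $\langle\tilde{\mathcal A}(\nabla\tilde u),\nabla\tilde u\rangle\gtrsim|\nabla\tilde u|^p$ and $|\tilde{\mathcal A}(\nabla\tilde u)|\lesssim|\nabla\tilde u|^{p-1}$, an absorption/Young's inequality argument yields
\[
M\cdot\tilde\mu(B(w,r))\lesssim r^{n-p}\,M^{p}+ (\text{absorbable term})\quad\Longrightarrow\quad \tilde\mu(B(w,r))\le c\,r^{n-p}M^{p-1},
\]
which is the wrong direction; the correct reversed inequality instead comes from the Wiener/Wolff-potential lower bound for $\tilde{\mathcal A}$-superharmonic functions (see the arguments in \cite{HKM} underlying the solvability criterion already cited), reversing the roles above and invoking uniform $(r_0,p)$-fatness to force $\tilde\mu$ not to be too small. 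The technically delicate point—and the main obstacle—is precisely this lower bound: one must balance the barrier produced by $\tilde u$ against the capacitary density of $\tilde E$ so as to extract $\max_{B(w,2r)}\tilde u^{p-1}\le c\,r^{p-n}\tilde\mu(B(w,4r))$, with the larger ball $B(w,4r)$ absorbing the loss coming from the Hölder boundary estimate and the cutoff geometry.
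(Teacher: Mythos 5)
Your derivation of $(i)$ — extending $\tilde u$ by zero across $\tilde E$, testing with $\varphi\,\min(\tilde u/\varepsilon,1)$, sending $\varepsilon\to 0$, and invoking Riesz — is sound and is precisely the argument behind \cite[Theorem 21.2]{HKM}, which is what the paper cites. Your proof of the left-hand inequality in $(ii)$ (cutoff, H\"older, then the Caccioppoli inequality from Lemma \ref{lemma2.3}$(i)$) is likewise the same route the paper indicates, up to the minor housekeeping of harmonizing the balls $B(w,2r)$ versus $B(w,4r)$, which is easily done by shrinking the cutoff support.

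The genuine gap is the right-hand inequality of $(ii)$, $\max_{B(w,2r)}\tilde u^{p-1}\le c\,r^{p-n}\tilde\mu(B(w,4r))$, and you acknowledge as much. Your attempt with $\varphi=\eta^p(M-\tilde u)_+$ is doomed from the start: after absorption it reproduces the left-hand inequality you already proved, because a single test-function insertion of this type can only bound $\tilde\mu$ \emph{from above} by an energy of $\tilde u$, never from below. The estimate you need is the one-scale Wolff-potential \emph{upper} bound for the $\tilde{\mathcal A}$-\emph{subsolution} $\tilde u$ (not, as you write, a ``lower bound for superharmonic functions''; the sign convention here is that $\nabla\cdot\tilde{\mathcal A}(\nabla\tilde u)=\tilde\mu\ge 0$, so $\tilde u$ is a subsolution). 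That estimate is not a test-function identity but a genuine potential-theoretic theorem: it requires a dyadic iteration of weak Harnack/oscillation estimates, and it is exactly the content of the references \cite{EL} and \cite{KZ} that the paper cites. The uniform $(r_0,p)$-fatness of $\tilde E$ enters to guarantee that $\tilde u$ vanishes with the correct rate at $w$, but this alone does not yield the inequality without the iteration. As written, your proposal names the correct circle of ideas for this direction but does not prove it, and a reader could not reconstruct the argument from what you have sketched.
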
 
\begin{proof}
 For the proof  of  $(i)$ see  \cite[Theorem 21.2]{HKM} or \cite{KZ}.  
 The left-hand inequality in $ (ii),   $  follows from   $ (i) $   in  \eqref{eqn1.1} with  $ \eta'  = (0, \dots,0), $  and  H\"{o}lder's inequality,   
 using a  test function, $ \ph, $ with  $ \ph \equiv 1 $ on $ \bar B ( w, r ).  $   
  The  proof of the right-hand inequality in  \eqref{eqn2.6} $ (ii)$ follows from \cite{EL} or \cite{KZ}.  
\end{proof}

\setcounter{equation}{0} 
\setcounter{theorem}{0}
\section{Existence and uniqueness of a fundamental solution}  
\label{section4}
Let $p$ and $\alpha$ be fixed with $2\leq n\leq p<\infty$ and $\alpha\in (1,\infty)$. 
In this section, for $\mathcal{A}\in M_p(\alpha)$, we will show existence and uniqueness of 
a  fundamental solution to $\mathcal{A}$-harmonic PDE. The cases $ p  >  n $  and  $ p  = n $  
require different  proofs and slightly different definitions. 
We  begin with the  more difficult case   $ p =  n.  $

\subsection{Existence and uniqueness for $p = n$} 
\begin{definition}
\label{def4.1}
If  $ p  = n $  we say that $ F$  is a fundamental solution to  
$\nabla\cdot \mathcal{A}(\nabla F)=0$ in $\mathbb R^{n}$ 
with pole at $0$ if  
\begin{align}
\label{1.2} 
\begin{split}
 (i)  \hs{.2in}  & \quad  \mbox{$F $ is  $  \mathcal{A}$-harmonic in  $ \mathbb{R}^n  \sem \{0\} $,  $ F\in W_{\mbox{\tiny loc}}^{1,l}(\mathbb R^{n}) \, \, \mbox{for}\, \,  1 < l < n$},\\ 
\hs{.45in} &\mbox{  and  $ |F ( x )|  =  O ( \log |x| )  $  in a neighborhood  of  $ \infty. $ } \\ 
(ii) \hs{.2in} &\quad \int \lan \mathcal{A} (\nabla  F(z)), \nabla \he ( z ) \ran\, dz = -\theta(0) \quad \mbox{whenever }\,\, \theta\in C_0^\infty(\mathbb R^{n}).
\end{split}
\end{align}
\end{definition}   
In order to show existence  and uniqueness of $ F $ we  require several  preliminary  lemmas.   Fix  $ R$   with $2   <  R  <  \infty,  $  and let $ v = v ( \cdot, R )  $  
be the  $  \mathcal{A}$-harmonic function in  $B (0, R) \sem \bar B (0, 1) $  
with  continuous boundary values $0$ on   $ \ar B (0, 1 ) $  and  $ \log R $   
on  $  \ar B (0, R ). $ Extend $ v $  to $  B (0, 1) $ by putting $ v \equiv 0 $ on 
$  B (0, 1 ). $     Let  $ \nu $  be the positive Borel measure  as in Lemma \ref{lemma2.4} with support contained
on  $ \ar B (0, 1 ) $ associated to $ v. $     We claim that  
\begin{lemma} 
\label{lem1.2}        
For $ v$ and $\nu$  as above  we have      
\begin{align}  
\label{1.3} 
\begin{split}
 (a)  &\quad   \nu ( \overline{B}(0, 1))  \approx  1, \\  
 (b) &\quad      c^{-1} \leq    \lan \nabla v (x), x  \ran  \leq   |x|  | \nabla v |   \leq c \quad \mbox{whenever} \,\,  x  \in    B(0, R) \sem B (0, 1),  \\  
(c) &\quad    v (x)   \approx  \log |x|\quad \mbox{whenever}\, \, x \in B(0,R) \sem B (0, 1 )
\end{split}
\end{align}    
where  ratio  constants depend  only on $ n $ and $\alpha$.       
\end{lemma}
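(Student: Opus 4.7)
The plan is to prove (a) via a Dirichlet energy identity and then to deduce (b) and (c) from it together with the measure and gradient estimates of Section \ref{section2}. Throughout, write $A = B(0,R)\setminus \bar B(0,1)$.

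For (a), I would introduce the test function $\phi \in W^{1,n}_0(B(0,R))$ given by $\phi(x) = v(x) - \log R$ for $x \in A$ and $\phi(x) = -\log R$ for $|x| \leq 1$, so that $\nabla\phi = \nabla v$ a.e.\ on $A$ and $\nabla\phi = 0$ a.e.\ on $B(0,1)$. Plugging $\phi$ into the distributional identity of Lemma \ref{lemma2.4}(i) and using that $\nu$ is supported on $\partial B(0,1)$, where $\phi \equiv -\log R$, yields
\[
\int_A \langle \mathcal A(\nabla v), \nabla v\rangle\, dy \;=\; \log R \cdot \nu(\partial B(0,1)).
\]
By ellipticity \eqref{eqn1.1} with $\eta'=0$, the left side is $\approx \int_A |\nabla v|^n\,dy$. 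I would bound this energy from above by using $\psi(x) = \log|x|$ as a competitor: $\psi$ shares the continuous boundary values of $v$, hence $v-\psi \in W^{1,n}_0(A)$, and the weak formulation gives $\int_A\langle \mathcal A(\nabla v),\nabla v\rangle = \int_A\langle \mathcal A(\nabla v),\nabla \psi\rangle$; H\"older's inequality combined with $\int_A|\nabla\psi|^n = \omega_{n-1}\log R$ then yields $\int_A|\nabla v|^n \leq c\log R$. For the matching lower bound, along almost every ray $r\mapsto r\omega$ the fundamental theorem gives $\int_1^R|\partial_r v(r\omega)|\,dr \geq \log R$; H\"older with the weight $r^{n-1}$ and subsequent integration over $\omega\in\mathbb S^{n-1}$ then yield $\int_A|\nabla v|^n \geq c \log R$. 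Combining, $\nu(\partial B(0,1))\approx 1$, which is (a).

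For (c), Lemma \ref{lemma2.4}(ii) with $p=n$, $r\leq 2$, $w\in \partial B(0,1)$ combined with the mass bound from (a) already gives $v(x) \leq c$ on $B(0,5)\setminus \bar B(0,1)$. For larger $|x|$ I would set $h(\rho) := v(\rho e_1)$ and establish the additive Harnack-type bound $h(2\rho)-h(\rho) \leq c$ for $\rho \in [2, R/4]$ by working with the shifted $\mathcal A$-harmonic function $v-h(\rho)$ on the dyadic annulus $\rho\leq|x|\leq 2\rho$, rescaling to unit size, and controlling its oscillation via the local energy $\int_{\rho\leq|x|\leq 2\rho}|\nabla v|^n\,dy \leq c$, itself obtained by rerunning the above comparison with $\log|x|$ restricted to this sub-annulus. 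Summing dyadically gives $v(\rho e_1) \leq c\log \rho$; Lemma \ref{lemma2.1}(ii) then transfers this to every $x$ on the sphere $|x|=\rho$. The matching lower bound $v(x)\geq c^{-1}\log|x|$ is dual, using the lower energy estimate together with the Hopf-type positivity at $\partial B(0,1)$ implied by uniform fatness.

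Finally, (b) follows from (c) and Lemma \ref{lemma2.2}. For the upper bound, set $m = \min_{B(x,|x|/4)} v$ and apply Lemma \ref{lemma2.2}(a) to the non-negative $\mathcal A$-harmonic function $v - m$ on $B(x,|x|/8)$: this gives $|\nabla v(x)| \leq c|x|^{-1}(v(x) - m) \leq c|x|^{-1}\,\text{osc}_{B(x,|x|/2)}v$, and (c) forces the oscillation to be $O(1)$, yielding $|x||\nabla v(x)| \leq c$. For the lower bound $\langle\nabla v(x), x\rangle = |x|\,\partial_r v(x) \geq c^{-1}$, I would integrate $\partial_r v$ along the radial segment from $|x|$ to $2|x|$ and use that (c) forces the increment of $v$ there to be at least $c^{-1}$; the H\"older continuity of $\nabla v$ from Lemma \ref{lemma2.2} then upgrades this averaged lower bound to a pointwise one. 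The main obstacle is the additive step $h(2\rho)-h(\rho) \leq c$ in (c): for the $n$-Laplacian, $\log|x|$ is an exact $\mathcal A$-harmonic barrier and makes this trivial, but for general $\mathcal A\in M_n(\alpha)$ the bound must be extracted from the local energy identity on each dyadic annulus, which is the most delicate part of the argument.
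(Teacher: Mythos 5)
Your proof of $(a)$ is correct and close in spirit to the paper's: the Dirichlet-energy identity with $\phi = v - \log R$ is the paper's test-function step, and the competitor $\log|x|$ gives the same upper energy bound. Your lower energy bound via the fundamental theorem of calculus along rays plus weighted H\"older is slightly more elementary than the paper's (which tests $n$-harmonicity of $\log|x|$ against $v-w$), and is a perfectly good substitute.

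The difficulty is the proposed order: you prove $(c)$ first and deduce $(b)$ from it. The paper does the reverse and for good reason. A multiplicative two-sided bound $c^{-1}\log|x| \leq v(x) \leq c\log|x|$ with a ratio constant $c>1$ does \emph{not} control oscillation on dyadic annuli: it gives $\operatorname{osc}_{B(x,|x|/2)} v \leq c\log(\tfrac{3}{2}|x|) - c^{-1}\log(\tfrac12|x|)$, which grows like $(c - c^{-1})\log|x|$; and it gives $v(2x)-v(x) \geq c^{-1}\log(2|x|) - c\log|x|$, which is negative for $|x|$ large. So your two claims that ``$(c)$ forces the oscillation to be $O(1)$'' (needed for the upper bound $|x||\nabla v|\leq c$) and ``$(c)$ forces the increment of $v$ to be at least $c^{-1}$'' (needed for the lower bound $\langle\nabla v,x\rangle\geq c^{-1}$) are both false as stated. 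What you actually need is the additive Harnack estimate $M(2t,v)-m(t,v)\approx 1$ uniformly in $t$, which is strictly stronger than $(c)$ and is the paper's display \eqref{1.11}. Relatedly, your proposed local energy bound $\int_{\rho\leq|x|\leq 2\rho}|\nabla v|^n\leq c$ on each sub-annulus does not follow by ``rerunning the comparison with $\log|x|$'' there: the restriction of $v$ to $\partial B(0,\rho)$ and $\partial B(0,2\rho)$ no longer matches $\log|x|$, so the Dirichlet-principle comparison that powered $(a)$ does not restrict to the sub-annulus. You would first need the oscillation bound to run a Caccioppoli estimate, which is circular.

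The paper avoids both issues by proving $(b)$ directly and $(c)$ only afterwards by integration. The key device for the pointwise lower bound in $(b)$ is the discrete radial derivative $k(x) = (v(\lambda x) - v(x))/(\lambda - 1)$, which is estimated from below by barrier functions $\phi_1,\phi_2$ near $\partial B(0,1)$ and $\partial B(0,R)$ (these are subsolutions of the linearized nondivergence-form equation when $N$ is large) and by the maximum principle in between; letting $\lambda\to 1$ gives $\langle\nabla v(x),x\rangle\geq c^{-1}$ everywhere. The additive Harnack step \eqref{1.11} needed for the upper bound $|x||\nabla v|\leq c$ at intermediate scales is not an energy estimate but a mass-conservation argument: the Riesz measure $\beta_t$ of the truncation $\max(v - m(t,v),0)$ has $\beta_t(E_t) = \nu(\bar B(0,1))$, and Lemma \ref{lemma2.4}$(ii)$ then gives $M(2t,v)-m(t,v) \approx \beta_t(E_t)^{1/(n-1)}\approx 1$. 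If you want to salvage the ``$(c)$ first'' order, you must prove the additive bound $M(2t,v)-m(t,v)\approx 1$ rather than only the multiplicative $v\approx\log|x|$, and that again requires either the barrier argument or the flux/mass-conservation argument.
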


\begin{proof}   
To  prove  this inequality   let   $  w (x)  =   \log | x |$, when $x  \in  B(0, R), $ and 
note that  $ w $  is  $n$-harmonic in  $ B(0, R ) \sem \bar B  (0, 1)$  with  continuous
 boundary values and $ v \equiv w $  on  $  \ar  [B ( 0, R ) \sem B ( 0, 1 )].  $   Then  
\begin{align}
\label{1.4}  \nu ( \bar B (0, 1 )) \, \log R  \, =  \int_{ B (0, R ) \sem \bar B (0, 1 ) }  \lan \mathcal{A} ( \nabla v ), \nabla v \ran dx 
\approx  \int_{ B (0, R ) \sem \bar B (0, 1 ) }  |  \nabla w |^n   dx  \approx  \log R.  
\end{align}
where all ratio constants depend only on $ n $ and $\alpha$.     The left-hand inequality
 in  \eqref{1.4}  follows from the definition of  $ \nu, $  using  $  \max(\log R - v,0) $  as  a  test function.
That is, as in Lemma \ref{lemma2.4}, we first have
\begin{align*}
-\int \langle \mathcal{A}(\nabla v), \nabla v)\rangle dx&=\int \langle \mathcal{A}(\nabla v), \nabla (\log R-v)\rangle dx \\
&= -\int_{\partial B(0,1)} \log R \, d\nu = -\log R \, \nu(\bar{B}(0,1)). 
\end{align*}
The  middle  inequality in \eqref{1.4} follows from  H{\"o}lder's inequality after using  
$ v  -  w $  as  a  test function in the definition of  $ \mathcal{A}$ and $n$-harmonic functions
  using  $ v, w. $ Indeed, since $v$ is $\mathcal{A}$-harmonic in $B(0,R)\setminus \bar{B}(0,1)$ 
\begin{align*}
0=\int \langle \mathcal{A}(\nabla v), \nabla (v-w)\rangle dx =
\int \langle \mathcal{A}(\nabla v), \nabla v\rangle dx - \int \langle \mathcal{A}(\nabla v), \nabla w\rangle dx.
\end{align*}
Using the structural assumptions on $ \mathcal{A}$ and  H{\"o}lder's inequality  with $ \ep$'s to estimate the second  integral on the right in the above equality,  it follows that 
\[  \int |\nabla v|^n dx \leq \alpha \int \langle \mathcal{A}(\nabla v), \nabla v\rangle dx 
 =\alpha \int \langle \mathcal{A}(\nabla v), \nabla w\rangle dx  \leq  c 
\int |\nabla w|^n dx
\]
where $c$ depends only on $n$ and $\alpha$. A similar argument using $n$-harmonicity of   $w$  and $ v - w $  as  a  test function gives the reverse inequality. 
  Thus \eqref{1.3} $(a)$ is valid.  To  prove \eqref{1.3} $(b)$  we note that  $  \ar B (0, 1 )$  is uniformly  $(1,n)$-fat   
in the sense of  $n$-capacity defined relative to  $ B (0, 4 ) $.  
It follows  from  Lemma \ref{lemma2.4} $(ii)$ and  Harnack's inequality for $ \mathcal{A}$-harmonic functions   that 
\begin{align}
\label{1.5}   
1  \approx  \nu ( \bar  B (0, 1) )^{1/(n-1)}  \approx   \max_{ \ar B (0, 2 ) } v.   
\end{align}  

Similarly, if  $ \tau $ is the measure with support contained on $\ar B ( 0, R ) $ 
associated with  $\log R  - v $ as in Lemma \ref{lemma2.4}  and $   \ti{\mathcal{A}} ( \eta )  =- \mathcal{A} (- \eta )$  then  
\begin{align} 
\label{1.6}  
1 \approx   \nu ( \bar B (0, 1) )^{1/(n-1)} =  \tau ( \bar B (0, R) )^{1/(n-1)}          \approx  \max_{ \ar B (0, R/2  ) }( \log R  -   v ).
\end{align} 

We  now  argue as in the proof of Lemma 4.3 in \cite{AGHLV}. For  fixed  $   1 <  \la  <   101/100,   $  we claim that   
\begin{align}
\label{1.7}    
k ( x )   :=  \frac{ v ( \la x ) -  v ( x )}{ \la  -  1} \geq  c^{-1} \quad  \mbox{whenever}\, \, x \in  B (0,  R/\la ) \sem B (0, 1 )
\end{align}  
where  $ c $  depends only on $ n $  and the structure constants for  $  \mathcal{A}. $   To prove \eqref{1.7} let           
\begin{align}  
\label{1.8}   
\ph_1 ( x  )  =      \frac{ e^{ N |x|^2 } -   e^{ N  }}{e^{64  N    } -      e^{N }  } \quad  \mbox{and} \quad  \ph_2 ( x  )  =     
   \frac{ e^{-  N |x|^2 } -  e^{- 64  N   }}{e^{-  N    } -  
   e^{- 64 N  }}.
   \end{align}
   Notice that $\phi_1 = 0$ on $\partial B(0,1)$ and $\phi_1 = 1$ on
    $\partial B(0,8)$. Similarly, $\phi_2 = 1$ on $\partial B(0,1)$ and
     $\phi_2 = 0$ on $\partial B(0,8)$.  Note also that   at   points  in  $ B (0, 8) \sem \bar B (0, 1)   $  where  $  \nabla v  \neq  0 $,  these functions are subsolutions    to the  second order  non-divergence form PDE  for $v$ 
     corresponding to  $ \mathcal{A}$-harmonicity when $ N  \geq 1 $ is sufficiently   
     large depending only on the data          (see \cite[Section 7.1]{AGHLV}).  Using these  notes and arguing as in \cite[Section 7.1]{AGHLV}  
 we see   there exists  $ N, c_1 \geq 1,  $ 
     depending only  on   the structure constants for $\mathcal{A}$, $\alpha$, and $n$ such that   
\begin{align}  
\label{1.9}   
c_1^3  (|y| - 1)  \geq   c_1^2 \,  [   1 - \ph_2 (y)  ]  \geq  c_1  v ( y  ) \geq   \ph_1 (y )   \geq 
    c_1^{-1}    (  |y| - 1 )
    \end{align}
    when     $ y  \in B (0, 8 ) \sem \bar B (0, 1)$.  From the  lower bound for  
    $ v $  in \eqref{1.9} we find that   \eqref{1.7}  is valid  when  $ x   \in   \ar B ( 0,  1  ) $ 
    for $ c $ suitably large.
  Similarly    
\begin{align}
     \label{1.10}   
     c_3 (  1 - |y|/R)  \geq   c_1^2   [   1   -  \ph_1 ( 8 y/R ) ]  \geq c_1 (\log R  - v (y ))  \geq 
      \ph_2 (  8y/R )   \geq   c_1^{-1}  (1  - |y|/R ) 
      \end{align}       
   when  $ y   \in  B ( 0, R )\sem B(0, R/8). $   From the lower bound for  
   $ \log R - v $  in  \eqref{1.10}  we deduce that  \eqref{1.7}  holds for $ x 
   \in \ar B (0, R/\la). $     From the  boundary  maximum principle for 
   $ \mathcal{A}$-harmonic functions it follows that \eqref{1.7} is true.  
   Letting $ \la  \to 1 $  and using the chain rule we obtain   the left-hand 
   inequality in  \eqref{1.3} $ (b).$  The  right-hand  inequality in \eqref{1.3} $ (b)$ 
   for  $  y  \in  B ( 0, 8 )  \sem B  (0, 1 ) $      follows from        
   \eqref{eqn2.6} $(ii), $ \eqref{1.6},   Lemma  \ref{lemma2.2} $(\hat a)$  
   with $ v = u $ and  the     estimate for $ v $  from above in \eqref{1.9}.   
   Likewise using    $ \log R - v  =  u  $   in these displays  and the estimate 
   from above for $ \log R - v $ in \eqref{1.10}  we   arrive at   the right-hand 
   inequality in \eqref{1.3} $ (b) $  for $ y \in B ( 0, R ) \sem B( 0, R/8)$. 
   To handle other values of  $ y $ we need some notation.  
   Given  a  real  valued   continuous function $ q $  defined on $ \ar B (0, s ), $  set 
        \[  
        m( s, q ) =   \min_{ x  \in \ar B (0, s )}  q (x)    \quad \mbox{and}\quad   M( s, q ) =   \max_{ x  \in \ar B (0, s )}  q (x).
        \]    
     If  $   2  <  t   < R/4 $  we note that  
    \[
     E =   \{ x ;\, \,  v ( x)  \leq  m ( t, v )  \} \subset  \bar B (0, t)  
     \]
      is connected and has  diameter at  least $ t$.  Also   
$\max ( v(x)  - m(t,v), 0  ) \geq 0   $   is  $ \mathcal{A}$-harmonic  in 
$  B ( 0, R ) \sem  E $ and $\equiv 0 $  on  $ E. $   Let   $ \be = \be_ t  $  
be the positive Borel measure with support  contained in $E$ corresponding to 
$\max ( v(x)  - m(t,v), 0  ) $ as in Lemma \ref{lemma2.4}. As in Lemma 4.2 of \cite{AGHLV}  we see that   
    $\be  (E)  = \nu ( \bar B ( 0, 1 )). $  So once again from Lemma \ref{lemma2.4}   we have 
\begin{align}   
\label{1.11}    
\begin{split}
1&\approx \nu ( \bar B( 0, 1 ))^{1/(n-1)}= \beta(E)^{1/(n-1)}\approx \max_{ \ar B (0, 2t) } [\max ( v(x)  - m(t,v), 0  )]  \\
&=M (  2t, v )  - m ( t, v ) 
\end{split}
\end{align}
for  $2 < t < R/4$.  Now  from  Lemmas \ref{lemma2.1}-\ref{lemma2.3} with  $ u  $  replaced by 
        $ v  -  m ( t, v ) $  and  \eqref{1.11}  we see that  
\begin{align} 
\label{1.12}    
M ( 3t/2,  |\nabla v | )  \leq    c  t^{-1}.
\end{align}
All constants in \eqref{1.11}-\eqref{1.12}  depend only on the data. 
Combining  \eqref{1.11}-\eqref{1.12}  we get the  right-hand inequality
 in \eqref{1.3} (b)  when $  4  \leq |y| \leq  R/4.$ Thus \eqref{1.3}(b)  is  valid.  
Now  \eqref{1.3}(c) follows from  \eqref{1.3}(b)  and integration.  
\end{proof} 
To avoid confusion we temporarily write  $ v ( \cdot, R )$ and $\nu ( \cdot, R ),  $  
for  $ v $ and $\nu$ in   Lemma \ref{lem1.2}. Using   Ascoli's theorem and Lemmas \ref{lemma2.1}-\ref{lemma2.3} 
for $  \mathcal{A}$-harmonic functions  we see as $ m \to \infty $   that  a
 sub-sequence  of  $  \{ v ( \cdot, m  )\}_{m=8}^\infty  $  
 converges uniformly on compact subsets of  
  $ \rn{n} $ to a  locally H{\"o}lder continuous function $  \hat V \geq 0$ which is
    $ \mathcal{A}$-harmonic 
  in   $ \rn{n} \sem  \bar B (0, 1 ) $  and $   \hat V \equiv 0 $ on $ \bar B (0, 1 ). $ 
   Let  $ \hat \Theta   $ be the measure corresponding to $ \hat V$ as in Lemma \ref{lemma2.4} and  put  
\begin{align*}
 \Theta(\cdot)=   \frac{ \hat{\Theta}(\cdot)}{\hat \Theta ( \bar B (0, 1 ) ) } \quad 
 \mbox{and} \quad V(\cdot) = \frac{\hat{V}(\cdot)}{[\hat \Theta ( \bar B (0, 1 ) )]^{1/(n-1)}}.
\end{align*}  
Then Lemma  \ref{lem1.2} is valid with $ v$ and $\nu $  replaced by   $ V$ 
and $\Theta$ in  $ \rn{n} \sem   B (0, 1 ), $  and  $  \Theta ( \bar B (0, 1 ) ) = 1. $     
Next we prove   
\begin{lemma}
 \label{lem1.3}
For $ V$ and $\Theta $ as above there exist  $ R_0 \geq 4$,  $b \in  C^{1, \si} ( B (0, 2 )   \sem \bar B (0, 1/2 ) )$,  
 $\ti \al  \in (0, 1)$,  $c_+  \geq 1$, and $ 0 <  \ga  \leq c_+$   where  $  \ti \al, \si,  c_+, $   depend only on the data,  satisfying         
\begin{align} 
\label{1.13}
 |  V ( x  )   -   \ga  \log |x|   -   b ( x/|x| ) |  \leq c_+  (R_0/|x|)^{   \ti  \al  } 
 \end{align} 
whenever $| x |  =  R   \geq  R_0  \geq   4$. 
\end{lemma}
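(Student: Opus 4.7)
The plan is to establish \eqref{1.13} by a blow-down analysis: rescale $V$ to a compact family of $\mathcal{A}$-harmonic functions on a fixed annulus, extract a limit of the separated form $\ga \log|x| + b(x/|x|)$, and quantify the rate of convergence by an oscillation-decay iteration.

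For $R \ge 4$, define $V_R(x) := V(Rx) - V(R e_1)$ on the annulus $A := B(0, 2) \sem \bar B(0, 1/2)$. Scaling invariance of $\mathcal{A}$-harmonicity (see Remark \ref{rmk1.3}; a consequence of the homogeneity in Definition \ref{defn1.1}(ii)) makes each $V_R$ $\mathcal{A}$-harmonic on $A$. Lemma \ref{lem1.2}(b) yields the uniform gradient bound $|\nabla V_R(x)| = R|\nabla V(Rx)| \le c/|x|$, and together with the normalization $V_R(e_1) = 0$ the family is uniformly bounded and Lipschitz on $\bar A$. Lemma \ref{lemma2.2} upgrades this to uniform $C^{1, \ti\be}$ bounds, so by Arzel\`a--Ascoli a subsequence $V_{R_j} \to V_\infty$ in $C^{1, \be'}(\bar A)$, with $V_\infty$ itself $\mathcal{A}$-harmonic on $A$. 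Since the same bounds hold after replacing $R_j$ by $s R_j$ for any $s \in [1/2, 2]$, the limit satisfies $V_\infty(sx) - V_\infty(x) = c(s)$ for some constant $c(s) > 0$, obtained by passing to the limit in the radial identity
\[
V_R(sx) - V_R(x) = \int_1^s \lan \nabla V(Rrx), Rrx \ran \, \frac{dr}{r}
\]
and using the two-sided bound on the integrand from Lemma \ref{lem1.2}(b). The functional equation $c(s_1 s_2) = c(s_1) + c(s_2)$ then forces $c(s) = \ga \log s$ and thus $V_\infty(x) = \ga \log|x| + b(x/|x|)$ on $A$ with $b := V_\infty|_{S^{n-1}}$. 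The $C^{1, \si}$ regularity of $b$ is inherited from the $C^{1, \ti\be}$ regularity of $\nabla V_\infty$ via Lemma \ref{lemma2.2}.

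To quantify the convergence rate, set $D(R) := \sup_{|x|=R}|V(x) - \ga\log|x| - b(x/|x|)|$; the blow-down together with independence of subsequence yields $D(R) \to 0$ as $R \to \infty$. The rescaled defect $W_R(y) := V(Ry) - \ga\log|Ry| - b(y/|y|)$ is bounded by $D(R/2) + D(2R)$ on $\bar A$ and satisfies a PDE obtained by subtracting the known logarithmic and angular components. Applying the interior H\"older estimate Lemma \ref{lemma2.1}(iii) (or the linearized $C^{1, \be}$ estimate of Lemma \ref{lemma2.2}) to $W_R$ on nested annuli produces a geometric oscillation decay $D(2R) \le \theta D(R)$ with $\theta \in (0,1)$ for all $R \ge R_0$, which iterates to $D(R) \le c_+ (R_0/R)^{\ti\al}$ and gives \eqref{1.13}.

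The main obstacle is establishing, in the second step, that the blow-down limit $V_\infty$ is independent of the extracted subsequence, i.e., that $\ga$ and $b$ are well defined. The crucial input is the strict two-sided bound $c^{-1} \le \lan \nabla V, x \ran \le c$ of Lemma \ref{lem1.2}(b): it prevents degeneracy of the radial integrand uniformly in $R$, so the functional equation for $c(s)$ pins down a unique constant $\ga$. Without this radial monotonicity the decoupling $V_\infty(x) = \ga \log|x| + b(x/|x|)$ could fail, leaving the defect $D(R)$ only subsequentially small and aborting the oscillation-decay iteration.
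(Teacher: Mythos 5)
Your blow-down scheme is reasonable in spirit, but it has a genuine gap at the crucial decomposition step, and the gap is precisely what the paper's proof spends most of its effort overcoming.

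The problem is your assertion that the subsequential limit $V_\infty$ on the annulus satisfies $V_\infty(sx)-V_\infty(x)=c(s)$ with $c(s)$ independent of $x$. The radial identity
\[
V_R(sx)-V_R(x)=\int_1^s \lan \nabla V(Rrx),Rrx\ran\,\frac{dr}{r}
\]
does not give this: the integrand $\lan \nabla V(Rrx),Rrx\ran$ depends on $x$ through its angular part, and all that Lemma \ref{lem1.2}$(b)$ provides is a two-sided pinch $c^{-1}\le \lan \nabla V,x\ran \le c$, not $x$-independence. Passing to the limit only gives $c^{-1}\log s\le V_\infty(sx)-V_\infty(x)\le c\log s$, not a constant. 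Nor does ``replacing $R_j$ by $sR_j$'' help: a priori different subsequences of scales may produce different blow-downs $V_\infty$, none of which need separate variables as $\ga\log|x|+b(x/|x|)$. To obtain the separation you need, you must show that $h(x):=\lan x,\nabla V(x)\ran$ has a limit at infinity. That is the real content of the lemma. In the paper this is achieved by observing that $h$ solves the linearized uniformly elliptic equation \eqref{1.15}--\eqref{1.17}, then using the maximum--minimum principle for that linear equation to deduce ultimate monotonicity of $m(\cdot,h)$ and $M(\cdot,h)$, Harnack's inequality to force the two limits to coincide \eqref{1.18}, and a rather delicate coarea-formula argument \eqref{1.21}--\eqref{1.25} together with case analysis to derive either exact constancy of $h$ at infinity \eqref{1.19} or a geometric oscillation decay \eqref{1.26a}--\eqref{1.26}. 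None of this is subsumed by the soft compactness you invoke.

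A secondary issue: in the quantitative step you propose to apply the interior H\"older estimates of Lemma \ref{lemma2.1}$(iii)$ or Lemma \ref{lemma2.2} to the defect $W_R=V-\ga\log|\cdot|-b(\cdot/|\cdot|)$. But $W_R$ is not $\mathcal{A}$-harmonic; it is a difference of two $\mathcal{A}$-harmonic functions, so those estimates do not apply directly. One needs the linearization argument in the style of \eqref{1.43}--\eqref{1.45} to realize $W_R$ as a solution of a uniformly elliptic linear equation before Harnack and oscillation decay can be used. In the paper this is handled at the level of $h$, where the geometric decay of $M(s,h)-m(s,h)$ in \eqref{1.26} is integrated in $r$ to give the quantitative estimate \eqref{1.27}, and Ascoli is applied to the rescaled angular traces $\psi_m$ only at the very end to extract $b$.
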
     

\begin{proof}  
  We   first note  that  $  h (x) :=  \lan x,  \nabla V (x) \ran$ is  a  weak  solution to  
\begin{align} 
\label{1.15}   
L h  =  \sum_{i, j =1}^n  \frac{\ar}{\ar x_i} ( \mathfrak{b}_{ij } (x)  h_{x_j} )   = 0 
\end{align}   
whenever    $x \in \rn{n} \sem \bar B (0, 1 ) $ where  
\begin{align}
\label{1.16}   
\mathfrak{b}_{ij} ( x )  =  \frac{\ar \mathcal{A}_i (\nabla V (x)) }{ \ar \eta_j }
\end{align}
 for almost  every $x \in  \rn{n}  \sem  \bar B (0, 1)$.  From  \eqref{1.16} and 
the structural assumptions on $ \mathcal{A} $ in  Definition  \ref{defn1.1}  it follows that   
 \begin{align} 
 \label{1.17}  
  \sum_{i,j = 1}^n \mathfrak{b}_{ij} (x)  \xi_i \xi_j       \approx   | \nabla V (x) |^{n-2} |\xi |^2  \quad \mbox{whenever}\, \,  \xi \in \rn{n} \sem \{0\}
\end{align} 
where  ratio constants depend only on the data.  From \eqref{1.15}-\eqref{1.17}  
and  \eqref{1.3} $ (b) $  we see for  $ r \geq 8  $    that $ h  $  is  a  bounded  
positive weak solution to  a uniformly elliptic PDE in     $ B ( 0, 2 r)  \sem  B ( 0,  r/2)  $      
            with bounded measurable coefficients. 
             
 We observe  that  $ m ( r, h ) $ as a function of $r$ is either non-decreasing 
  on  $ (1, \infty ) $  or  ultimately non-increasing  in the sense that there 
  exists $ t_1  >  1    $   with     $ m(\cdot, h) $ non-increasing on   $ [ t_1,  \infty).$   
  Also either  $ M (r, h ) $ is non-increasing on  $(1, \infty)$       
  or ultimately non-decreasing    on  $  (1, \infty) $ in the   
  sense that there exists $ t_1 > 1   $   with   $ M(\cdot, h) $ non-decreasing
   on   $ [ t_1,  \infty).$   Both statements follow from the 
   maximum-minimum  principles for $ \mathcal{A}$-harmonic  functions.   
  Thus   there exist constants $\be$ and $\ga$ such that
  \[  
  0 < \be  := \lim_{r \to \infty}  m ( r, h )  \leq \lim_{r \to \infty}  M( r, h ) =:\ga< \infty
  \] 
  thanks to  Lemma \ref{lem1.2}. 
 Then $ h  -  \min ( m ( r/2, h ), m (2r, h ) )  $ is non-negative
  in  $  B  ( 0, 2r  ) \sem B ( 0, r/2)  $  so by  Harnack's inequality
   for uniformly elliptic PDE in \eqref{1.15} we have, 
\begin{align} 
\label{1.17a}  
M ( r, h )  -  \min ( m ( r/2, h ), m (2r, h ) )   \leq  c  [ m ( r, h )  -  \min ( m ( r/2, h ), m (2r, h ) ) ] ,
 \end{align}   
 $ c $ depending only on the data.  Letting   $  r   \to  \infty $ in \eqref{1.17a} we obtain that 
 \begin{align}  
 \label{1.18}     
 \ga  =  \be <  \infty. 
 \end{align}
To get the estimate in  \eqref{1.13}  first assume that  $ M ( \cdot, h ), $   is  
non-decreasing on  $ [ t_1, \infty) $ for  some $ t_1 \geq 1. $    Then from \eqref{1.18} 
it follows that  $ m ( \cdot, h ) $  is  also non-decreasing on 
 $ [t_1,  \infty )$.      In this case   we  show the existence of $ t_2  \geq t_1 $ with   
 \begin{align}  
 \label{1.19}  
 h  \equiv  \ga  \quad \mbox{on}\, \, \mathbb{R}^n \sem B (0, t_2 ). 
 \end{align}
   From \eqref{1.18} and   Harnack's  inequality for $  L  $  in   
   \eqref{1.15} applied  to   
 $ h - m (  t,  h  )$  it follows that either  $ m ( t,  h )  = \ga  $  for some $ t  > t_1 $  in which case we put 
 $  t_2 =  t $  and observe  that   \eqref{1.19}  is valid  or   there exists $ t_2  > t_1 $  with  
  \begin{align}  
 \label{1.20} 
    \ga > m ( t, h ) > M ( t_1, h )  \quad \mbox{for}\, \,  t \geq t_2.   
\end{align}   Assuming \eqref{1.20} we claim   that  for  $ \mathcal{H}^1$-almost every 
$ \tau  \in [m(t_2, h), \ga], $   
\begin{align} 
\label{1.21}  
  I (  \tau )   :=  \int_{ \{ x \in  \mathbb{R}^n \sem B ( 0, t_1) :  \, \,   h(x) =  \tau, \, \,  \nabla h (x) \neq 0  \}}  \,  
\sum_{i, j = 1 }^n  |\nabla h |^{-1} \mathfrak{b}_{ij} ( x )     h_{ x_i}  h_{x_j}   d \mathcal{H}^{n-1} = \breve{a} 
\end{align} 
where  $ \breve{a} $ is  a  constant independent of  $ \tau \in  [m(t_2, h), \ga ]. $      
To prove this claim, let $\tau_1, \tau_2, \epsilon$ be so that      $m(t_2, h)< \tau_1 <    \tau_2  <   \ga $  and 
\[
0 <  \ep  <  \frac{1}{8} \min \{ \tau_1 -    m (t_2, h ), \ga - \tau_2, \tau_2 - \tau_1 \}.
\]     
Let   $ l     $  be infinitely differentiable on  
 $ \re $   with  $ l  \equiv 1 $ on  $ [ \tau_1, \tau_2],  $   $ l  \equiv 0 $  on 
 $ \re  \sem  [ \tau_1 - \ep,  \tau_2 +  \ep], $  and $ | \nabla l |   \leq c/\ep $ where $ c $ is an absolute constant.    Using   $ l \circ h $ as  a  test function in 
the  weak formulation of     \eqref{1.15} and using \eqref{1.16} and \eqref{1.17}  we see that     
\begin{align} 
\label{1.21a}    
0 =  \int_{\rn{n}} \sum_{i, j = 1}^n  \mathfrak{b}_{ij} ( x )     h_{ x_i} h_{ x_j} \,  (l' \circ h) dy =   \int_{\tau_1 -  \ep}^{\tau_1}
         I(s)  l'(s) ds   +     \int_{\tau_2 }^{\tau_2 + \ep}  I (s)   l' (s)  ds. 
\end{align}
 where we have used the version of the   coarea theorem in \cite{MSZ}  which is 
 permissible since   $  h  \in  W^{1,2}_{{\rm loc}} ( \rn{n} \sem \bar B(0, 1) )$.   
 From  Lemma \ref{lem1.2} applied to $ V, $  \eqref{1.16}, \eqref{1.17}, and once 
 again the coarea theorem,   we  deduce that   $ I $ is integrable on any compact 
 subset of  $ (m ( t_2, h ), \ga ). $  This observation,  \eqref{1.21a}, and the Lebesgue differentiation theorem  imply 
\begin{align}
	\label{1.21b}
	\begin{split}
	I ( \tau_2) - I ( \tau_1) =    \int_{\tau_1-\ep}^{\tau_1}   [I(s) - I(\tau_1)] l' (s) ds +  \int_{\tau_2}^{\tau_2 + \ep}   [I(s) - I(\tau_2)] l' (s) ds  \to 0 \quad \mbox{as}\, \, \ep \to 0
	\end{split}
\end{align} 
for $\mathcal{H}^{1}$-almost every $ \tau_1, \tau_2 \in [m(t_2, h ), \ga]$.  This proves claim \eqref{1.21}.      
	
Assuming $ \breve a   \neq 0$   and  that    $  t_0  > t_2 $   we  let  
\[
 \ti h (x) :=\left\{
 \begin{array}{ll}
   \max ( h (x)   - m( t_0, h  ),  0 ) &\mbox{whenever}\, \, x  \in  \rn{n} \sem \bar{B}(0, t_1),\\ 
0  & \mbox{whenever} \, \, x\in \bar{B}(0, t_1). 
 \end{array}
\right.
   \]
 Then from   \eqref{1.20}  we observe  that  
 \[
\lim_{x \to y} \ti h (x)  =  0  \quad     \mbox{whenever}\quad   y \in \ar B (0, t_1) .
  \]
Let  $ 0 \leq   \ph   \in  C_0^\infty ( B ( 0, 4t_0)) $ with $ \ph \equiv 1 $ on
  $ \bar B ( 0, 2t_0) $ and  $ |\nabla \ph |  \leq  c t_0 ^{-1}$ where  $c = c(n)$.  
  Then from  the above observation,   \eqref{1.20} and  Lemma  \ref{lem1.2}  
  we see that  $  \ti h \, \ph^2 $  can be  used  as  a  test  function in  the weak
   formulation  of   \eqref{1.15}. Doing this and using  H{\"o}lder's  inequality
     we  get the  Caccioppoli inequality,   
\begin{align}  
\label{1.22}   
 I_1 := \int   \sum_{i, j = 1}^n  \mathfrak{b}_{ij}  (x)  \ti h_{x_i} \ti h_{x_j}  \ph^2  dx   \,   \leq \,  c  \int  | \nabla V |^{n-2}  \ti h^2 \,  | \nabla \ph |^2  dx  =:  I_2.
 \end{align}      
 Using   \eqref{1.22},  the coarea theorem from \cite{MSZ}, and  \eqref{1.21}  once again we arrive at  
 \begin{align}
 \label{1.23} 
\begin{split} 
  I_1  &\geq  \int_{\{  x  :\, \,  0  < \ti h ( x ) < 
  m ( 2t_0, h ) - m (t_0, h )  \}}  
   \sum_{i, j = 1}^n  \mathfrak{b}_{ij}  (x)  \ti h_{x_i} \ti h_{x_j}  \ph^2  dx  \\
&= \int_{ 0}^{m ( 2 t_0, h )  -  m ( t_0, h ) }   \left( \int_{\{ \ti  h = t \}}  |\nabla \ti h |^{-1} \sum_{i, j = 1}^n  \mathfrak{b}_{ij}  (x)  \ti h_{x_i} \ti h_{x_j}   d\mathcal{H}^{n-1} \right) dt \\
  &=     \breve{a} \,  [  m( 2t_0, h ) -  m ( t_0,  h) ] 
\end{split}
\end{align}   
Also  from Lemma \ref{lem1.2} (b)   for $ V $ and the definition of $\ti h$ we find  
  \begin{align}
    \label{1.24}  
     I_2   \leq   c  [ M (4 t_0, h)  -  m(t_0, h ) ]^2  \leq c^2  [ m (2 t_0, h)  -  m(t_0, h ) ]^2 \neq 0
    \end{align}
 where to get the  last two inequalities we have used  \eqref{1.20} and Harnack's  
 inequality for  $ h - m( t_0, h )   $ in $ \mathbb{R}^n \sem \bar B (0, t_0). $     
 Using \eqref{1.23} and \eqref{1.24} we  conclude for some $ \ti c \geq 1 $  that  
   \begin{align}  
\label{1.25}   
\breve{a}  \leq  \ti c  [ m (2 t_0, h)  -  m(t_0, h ) ]. 
\end{align} 
 We  now  repeat this  argument with  $ t_0 $  replaced  by     $ 2^k  t_0$ for $ k = 1, 2, \dots $  
 in    \eqref{1.25}. Summing the  resulting  inequalities  we eventually
  get a  contradiction to   \eqref{1.18} unless  $\breve a = 0. $   If $ \breve{a} = 0 $  
we observe  that    
\[   
B ( 0, 4 t_0 )   \sem  \bar B ( 0, t_0 )  \subset  \{ x :   m ( t_0,  h ) <   h ( x ) < M ( 4 t_0,  h ) \}.  
\]   
 Using this observation, the coarea theorem from \cite{MSZ}, \eqref{1.21}, 
 and arguing as in \eqref{1.23} we get  $ I_1 = 0. $  It then follows 
 from  \eqref{1.17}  that $ \nabla  \ti h  \equiv 0  $ in  $ B ( 0, 2t_0) \sem \bar B ( 0, t_0) $ 
 and thereupon  from Harnack's inequality that  $ h \equiv \ga $  
 in   $ \mathbb{R}^n \sem  B (0, t_0 ),  $ a contradiction to   \eqref{1.20}.     
  Since $ t_0 > t_2 $ is arbitrary we conclude  from all these 
  contradictions  that \eqref{1.19} holds  when  $ M   ( \cdot, h )$ 
  and $m  (\cdot, h ) $   are both non-decreasing on  $ [t_1, \infty )$.   
    
If   $ M   ( \cdot, h )$ and $m  (\cdot, h ) $  are both non-increasing  on  
$ [t_1, \infty ) $  then either  $  M ( t,  h )  =  \ga $  for some $ t  \geq t_1 $  in which  case we put 
$ t_2  =  t $   and observe that \eqref{1.19} holds  or there exists 
 $ t_2  > t_1 $  so that    
\begin{align}
\label{1.25a}  
\ga < M ( t, h )   <  m ( t_1, h )  \quad \mbox{for}\, \,  t  \geq t_2. 
\end{align}  
In  this case   let    $ t_0 > t_2 $   and define 
\[
\hat  h(x)   =
\left\{
\begin{array}{ll}
\max(M( t_0, h ) - h(x), 0) & \mbox{whenever} \, \, x\in \mathbb{R}^n \sem \bar{B} ( 0, t_1), \\
0 & \mbox{whenever} \, \, x\in B ( 0, t_1 ).
\end{array}
\right.
\]
Repeating the argument from \eqref{1.22}-\eqref{1.25} with  $ \ti h $ replaced by 
$ \hat h $  and using  arbitrariness of  $ t_0 $   we  eventually  arrive at  a  contradiction
  so  \eqref{1.19} is valid. The only other possible case in view of  \eqref{1.18} is  that  
   there exist $ t_1 > 1,  $  such that $  M ( \cdot, h )  $ 
is  non-increasing  on $ [t_1, \infty) $   and $ m ( \cdot, h ) $ is non-decreasing on  
$ [t_1,   \infty ).  $   In this case we see that either \eqref{1.19}  is valid for some 
$ t_2  \geq  t_1 $  or     
\begin{align} 
\label{1.25b}     
m ( t_0, h )  <  \ga  <  M ( t_0, h )  \quad \mbox{for all}\, \,  t_0  \in  [ t_1, \infty).  
\end{align}
 In this case  we can  apply Harnack's  inequality to   
 $ M ( t_0, h )  -  h $ and  $  h  -  m (t_0, h ) $  in  $ \rn{n} \sem \bar B ( 0, t_0) $  
 whenever $  t_0 >  t_1$ to get
\[
M ( t_0, h )  -  m(2t_0, h) \leq  c_1 [M ( t_0, h )  -  M(2t_0, h)]
\]
and
\[
M(2t_0, h)  -  m (t_0, h )\leq c_1 [m(2t_0,h)  -  m (t_0, h )].
\]   
Adding  these two inequalities  and  doing  some arithmetic  we obtain 
\begin{align} 
\label{1.26a}  
M ( 2 t_0, h )  -  m ( 2t_0, h )  \leq  \kappa ( M (  t_0, h )  -  m ( t_0, h ) )
\end{align} 
 where $\kappa=\frac{c_1-1}{c_1 + 1 }\in (0,1)$.  
Iterating this inequality with  $ t_0 $  replaced by  $  2^k t_0,  k  = 1, 2, \dots,  $  
 it follows  that if $ s \geq 2t_0, $ then  
\begin{align}
 \label{1.26}  
 \begin{split}
 M ( s, h )  -   m(s, h ) \leq  c (t_0/s)^{\ti\al} ( M (  t_0, h )  -  m ( t_0, h ) ) \leq c^2  (t_1/s)^{\ti \al}
\end{split}
\end{align}
 for some $ c  \geq 1, $  depending only on the data.  
 
 Finally, we are in a position to 
 prove \eqref{1.13}  in Lemma  \ref{lem1.3}.   
 First if   \eqref{1.19}  is valid, then  using  $  r^{-1}  h ( r \om  )  =    \frac{\ar V ( r  \om) }{\ar r} $ 
 when $ |x| =r$ and $\om = x/ |x| \in \mathbb{S}^{n-1} $  
 we find upon integrating  \eqref{1.19} from $ r = R  \geq t_2 $  to  $ \tau  > 2R $  
  that  
\begin{align}
\label{1.26b}  
V (\tau \om  )  - V ( R \om) = \ga  \log (\tau / R).  
\end{align}
   Otherwise,   (i.e. \eqref{1.25b} holds and therefore \eqref{1.26} holds)    
   we find  from \eqref{1.26}    that  if $  \om \in \mathbb{S}^{n-1}$   then  
   \[    
   | h ( s\,  \om )  -  \ga  |  \leq  c_{++}  (t_1/s)^{  \ti \al } \quad \mbox{for}\,\, s \geq    2R  \geq 4 t_1 
   \] 
 where  $ c_{++} $  depends only on the data.   
 Integrating this inequality with respect to $ s $  
 from $ R $ to $ \tau > R $    we get  as in \eqref{1.26b}  that 
\begin{align}
 \label{1.27}  
  | V ( \tau \om )  - V ( R  \om )  -  \ga \log (\tau /R) |  \leq  c_+ ( t_1/R)^{\ti \al}  
\end{align}
  whenever $ \om \in \mathbb{S}^{n-1}$ and $ \tau  \geq 2R \geq 4 t_1. $ If  either 
  \eqref{1.26b} or \eqref{1.27} holds we let   
    \[   
    \psi_m ( y ) =  V ( m y/|y| ) - \ga \log m \quad \mbox{whenever} \, \, y  \in  
    \mathbb{R}^n \sem \{0\}  \quad \mbox{for}\, \, m =  2, 3, \dots. 
    \]  
Note that $\psi_m$ is a homogeneous of degree zero function for $m=2,3,\ldots$.     
From  Lemma \ref{lem1.2} $(b) $ for  $ V $ in $ \rn{n} \sem \{0\} $ 
and Lemmas \ref{lemma2.1}-\ref{lemma2.3}, we  see that   
$  \nabla \psi_m  \in  C^{0, \si } ( B (0, 4) \sem B (0, 1/4) ) $  for some 
    $ \si \in (0, 1) $  with  norm $ \leq c, $  while from  \eqref{1.26b} or \eqref{1.27}   
    we deduce  for  $ m  \geq 2R $   that  
    $ \{\psi_m\}$   is uniformly bounded by a  constant  depending on $ R. $   
    Using  Ascoli's theorem we conclude that 
    a sub-sequence of  $ \{\psi_m\} $  say  $  \{\psi_{m_l}\} $  converges uniformly
     in  the $ C^{1,\si} (B (0,2) \sem B(0,1/2) ) $ norm to   $ b   $, 
   a homogeneous  degree 0 function on $ \mathbb{R}^n \sem \{0\}. $    
   Letting $ \tau = m_l$,  $\om = x/|x|$  in  \eqref{1.26b} or  \eqref{1.27}, and $  l \to \infty $ we conclude 
  \eqref{1.13}    for a  fixed $ x $  with $ |x| = R \geq  R_0 $ where $ R_0 = 2 t_2 $ if 
  \eqref{1.26b} holds and  $ R_0 = 2 t_1 $ if \eqref{1.27}  holds.    
  Now letting  $ R $ vary   we get Lemma \ref{lem1.3}. 
  \end{proof}  
  Finally, we  prove    
  \begin{lemma}
 \label{lem1.4}  
Let $ \mathcal{A}\in M_n(\alpha)$. Then there exists a  unique (up to an additive  constant) fundamental solution $ F$ 
to the equation $\nabla\cdot  \mathcal{A}(\nabla F )=0$
in $\mathbb R^{n}$, with pole at $0$,  in the sense of \eqref{1.2}.  Furthermore, there exist
$b \in C^{1,\si}(B (0, 2 ) \sem B (0, 1/2))$, $\ga  > 0, $ and  $ \ti c   \geq 1 $ such that
\begin{align}
\label{1.27a}
\begin{split}
& (+) \hs{.24in}  F(z)= \ga \log |z| + b(z/|z|)  \quad \mbox{whenever} \, \, z\in\mathbb R^{n}\setminus\{0\},  \\
&(++) \hs{.2in} \ti c^{-1}  \leq  \lan z,  \nabla F (z) \ran  \leq   |z | \nabla F (z) |  \leq  \ti c    \quad \mbox{whenever}\, \, z  \in \mathbb{R}^n \sem \{0\}.  
\end{split}
\end{align}
\end{lemma}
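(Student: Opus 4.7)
My plan is to obtain the fundamental solution $F$ as a scaling limit of the function $V$ from the discussion preceding Lemma~\ref{lem1.3}, to read off the representation $(+)$ and the gradient bound $(++)$ from Lemma~\ref{lem1.3} and Lemma~\ref{lem1.2}(b), and to settle uniqueness by first pinning down the coefficient $\gamma$ via a flux argument and then invoking a Liouville-type result for the linearized equation.

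For existence I would set $\tilde V_\rho(x) := V(\rho x) - \gamma \log \rho$ for $\rho > 0$, noting by Remark~\ref{rmk1.3} and the homogeneity of $\mathcal{A}$ that $\tilde V_\rho$ is $\mathcal{A}$-harmonic on $\mathbb{R}^n \setminus \bar B(0,1/\rho)$. Applying \eqref{1.13} with $x$ replaced by $\rho x$ gives
\[
|\tilde V_\rho(x) - \gamma \log|x| - b(x/|x|)| \leq c_+\,(R_0/(\rho|x|))^{\tilde\alpha}
\]
for $\rho|x| \geq R_0$, hence pointwise convergence $\tilde V_\rho \to F := \gamma\log|\cdot| + b(\cdot/|\cdot|)$ on $\mathbb{R}^n \setminus \{0\}$. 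The uniform $C^{1,\sigma}$ bounds from Lemmas~\ref{lemma2.1}--\ref{lemma2.2} upgrade this to $C^{1,\sigma}_{\text{loc}}$ convergence away from the origin, so $F$ is $\mathcal{A}$-harmonic there and $(+)$ holds. The gradient estimate $(++)$ follows from $\nabla \tilde V_\rho(x) = \rho \nabla V(\rho x)$ and Lemma~\ref{lem1.2}(b) applied to $V$ at $\rho x$, passed to the limit; in particular $|\nabla F(x)| \leq \tilde c/|x|$, so $F \in W^{1,l}_{\text{loc}}(\mathbb{R}^n)$ for $1 < l < n$. For the Dirac mass identity, a change of variables $y = \rho x$ combined with the $(n-1)$-homogeneity of $\mathcal{A}$ converts the identity for $V$ into
\[
\int \langle \mathcal{A}(\nabla \tilde V_\rho), \nabla \theta \rangle\,dx = -\int \theta(y/\rho)\,d\Theta(y);
\]
the right side tends to $-\theta(0)$ since $\Theta$ is supported in $\bar B(0,1)$ with total mass $1$, and dominated convergence using the uniform bound $|\mathcal{A}(\nabla \tilde V_\rho)| \leq c|x|^{-(n-1)} \in L^1_{\text{loc}}$ transports the limit to the left side, yielding \eqref{1.2}(ii) for $F$.

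For uniqueness, let $F'$ be any other fundamental solution in the sense of Definition~\ref{def4.1}. Since $F'$ is $\mathcal{A}$-harmonic on $\mathbb{R}^n\setminus\{0\}$ with $|F'(x)| = O(\log|x|)$ at infinity, the entire argument of Lemmas~\ref{lem1.2}--\ref{lem1.3}, applied with $F'$ in place of $V$ on large annuli, yields an asymptotic expansion $F'(x) = \gamma' \log|x| + b'(x/|x|) + O(|x|^{-\tilde\alpha'})$. Testing \eqref{1.2}(ii) for $F'$ against a cutoff $\theta$ identically $1$ on $B(0,\tau)$ and using the divergence theorem together with the expansion fixes the flux of $\mathcal{A}(\nabla F')$ through large spheres; matching this to $-\theta(0)=-1$ forces $\gamma'=\gamma$. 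Consequently $G := F' - F$ is bounded on $\mathbb{R}^n \setminus \{0\}$. Subtracting the two instances of \eqref{1.2}(ii) gives $\nabla\cdot[\mathcal{A}(\nabla F') - \mathcal{A}(\nabla F)] = 0$ distributionally on $\mathbb{R}^n$, and since $(++)$ applies to both functions away from $0$, this linearizes to $\nabla\cdot(\mathbf{B}(x)\nabla G) = 0$ with $\mathbf{B}(x) = \int_0^1 D\mathcal{A}(\nabla F + t\nabla G)\,dt$ symmetric and locally uniformly elliptic.

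The chief obstacle is the Liouville-type conclusion for $G$: the scaling $|\nabla F|, |\nabla F'| \approx |x|^{-1}$ forces $\mathbf{B}(x) \approx |x|^{2-n}I$, which degenerates at both $0$ and $\infty$ and so blocks a direct appeal to Moser's theorem on $\mathbb{R}^n$. I would resolve this by passing to cylindrical coordinates $(s,\omega) = (\log|x|, x/|x|) \in \mathbb{R} \times \mathbb{S}^{n-1}$, in which the degenerate factor is absorbed by the Jacobian and the equation becomes a uniformly elliptic linear PDE that is asymptotically translation-invariant in $s$; Serrin's removable singularity theorem allows $G$ to be extended across $s = -\infty$, and the Moser--Harnack Liouville principle then gives $G \equiv \text{const}$, completing uniqueness.
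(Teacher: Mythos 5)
Your existence argument is correct and is essentially the rescaling-limit construction used in the paper (the paper writes $F_m(y) = V(my) - \gamma\log m$ where you write $\tilde V_\rho$; the Dirac mass is transported by exactly the change-of-variables and dominated-convergence argument you describe). So that half is fine.

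The uniqueness argument, however, has a genuine gap. You assert that "the entire argument of Lemmas \ref{lem1.2}--\ref{lem1.3}, applied with $F'$ in place of $V$ on large annuli, yields an asymptotic expansion." But Lemma \ref{lem1.3} is not a black box that accepts any $\mathcal{A}$-harmonic function with logarithmic growth: its proof crucially requires the two-sided gradient bound $c^{-1} \leq \langle x, \nabla V(x)\rangle \leq |x||\nabla V(x)| \leq c$ (i.e.\ Lemma \ref{lem1.2}(b) for $V$), since that is what makes $h(x) = \langle x, \nabla V(x)\rangle$ a \emph{bounded, positive} solution of the linearized equation \eqref{1.15}, which in turn is what Harnack's inequality is applied to. For $V$ this bound is established by barrier comparison ($\varphi_1$, $\varphi_2$ in \eqref{1.8}--\eqref{1.10}) exploiting that $V$ vanishes on $\bar B(0,1)$ and is nonnegative. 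For a general fundamental solution $F'$ in the sense of Definition~\ref{def4.1} you have none of this a priori; $F'$ is not nonnegative, does not vanish on any set, and may a priori oscillate. Deriving the analogue of Lemma \ref{lem1.2}(b) for $F'$ is precisely the substantial part of the paper's uniqueness proof — see \eqref{1.30}--\eqref{1.42}, where the paper must first show $\lim_{t\to 0}M(t,F_1) = \lim_{t\to 0}m(t,F_1) = -\infty$, establish monotonicity and boundedness of the level sets \eqref{1.36c}, construct the subsolution $k = \max(F_1 - m(r,F_1),0)$ with unit-mass measure \eqref{1.37}--\eqref{1.39}, and only then obtain $F_1(x) \approx \log|x|$ and the gradient bound \eqref{1.41}. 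That chain cannot be skipped.

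A second, related gap: you write "Consequently $G := F' - F$ is bounded on $\mathbb{R}^n\setminus\{0\}$." The asymptotic expansion you invoke lives at $\infty$; $\gamma' = \gamma$ controls $G$ only in a neighborhood of $\infty$. Nothing in your argument controls $G$ near the origin, and yet boundedness (or at least $G = o(\log(1/|x|))$) as $x\to 0$ is exactly what any removable-singularity/Liouville argument on the cylinder would need as input. The paper handles this by a separate detailed analysis of $F_1 - F_2$ near $0$, establishing \eqref{1.47a}--\eqref{1.47b} via the maximum principle and Harnack's inequality before concluding $F_1 - F_2 \equiv \mathrm{const}$. Your cylindrical-coordinates Liouville plan is a reasonable alternative strategy in principle, but as stated it presupposes the two estimates whose derivation constitutes the bulk of the paper's proof.
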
  
\begin{proof} 
Let  $ x  =  m y,  |y| \geq 1/m, $ for $ m  \geq 4  R_0^2$ where $R_0$ is as above, and put  
\[  
F_m (y)  =   V  (x)  -  \ga \log m \quad \mbox{for}\,\,   y \in\rn{n} \sem \bar B ( 0, 1/m).  
\]  
Then from Lemma  \ref{lem1.3} with  $ R = |x| $  we  get for $ |y| \geq m^{-1/2} $ that    
\begin{align}
\label{1.28}  
\begin{split}
|F_m ( y ) -  \ga \log | y |  -  b(y/|y| ) |   &= |V(x)-\ga \log |y| - \ga \log m - b(y/|y|)| \\
&=|V(my)-\gamma\log m|y|- b(my/(m|y|)| \\
&\leq c (R_0/m|y|)^{\al} \leq c  ( R_0^2 /m)^{\al/2} .
\end{split} 
\end{align}  Also  from the chain  rule and  Lemma  \ref{lem1.3} for $ V $ we have if  $ |y| > 1/m $ that 
\begin{align} 
\label{1.28a} 
c^{-1}  \, \leq  \,  \lan \nabla F_m ( y ), y  \ran  \leq  |y| \, | \nabla F_m (y) |   \leq c  
 \end{align}  
 where $ c \geq  1 $ depends only on the data.  From 
  Lemma  \ref{lem1.2},  \eqref{1.28},  \eqref{1.28a},  as well as Lemmas \ref{lemma2.1}-\ref{lemma2.3}, 
  we observe  that the $ C^{1,\si} $ norms of  $ \{F_m\}_{\{m\geq 16\}} $    
    are uniformly bounded on a given compact subset of  
   $ \rn{n} \sem  \{0\} $ for $m$ large enough.  From this observation and  \eqref{1.28} 
   we conclude that 
   \[  
   \lim_{m\to \infty} F_m ( y )  =    \ga \log |y|  +  b(y/|y|) = F(y)  \quad \mbox{whenever} \, \, y \in \rn{n} \sem \{0\}.
   \] 
Thus, \eqref{1.27a}  $ (+) $  is proved.  Also \eqref{1.27a} $(++)$ 
   follows from  \eqref{1.28a}  and uniform convergence of    $ \nabla  F_m  \to \nabla F $  on 
compact subsets of  $ \rn{n} \sem \{0\}. $ Moreover, $ F $ is  $ \mathcal{A}$-harmonic in $ \rn{n} \sem  \{0\}.$        
   Extend $ F_m + \ga  \log m  $ to  a  continuous function on $ \rn{n} $ 
   by putting this function $ \equiv 0 $ on $ \bar B (0, 1/m). $   Let $ \mu_m $  
   denote the measure as in Lemma \ref{lemma2.4} $(i)$ corresponding to  
   $ F_m +  \ga \log m$, with support contained in 
    $ \ar B (0, 1/m). $  Once again using invariance of  $ \mathcal{A}$-harmonic 
    functions under dilation and  translation  we  deduce that
    \[  
    \mu_m ( \bar B (0, 1/m) ) = \Theta (\bar B(0,1) ) = 1. 
    \]  
       Using uniform convergence of    $ \nabla  F_m  \to \nabla F $  on 
compact subsets of  $ \rn{n} \sem \{0\} $  and  \eqref{1.28a}  
we conclude for $ \he \in C_0^\infty( \rn{n} ) $    that  
\begin{align} 
\label{1.29} 
\int  \lan \mathcal{A} ( \nabla F ),  \nabla  \he \ran  dx =      
 \lim_{m \to \infty}  \int  \lan \mathcal{A} ( \nabla F_m ),  \nabla  \he \ran  dx 
 =  -  \lim_{m\to \infty}  \int \he d  \mu_m=  - \he (0). 
\end{align}  
    Thus \eqref{1.2} $ (i)$ and $(ii)$ are  valid   so $ F $ is  a  fundamental solution in the sense of Definition \ref{def4.1}.    
    
To prove uniqueness of  $ F $ up to a constant,  suppose $ F_1 $ also 
satisfies \eqref{1.2} $(i)$ and  $(ii)$. We assume, as we may,  that $  F_1 (  e_1 ) = 1 $ where  $  e_1 = 
 (1,0, \dots, 0).$   Then
 \[
 A =  \lim_{t \to 0 }  m ( t,  F_1) \quad \mbox{and} \quad B  =   \lim_{t \to \infty }  m ( t,  F_1)
  \]
  in the extended sense thanks to the minimum principle for 
  $ \mathcal{A}$-harmonic functions. We note that  if  $A$ is finite  
  then   $ F_1 \to A $ as $ x \to 0 $  while if  $ B $ is finite then  $ F_1 \to B $ as 
  $ x \to \infty.  $   This note is proved using Harnack's inequality as in   \eqref{1.17a}.
   We  first show that  
\begin{align}
\label{1.30}  
A = - \infty.   
\end{align}  
Indeed, if   $ A \neq - \infty, $ then  from the above note and the 
maximum-minimum principles for   $  \mathcal{A}$-harmonic  functions  we see that  
 $  A   = \lim_{t\to 0 }  M ( t, F_1)$  and $  B  \neq A. $     
  In  this case if  $   A  >  B,   $  let   $s, t $ satisfy 
    $   A  > s > t > B $  and set  
    \[   
    \hat \he ( x )  =  \max(  \min ( F_1, s ),  t ) - t. 
    \]   
    Approximating $ \hat \he $ by $ C_0^\infty ( \rn{n} )  $  functions  
    we see that  $ \hat  \he $  can be used as  a  test function in   \eqref{1.2} $ (ii). $  
    Doing this  it follows  that    
\begin{align} 
\label{1.31}    
 -\hat \he ( 0)= -s+t =  \int_{ \{ x :\,  t <  F_1 (x)  <  s\} }   \lan \mathcal{A}(\nabla F_1), \nabla F_1 \ran  dx .
\end{align}    
which is  a contradiction since the  right-hand integral is always non-negative 
as we see from     the structural assumptions on  $ \mathcal{A}.$  If $  A  < B  $ we  suppose  
$ A < s < t  <  B $  and  use    
\[  
\hat \he ( x )  =  \min(  \max ( F_1, s ),  t ) - t    
\]   
as a test function  in   \eqref{1.2} $ (ii) $  in order to obtain   \eqref{1.31}.  
          Since $ A \neq - \infty $ we can let  $ s \to  A$ and   
          take  $ t  \geq   M ( 1, F_1 ) $   in order to   conclude from  \eqref{1.31} that  $ F_1 \in W^{1,n} 
    (B (0, 1 ) \sem \{0\}).  $      
    Using this  conclusion and the fact that a point has  zero    $n$-capacity
      one can now show  that   $ F_1 $  extends to a $ \mathcal{A}$-harmonic
       function in  $  B (0, 1) $ which  easily leads to a contradiction. 
       To see this contradiction, given $ \he \in C_0^{\infty } ( B ( 0, 1 ) ) $  with  $ \he (0 ) \neq 0$ 
       and $\epsilon>0$, we let  $ \he_\ep =  \eta_\ep  \, \he  $  where
\begin{align}  
\label{1.32}   
 \eta_\ep ( x )   =   \frac{ \max (\log ( |x|/\ep ), 0 ) }{ \log(1/\ep) } \quad \mbox{whenever} \, \, x  \in B ( 0, 1 ). 
 \end{align}  
 It is easily shown  that  
\begin{align} 
\label{1.33}    
0 \leq \eta_\ep  \leq 1,  \quad  \int_{B(0, 1 )} | \nabla \eta_\ep|^n  dx \to 0, 
\quad  \mbox{and}\, \,\,     \eta_\ep \to 1 \, \,   \mbox{a.e in $B(0,1)$}\quad  \mbox{as $ \ep \to 0$}.
\end{align}
        Using  $  \he_\ep $ as  a test function in  \eqref{1.2} $(ii)$  
        and letting  $ \ep \to 0 $  it  follows from \eqref{eqn1.1},   $ F_1 \in W^{1,n} 
    (B (0, 1 ) \sem \{0\}),  $  \eqref{1.33}, H{\"o}lder's inequality, 
     and  \eqref{1.2} $(ii)$ for $ \he $   that    
  \begin{align} 
  \label{1.34}
  \begin{split}
0  &=  \int_{B(0,1)}  \lan  \mathcal{A} ( \nabla F_1 ),  \nabla \he_\ep \ran  dx \\
&= \int_{B(0,1)}  \he \lan  \mathcal{A} ( \nabla F_1 ),  \nabla \eta_\ep \ran  dx +  \int_{B(0,1)} \eta_\ep \lan  \mathcal{A} ( \nabla F_1 ),  \nabla \he \ran  dx\\
&\leq \int_{B(0,1)}  \he  |\nabla F_1|^{n-1}  |\nabla \eta_\ep|  dx + \int_{B(0,1)}  \eta_\ep \lan  \mathcal{A} ( \nabla F_1 ),  \nabla \he \ran  dx \\ 
& \leq c \left(\int_{B(0,1)}   |\nabla F_1|^{n} dx\right)^{(n-1)/n} \left(\int_{B(0,1)} |\nabla \eta_\ep|^{n}  dx\right)^{1/n} + \int_{B(0,1)}  \eta_\ep \lan  \mathcal{A} ( \nabla F_1 ),  \nabla \he \ran  dx \\
&\to 0+\int_{B(0,1)} \lan  \mathcal{A} ( \nabla F_1 ),  \nabla \he \ran  dx  = - \he (0)  \neq 0  \quad \mbox{as $\ep\to 0$.}
\end{split}
\end{align} 
From this contradiction in \eqref{1.34} we conclude that \eqref{1.30} holds.  
Next we use  \eqref{1.30}  and  \eqref{1.2} $(i)$  to show that  
\begin{align}
\label{1.35}  
d =   \lim_{t \to 0}  M  (  t,  F_1 ) = - \infty. 
\end{align}      
The proof     is by contradiction.   If   $ d  \neq  \pm \infty,  $  then 
from Harnack's inequality we  find that      $ d  =    \lim_{t \to 0}  m  (  t,  F_1 ), $ 
 a contradiction to  \eqref{1.30}.
If  $ d = \infty $    then  from the maximum-minimum principles
  for  $  \mathcal{A}$-harmonic functions we deduce the  existence
   of $ t_1 > 0 $ with    $ M ( \cdot, F_1 ) $  non-increasing and  $ m ( \cdot, F_1 ) $
  non-decreasing on $ (0, t_1)$.  So  arguing as  in  \eqref{1.26}  we get  
  for some $ \ti \al \in (0, 1 ) $ and $ 0 < 2t < s  < t_1 $  that   
\begin{align} 
\label{1.36}  
M ( s, F_1 )  -   m(s, F_1 )  \leq  c (t/s)^{\ti \al} ( M ( t,  F_1 )  -  m (t,  F_1 ) ).
\end{align} 
Fix $ s <  t_1 
 $ so that  $ m ( s,  F_1) < 0$ and $M ( s, F_1 ) > 0 $,  and choose $ x $ so that  
 $ |x|  = t$ and $F_1  ( x )  =  \max(M ( t, F_1 ), - m ( t, F_1 )).$  
 Then from  \eqref{eqn2.1}  we have for some  $\bar c  \geq 1, $ 
\begin{align} 
\label{1.36a}     
| F_1 ( x) |^n  \leq  \bar c \,     t^{- n}    \int_{B( x, t/8)}  | F_1 (y ) |^{n} dy. 
\end{align} 
From  \eqref{1.36}, \eqref{1.36a}, and  H{\"o}lder's inequality it follows that if    $ \la = 2/\ti \al, $  then   
\begin{align} 
\label{1.36b}  
[ M ( s, F_1 )  -   m(s, F_1 ) ]^{n\la} \,  (s/t) ^{ 2 n } 
\leq ( \hat c | F_1 ( x ) |)^{n \la}   \leq  \hat c^{2 \la n} t^{-n} \int_{B (x, t/8)}  | F_1 (y)  |^{n \la}  \, dy  
\end{align}
where $ \hat c \geq 1 $  depends only on the data.   Multiplying both sides of   \eqref{1.36b}  by $ t^n $  it  follows that  
\[   
\int_{B ( x, t/8) }  | F_1 ( y )|^{n \la}  dy \to  \infty  \quad \mbox{as}\, \, t\to 0. 
\]
We have reached a  contradiction since by  \eqref{1.2}  $(i)$  and  
Sobolev's inequality we  have   $  |F_1|^{n\la} $  integrable 
on  $ B (0, 1). $ Thus \eqref{1.35} is valid.                
    We put  $ F_1 ( 0 ) = - \infty $ and observe  from \eqref{1.35} 
    that $ F_1 $ is continuous in the extended sense on $ \rn{n}. $ 
     
  The  next  step in our proof  of   Lemma \ref{lem1.4}  is to show  using    \eqref{1.35} 
  that for  $ 0 < t < \infty, $ 
  \begin{align} 
\label{1.36c}   
\begin{split}
&(a)  \hs{.2in}  \{ x :  F_1 (x)   <  m ( t, F_1 )  \}  \,\,   \mbox{is  open, bounded, connected, and contains 0},  \\   
&(b)  \hs{.2in}    m (  t, F_1 ) \, \, \mbox{and}\, \, M ( t,  F_1 )  \,\,   \mbox{  are strictly increasing on}\, \,  (0, \infty).
\end{split}
\end{align} 

To prove     \eqref{1.36c} $(b)$  observe  from   \eqref{1.35}  and the maximum principle  for      $ \mathcal{A}$-harmonic functions  that  $ M ( \cdot,  F_1 ) $  is  at least  non-decreasing on  
$ (0, \infty ). $   Also  if   $ M ( \cdot , F_1) $  is not strictly increasing  then  from Harnack's inequality for      $ \mathcal{A}$-harmonic functions it follows that  $ F_1  \equiv  $ constant in   $ B ( 0, t_2 )  \sem \bar B (0, t_1)  $   for some  $ 0  < t_1 < t_2 < \infty.  $   To  get a  contradiction choose  $  \he \in C_0^\infty ( B (0, t_2) ) $ with $  \he  \equiv  1  $ in  $ \bar B ( 0, t_1)  .$    Using  $  \he  $  as  a  test function  in  \eqref{1.2} $(ii) $  we obtain   a  contradiction,  since then the  integral in this display is  zero.    

 To prove \eqref{1.36c} $(b)$    for  $ m ( \cdot, F_1 ) $  by way of  contradiction,  observe  by the same reasoning as  above,   that if    $(b)$ is  false for   this function,   then  there  exists  $ t_1 \in ( 0, \infty ) $  with    $  m (\cdot , F_1)  $ strictly increasing on   $ ( 0, t_1 ) $ and strictly decreasing on $ [t_1,  \infty).$      In this  case we can apply  the same argument as  in  \eqref{1.36}  to  find for some $ \ti \al  \in ( 0, \infty) $ that 
 \begin{align} 
\label{1.36d}  
M ( s, F_1 )  -   m(s, F_1 )  \,  \geq  \, c^{-1}  (s/t_1)^{\ti \al} \,  ( M ( t_1,  F_1 )  -  m (t_1,  F_1 ) ).
\end{align} 
whenever   $  s    >  t_1 . $   Letting  $  s   \rar \infty $  in \eqref{1.36d} 
we   obtain  a     contradiction   to our assumption that    
 $| F_1  ( x ) |   =  O  ( \log |x| )  $   in  a  neighborhood of  $  \infty.  $  
 
   From    \eqref{1.36c}  $ (b) $   and the discussion above  $\eqref{1.30}  $  it follows that 
     \[  \lim_{ t  \rar  \infty}  M  ( t,  F_1)  =  \lim_{ t  \rar  \infty}  m  ( t,  F_1)      \] in possibly the extended sense.    \eqref{1.36c}  $ (a) $ follows from this  display, continuity of  $ F_1 $ 
in the extended sense in  $\mathbb{R}^n, $    
and the maximum  principle  for   $ \mathcal{A}$-harmonic functions.

Using \eqref{1.36c}  we   show for  given  $  r  > 0 $  
that   $   k =   k ( \cdot, r ) =  \max(  F_1 -  m ( r, F_1),  0 ) $  is  an  $ \mathcal{A}$-subsolution 
on $  \rn{n} $  with  corresponding  measure  $ \mu = \mu( \cdot, r ) $  satisfying
\begin{align} 
\label{1.37}   
\mbox{supp}(\mu)\subset  \{ x  :\,   F_1(x) =  m( r, F_1 )   \} \quad \mbox{and}\quad       \mu ( \{ x  : \,  F_1(x) =  m( r, F_1 )   \} )  = 1.
     \end{align}
   To prove   \eqref{1.37} we first note  from \eqref{1.36c}  that 
           $ k\in W^{1,n}_{\mbox{\tiny loc}} ( \rn{n} ). $     
           Let   $  \psi  =    ( k +  \ep_1)^{\ep_2}  - \ep_1^{\ep_2}   \,$ where 
   $ \ep_1, \ep_2 > 0 $   and suppose $ 0 \leq \he \in C_0^\infty (\rn{n} ). $  
   Using   $ \psi \,  \he  $   as  a test function in  \eqref{1.2} $(ii)$ for    $  \ep_1 > 0 $  small,  we  obtain  
\begin{align}
 \label{1.38}  
 \begin{split} 
 0  & = \int \lan \mathcal{A} ( \nabla F_1),  \nabla k \ran \, \ep_2  ( k + \ep_1)^{(\ep_2 - 1) } \he    dx  +  \int \lan \mathcal{A} ( \nabla F_1),  \nabla \he  \ran   \psi dx \\
 &\geq  \int \lan \mathcal{A} ( \nabla k),  \nabla \he  \ran   \psi  \,   dx.   
\end{split}
 \end{align}
To get  \eqref{1.38} we have used 
\[
\int_{\{ x:\,   F_1(x) - m( r, F_1 )>0\} }\lan \mathcal{A} ( \nabla F_1),  \nabla F_1 \ran \, \ep_2  ( k + \ep_1)^{(\ep_2 - 1) } \he \,    dx>0
 \]
 and also that 
 \begin{align*}
0&\geq  \int \lan \mathcal{A} ( \nabla F_1),  \nabla \he  \ran   \psi dx \\
&= \int_{ \{ x:\, F_1(x) - m( r, F_1 )>0\} } \lan \mathcal{A} ( \nabla k),  \nabla \he  \ran   \psi  \,   dx \\
&= \int \lan \mathcal{A} ( \nabla k),  \nabla \he  \ran   \psi  \,   dx.
  \end{align*}
Letting  $  \ep_1 \to 0 $  first and  then  $ \ep_2  \to  0 $  in  \eqref{1.38},  
we conclude from the Lebesgue  dominated converge theorem and 
arbitrariness of  $ \he $  that $ k $ is an  $\mathcal{A}$-subsolution 
 on $ \rn{n }, $  so there exists  $  \mu $  as defined above  
  \eqref{1.37}. Let $ K  =  \{ x:\, \, F_1(x)   \leq  m ( r, F_1 ) \}.$  Then from  \eqref{1.36c} we see that  $ K $ is compact  so there exists   
                           $ 0 \leq \he \in C_0^\infty (\rn{n} ) $ with 
                             $  \he  \equiv 1 $ on an open set  containing $ K. $ Then from  \eqref{1.2} $ (ii)$  
                             and the integral inequality  involving $  k, \mu $  we obtain that 
 \begin{align}
 \label{1.39}  
\begin{split} 
  \mu  (   K  )   =  \mu ( \rn{n} )& = \int  \theta d\mu=  - \int  \lan \mathcal{A} ( \nabla k),  \nabla \he \ran  dx   \\
  &=  -    \int   \lan \mathcal{A} (\nabla F_1),  \nabla  \he  \ran dx  =\theta(0)= 1.   
\end{split}
\end{align} 
  Also  $  \mu  ( \{ x : F_1 ( x )  <  m ( r, F_1 )  \} )  = 0 $ since this set is open, bounded,  and     
  $  \nabla k  = 0  $   on it.   Using arbitrariness of $ \he $
   and regularity of $ \mu $  we see that   \eqref{1.37}  is valid.    

 Next  from  \eqref{1.36c} $(a)$  and  facts about $n$-capacity      
 we deduce   that  $ \{ x: \, \, F_1 (x)  \leq   m ( r, F_1) \} $ 
   is uniformly $(4r,n) $-fat.   
 Using this  fact  and  \eqref{1.37}  we  see  as in \eqref{1.3} $(a)$  that  
\begin{align}   
\label{1.40}   
M( 4r, k )  =    M ( 4r, F_1) -  m ( r,  F_1)  \approx   m ( 2r, F_1) - m ( r, F_1)  \approx  1
\end{align}
whenever $0< r < \infty$. From  \eqref{1.40}  and 
Lemmas \ref{lemma2.1}-\ref{lemma2.3},  it follows that 
 \begin{align}   
 \label{1.41}   
 M ( 2r, |\nabla F_1| )  \leq c r^{-1}.
\end{align}   
Now \eqref{1.40}, \eqref{1.41},    and $ F_1 ( e_1 ) = 1 $    also imply that     
\begin{align}   
\label{1.42}  
 -  F_1 ( x  )  \approx   -  \log |x| \, \, \mbox{whenever}\, \,   |x|  \leq 
 1/2  \quad \mbox{ and } \quad F_1 ( x )  \approx \log |x| \,\, \mbox{whenever} \, \,  |x| \geq 2
 \end{align} 
  where all  constants in  \eqref{1.40}-\eqref{1.42} depend only on  the data.   
  Indeed,  if  $ | x | \geq 2,$  choose  $ k $  so that  $ 2^{k} \leq |x| \leq  2^{k+1}.$  
  Putting  $ r =  2^l $ in  \eqref{1.40} and  summing the resulting 
  inequality from  $ 0 $ to  $ k $  we obtain  \eqref{1.42} after simple estimates.  
  A  similar argument gives   \eqref{1.42}  if   $ |  x | \leq 1/2.  $    
     To  get  a  better inequality we note that   if $  F_2  = \zeta F$,  $\zeta  $ a real number,  then  
\begin{align} 
\label{1.43}   
\hat L ( F_1 - F_2)  =  \sum_{i, j =1}^n  \frac{\ar}{\ar x_i} ( \hat{\mathfrak{b}}_{ij } (x)  (F_1 - F_2)_{x_j} )   = 0 
\end{align}
weakly in $ \rn{n} \sem \{0\}$   where 
\begin{align} 
\label{1.44} 
 \hat{\mathfrak{b}}_{ij} ( x )  =   \int_0^1  \frac{\ar \mathcal{A}_i ( t \nabla F_1 (x)  +   (1-t)  \nabla F_2  (x ) )}{ \ar \eta_j }  dt
\end{align}
  for almost  every $ x \in  \rn{n}  \sem  \{0\}$.     From \eqref{1.44} and  
   the structural assumptions on $ \mathcal{A} $ it follows that for 
  fixed $ x \in \rn{n} \sem \{0\}, $    
\begin{align}
\label{1.45}   
\sum_{i,j = 1}^n \hat{\mathfrak{b}}_{ij} (x)  \xi_i \xi_j       \approx  
(| \nabla F_1 (x) | + |\nabla F_2 (x)|)^{n-2} |\xi |^2  \approx  
 [( 1   +  |\zeta| ) | x |]^{2 - n} |\xi |^2  
 \end{align}
whenever $\xi \in \rn{n} \sem \{0\}$. Here  to get the last inequality 
we have  used   \eqref{1.41}   and  \eqref{1.27a} 
 $(++)$.  Ratio constants depend only on  the data.  From 
  \eqref{1.43}-\eqref{1.45}     we see for  $ r \in ( 0,  \infty)   $    
 that $ F_1 - F_2   $  is  a  bounded  weak solution to  a 
 uniformly elliptic PDE  with  bounded measurable  coefficients in     $ B ( 0, 2 r)  \sem  B ( 0,  r/2)  $. 
 Moreover, ellipticity constants and  $ L^{\infty} $  bound on the  coefficients depend only on the data  and $ \zeta. $    
   Put    
 \[    
 a_1 =   \liminf_{r \to \infty  } m(r,  (F_1 - F)/F ). 
 \]   
 From   \eqref{1.42}  and our knowledge of $ F, $   we see that $  F_1 \approx  F $  
 so that $ - 1  < a_1 \leq  c, $    where  constants depend  only on the data.   
    Using this fact and  Harnack's inequality  it follows that 
\begin{align} 
\label{1.46}      
a_1 =  \lim_{r \to \infty } M(r,  (F_1 - F)/F ) =      \lim_{r \to \infty } m(r,  (F_1 - F)/F ).  
\end{align}     
To outline the proof of \eqref{1.46}   observe  from  the definition of  $ a_1 $  
that there exists  an increasing sequence   $ \{r_j\} $  with $ r_j \to \infty $  as $ j \to  \infty$  such that   
$ m (r_j, F_1 - F )/F ) \to a_1 $ as $j \to \infty$.  Then  for $ j $ 
large enough, say  $j \geq j_0, $  and fixed   $ \ep>0,  $  small we have    
   $ F_1 - F  - ( a_1   - \ep) F  \geq  0  $ on $  \ar B ( 0, r_j).  $  
    Using the  maximum principle  for  $  \mathcal{A}$-harmonic 
    functions in   $  B (0, r_{j}) \sem B ( 0, r_{j_0} ) $  and letting $ j  \to  \infty $
     it   follows that    $ F_1 - F  - ( a_1   - \ep) F  \geq   0  $ 
     on $  \mathbb{R}^n \sem   B (0, r_{j_0} ). $   
     Next we choose $ j_1 $  so that   $ r_{j_1} \geq 2 r_{j_0} $    
     and    $  F_1   -  F  \leq  (a_1 + \ep ) F $  at  some point 
     on  $  \ar  B (0, r_j )$  for $ j \geq j_1. $       Using this  
     choice and the previous facts,   as well as \eqref{1.43}-\eqref{1.45} 
     with  $ F_2 =  (1 + a_1  -    \ep)  F, $   we see that  
     Harnack's  inequality  can be  applied  on  
   $  \ar B (0, r_j )$ for   $ j \geq  j_1. $     Thus   
\begin{align}
\label{1.46a}  
\max_{\partial B(0,r_j)} [F_1  - F -  (a_1 - \ep ) F ] 
\leq c \min_{\partial B(0,r_j)} [F_1  - F -  (a_1 - \ep ) F ]   
\leq   c^2  \ep \min_{\partial B(0,r_j)}  F
\end{align}  
Using once again the maximum principle for $ \mathcal{A}$-harmonic 
functions in $ B ( 0, r_{j} )  \sem B ( 0, r_{j_1} ) $ and letting $j  \to \infty $  
we conclude  for some $ c'  \geq 0, $ depending only on the data and $ F_1 (e_1) $  that 
 \[  
F_1 -  F  -  a_1 F   \leq  c'  \ep  F   \quad \mbox{in}\, \,  \mathbb{R}^n \sem B (0, r_{j_1} ).  
 \]      
Dividing both sides by  $ F $ and letting $ \ep \to 0 $  we get    \eqref{1.46}.  
    
    Next  we put   $ F_2 =    (1 +a_1 ) F $  and  observe as in the display above \eqref{1.17a} that      
\begin{align}  
\label{1.47a}  
  a_2   =  \lim_{r \to 0 }  m(r, F_1 - F_2 )\quad \mbox{and} \quad a_3 = \lim_{r \to 0} M ( r, F_1 - F_2 )  \quad \mbox{both exist}.        
\end{align} 
   if    $ \ti a  \in \{ a_3, a_2 \}$  is finite, then  from  \eqref{1.43}-\eqref{1.45}  
   we see that  Harnack's inequality can be used in a now well-known way to  deduce     $ \ti a  = a_3 = a_2. $  
  Also,   the case  $ a_2 =  -  \infty,  a_3 = \infty $    
  cannot  occur in view of  \eqref{1.41}-\eqref{1.45}, our knowledge of $F,  
  F_1, $  and the same argument as in  \eqref{1.36}.  
   In order to  rule out other cases  choose     $  t_1  >  0 $  so that   $  F ( x )  \geq 0 $ in  $  \rn{n}  \sem      B ( 0, t_1) . $      Then  given  $  \ep  > 0, t  > t_1,  $   it  follows from 
   \eqref{1.27a}  and our  choice of  $ a_1 $      that      there   exists  $ s_0  =  s_0 ( \ep, t )  > > t$  so that 
     \[  m ( t,  F_1  -  F_2 )   -  \ep  F    \leq    F_1 - F_2     \leq   M ( t,  F_1 - F_2 )  +    \ep  F    
      \mbox{ on  $  \ar [ B ( 0, s ) -  B ( 0, t )  ] $  for   $ s \geq s_0$  } .   \] 
    Thus from the  maximum - minimum principles for  $ \mathcal{ A } $-harmonic functions,  this inequality also holds in  $   B ( 0, s ) -  B ( 0, t ) . $  Letting  $ \ep  \rar 0 $ 
   we conclude that   
   \beq  \label{1.47b} \mbox{ $ m (\cdot, F_1 - F_2  ) $ is non-decreasing, 
   and   $ M (\cdot, F_1 - F_2  ) $ is non-increasing on  $ (t_1,  \infty ) .$ } \eeq 
      From   the above discussion of  cases    we  
   see that if   $  a_2 =  -  \infty, $ then $ a_3 =  - \infty$.  However  from  \eqref{1.47b}  and the maximum principle for  $ \mathcal{A} $-harmonic functions it then follows  that 
    $ F_1 - F_2 \equiv $  constant, an  obvious contradiction.   If  $ a_2   = \infty = a_3 $  we can apply  the same argument to  get that  $ F_1 - F_2 $  has  an  absolute minimum  so  is  constant,  another  contradiction.    Also   if    $  a_2  =  a_3  \not  =  \pm \infty, $  then 
   \eqref{1.47b}  and  the above  maximum - minimum principles imply  that  
   \[   m ( \cdot,  F_1  -  F_2 )  \geq  a_2   \geq  M ( \cdot,  F_1  -  F_2 )  \mbox{ on } (0, \infty ) \]
   so    $ F_1 -  F_2 \equiv $  constant.   Finally  $ a_1  = 0 $  since if $   \he   \in C_0^{\infty} (\rn{n}) $  with  $  \he ( 0 )  =  1, $  then   from \eqref{1.2} $ (ii)$  for $ F, F_1, $  we have       
      \[      0 =  \int_{\rn{n}}\lan   \mathcal{A} (\nabla F_1 )  -   \mathcal{A} (\nabla F_2), \nabla \he  \ran  \,  dx   =  ( 1  + a_1)^{n-1}     -  1.  
    \]
      The proof of  Lemma \ref{lem1.4} is now complete.  
      \end{proof} 

\subsection{Existence and uniqueness for $p > n$} 
\begin{definition}
\label{def4.2}
If  $ p > n \geq 2$   we say that $ F$  is a fundamental solution to 
$\nabla\cdot \mathcal{A}(\nabla F)=0$
in $\mathbb R^{n}$ with pole at $0$ if
\begin{align}
\label{1.53} 
\begin{split}
 (i)\, \,\,  & \mbox{ $F $ is  $  \mathcal{A}$-harmonic in $ \mathbb{R}^n  \sem \{0\}, $}  \\    
(ii)\, \, & F\in W_{{\rm loc}}^{1,p}(\mathbb R^{n}),\, \, \mbox{$F$ is continuous in $\mathbb R^{n}$},\, \,  F (0)=0,\, \,  F >0 \, \, \mbox{in}\, \,  \mathbb R^{n}\setminus\{0\},  \\
(iii)\, & \int \lan \mathcal{A} (\nabla  F(z)), \nabla \he ( z ) \ran\, dz = -\theta(0) \quad \mbox{whenever} \, \, \theta\in C_0^\infty(\mathbb R^{n}).
\end{split}
\end{align}
\end{definition}\
 As in  Lemma \ref{lem1.4} we prove 
\begin{lemma}
\label{lem1.5} 
Let $2 \leq n$ be an integer and let $p$ be fixed with $n<p<\infty$. Let $\xi=(p-n)/(p-1)$ and assume that $ \mathcal{A}\in M_p(\alpha) $ for some $\alpha\in (1,\infty)$. Then
there exists a  unique  fundamental solution $F$ to the equation  $\nabla\cdot  \mathcal{A}(\nabla F )=0$
in $\mathbb R^{n}$ with pole at $0$  in the sense of Definition \ref{def4.2}.  Furthermore, there exist
$\sigma=\sigma(p,n,\alpha)\in(0,1)$, $c  \geq 1$, depending only on the data, and  $\psi\in C^{1,\sigma}(\mathbb S^{n-1})$ such that
\begin{align}
\label{1.54}
\begin{split}
&(i) \hs{.2in}  F(z)=|z|^{\xi}\, \psi(z/|z|)  \quad \mbox{whenever}\, \,  z\in\mathbb R^{n}\setminus\{0\},  \\
&(ii) \hs{.1in}  c^{-1}   F(z) =  \lan z,  \nabla F ( z ) \ran  \leq |z| | \nabla F ( z ) |  \leq c F (z)  \quad \mbox{whenever}\, \,  z \in \rn{n} \sem \{0\}.
\end{split}
\end{align}
\end{lemma}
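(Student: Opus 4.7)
The plan parallels Lemma~\ref{lem1.4}, but the argument is substantially simpler in the regime $p>n$. The key observation is a scale invariance: by Definition~\ref{defn1.1}(ii), if $u$ is $\mathcal{A}$-harmonic then so is $\lambda^{-\xi}u(\lambda\,\cdot\,)$ for $\lambda>0$, and a direct computation using $\xi=(p-n)/(p-1)$ shows the distributional source in Definition~\ref{def4.2}(iii) is preserved under this scaling. Once uniqueness is in hand, any fundamental solution must therefore be homogeneous of degree $\xi$, yielding the representation $F(z)=|z|^{\xi}\psi(z/|z|)$ in \eqref{1.54}(i).

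For existence I would run an approximation argument analogous to the one preceding Lemma~\ref{lem1.2}. For each large integer $m$, let $v_m$ be the $\mathcal{A}$-harmonic function in $B(0,m)\setminus\bar{B}(0,1/m)$ with continuous boundary values $0$ on $\partial B(0,1/m)$ and $1$ on $\partial B(0,m)$, extended by $0$ to $\bar{B}(0,1/m)$. Solvability and boundary regularity are automatic for $p>n$ by Remark~\ref{rmk2.4}. Let $\nu_m$ be the measure supplied by Lemma~\ref{lemma2.4}, and rescale by setting $F_m:=\nu_m(\bar{B}(0,1/m))^{-1/(p-1)}v_m$, so the associated measure $\mu_m$ has total mass $1$ and support in $\bar{B}(0,1/m)$. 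Lemmas~\ref{lemma2.1}--\ref{lemma2.4} yield uniform $C^{1,\sigma}$ estimates on compacta of $\mathbb{R}^n\setminus\{0\}$ together with uniform H\"older continuity up to the origin, so Ascoli's theorem extracts a subsequence $F_{m_\ell}\to F$ locally uniformly, with $\nabla F_{m_\ell}\to\nabla F$ on compacta away from $0$. The weak convergence $\mu_m\to\delta_0$ allows one to pass to the limit in the weak formulation to obtain Definition~\ref{def4.2}(iii); the H\"older estimate of Lemma~\ref{lemma2.3}(ii) delivers $F(0)=0$ and continuity of $F$ on $\mathbb{R}^n$; Harnack's inequality (Lemma~\ref{lemma2.1}(ii)) combined with the maximum principle gives $F>0$ off the origin.

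Homogeneity of $F$ then follows from the scaling observation of the first paragraph combined with uniqueness: each $\lambda^{-\xi}F(\lambda\,\cdot\,)$ is itself a fundamental solution in the sense of Definition~\ref{def4.2}, hence equals $F$. This produces \eqref{1.54}(i), with $\psi\in C^{1,\sigma}(\mathbb{S}^{n-1})$ inherited from Lemma~\ref{lemma2.2} applied in annuli bounded away from the origin. For \eqref{1.54}(ii), Euler's identity for a degree-$\xi$ homogeneous function gives $\langle z,\nabla F(z)\rangle=\xi F(z)$, which directly yields the lower bound $c^{-1}F(z)\leq\langle z,\nabla F(z)\rangle$ with $c=1/\xi$, while Lemma~\ref{lemma2.2}$(\hat a)$ applied to $F$ in the ball $B(z,|z|/4)$ gives the upper bound $|z|\,|\nabla F(z)|\leq c F(z)$.

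For uniqueness I would mimic the linearization argument in the final part of Lemma~\ref{lem1.4}, which simplifies considerably because both candidate solutions are continuous and vanish at $0$. Given a second fundamental solution $F_1$, write $F_2:=\zeta F$ and observe that $F_1-F_2$ solves a uniformly elliptic linear PDE on $\mathbb{R}^n\setminus\{0\}$, with coefficients controlled via $|\nabla F_1|,|\nabla F_2|\approx|z|^{\xi-1}$. Define $a_1:=\liminf_{|z|\to\infty}(F_1-F)(z)/F(z)$; use Harnack-type oscillation decay to show it coincides with the corresponding $\limsup$; set $\zeta=1+a_1$; and repeat the argument at $0$ using $F_1(0)=F(0)=0$ to force $F_1\equiv(1+a_1)F$. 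Testing the difference of the two weak formulations against a bump $\theta$ with $\theta(0)=1$ yields $(1+a_1)^{p-1}=1$, hence $a_1=0$ and $F_1=F$. The hardest part is calibrating the normalization of $v_m$ so that $\mu_m\to\delta_0$: unlike the $p=n$ case of Lemma~\ref{lem1.4}, no logarithmic comparison guides the choice, but the capacity-type mass estimate of Lemma~\ref{lemma2.4}(ii) makes the bookkeeping routine.
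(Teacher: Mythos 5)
Your strategy matches the paper's — approximate from finite domains, pass to a limit, prove uniqueness by linearizing the equation, and then read off homogeneity from uniqueness plus the dilation covariance of the equation — but there is a latent circularity in the order in which you derive the quantitative estimates. Your uniqueness argument linearizes around $F_2 = \zeta F$ and needs $|\nabla F| \approx |z|^{\xi-1}$, and in particular the \emph{lower} gradient bound, to make the coefficients $\hat{\mathfrak{b}}_{ij}$ of the linearized equation uniformly elliptic on annuli so that Harnack can be applied. You propose to obtain that lower bound from Euler's identity applied to the homogeneous representation \eqref{1.54}$(i)$, but homogeneity is itself a consequence of the very uniqueness you are in the middle of proving. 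The paper breaks this circle by first establishing the analogue of Lemma~\ref{lem1.2}, namely Lemma~\ref{lem1.6}, which proves $c^{-1} v(x) \leq \langle \nabla v(x), x \rangle \leq |x|\,|\nabla v(x)| \leq c\,v(x)$ for the approximants $v(\cdot,R)$ via a barrier argument using the comparison functions $\varphi_1,\varphi_2$ of \eqref{1.8} (imitated in \eqref{1.57}); \eqref{1.54}$(ii)$ then follows on passing to the limit and is available as input to the uniqueness proof, with homogeneity derived last. You would need to insert the analogous barrier step to justify the gradient lower bound for $F$. The remainder of your sketch is sound and essentially as in the paper: the scaling check, the choice of approximating domain (the paper works directly in $B(0,R)\setminus\{0\}$, using that a single point has positive $p$-capacity when $p>n$; your annular truncation with the $\nu_m(\bar{B}(0,1/m))^{-1/(p-1)}$ rescaling is equivalent), the identification of the limit's point mass, and the normalization $(1+a_1)^{p-1}=1$ forcing $a_1=0$.
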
  
\begin{proof} 
A  proof of  Lemma  \ref{lem1.5} is given in  Lemmas 5.1  and 5.2  of \cite{LN4} and  somewhat  simpler  than the case $p = n $  so we only sketch the details. 
   Fix  $ R,   2   <  R  <  \infty$,  and let $ v = v ( \cdot, R )  $  be the  $  \mathcal{A}$-harmonic function in  $ B (0, R) \sem \{0\} $  with  continuous boundary value $0$  at  $0$ and  $R^{\xi}  $   on  $  \ar B (0, R )$. Existence of $ v \in W^{1,p}  ( B ( 0, R ) ), $  follows from the fact that a point set has positive $\mathcal{A}$-capacity when $ p > n $  (see \cite[Chapter 2]{HKM}). Extend $ v $  to  a continuous function in $ B ( 0, R ) $  by setting $ v ( 0 ) = 0$.  
       Let  $ \nu $  denote the  positive Borel measure  with support at 
       $ \{0\} $  associated with $ v $.     As in  Lemma \ref{lem1.2}  we prove        
     \begin{lemma}  
     Let $\nu$, $v$, $R$, and $\xi$ be as above. Then
     \label{lem1.6}  
     \begin{align}  
\label{1.55} 
\begin{split}
 (a)  &\quad   \nu ( \bar B (0, R) ) \approx  1, \\  
 (b) &\quad     c^{-1} v(x)  \leq     \lan \nabla v (x), x  \ran  \leq   |x|  | \nabla v |   \leq c v(x) \quad \mbox{whenever} \,\,  x  \in    B(0, R) \sem  \{0\},  \\  
(c) &\quad    v (x)   \approx   |x|^{\xi}\quad \mbox{whenever}\, \, x \in B(0,R) \sem B (0, 1 )
\end{split}
\end{align}    
where  ratio  constants depend  only on $p$, $ n $, and $\alpha$.       
\end{lemma}  
 \begin{proof}   
   To  prove  \eqref{1.55}    let   $  w (x)  =  | x |^{\xi} $,           when $x  \in  B(0, R), $ and 
note that  $ w $  is  $p$-harmonic in  $ B(0, R ) \sem \{0\}$  with  $ v \equiv  w, $ 
   continuously  on  $  \ar B ( 0, R ) \cup  \{0\}.  $   Then  as   in    \eqref{1.4}  we also have
    \begin{align}
\label{1.56}  
 \nu (\bar B (0, 1 )) \, R^{\xi}=  \int_{ B (0, R ) \sem \bar{B}(0,1)}  \lan \mathcal{A} ( \nabla v ), \nabla v \ran dx 
\approx  \int_{ B (0, R ) }  |  \nabla w |^{\xi}   dx  \approx    R^{\xi}.  
\end{align}  
Thus  $ (a) $ in  \eqref{1.55}  is valid.     
To  prove  $(b) $ in \eqref{1.55},   we  show  (compare with  \eqref{1.7})  that if $ 1  < \la \leq 101/100 $   and   $ x \in B ( 0,   R/\la) 
\sem \{0\}, $  then  for some $ c \geq 1, $ depending only on the data    
\begin{align}  
\label{1.57}   
 c  v (x)   \leq    \frac{ v ( \la x )  - v (x)}{\la - 1}    
\end{align}  
Indeed this estimate is clearly valid  at $ x = 0$.   To show this inequality also holds  
when  $ x \in  \ar B (0, R/\la ) $   we first observe  from   Lemma  \ref{lemma2.4}   
applied to  $ R^{\xi} - v $  and the same argument  as  in  \eqref{1.6}    that   
\begin{align} 
\label{1.58}     
R^{\xi}  -   v (x)   \approx  R^{\xi} \quad \mbox{when}\, \, |x| = R/8.
\end{align}    
Let  $ \ph_2  $ be  as in     \eqref{1.8}. Then $ \ph_2   \equiv 1$  on  $ \ar B (0, 1),  $   
 $  \equiv 0  $ on  $  \ar B (0, 8 ), $   and  $  \ph  ( 8 x/R ) $   is  a     
     subsolution at points in $ B (0, R ) \sem B ( 0, R/8)  $  
     where  $  \nabla v  \neq  0 $,     to the  second order  non-divergence form PDE  
    for $v$  corresponding to  $ \mathcal{A}$-harmonicity when $ N $ is sufficiently   
     large (depending only on the data). Using  these  facts  and  \eqref{1.58}  it follows  
    as in   \eqref{1.10} that 
\[  
R^{\xi}  - v (x)   \geq  c^{-1} R^{\xi}  \ph_2 (8x/R)  \geq  c^{-2} R^{\xi - 1} ( R -  |x| ) 
\]
whenever $x \in B (0, R ) \sem  B (0, R/8)$.  
This inequality   is easily seen to imply   \eqref{1.57} whenever    
  $  x   \in   \ar  B ( 0,  R/\la ),  1  < \la  \leq  101/100$.   
Letting $  \la   \to 1 $  we get the left-hand inequality in  \eqref{1.57}.   
The far right inequality follows from   \eqref{eqn2.2} for  $  |x|  \leq  R/2 $   
and from  a  barrier type argument using 
$ R^{\xi} [ 1 - \ph_1 ( 8 x/R)]$,  $\ph_1  $  as in   \eqref{1.8},   
when  $ R/2 \leq  |x |  < R$.  
Thus   \eqref{1.57}   is  true.  Letting $   \la  \to 1 $ in this inequality  
 we obtain the left-hand  inequality in   \eqref{1.55} $(b).$  
 The right-hand inequality in this display  follows from 
\eqref{eqn2.2} $(ii)$ for $ x \in B ( 0, R/2  ) \sem B (0, 2) $  
and from the above barrier estimates if $ 1 < |x|  < 2 $ or $ R/2 < |x| < R$.   Finally, \eqref{1.55}   $(c)$  follows from uniform  $ (|x|/2, p ) $  fatness of  $ \{0\} $ and   
\eqref{eqn2.6} $(ii)$. 
 \end{proof}        
 
 We now return  to the proof of lemma \ref{lem1.5}.  To avoid confusion we again temporarily write  $ v ( \cdot, R ), \nu ( \cdot, R ),  $  
for  $ v $ in   Lemma \ref{lem1.6}. Using   Ascoli's theorem and Lemmas \ref{lemma2.1}-\ref{lemma2.3} 
for $\mathcal{A}$-harmonic functions  we see as $ m \to \infty $   that  a
 sub-sequence  of  $  \{ v ( \cdot, m  )\}_{m=8}^\infty  $  
 converges uniformly on compact subsets of  
  $ \rn{n} $ to a  $  W^{1,p}_{{\rm loc}} ( \rn{n}) $ function  $  \hat V \geq 0$ which is    $ \mathcal{A}$-harmonic 
  in   $ \rn{n} \sem  \{0\} $  with   $   \hat V (0) = 0$.  
   Let  $ \hat \Theta   $ be the  measure  with support at $ \{0\} $ corresponding to $ \hat V.$   We put  
\begin{align*}
 F(\cdot) := \frac{\hat{V}(\cdot)}{[\hat \Theta (0) )]^{1/(p-1)}}.
\end{align*}  
   Then  $ (i)$ and $(ii)$   of \eqref{1.53} are true for  $F.$  $(iii) $ 
   of \eqref{1.53}    follows from  \eqref{eqn2.6} $(i)$  with $ \ti u = F $ 
 since the point     measure corresponding to $ F $ has total mass $1$ at $\{0\}$.  
       Moreover \eqref{1.54} $(ii) $ is true as we see from Lemma \ref{lem1.6}.

       To prove uniqueness suppose that $ F_1 $  also  satisfies  \eqref{1.53} 
      $ (i)-(iii)$.  Then    from  \eqref{1.53} $(iii)$   we see that  the measure associated to $ F_1 $   
      has a point mass of total mass 1  at $\{0\}$  and since  $  \{0\} $ is 
      uniformly  $ (R, p )$-fat  for  any  $ R > 0, $  it follows from \eqref{eqn2.6} $(ii)$ of Lemma \ref{lemma2.4}  (as in \eqref{1.37}-\eqref{1.40})  that   
\begin{align} 
\label{1.59}   
M ( t,  F_1 ) -  m ( t, F_1)   \approx   t^{\xi } \quad  \mbox{whenever}\,\,  t > 0. 
\end{align}      
From \eqref{1.59}  and  \eqref{eqn2.2} $(\hat a)$  we have 
\begin{align} 
\label{1.60}    M  ( t,  | \nabla F_1| )   \leq   c   t^{(1-n)/(p-1)}  \quad \mbox{whenever}\,\,  t >  0. 
\end{align} 
Again constants depend only on the data.   Using   \eqref{1.59}, \eqref{1.60}, 
and  \eqref{1.54} $(ii) $  for  $ F $  we  can now  use the  same argument as  after  \eqref{1.43} 
to deduce  first that if  $  F_2 = \zeta F$  then  $ F_1 - F_2  $ is a  solution to  
an elliptic  PDE as in \eqref{1.43}, \eqref{1.44}, whose ellipticity is given by   \eqref{1.45}  
with  $n$ replaced by $p. $   Using these facts it then follows as  in  \eqref{1.46} 
that      
  $ \lim_{ x \to \infty} (F_1 -  F)/F  = a_1,$   where $ -1 < a_1 < \infty.$    
  Next    given  $  \ep > 0  $ small  we   find that  $ |F_1 - (1+a_1 ) F| \leq \ep F $
   on  $  \ar B (0, s ) $ for $ s > 0 $ large enough so  
   from  the maximum principle for $ \mathcal{A}$-harmonic functions  
   and $ F(0) = 0 = F_1 (0), $  this inequality also holds in $ B (0, s ). $  Letting $ \ep \to 0 $ 
   we get $  F_1 = (1+a_1)  F. $  Now $a_1 = 0 $ is  proved in the same way as 
   earlier  (see the display before  \eqref{1.53}). Thus $ F $ is the 
   unique function satisfying 
   \eqref{1.53}$(i)-(iii)$.  Finally, since $ \mathcal{A}$-harmonic
    functions are dilation invariant, it is  easily checked that 
  given  $ t >0, $ the function     $ x   \to  t^{-\xi} F (t x)$, whenever $x \in \rn{n}$,   
  also satisfies \eqref{1.53} $(i)-(iii)$  so by uniqueness  equals  $ F. $    
  The proof of  Lemma  \ref{lem1.5} is now  complete. 
  \end{proof}

  \setcounter{equation}{0} 
 \setcounter{theorem}{0}
   \section{$\mathcal{A}$-harmonic Green's function} 
   \label{section5} 
 Fix $ p \geq n, $ and let  $ F $  be the fundamental solution 
  constructed in Lemma \ref{lem1.4} when $p=n$ and in Lemma \ref{lem1.5} 
  when $n<p<\infty$.  If $ p = n $ we assume that $ F (e_1) = 1. $  
  In   this section we  use  $ F $ to define, show the existence of,  
  develop some properties of, and prove uniqueness  for   
  the $\mathcal{A}$-harmonic Green's function for  $ \rn{n} \sem E $ with pole at $ \infty $ 
  when $ p  \geq n$ whenever $ E $ is    a  certain  compact convex set.      
  \begin{definition}[{\bf $\mathcal{A}$-harmonic  Green's function}]
  \label{def5.1} 
  Given a   compact, convex set $ E \subset \rn{n}$  
  we say that $G$ is the $ \mathcal{A}$-harmonic Green's function for  $ \rn{n} \sem E $ 
  with pole at $ \infty, $  if   
  $ G : \rn{n} \sem E \to ( 0, \infty ) $ has continuous 
  boundary value $0$ on $ \ar E$,    $G $ is  $ \mathcal{A}$-harmonic in  $ \rn{n} \sem E$,  and     $ G(x) =  F(x)  +  k(x)  $   where $ k(x) $ is  
  a bounded function  in a neighborhood of $ \infty$. 
  \end{definition}       

  The  statements and proof of  uniqueness as well as some other properties of $ G$ and  $k$  
  are slightly different when $ p  > n $ and  $ p = n$,  so  we consider  each range of $ p $ separately.   We start with the case when $p=n$.  
    \begin{lemma}
    \label{lem5.2}  
Let $p=n$.   Given a  compact  convex set  $E$ consisting of  at least two points,  
there exists a  unique  $ \mathcal{A}$-harmonic Green's function $G$ for $\mathbb{R}^{n}\setminus E$  
with  pole at $ \infty$ in the sense of Definition \ref{def5.1}.   Moreover, 
\begin{align} 
\label{5.2}  
\begin{split} 
&(a) \, | \nabla  G (x) |  \leq  c \, | x - \hat{x} |^{-1} \,\,  \mbox{whenever}\, \,   x\in \mathbb{R}^{n}\setminus E \, \, \mbox{and} \, \, \hat x \in E \\
&\quad \, \, \mbox{with} \, \, |x - \hat x | \geq 8\,  \mbox{diam}(E) \, \, \mbox{where $c$ depends only on the data}. \\
 &(b) \,  \lim_{x \to \infty} k ( x ) =  k ( \infty )  \, \,  \mbox{exists finitely.}  \\
 & (c) \, \mbox{There exist $\beta \in (0, 1)$,  depending only on the data, and}  \, \, \hat r_0 =  \hat r_0 (E)\geq 100 \\ 
 & \quad \, \, \mbox{such that}\,\,   |k ( x) -  k ( \infty ) |    \leq  \hat r_0  |x|^{- \beta} \, \,   \mbox{whenever} \,\, |x |  \geq   \hat r_0. \\ 
& (d) \,  \mbox{If $ \mu $  denotes   the positive Borel measure  associated to $ G $ then $ \mu ( E ) = 1.$ } 
\end{split}
 \end{align}
\end{lemma}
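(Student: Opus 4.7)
The plan is to construct $G$ by exhaustion, derive the asymptotic properties from the linearization used in the proof of Lemma \ref{lem1.4}, and close the argument with an $\mathcal{A}$-harmonic comparison principle. For each $R \gg \mbox{diam}(E)$, I would let $G_R$ solve the Dirichlet problem $\nabla \cdot \mathcal{A}(\nabla G_R)=0$ in $B(0,R) \setminus E$ with $G_R \equiv 0$ on $\partial E$ and $G_R \equiv F$ on $\partial B(0,R)$; after a harmless translation of the pole of $F$ into $E$, both boundary pieces are uniformly $(r_0,n)$-fat (by Remark \ref{rmk2.4}, using that $E$ contains at least two points), so the Dirichlet problem is Wiener-solvable. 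The $\mathcal{A}$-harmonic comparison principle against the competitors $F \pm M$, with $M := \max_{\partial E}|F|$, yields $\|G_R - F\|_{L^{\infty}(B(0,R)\setminus E)} \leq M$ uniformly in $R$. Lemmas \ref{lemma2.2} and \ref{lemma2.3} make $\{G_R\}$ equicontinuous on compact subsets of $\mathbb{R}^{n} \setminus \mbox{int}(E)$, so Ascoli's theorem gives a subsequential limit $G$ which is $\mathcal{A}$-harmonic on $\mathbb{R}^{n}\setminus E$, continuously vanishes on $\partial E$, and satisfies $k := G - F \in L^{\infty}$ in a neighborhood of infinity.

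For $(a)$, I would apply Lemma \ref{lemma2.2}$(\hat{a})$ on $B(x, |x-\hat x|/4)$ with $u = G$: the bound $|\nabla G(x)| \leq c |x-\hat x|^{-1}$ follows from the local oscillation estimate $M(2r,G)-m(r,G) \leq c$ for $r \gg \mbox{diam}(E)$, since $k$ is bounded and $F$ has bounded oscillation on annular shells by \eqref{1.27a}$(+)$ together with Harnack's inequality. For $(b)$ and $(c)$, the key observation is that on $\mathbb{R}^{n}\setminus E$ the difference $k = G - F$ weakly solves the linear divergence-form equation $\hat{L}k = 0$ of \eqref{1.43}--\eqref{1.44}, whose coefficients obey the ellipticity bound \eqref{1.45} with $F_2 = F$; together with $(a)$ and \eqref{1.27a}$(++)$, this makes those coefficients uniformly elliptic on each annulus $B(0,2r) \setminus B(0,r/2)$ after the natural rescaling. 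The Harnack oscillation iteration from the proof of Lemma \ref{lem1.4} (specifically \eqref{1.26a}--\eqref{1.26}) then yields a geometric decay of the oscillation of $k$ on successive dyadic annuli, giving both the existence of $k(\infty) = \lim_{|x| \to \infty} k(x)$ and the polynomial rate $|k(x)-k(\infty)| \lesssim |x|^{-\beta}$ claimed in $(c)$.

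For $(d)$, take $\tau_{R} \in C_{0}^{\infty}(B(0,2R))$ with $\tau_{R} \equiv 1$ on $B(0,R)$ and $|\nabla \tau_{R}| \leq 2/R$. Lemma \ref{lemma2.4}$(i)$ gives $-\mu(E) = \int \langle \mathcal{A}(\nabla G), \nabla \tau_{R} \rangle\, dx$, the integrand being supported in the annulus $A_{R} = B(0,2R)\setminus B(0,R)$. Splitting $\mathcal{A}(\nabla G) = \mathcal{A}(\nabla F) + [\mathcal{A}(\nabla G) - \mathcal{A}(\nabla F)]$ and using the structural bound $|\mathcal{A}(\eta) - \mathcal{A}(\eta')| \leq c(|\eta|+|\eta'|)^{n-2}|\eta - \eta'|$ (a consequence of \eqref{eqn1.1} and Definition \ref{defn1.1}), the bounds $|\nabla G|, |\nabla F| \lesssim R^{-1}$ on $A_R$ from $(a)$ and \eqref{1.27a}$(++)$, and the improved decay $|\nabla k| = o(R^{-1})$ on $A_R$ extracted from $(c)$ via interior Schauder estimates applied to $\hat{L}k = 0$, the remainder integral tends to zero. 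The main term equals $-\tau_{R}(0) = -1$ by \eqref{1.2}$(ii)$ for $F$, hence $\mu(E) = 1$.

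Finally, for uniqueness, let $G_{1}, G_{2}$ be two Green's functions as in Definition \ref{def5.1} and set $w = G_{1} - G_{2}$. Then $w$ is bounded on $\mathbb{R}^{n}\setminus E$, vanishes continuously on $\partial E$, and weakly solves the linear divergence-form elliptic PDE built from the pair $(G_{1}, G_{2})$ exactly as in \eqref{1.43}--\eqref{1.45}. Running the Harnack oscillation scheme of Lemma \ref{lem1.4} on $w$ shows that $w$ converges to a finite limit $c$ at infinity. Because $(d)$ forces $\mu_{1}(E) = \mu_{2}(E) = 1$, repeating the test-function computation of part $(d)$ with $w$ in place of $G$ (and the same $\tau_R$) forces $c = 0$; once this is known, the $\mathcal{A}$-harmonic comparison principle applied on $B(0,R)\setminus E$ with $G_{2} \pm \varepsilon$ and letting $R \to \infty$, $\varepsilon \to 0$ yields $w \equiv 0$. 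I expect the main obstacle to be precisely this extraction of $c = 0$ from the matching masses, which parallels the ``$a_{1}=0$'' step at the end of the proof of Lemma \ref{lem1.4} with the point mass of $F$ at the origin replaced by the mass of $G_i$ on $E$.
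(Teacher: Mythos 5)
Your outline for parts (a)--(d) largely parallels the paper, but the uniqueness step contains a genuine gap. You propose to show that the limit $c := \lim_{x\to\infty}(G_1-G_2)(x)$ vanishes by ``repeating the test-function computation of part (d) with $w$ in place of $G$.'' That computation cannot see $c$. Part (d) evaluates $\mu(E) = -\int_{A_R}\langle\mathcal{A}(\nabla G),\nabla\tau_R\rangle\,dx$ by splitting $\mathcal{A}(\nabla G)=\mathcal{A}(\nabla F)+[\mathcal{A}(\nabla G)-\mathcal{A}(\nabla F)]$: the first piece returns $1$ because $F$ is the fundamental solution, and the correction involving $\nabla k$ decays. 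Replacing $G$ by $w=G_1-G_2$ only evaluates $\mu_1(E)-\mu_2(E)=-\int_{A_R}\langle\mathcal{A}(\nabla G_1)-\mathcal{A}(\nabla G_2),\nabla\tau_R\rangle\,dx$; the integrand is controlled by $|\nabla F|^{n-2}|\nabla w|$, and $\nabla w$ already decays faster than $|x|^{-1}$ by the Harnack iteration you invoke, so the right-hand side tends to $0$ as $R\to\infty$ regardless of the value of $c$. You obtain $0=0$, not $c=0$. The analogy with the ``$a_1=0$'' step of Lemma~\ref{lem1.4} does not transfer: there one first proves the \emph{exact identity} $F_1-F_2\equiv\mathrm{const}$, which yields $\nabla F_1=(1+a_1)\nabla F$ and hence the algebraic relation $(1+a_1)^{n-1}=1$ from the point mass; for the Green's function you only have convergence of the difference, not an identity, so there is no scaling factor to solve for. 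The paper's uniqueness argument avoids the detour completely: since $G-F$ and $G_1-F$ are bounded, $|G-G_1|\leq a<\infty$, and $G\to+\infty$, so for any $\epsilon>0$ one has $|G-G_1|\leq a\leq\epsilon G$ on $\partial B(0,s)$ for $s$ large. Because $(1\pm\epsilon)G$ are again $\mathcal{A}$-harmonic by the $(p-1)$-homogeneity of $\mathcal{A}$, the boundary maximum principle yields $(1-\epsilon)G\leq G_1\leq(1+\epsilon)G$ in $B(0,s)\setminus E$, and letting $s\to\infty$ then $\epsilon\to0$ gives $G_1\equiv G$.

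There is also a technical gap at the start of your existence construction. You set $M:=\max_{\partial E}|F|$ after ``translating the pole of $F$ into $E$'' and compare $G_R$ with $F\pm M$. But $E$ is only assumed to contain two points, so it may have empty interior (e.g.\ a segment). Then translating the pole into $E$ forces it onto $\partial E$, and $\max_{\partial E}|F|=\infty$ because $F\to-\infty$ at the pole. The upper comparison $G_R\leq F+M$ then fails: on $\partial E$ near the pole it requires $0\leq F+M$, which is impossible for any finite $M$. (The lower comparison $G_R\geq F-\max_{\partial E}F$ survives, since $F$ is bounded above on $\partial E$.) The paper handles this by first deriving the annular oscillation bound $M(2t,v)-m(t,v)\approx1$ for $2\leq t\leq R/2$ (this is \eqref{5.3}) directly from the uniform $(r_0,n)$-fatness of $E$ and Lemma~\ref{lemma2.4}$(ii)$, and only afterwards compares $v$ with $F\pm c$ on $B(0,R)\setminus B(0,2)$, away from the singularity of $F$. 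Two minor imprecisions are worth noting as well: the iteration \eqref{1.26a}--\eqref{1.26} for (c) only applies in the case where $M(\cdot,k)$ is non-increasing and $m(\cdot,k)$ non-decreasing, and the other monotone cases must be dispatched separately (the paper argues they force $k$ eventually constant); and the decay of $\nabla k$ in (d) comes from a Caccioppoli $L^2$ estimate, not Schauder estimates, since the coefficients of $\hat L$ are only bounded measurable.
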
    
 \begin{proof}   
 To prove existence in  Lemma   \ref{lem5.2}   for   $  G  $, 
 let  $ E $ be  a  convex set consisting of  at least two points  
 and  let $\hat x$ be as in  Lemma \ref{lem5.2}.   
 We first   suppose  $  \hat r:=\mbox{diam}(E) =  1$ and $\hat  x =  0$.   
 Then   after proving  Lemma   \ref{lem5.2} for these values,  
        we shall  use  translation and dilation invariance of  
        $  \mathcal{A}$-harmonic functions to get this lemma 
        for general $   \hat x$ and $\hat r.  $     Let           $   R > 4  $   
        and    $ v  = v ( \cdot, R ) $ be the $ \mathcal{A}$-harmonic 
        function in          $  B ( 0,  R  ) \sem E $ with  $ v (x)  \equiv  
       \ga \log R  $ on        $ \ar  B ( 0, R ) $ and $ v \equiv 0 $ 
       on $\ar  E$ in the sense of  continuous boundary values. 
       Here $ \ga $  is as in \eqref{1.27a}.  Existence of $ v $ and in fact  
       local H{\"o}lder continuity of  $ v $ in  $ B (0, R ) $   follows  from  
        the note after Definition  \ref{def3.2}. Extend  $ v $  to  a  
        continuous function in  $ B (0, R ) $ by putting  $ v  \equiv 0 $  on $ E.$  
        Let  $  \nu $  be the measure corresponding to $ v $ with support contained in $E$ as in  
        Lemma \ref{lemma2.4}  and let $  \zeta(x)$ be the $n$-capacitary 
        function for  $  E $  relative to   $ B ( 0, R ). $  Since the  $n$-capacity of  a  
        line segment of  length    $1$   with center at the origin in $ B (0, R) $  is  $  \approx   
        (\log R )^{1-n}, $  we see from Lemma  \ref{lemma2.4},   
        and the same  argument as in   \eqref{1.4}   with   $ w (x)  =  (\log R)
         (1  -  \ze(x) )$, as well as \eqref{1.5} and \eqref{1.11}   that  
\begin{align} 
\label{5.3}  
 1  \approx  \nu ( \bar{B} (0, 1  ))^{1/(n-1)}   \approx   M  (2 t,  v )  - m (t, v  )  \approx  M ( 2, v )
 \end{align}  
 whenever $ 2 \leq t  \leq R/2. $    Using   \eqref{5.3}  and  arguing as in  \eqref{1.12}  we  get    
\begin{align} 
\label{5.4}    
M  ( t,  | \nabla v |   )  \leq  c t^{-1}  \quad \mbox{whenever}\quad     4  \leq t  \leq       R/ 4. 
\end{align}   
Moreover, from  \eqref{5.3},   Harnack's inequality for $v,$  the boundary 
maximum principle for  $  \mathcal{A}$-harmonic functions,   and Lemma \ref{lem1.4} we have  
\begin{align} 
\label{5.5} 
|  v ( x  )  -   F ( x ) |  \leq  c  \quad \mbox{whenever}\quad  2  \leq  |x| \leq  R,
\end{align} 
where constants  in \eqref{5.3}-\eqref{5.5} depend only on the data. 
 From   \eqref{5.3}-\eqref{5.5}, Lemmas  \ref{lemma2.1}-\ref{lemma2.3}, and 
      Ascoli's  theorem  we  deduce that  a  subsequence,  say  $ \{ v ( \cdot, R_l) \}, $  
      converges uniformly on compact subsets of  $  \rn{n} $ as $ R_l  \to \infty $ 
       to  a   H{\"o}lder continuous function, $G,$   that is  $ \mathcal{A}$-harmonic
        in  $ \rn{n} \sem   E  $   with  $  G  \equiv  0 $ on  $ E. $   
        Moreover,  $  \{ \nabla v ( \cdot, R_l) \} $ converges uniformly 
        on compact subsets of  $  \rn{n} \sem E  $  to  $  \nabla G, $ 
    and   $  \{ \nu ( \cdot, R_l  ) \}$  converges   weakly as measures to  $ \mu,  $  
    the positive Borel measure corresponding to $ G. $  
     From this convergence, \eqref{5.3} and \eqref{5.4},  we  
     deduce that \eqref{5.2} $(a)$ is true   when $ \hat x  = 0$ and $\hat r = 1.$
     Also \eqref{5.5} holds with $ v $  replaced by $ G,$ so $ G $  as in  Definition 
     \ref{def5.1} exists.

    We next prove uniqueness. To this end,  suppose  $ G_1 $ is another  $ \mathcal{A}$-harmonic   
      function in  $  \rn{n} \sem  E $ with continuous boundary value $0$ on $ \ar E $ and 
  $ G_1 - F$ is bounded in a  neighborhood of  $  \infty. $   Then  from the  
  boundary  maximum principle we see that   $ | G -  G_1 |  \leq  a < \infty   $   on  
  $ \rn{n} \sem E$.   From this inequality, the fact that  $ | G - F |  \leq c, $ 
  and our knowledge of $ F, $ we deduce that for given $ \ep > 0 $ 
  there is  an $ s > 0  $ large such that   $ | G - G_1 |  \leq  \ep G $ 
  on  $ \ar B (0, s). $ Using the boundary maximum principle once 
  again for $ \mathcal{A}$-harmonic functions we conclude that this 
  inequality also holds in  $ B (0, s )  \sem  E. $  Letting  $ \ep \to 0 $ we obtain 
  $ G  \equiv G_1. $  Thus $ G $ is unique.   
  
It remains to prove $ (b),  (c)$ and $(d)$ of  Lemma   \ref{lem5.2}.   To prove these inequalities,
 we note from   \eqref{5.4} and  Lemma \ref{lem1.4} that  $ k = G- F $  satisfies
   the     elliptic  PDE in  \eqref{1.43}-\eqref{1.45}  with $ F_1 = G$,  $F_2 = F, $ 
   and $ \zeta = 1.$ Thus,
  $ k $ is  a solution to a uniformly elliptic PDE in divergence form in  
  $ B (0, 2r)  \sem  \bar B (0, r/2) $  when  $ r  \geq 16. $  Using this fact and 
  boundedness of  $ k $ in  $ \rn{n} \sem B ( 0, 4) $  we can 
  now essentially repeat the argument given for $h$ in the proof 
  of Lemma \ref{lem1.3}. Thus, we first obtain as in 
  \eqref{1.18}  that  
\begin{align}
\label{5.6} 
\lim_{x \to \infty} k ( x ) = k ( \infty) < \infty,
\end{align} which proves   $(b)$ of Lemma  \ref{lem5.2}. 
  Then, using \eqref{5.6}, we argue as in \eqref{1.19}-\eqref{1.25a} to  
  show that if    $ M ( t, k )$ and $m (t, k ) $ are both either ultimately non-decreasing or
    both ultimately non-increasing then   $ k  \equiv  $   a  constant in a neighborhood
     of $ \infty.$  Thus  in  these cases,   $(c)$  of  Lemma   \ref{lem5.2} is trivially true.  Otherwise,  there exists  $ R_0 $ with $ M ( \cdot, k ) $ strictly decreasing  
  and $   m ( \cdot, k ) $  strictly increasing on $ (R_0, \infty). $  Using this fact and  
  Harnack's inequality  we find  as in \eqref{1.26} that      $ (c) $ of   Lemma 
  \ref{5.2} is true in  this case  (when $  \hat x = 0$ and $\hat r = 1$)  with  $ \hat r_0  = c R_0. $     
  
  To prove $ (d) $  assuming  $(c) $ when $ \hat x = 0$ and $\hat r = 1$,    let  $  R \geq 
  10 \hat r_0   \geq 1000 $ and  $ 0 \leq  \he  \in  C_0^\infty ( B (0, 2R ) )  $ with $ \he  \equiv 1 $ on 
  $ B ( 0, R ) $ and $ |\nabla \he|  \leq c/R. $  Using  $ \he $ as a  test function in  
  \eqref{1.2} for  $ G $  we see that  
\begin{align}  
\label{5.7}  
\mu ( E ) = -   \int_{ B ( 0, 2R) \sem B (0, R )} \lan  \mathcal{A} (\nabla G),  \nabla \he \ran dx. 
    \end{align} 
    Now from $ C^1 $   regularity of   $ \mathcal{A}$,  the  structural 
    assumptions on $\mathcal{A}$ in Definition \ref{defn1.1},    Lemma \ref{lem1.4},  \eqref{5.2},  
        we have
\begin{align} 
\label{5.8}  
|\mathcal{A} ( \nabla F )  -  \mathcal{A}( \nabla G ) | |\nabla \he | \leq   c          R^{1 - n}  | \nabla k | \quad \mbox{whenever} \quad R \leq |x| \leq 2R
 \end{align}
 where $ c  > 0 $  is a  constant that  depends only on the data. 
          Also since $ k $ is a solution to a uniformly elliptic 
          PDE in  $  B (0, 4R ) \sem B ( 0, R/2 ) $ it follows from a   
          Caccioppoli type inequality for $ k $ and   $ (c) $ of \eqref{5.2} that  
\begin{align} 
\label{5.9}   
\int_{B( 0, 2R) \sem B(0, R)}  | \nabla k | dx  \leq c R^{n/2}  
        \left( \int_{B( 0, 2R) \sem B(0, R)}  | \nabla k |^2  dx \right)^{1/2}  
           \leq          c^2  \hat r_0  R^{{n - 1 - \be}} 
\end{align}  
From  \eqref{5.7}-\eqref{5.9} and  \eqref{1.2} $(ii)$,   we obtain 
\begin{align} 
 \label{5.10} 
 \begin{split} 
 | \mu( E ) - 1 | &=   \left|   \int_{ B ( 0, 2R) \sem B (0, R ) }     \lan  \mathcal{A} (\nabla G)  -  \mathcal{A} (\nabla F),  \nabla \he \ran
     dx  \right|   \\
     &\leq \,  c R^{1-n} \int_{B( 0, 2R) \sem B(0, R)}  | \nabla k | dx   \\    
     &\leq   c^3 \hat r_0  R^{-\be}  \to 0  \quad \mbox{as} \quad R  \to \infty.  
        \end{split}
        \end{align}   
From \eqref{5.10} we conclude  $(d)$ of \eqref{5.2} and so also Lemma \ref{lem5.2}
     when   $ \hat x = 0$ and $\hat r = 1$.    
         Now suppose  that  $  \hat x \in E $ and   $ \mbox{diam}(E)= \hat r. $   Let  
         $ \hat E =    (1/  \hat r) ( E  - \hat x).$  Then $ 0 \in \hat E $  and 
         $ \mbox{diam}(\hat E)=1$, so from our previous work,  we see that 
         $ \hat G =  F + \hat k $  where $ \hat G $ is the $\mathcal{A}$-harmonic Green's function for  $ \rn{n} \sem \hat E $  with pole at 
         $  \infty.  $     Let  
         \[   
         G ( x)  =   \hat G \left(  \frac{x  - \hat x}{\hat r}  \right) \quad\mbox{whenever} \quad  x \in \rn{n} \sem E.    
         \] 
                 Using  Lemma \ref{lem5.2} for  $ \hat E $  and translation and dilation invariance of  
        $ \mathcal{A}$-harmonic functions we see that $G$ is  $ \mathcal{A}$-harmonic in $ \rn{n}  \sem E $  with  $ G  \equiv 0 $ on $ \ar E. $   Also from 
        Lemma \ref{lem1.4}  and  Lemma \ref{lem5.2}  we can write for $ x \in \rn{n} \sem E, $  that  
\begin{align} 
\label{5.10a} 
G ( x )  = \ga  \log \left( \frac{| x - \hat x |}{\hat r}\right)  + b \left(\frac{x - \hat x}{|x-\hat{x}|}\right)         +   \hat k \left( \frac{x - \hat x}{\hat r} \right)    =  F ( x )   + k ( x ) 
\end{align}
where  
\[ 
k ( x )   =  - \gamma\log \hat r +  \ga \log \left(  \frac{| x - \hat x |}{ | x |} \right) +        b \left( \frac{x - \hat x}{| x - \hat x |} \right) -   b \left( \frac{x}{|x|}\right)    +  \hat k \left( \frac{x - \hat x}{\hat r}\right).
\]   

Using  Lemma  \ref{5.2}   for $ \hat k $  and smoothness  of  $ b $ we see first from  \eqref{5.10a} that 
\begin{align}
\label{5.11}   
k (  \infty) =  \lim_{x \to \infty}   k ( x )  = \hat k ( \infty ) -  \gamma\log(\hat r) 
\end{align}  
 and second that there exists   $ \hat r_0 = \hat r_0 ( E ) $   with  
   \[     
   | k ( x )  -  k ( \infty ) |  \leq  \hat r_0 \, |x|^{-\be}  \quad \mbox{whenever}\quad | x | \geq  \hat r_0.  
   \] 
    Also if  $  \hat \mu$ and $\mu$ denote the measures corresponding to $ \hat G$ and $G $ 
    respectively, then  $ \mu ( E )   =  \mu ( \hat E ) = 1, $ 
    as follows easily from  changing variables in the integral 
    identity  for  $ \hat \mu. $    The proof of Lemma  \ref{lem5.2}  is now complete.        
 \end{proof} 
           
Next we state  
      \begin{lemma} 
      \label{lem5.3}  
Let $n<p<\infty$ be fixed and $\xi = (p-n)/(p-1)$.      Given a  nonempty convex compact set  $E$, there exists a  
      unique  $ \mathcal{A}$-harmonic  Green's function $G$ for $\mathbb{R}^{n}\setminus E$  
      with  pole at $ \infty$ in the sense of Definition \ref{def5.1}. Moreover  
\begin{align}
\label{5.12} 
\begin{split}
& (a) \,   | \nabla  G (x) |  \leq  c | x - \hat{x} |^{\xi -1}    \approx     \frac{G ( x )}{ |x - \hat x |}     \, \,  \mbox{whenever}\, \,  x\in \mathbb{R}^{n}\setminus E \, \, \mbox{and}\,\, \hat x\in E\\
& \quad \, \, \mbox{with}\, \,  | x - \hat x | \geq 8 \mbox{  diam }(E) \, \, \mbox{where $c$ depends only on the data}. \\
&(b) \,   \lim_{x \to \infty} k ( x ) =  k ( \infty )  \, \,  \mbox{exists finitely and}\, \, k\leq 0   \, \, \mbox{in}\, \,   \rn{n} \sem E.  \\
&(c) \,  \mbox{There exist}\, \,  \beta \in (0, 1), \, \, \mbox{depending only on the data, and}
 \\  & \quad \,\, \hat r_0  =  \hat r_0 ( E)  \geq 100
\, \,   \mbox{such that}\, \,                            |k ( x) -  k ( \infty ) |  \leq   \hat r_0    |x|^{ - \beta} \, \,  \mbox{whenever}\, \, |x |  \geq \hat r_0. \\
& (d)\, \mbox{If $ \mu $  denotes   the positive Borel measure  associated to $ G $ then $ \mu ( E ) = 1.$ } 
\end{split}
\end{align}
      \end{lemma}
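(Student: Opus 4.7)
The overall strategy mirrors the proof of Lemma \ref{lem5.2}, but several steps simplify in the regime $p > n$ because point sets have positive $\mathcal{A}$-capacity, the fundamental solution $F$ of Lemma \ref{lem1.5} is everywhere non-negative with $F(0)=0$, and $F$ itself can be used directly as a barrier on $\partial B(0,R)$ without additive constants.

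\emph{Existence via exhaustion.} Fix $R \gg \operatorname{diam}(E)$ large. By Wiener regularity (Definition \ref{def3.2} and Remark \ref{rmk2.4}, which apply since $p>n$ makes every nonempty closed set uniformly $(r_0,p)$-fat), there exists a unique continuous $\mathcal{A}$-harmonic $v_R$ in $B(0,R)\setminus E$ with boundary values $0$ on $\partial E$ and $F$ on $\partial B(0,R)$. Since $F$ is $\mathcal{A}$-harmonic away from the origin, the comparison principle applied to $F - v_R$ on $B(0,R)\setminus E$ gives $0 \le v_R \le F$. Extend $v_R$ continuously by $0$ on $E$. By Lemmas \ref{lemma2.1}--\ref{lemma2.3} the family $\{v_R\}$ is locally equicontinuous and has locally equicontinuous gradients in $\mathbb{R}^n\setminus E$. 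A diagonal subsequence converges uniformly on compact sets to a function $G$ that is $\mathcal{A}$-harmonic in $\mathbb{R}^n\setminus E$, continuous and zero on $\partial E$, with $0 \le G \le F$; in particular $k=G-F \le 0$ everywhere.

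\emph{Property (a) and preliminary gradient control.} For $x \in \mathbb{R}^n \setminus E$ with $|x-\hat x| \ge 8\operatorname{diam}(E)$, the function $G$ is $\mathcal{A}$-harmonic in $B(x,|x-\hat x|/2)$, and by $0 \le G \le F$ together with \eqref{1.54}$(i)$ one has $G \le c|x-\hat x|^{\xi}$ there. Applying Lemma \ref{lemma2.2}$(\hat a)$ on this ball yields $|\nabla G(x)| \le c|x-\hat x|^{\xi-1}$. The comparison $G(x) \approx |x-\hat x|^{\xi}$ at such points, combined with Lemma \ref{lemma2.3}(ii) and Harnack, supplies the ratio form in (a).

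\emph{Properties (b) and (c) via a linearized elliptic equation.} As in \eqref{1.43}--\eqref{1.45} with $F_1=G$, $F_2=F$, $\zeta=1$, the difference $k=G-F$ solves a linear divergence-form equation $\hat L k = 0$ in $\mathbb{R}^n\setminus (E \cup \{0\})$ whose coefficients $\hat{\mathfrak b}_{ij}$ satisfy
\[
\sum_{i,j} \hat{\mathfrak b}_{ij}(x)\xi_i\xi_j \approx |x|^{(\xi-1)(p-2)}|\xi|^2,
\]
so $\hat L$ is uniformly elliptic on each annulus $B(0,2r)\setminus B(0,r/2)$ with ellipticity constants depending only on the data (after the natural rescaling $x\mapsto x/r$ the equation becomes uniformly elliptic with coefficient bounds independent of $r$). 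Since $k$ is bounded (because $-F \le k \le 0$ is not obviously bounded above, but away from $E$ one has $|k| \le |G|+|F| \le cF$; the bound $|k|\le c$ near infinity follows from the same Harnack-based argument as in Lemma \ref{lem1.3}, after noting that $\max(F-G,0)$ is non-negative and $\mathcal{A}$-subharmonic on $\mathbb{R}^n\setminus\{0\}$). The oscillation-decay argument of the proof of Lemma \ref{lem1.3}, culminating in \eqref{1.26}, applied to $k$ on dyadic annuli, then yields both the existence of the finite limit $k(\infty)$ in (b) and the quantitative decay in (c) for some $\beta\in(0,1)$ and $\hat r_0 = \hat r_0(E)$.

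\emph{Property (d) and uniqueness.} For (d), pick $\theta \in C_0^\infty(B(0,2R))$ with $\theta \equiv 1$ on $B(0,R)$ and $|\nabla\theta|\le c/R$ for $R$ large. Testing \eqref{eqn2.6}$(i)$ for both $G$ and $F$ and subtracting,
\[
\mu(E) - 1 = -\int \langle \mathcal{A}(\nabla G) - \mathcal{A}(\nabla F),\, \nabla\theta\rangle\,dx.
\]
By \eqref{eqn1.1}, the Lipschitz bound on $\mathcal{A}$, the gradient estimate (a), the ellipticity \eqref{1.45}, a Caccioppoli inequality for $k$ on $B(0,4R)\setminus B(0,R/2)$, and (c), the right-hand side is $O(R^{-\beta})\to 0$, proving (d). For uniqueness, if $G_1$ is a second such Green's function, then $G-G_1$ is bounded on $\mathbb{R}^n\setminus E$ with zero boundary values on $\partial E$; the same linearization and Harnack argument shows $(G-G_1)/F \to 0$ at infinity, and then $|G-G_1|\le \varepsilon F$ on $\partial B(0,s)$ for large $s$, so by the maximum principle $G\equiv G_1$.

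\emph{Main anticipated obstacle.} The principal technical point is establishing (c): running the oscillation-decay argument of Lemma \ref{lem1.3} in the present context requires verifying that the linearized operator $\hat L$ is genuinely uniformly elliptic on annuli (after rescaling) with constants independent of scale, so that Harnack's inequality yields geometric oscillation decay. The rest is a matter of assembling the pieces already developed for the $p=n$ case.
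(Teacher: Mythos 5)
Your construction diverges from the paper's in a worthwhile way: the paper prescribes the constant boundary value $R^{\xi}$ on $\partial B(0,R)$, obtains a limit $\bar G$, then rescales so the boundary measure has mass one and must subsequently prove (via \eqref{5.16}--\eqref{5.20}) that the asymptotic constant $a_1$ in $(F-G)/G\to a_1$ is in fact zero. Your choice of boundary value $F$ on $\partial B(0,R)$ short-circuits this: it pins $G$ to the correct asymptotic normalization from the start, so the $a_1$-analysis disappears, and $\mu(E)=1$ is then recovered from (c) by the test-function computation rather than built in by fiat. That is a genuinely cleaner route, and the remaining ingredients (the linearized equation \eqref{1.43}--\eqref{1.45} with $n$ replaced by $p$, the oscillation-decay scheme of Lemma \ref{lem1.3}, Lemma \ref{lemma2.2} for (a), the $R\to\infty$ test-function argument for (d), and the boundary maximum principle for uniqueness) match the paper's.

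There is, however, a gap in your justification that $k$ is bounded below, and the idea you gesture at would not work. You assert that $\max(F-G,0)$ is an $\mathcal{A}$-subsolution on $\mathbb{R}^n\setminus\{0\}$; but $\max(F-G,0)=F-G$ is the difference of two $\mathcal{A}$-harmonic functions, which solves a \emph{linear} divergence-form equation, not a sub- or supersolution property of the nonlinear operator $\nabla\cdot\mathcal{A}(\nabla\cdot)$, and no elementary argument makes it an $\mathcal{A}$-subsolution. The correct (and much simpler) observation, which your setup already makes available, is a one-line comparison in the exhaustion: assume WLOG $0\in E$, so $F$ is $\mathcal{A}$-harmonic in $B(0,R)\setminus E$; since $F-\max_{E}F$ is also $\mathcal{A}$-harmonic, equals $F-\max_E F\le 0=v_R$ on $\partial E$, and equals $F-\max_E F\le F=v_R$ on $\partial B(0,R)$, the comparison principle gives $F-\max_E F\le v_R\le F$ in $B(0,R)\setminus E$ for every $R$. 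Passing to the limit yields $-\max_E F\le k\le 0$ directly, without any Harnack argument or any reference to Lemma \ref{lem1.3}. Once this two-sided bound is in hand, the oscillation-decay argument of Lemma \ref{lem1.3} applies to $k$ verbatim (using \eqref{1.43}--\eqref{1.45} with $p$ in place of $n$ for the ellipticity of the linearized operator on dyadic annuli, exactly as you and the paper both invoke), and the rest of your outline goes through.
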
     
 \begin{proof}  
 Once again we first assume that  $ \hat x = 0 $ and  $ \hat r :=\mbox{diam}(E)= 1$.   For fixed $ p > n, $   
 let  $ v = v ( \cdot, R ) $ be the $ \mathcal{A}$-harmonic function in $ B ( 0, R )   \sem E $   
 with boundary values  $ v \equiv 0 $  on $ E $ and  $ v \equiv  R^{\xi}$   on $ \ar B  ( 0, R ) $ 
  in the continuous sense.  Extend  
$ v $  to  a  H{\"o}lder continuous function in $ B ( 0, R) $ by setting $ v \equiv 0 $ 
on $ E. $            Let  $ \nu $ be the positive Borel measure associated to  $ v$ with support contained in $E$ as in Lemma \ref{lemma2.4}.  
Using uniform $(R,p)$-fatness of $ E $  and arguing as in \eqref{1.56},  \eqref{1.59},    
we see that  
\begin{align} 
\label{5.13}   
1  \approx  \nu ( B (0, 2  ))^{1/(p-1)}   \approx  t^{- \xi  } [ M  (2 t,  v )  - m (t, v  ) ] \approx  M ( 2, v ) 
\end{align}  
whenever $ 4 \leq t  \leq R/4. $    Using   \eqref{5.13}  and  arguing as in  \eqref{1.60}
 we  get    
\begin{align}
\label{5.14}    
M  ( t,  | \nabla v |   )  \leq  c t^{(1-n)/(p-1) } \quad \mbox{whenever}\quad  8  \leq t  \leq       R/ 8.   
\end{align}   
where constants in \eqref{5.13}-\eqref{5.14} depend only on the data. 
      From   \eqref{5.13}-\eqref{5.14}, Lemmas  \ref{lemma2.1}-\ref{lemma2.3}, and 
      Ascoli's  theorem  we  deduce that  a  subsequence,  say  $ \{ v ( \cdot, R_l) \}, $ 
       converges uniformly on compact subsets of  $  \rn{n} $ as $ R_l  \to \infty $  
       to  a   H{\"o}lder continuous function, $\bar G$   that is  $ \mathcal{A}$-harmonic
        in  $ \rn{n} \sem   E  $   with  $  \bar G  \equiv  0 $ on  $ E. $   Moreover,
          $  \{ \nabla v ( \cdot, R_l) \} $ converges uniformly on compact subsets 
          of  $  \rn{n} \sem E  $  to  $  \nabla \bar G$ 
    and   $  \{ \nu ( \cdot, R_l  ) \}$  converges   weakly as measures to  $ \bar \mu,  $  
    the positive Borel measure corresponding to $ \bar G. $  Let 
    \[  
    G(\cdot)  =  \frac{ \bar G(\cdot) }{ \bar \mu ( B (0, 2 ))^{1/(p-1)}  } \quad \mbox{and} \quad  \mu(\cdot) =  \frac{ \bar \mu(\cdot)}{ \bar \mu ( B ( 0, 2 )) }.  
    \]   
    Then  $ \mu $ is the measure  corresponding to $ G $ and      $  \mu ( B (0, 2))  = 1. $  
    
    Moreover from \eqref{5.13} and \eqref{5.14} we  deduce that 
    \begin{align} 
\label{5.15}    
|\nabla G ( x ) | \leq  c |x|^{(1-n)/(p-1)}  \approx    G ( x )/|x|  \approx  F ( x)/|x| \quad \mbox{whenever} \quad   |x|  \geq  8.   
\end{align}   
Using  these facts  and  the maximum principle for  $ \mathcal{A} $-harmonic functions we see that  $ F - G $ is a  non-negative  weak solution in $ \rn{n}  \sem  E $  to the elliptic equation in \eqref{1.43}-\eqref{1.44}  with $ F = F_1$, $G = F_2, $ $ \zeta= 1$, and $n$  replaced by $p$.  Also \eqref{1.45} is valid with $ n $ replaced by $p. $  Using this version of \eqref{1.43}-\eqref{1.45}  we deduce  as in \eqref{1.46}  that       
\begin{align} 
\label{5.16}   
\lim_{x \to \infty}   \frac{( F  - G ) (x)}{G(x)} = a_1  > - 1 . 
\end{align}  
     Then from the maximum principle for $ \mathcal{A}$-harmonic functions we see for given small $ \ep > 0  $ 

 \[    - \ep F   -  M ( 2, F )  \leq   (1+a_1) G - F   \leq \ep F    \quad  \mbox{on}\, \,  \ar (B ( 0, s )\sem E )  
 \]  
 provided   $ s $ is large enough,  so this inequality also holds in   $ B ( 0, s ) \sem E. $ Letting $ \ep \to 0 $ we get 
\begin{align} 
\label{5.17} 
0  \leq    F - (1 + a_1 ) G  \leq   M ( 2, F )  \quad  \mbox{in}\, \, \rn{n}\setminus E.  
\end{align}  
Using  \eqref{5.17}  and the new version of  \eqref{1.43}-\eqref{1.45} we can now    
once again  essentially  repeat  the argument given in the proof of  Lemma \ref{lem1.3} with $ h  = F - ( 1 + a_1)  G $.  
Thus if      $ k  = ( 1 + a_1 )  G  -  F $ we first obtain  \eqref{5.6}.     Next  
     using  \eqref{5.6}  we   argue as in  
     \eqref{1.18}-\eqref{1.25}  to eventually conclude that $(c)$ of  Lemma  \ref{lem5.3} is true for this $ k$.  
      It remains  to  show  that  $ a_1 = 0 $ in order to  prove  $(b), (c), $ of Lemma \ref{lem5.3}.  
      To  do this  let   $ R > 16,   0 \leq  \he  \in  C_0^\infty ( B (0, 2R ) )  $ with $ \he  \equiv 1 $ on 
  $ B ( 0, R ) $ and $ |\nabla \he|  \leq c/R. $  Using  $ \he $ as a  test function in  
  \eqref{1.2} for  $ G $  we see that   
\begin{align}  
\label{5.18}  ( 1 + a_1 )^{p-1}     \mu ( E ) =   ( 1 + a_1 )^{p-1}   =   -   \int_{ B ( 0, 2R) \sem B (0, R ) } 
    \lan  \mathcal{A} ((1+a_1)\nabla G),  \nabla \he \ran dx.
    \end{align}
    Then    as  in  \eqref{5.8}     we get from \eqref{5.15},  \eqref{1.54} $(ii)$, and 
   the  structure assumptions on  $ \mathcal{A}$  for $ k $ as defined above,    
   \begin{align} 
\label{5.19}  
|\mathcal{A} ( \nabla F )  -  \mathcal{A}( (1+ a_1 ) \nabla G ) | |\nabla \he | \leq c  ( 1 + |a_1|)^{p - 2}   R^{ \frac{n(2 - p) - 1 }{p-1} } \, \,  | \nabla k | \quad \mbox{whenever}\, \,  R \leq |x| \leq 2R.
\end{align}  
Also \eqref{5.9}   remains valid for our  $k.$     
           Using these  inequalities we obtain as in  \eqref{5.10} 
\begin{align} 
\label{5.20} 
\begin{split}
| (1+a_1)^{p-1} -1|&= \left|   \int_{ B ( 0, 2R) \sem B (0, R ) }     \lan  \mathcal{A} ((1 + a_1 ) \nabla G)  -  \mathcal{A} (\nabla F),  \nabla \he \ran
     dx  \right|  \\
     &\leq \,  c  ( 1 + |a_1|)^{p-2}   R^{ \frac{n   ( 2 - p) - 1 }{p-1}}  \int_{B( 0, 2R) \sem B(0, R)}  | \nabla k | dx     \\    
     & \leq          c^2  ( 1 + |a_1|)^{p-2} \, \hat r_0 R^{  \frac{ ( n - 1) - (p-1) ( 1 + \be )}{ p-1} } \to 0  \quad  \mbox{as}\, \,  R  \to \infty.   
\end{split}
\end{align}  
From \eqref{5.20} we conclude  $ a_1 = 0 .$    Thus $(b)$ and $(c)$ of  Lemma   \ref{lem5.3} are  
        true when $ \hat r = 1$ and $\hat x = 0 $.  
        
        To prove uniqueness in this case suppose  $ G_1 $  is also a $\mathcal{A}$-harmonic Green's function 
        for $\mathbb{R}^{n}\setminus E$ with pole at $  \infty. $ 
        Then from the boundary maximum principle 
        for $ \mathcal{A}$-harmonic functions  we 
        have  $ |  G - G_1 | $  bounded in  $ \rn{n} \sem E $ 
        so given $ \ep > 0 $ it follows from      Lemma  \ref{lem5.3} $(b)$ 
        that if  $ s > 0 $ is large enough, then  
        \[  
        | G - G_1 |  \leq  \ep  G\quad   \mbox{on}\, \,  \ar ( B ( 0, s ) \sem E).
         \]
Once again from the maximum principle this inequality holds in  $ B (0, s ) \sem E. $ 
Letting $ \ep \to 0 $  we conclude $ G_1 = G. $               
                   
To remove the assumption that $  0 \in E $ and   $\mbox{diam}(E) = 1$, we suppose  that  $  \hat x \in E $ and   $\mbox{diam}(E) = \hat r$ for some $\hat x$ and $\hat r>0$.   Let  
         $ \hat E =    (1/  \hat r) ( E  - \hat x).$  Then $ 0 \in \hat E $  and 
         $ \mbox{diam}(\hat E)=1 $, so from our previous work,  we see that $ \hat G =  F + \hat k $  
         where $ \hat G $ is the $\mathcal{A}$-harmonic Green's function for  $ \rn{n} \sem \hat E $  with pole at 
         $  \infty  $ and $  p>n $ fixed.     Let  
\[   
G ( x)  =  (\hat r)^{\xi}  \hat G \left(  \frac{x  - \hat x}{\hat r}  \right) \quad \mbox{whenever}\, \, x \in \rn{n} \sem E.    
\] 
Using  Lemma \ref{lem5.2} for  $ \hat E $  and translation and dilation invariance of  
        $ \mathcal{A}$-harmonic functions we see that $ G $ is  $ \mathcal{A}$-harmonic in $ \rn{n}  \sem E $  with  $ G  \equiv 0 $ on $ \ar E. $   Also from 
        Lemma \ref{lem1.4}  and  Lemma \ref{lem5.2}  we can write for $ x \in \rn{n} \sem E, $  that  
\begin{align} 
\label{5.21}     
G ( x )  =    F ( x - \hat x )         +  \hat r^\xi \,  \hat k \left( \frac{x - \hat x}{\hat r}\right )    =  F ( x )   + k ( x )  
\end{align}
       where  
\[ 
k ( x )   =   F ( x - \hat x )  - F ( x )  + \hat r^{\xi}  \hat k \left( \frac{x - \hat x}{\hat r}\right).
\]   
From this display and Lemma \ref{lem5.3} in the  previous  special case  we conclude  first  that    
\begin{align}  
\label{5.22} 
\lim_{x \to \infty} k ( x )  = k ( \infty ) =  \hat r^{\xi} \, \hat k ( \infty ) 
\end{align}
so    $ (b) $ of Lemma \ref{lem5.3} is valid.   Second  we deduce the validity of $ (c) $  of  Lemma \ref{lem5.3}.  Finally   
        if  $  \hat \mu$ and $\mu$ denote the measures corresponding to $ \hat G$ and $G $ respectively then  $ 
        \mu ( E )   =  \mu ( \hat E ) = 1, $ as follows easily from  changing variables in the integral 
        identity involving  $ \hat \mu. $    The proof of Lemma  \ref{lem5.3}  is now complete.        
              \end{proof}                
We are now able to define $\mathcal{C}_{\mathcal{A}}( E )$  alluded to in the introduction.
\begin{definition} 
\label{defn5.4} 
If  $ E $ is a nonempty convex compact set when $ p > n $ or  
$ E $ contains at least two points when $ p = n $  we put 
\[ 
\mathcal{C}_{\mathcal{A}}( E ) :=
\left\{
\begin{array}{ll}
  e^{- k ( \infty )/\ga} &  \mbox{when}\, \, p = n, \\ 
 (- k ( \infty ))^{p-1} & \mbox{when}\, \,  p > n. 
\end{array}
\right.
\] 
Here $\gamma$ is the constant as in Lemma \ref{lem1.4}.
\end{definition}  
\begin{remark} 
\label{rmk5.5} 
From the definition of  $\mathcal{C}_{\mathcal{A}}( E )$  and
     translation and dilation invariance of  $\mathcal{A}$-harmonic functions  we note that if 
     $ y \in \rn{n}$, $r > 0$, and  $ E $ is  a  convex compact set  containing  at least two points with  
     $ \hat E  =  r ( E + y ), $  then  
 \begin{align} 
 \label{5.23}
 \begin{split}
\mathcal{C}_{\mathcal{A}} (\hat E )=
  \left\{
  \begin{array}{ll}
      r \mathcal{C}_{\mathcal{A}} (E)  & \mbox{when}\, \,  p =n,\\
      r^{p-n}  \mathcal{C}_{\mathcal{A}} (E) & \mbox{when} \, \, p > n.
\end{array}    
           \right.
\end{split}
\end{align}        
         \end{remark}    
 \setcounter{equation}{0} 
\setcounter{theorem}{0}
\section{Proof of Theorem \ref{theoremA}}
\label{section6}
Let  $  \mathcal{A}  \in M_p ( \al ) $  and  $ p  \geq n $ be fixed.  Let  $G$ be    
the  $ \mathcal{A}$-harmonic  Green's function  for the complement of a  nonempty compact convex set, $ E, $   
containing at least two points   when  $p = n $  (see Lemma  \ref{lem5.2}  for $ p = n $  and  Lemma \ref{lem5.3}  when $ p > n$). 
In this section, we first study the levels of $G$ and then we prove that $ \mathcal{C}_{\mathcal{A}} (\cdot)$ satisfies the 
Brunn-Minkowski inequality and  also   obtain the   conclusion of  Theorem \ref{theoremA}    when  equality 
occurs. We  begin with 
\begin{lemma}
\label{lem6.1}
Let $p$, $\mathcal{A}$, $E$, and $G$  be as above and  put  $ G  \equiv 0 $ in $E$. 
Then   $ \{ x :  G (x) < t\}$ is convex for every   $t  \in  (0, \infty)$.  
Moreover,  $  \nabla  G  \neq 0 $ in   $ \rn{n} \sem  E $   and if     $  B ( 0, r_0 ) \subset E$ for some $r_0>0$ and   $\mbox{diam}(E)  = 1$   then  
\begin{align} 
\label{6.0} 
\begin{split}
&(a) \hs{.1in}   | \nabla G  (x)  |  \approx  \lan \nabla G,  x/|x|  \ran  \approx  | x |^{(1-n)/(p-1)} \, \, \mbox{wnenever}\, \,   |x| \geq 4,  \\  
& (b) \hs{.1in} \mbox{There exists} \, \,   \ti \he   >  0\, \, \mbox{and}\, \, c \geq 1 \, \,  \mbox{depending only on $r_0$ and the data with}  \\  
&\hs{.3in}  \,   \,  | k (x)  -  k ( \infty ) |  \leq c   | x |^{- \ti \he}  \, \, \mbox{whenever}\,\, | x | \geq 4.   
\end{split}
\end{align}
Here proportional constants  depend only on $ r_0 $   and   the data.  
\end{lemma}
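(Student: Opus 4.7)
The plan is to proceed in four steps corresponding to the successive claims of the lemma, the first being by far the most substantial.

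\textbf{Step 1: Convexity of the sublevel sets.} I would proceed by approximation. By Lemmas~\ref{lem5.2} and~\ref{lem5.3}, the $\mathcal{A}$-harmonic function $v(\cdot,R)$ on the convex ring $B(0,R)\sem E$, with boundary values $0$ on $\ar E$ and a constant on $\ar B(0,R)$, converges locally uniformly together with its gradient to $G$ as $R\to\infty$. It therefore suffices to show that for each large $R$ every sublevel set $\{v(\cdot,R)<t\}$ is convex, since convexity is preserved under monotone limits of nested convex sets. The convex-ring case is a Lewis-type theorem extended to the $\mathcal{A}$-harmonic setting in \cite{AGHLV}: one first approximates $E$ and $\ar B(0,R)$ by strictly convex $C^{\infty}$ bodies, then rewrites the $\mathcal{A}$-harmonic equation for $v$ in non-divergence form and analyzes the sign of the second fundamental form of a level surface, showing that the minimum principal curvature cannot change sign inside the ring; finally one passes back through the approximation.

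\textbf{Step 2: Nonvanishing of $\nabla G$.} With all sublevel sets convex and $G\in C^{1,\beta}$ away from its critical set by Lemma~\ref{lemma2.2}, I would rule out critical points geometrically: at a would-be critical point $x_{0}\in\rn{n}\sem E$ the level hypersurface $\{G=G(x_{0})\}$ would simultaneously be the topological boundary of the convex open set $\{G<G(x_{0})\}$ and exhibit a local pinch at $x_{0}$, which is geometrically impossible. Equivalently, one propagates $|\nabla G|>0$ outward from $\ar E$, where Hopf's lemma applies to the smooth approximants, via the strong maximum principle for the linear divergence-form PDE satisfied by $\lan\nabla G,\om\ran$ in each direction $\om$, with coefficients as in \eqref{1.15}--\eqref{1.17}.

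\textbf{Step 3: Estimate (a).} The upper bound $|\nabla G(x)|\leq c|x|^{(1-n)/(p-1)}$ is already contained in Lemmas~\ref{lem5.2}(a) and~\ref{lem5.3}(a). For the matching lower bound and the radial-component estimate, I would write $G=F+k$: by \eqref{1.27a}$(++)$ and \eqref{1.54}(ii) the fundamental solution obeys $\lan\nabla F,x/|x|\ran\approx|\nabla F|\approx|x|^{(1-n)/(p-1)}$ for large $|x|$. The correction $k$ is bounded and, by \eqref{1.43}--\eqref{1.45}, solves a uniformly elliptic divergence-form equation on each annulus at infinity; interior gradient estimates yield
\[
|\nabla k(x)|\,\leq\, c\,|x|^{-1}\sup_{B(x,|x|/2)}|k-k(\infty)|,
\]
which by Lemmas~\ref{lem5.2}(c) and~\ref{lem5.3}(c) decays strictly faster than $|x|^{(1-n)/(p-1)}$. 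Subtracting then gives (a) for all large $|x|$; the compact range $4\leq|x|\leq R_{1}$ is handled by the boundary gradient estimates for $G$ on $\ar E$, which under $B(0,r_{0})\subset E\subset\bar B(0,1)$ depend only on $r_{0}$ and the data.

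\textbf{Step 4: Estimate (b).} Lemmas~\ref{lem5.2}(c) and~\ref{lem5.3}(c) already provide decay of type $|x|^{-\beta}$ beyond the unspecified radius $\hat r_{0}(E)$. The additional hypothesis $B(0,r_{0})\subset E\subset\bar B(0,1)$ provides uniform geometric control: $E$ is uniformly $(1,p)$-fat with constants depending only on $r_{0}$ and $n$, and comparison barriers built from $F$ on the annulus $B(0,2)\sem E$ yield a uniform pointwise bound on $|k-k(\infty)|$ at scale $|x|=4$, with constants depending only on $r_{0}$ and the data. Feeding this uniform bound into the oscillation-decay iteration \eqref{1.26a}--\eqref{1.26} for the linearized elliptic equation satisfied by $k$, now applicable on every dyadic annulus starting at scale $4$ rather than at $\hat r_{0}$, produces (b) with $\ti\he$ and $c$ depending only on $r_{0}$ and the data.

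The main obstacle is Step~1: establishing convexity of the sublevel sets of an $\mathcal{A}$-harmonic Green's function in the convex-ring geometry. This is the essential new geometric input and requires either invoking the convex-ring convexity theorem as generalized to the $\mathcal{A}$-harmonic setting in \cite{AGHLV}, or running a self-contained non-divergence-form argument on the sign of the second fundamental form of the level surfaces, crucially using the structural hypothesis in Definition~\ref{defn1.1}. Steps~2--4 are then relatively standard applications of the PDE and barrier material developed in Sections~\ref{section2}--\ref{section5}.
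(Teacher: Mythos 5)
Your Step~1 mirrors the paper's approach closely: approximate $G$ by $\mathcal{A}$-harmonic solutions $v(\cdot,R)$ on the convex ring $B(0,R)\sem E$, invoke the convexity-of-sublevel-sets result from Lemma~4.4 of \cite{AGHLV} (which is proved there by the non-divergence / second-fundamental-form argument you describe), and then pass to the limit $R\to\infty$. That is exactly what the paper does, and your description of the AGHLV mechanism is accurate. What you omit, and what the paper proves simultaneously in \eqref{6.3}--\eqref{6.5}, is the quantitative radial estimate $\lan x,\nabla v(x)\ran\geq c_+^{-1}$ (resp.\ $\geq c_+^{-1}v(x)$ when $p>n$), obtained by comparing $v(\la x)$ with $v(x)$ via barriers and the boundary maximum principle. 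This estimate, which survives the limit $R\to\infty$, is the workhorse for both the nonvanishing of $\nabla G$ and the two-sided estimate in \eqref{6.0}(a); without it your Steps~2 and~3 lack their engine.

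Your Step~2 is where there is a genuine gap. The ``local pinch'' claim is not a proof: $\mathcal{A}$-harmonic functions with $p\neq 2$ are only $C^{1,\beta}$, their critical sets can be nontrivial, and a critical point on a boundary of a convex open set does not by itself produce any contradiction (convex boundaries can be quite irregular). The alternative via the linearized PDE for $\lan\nabla G,\om\ran$ also fails as stated: the coefficients $\frac{\ar\mathcal{A}_i}{\ar\eta_j}(\nabla G)$ in \eqref{1.16} degenerate precisely where $\nabla G=0$, so one cannot invoke a strong maximum principle for that equation until one already knows what one is trying to prove. The paper's argument is different and cleaner. Once \eqref{6.3} is established for the normalized case $B(0,r_0)\subset E$, diam$(E)=1$, it already gives $\lan x,\nabla G(x)\ran>0$, hence $\nabla G\neq 0$. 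For general $E$ (in particular empty interior, where no normalization applies), the paper observes that $G-t$ is the $\mathcal{A}$-harmonic Green's function for $\rn{n}\sem\{G\leq t\}$ with pole at $\infty$, and $\{G\leq t\}$ is convex with nonempty interior for every $t>0$; applying the already-established normalized case to this fatter set (after translation and dilation) gives $\nabla G\neq 0$ on $\{G>t\}$, and $t>0$ was arbitrary. This sublevel-set trick is the idea you are missing.

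Your Step~3 is a genuinely different route that is plausible but harder to make uniform. You perturb off the fundamental solution, estimating $|\nabla k|$ by interior elliptic estimates together with the decay of $k-k(\infty)$ from Lemmas~\ref{lem5.2}(c)/\ref{lem5.3}(c). This indeed yields $|\nabla k|=O(|x|^{-1-\be})$, which is subordinate to $|x|^{(1-n)/(p-1)}$, so the lower bound follows for $|x|$ large. The difficulty is that the decay radius $\hat r_0(E)$ in Lemmas~\ref{lem5.2}(c)/\ref{lem5.3}(c) depends on $E$, so the crossover scale $R_1$ does too, and the lemma requires constants depending only on $r_0$ and the data. The paper instead derives \eqref{6.0}(a) directly from \eqref{6.3} (which has uniform constants) together with the gradient bounds from Lemmas~\ref{lem5.2}(a)/\ref{lem5.3}(a) and the normalization $\mu(E)=1$: when $p=n$, $\lan x,\nabla G\ran\geq c_+^{-1}$ gives $\lan\nabla G,x/|x|\ran\geq c_+^{-1}|x|^{-1}$; when $p>n$, $\lan x,\nabla G\ran\geq c_+^{-1}G$ together with $G\approx|x|^\xi$ does the same.

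Your Step~4 is close to the paper's; the paper's extra observation (made necessary by the same uniformity issue as above) is that boundedness of $k$ together with knowledge of $F$ and the maximum principle forces $m(\cdot,k)$ non-decreasing and $M(\cdot,k)$ non-increasing on $(2,\infty)$, so the iteration of \eqref{1.26a}--\eqref{1.26} starts at scale $4$ for every normalized $E$, without passing through the dichotomy in Lemma~\ref{lem1.3} that produced the $E$-dependent $\hat r_0$. Finally, you do not mention the two approximation passes the paper makes at the end of Step~1 -- first replacing $\mathcal{A}$ by smooth $\mathcal{A}^{(l)}\in M_p(\al)$ to remove hypothesis \eqref{eqn1.8}(i), and second replacing $E$ by $\{x:d(x,E)<1/l\}$ when $E$ has empty interior -- both of which are needed to state the lemma for a general compact convex $E$.
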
 
\begin{proof} 
Our proof of  Lemma  \ref{lem6.1}  is  similar to the proof  of  Lemma  4.4  in  \cite{AGHLV}.    
Thus we shall often refer to this lemma for details.  We first assume that  
\begin{align} 
\label{6.1}  
\mbox{diam}(E)=1, \, \,  0 \in E,  \, \  B (0, r_0 )  \subset E,   \, \,  
  \mbox{and}\, \,   \eqref{eqn1.8} (i)\, \,   \mbox{holds  for}\, \,   \mathcal{A}. 
\end{align}  
Given  $ R  \geq   4  $,  let $v(\cdot)=v(\cdot, R)$ be the $\mathcal{A}$-harmonic function, defined as in the proof of 
Lemmas \ref{lem5.2} and \ref{lem5.3},  by  
\begin{align}
\label{6.2}
\begin{split}
\left\{
\begin{array}{ll}
\nabla \cdot \mathcal{A}(\nabla v)=0 & \mbox{in} \, \, B(0,R)\setminus E,\\
v=0 & \mbox{on} \, \, \partial E,\\
v=\left\{
\begin{array}{ll} 
\gamma \log R & \mbox{when} \,\, p=n \\
  R^{\xi } &\mbox{when}\, \, n<p<\infty \mbox{ where } \xi = \frac{p-n}{p-1} 
\end{array}
\right.
& \mbox{on}\, \, \partial B(0,R).
\end{array}
\right.
\end{split}
\end{align}    
Here $\gamma$ is as in Definition \ref{defn5.4}.  We  claim there exists  $c_+  \geq 1 $  such that  if  $ x  \in  B ( 0, R ) \sem E$ then    
\begin{align} 
\label{6.3}
\begin{split}
\begin{array}{ll}
(a) \hs{.2in}    c_+^{-1}   \leq   \lan x, \nabla v (x)  \ran  & \mbox{when}\, \,    p = n, \\ 
(b)  \hs{.2in}  c_+^{-1}  v (x) \,  \leq   \lan x, \nabla v (x) \ran & \mbox{when} \, \, p > n    
\end{array}
\end{split}
\end{align}   
where  $ c_+ $  depends only on the data and $ r_0$.    To  prove  \eqref{6.3}  
we  show   for  $ 1  <  \la \leq  101/100 $  that  if  $ x  \in   B (0, R/\la )  \sem E $  then 
\begin{align} 
\label{6.4}  
\begin{split} 
\begin{array}{ll}
(a)  \hs{.2in}  {\ds \frac{ v ( \la x )  -  v (x)}{\la - 1}}\geq    c^{-1}_{++}  &  \mbox{when}\, \,  p   = n,   \\  
(b)  \hs{.2in} {\ds \frac{ v ( \la x )  -  v (x)}{\la - 1}} \geq    c^{-1}_{++} \, v (x) &   \mbox{when}\,\,  p   > n
\end{array}
\end{split}
\end{align}
where $ c^{++} $  depends only on the data and $r_0$.  To prove \eqref{6.4}  suppose 
$ x,  z  \in \ar  E $  with $ | z  - \la x | =  d ( \la x, E). $   Observe from convexity of  
$ E $  and \eqref{6.1}  that   $ d ( \la x,  E )   \approx   \la  - 1,   $  
where constants depend only on $ r_0 $  and the data.  
Let  $  w   =    8  \frac{  \la x  - z }{ |\la x  - z |}  $   and let $  \ph_1$ and $\ph_2 $  be as in  \eqref{1.8}.      
Then  as  in \eqref{1.9} we  have for  $ N $    large enough  that      
\begin{align}  
\label{6.5}   
v ( \la x   )  \geq    c^{-1} v ( w )  \,   \ph_2 ( \la x  -  w )  \geq  \ti c^{-1} ( \la - 1 ),   
\end{align}
  where $ \ti c $  has the same dependence as $ c_{++}$.  Thus  \eqref{6.4}  holds when $ x \in  \ar E. $ 
   The proof of  \eqref{6.4}    for   $  x \in \ar B ( 0, R/\la ) $  is the same as in  the   
   right hand  inequality in  \eqref{1.10} and  below  \eqref{1.58}. Using the boundary maximum principle for 
$\mathcal{A}$-harmonic functions we conclude the validity of   \eqref{6.4} and letting 
$  \la  \to 1 $  we obtain   \eqref{6.3}.   

Extend  $ v  $  to   a  H{\"o}lder continuous function in  $  \rn{n} $  by  setting  $ v \equiv 0 $ 
on  $ E $ while  $ v =  \ga \log R $   when $ p = n $   and  $ v =  R^\xi $ when $ p > n$ on $\rn{n}  \sem  B (0, R)$.  Next we   show that 
$\{x :  v(x)<t\}$ is  convex for every $t\in (0, \gamma \log R)$ when $p=n$ and $t\in (0, R^{\xi})$ when $p>n$. 
 In  order to  make easy reference to  the proof of  Lemma 4.4  in  \cite{AGHLV}      we  define  $u(\cdot)=u(\cdot, R)$ by
\begin{align}
\label{eqn6.2}
\begin{split}
u(x):=\left\{
\begin{array}{ll}
1-\frac{\displaystyle v(x)}{\displaystyle R^{\xi }} & \mbox{when} \, \, n<p<\infty,\\
1-\frac{\displaystyle v(x)}{\displaystyle \gamma\log R} & \mbox{when} \, \, p=n.
\end{array}
\right.
\end{split}
\end{align}
Then it can be easily seen that $0\leq u\leq 1$ in $\mathbb{R}^{n}$, $u\equiv 1$ in $E$, $u\equiv 0$ 
on $\mathbb{R}^{n}\setminus B(0,R)$ continuously, and $u$ is 
$\tilde{\mathcal{A}}$-harmonic in $B(0,R)\setminus E$ where
 $\tilde{\mathcal{A}}(\eta)=- \mathcal{A}(-\eta)$ for $n\leq p<\infty$. 
 Note as in  Remark \ref{rmk1.3}   that $\tilde{\mathcal{A}}$ also satisfies the same structural properties
as $\mathcal{A}$ (i.e., $\tilde{\mathcal{A}}\in M_p(\alpha)$ whenever $\mathcal{A}\in M_p(\alpha)$).
From this observation, it can be easily seen that convexity of $\{x\in \mathbb{R}^{n};\, \, v(x)< t\}$ for  $ t $ in the range stated above  is 
equivalent to convexity of $\{x\in \mathbb{R}^{n};\, \, u(x)>t\}$ for  $ 0 < t < 1. $ 
 Moreover,   it is shown in   Lemma 4.4  of \cite{AGHLV} that $\{x\in \mathbb{R}^{n}: \, \, u(x)>t\}$ is 
convex for every $t\in (0,1)$.     We remark that the range of $p$ considered in \cite{AGHLV} is $1<p<n$  
and  $ u $ is the $\mathcal{A}$-capacitary function  for $ E,  $ so defined in 
$ \rn{n} \sem E. $   
However, the  contradiction type   argument  in \cite{AGHLV} uses only  the  maximum 
principle for $ \mathcal{A}$-harmonic  functions,  assumption  \eqref{6.1},  
and  Lemmas \ref{lemma2.1}-\ref{lemma2.3}  all of which hold in our situation.    
 Thus  $ \{x: v(x)  <  t  \} $ is  convex for 
$ t $ in the intervals stated above when \eqref{6.1} holds.   
Now  we saw, in the proof of Lemmas \ref{lem5.2} and \ref{lem5.3}, that a subsequence
  of $v(\cdot)=v(\cdot, R)$ converges uniformly on compact subsets of $\mathbb{R}^{n}$
   to $G$ as $R\to\infty$  when $ p =n $   and  to   $ b\, G,  b = $ constant  when  $ p > n. $  Moreover,  the corresponding  sequence of  gradients converges 
   uniformly on compact subsets  of  $ \rn{n}\sem E $  to  $  \nabla G$ for $ p = n $ and to  $ b \nabla G $  when $ p  > n.$    
   Thus    $  \{x :\, \, G(x) <  t  \}  $  is  convex  for  $ t \in (0, \infty)$   and \eqref{6.3} holds with 
   $ v $ replaced by  $ G $ whenever  $ x \in   \rn{n}  \sem   E. $  To   remove the 
   assumption on  $ \mathcal{A}$  in  \eqref{6.1}  we  approximate    $ \mathcal{A} $  by a  sequence of  smooth 
   $ \{\mathcal{A}^{(l)}\}$   where $\mathcal{A}^{(l)}  \in M_p  ( \al )$ for $l=1,2,\ldots$ and take 
   limits of the corresponding sequence  $\{ G^{(l)}\} $   (see the  paragraph following (4.35) of Lemma 4.4 in
       \cite{AGHLV} for more details).   Thus  \eqref{6.3}  is  valid  for  $ \mathcal{A}  \in M_p ( \al ) $  whenever  $  B (0, r_0)   \subset  E $  and 
    $ \mbox{diam}(E)=1$. Now \eqref{6.3}  and   \eqref{5.2} $(a)$    imply  \eqref{6.0} $(a) $ when $ p = n $ while
      \eqref{6.3}, \eqref{5.12} $ (a), (d), $   and  \eqref{eqn2.6} $(ii)$  give \eqref{6.0} $(a)$ when $ p > n.$  
      To  prove   \eqref{6.0}  $ (b) $   observe   that $ m ( t, k ) $ is  non-decreasing and  $ M ( t,  k ) $  
      non-increasing on  $ ( 2, \infty ) $  as follows  from the  maximum principle for  
      $  \mathcal{A}$-harmonic functions,  boundedness of  $k$,  and  our knowledge of  $ F$.    
      Using this fact and arguing as in  \eqref{1.26} we now get \eqref{6.0} $(b)$.
   
    Next  if   $ E $ has nonempty  interior,   we  can  translate and dilate  $ E $  to  
    get  a  convex  set  $  \hat E $   with  diameter  1  and  $ B (0, r_0 )  \subset \hat E $ 
    for some $ r_0 > 0. $   Using   \eqref{6.3} for the $\mathcal{A}$-harmonic Green's function
     for $\mathbb{R}^{n}\setminus \hat E $ with pole at infinity and  then   using  a  homothetic  
     transformation to get back  to  $ E $  we conclude  from  
     Remark \ref{rmk1.3} that   Lemma \ref{lem6.1}  is  valid whenever  $ E $ 
     has nonempty interior.    
     
     If  $  E $  has empty interior  choose $ \hat x  \in E $ and $ \rho > 0 $ with 
   $  E  \subset  B ( \hat x, \rho )$.  Let  $  G^{(l)} $  denote the  $\mathcal{A}$-harmonic
    Green's  function for   $\mathbb{R}^{n}\setminus \{ x : \, d ( x, E )  < 1/l \}$ 
    for $l = 1,  2,  \cdots  $ with pole at infinity  and $ \ti G $  the 
    $\mathcal{A}$-harmonic Green's  function for  $\mathbb{R}^{n}\setminus B ( \hat x, \rho )$ 
    with pole at infinity. Then for $l$  large enough it follows from the same argument used  for 
   proving uniqueness of  $ G $  that   $ \ti G  \leq  G^{(l)} \leq   G. $   Using this  fact,   
  Lemmas \ref{lemma2.1}-\ref{lemma2.3}, and  Ascoli's theorem we find that a 
  subsequence of    $\{ G^{(l)}\} $ converges  uniformly to  $ G $ on compact subsets of  $ \rn{n}$ so  
   $ \{ x : G(x)  < t \}$ is  convex for  $ t \in (0,\infty)$.  Finally,   we note that  $  G  -  t  $ is 
   the $\mathcal{A}$-harmonic Green's function for  $ \rn{n} \sem \{ x :  G (x)   \leq   t  \} $ with a pole at infinity.  Since  
   this set is convex with nonempty interior we have  $  \nabla G (x)  \neq  0 $  whenever  
   $ G ( x ) > t $ and  $ t  >  0$.  The proof of  Lemma  \ref{lem6.1}  is now complete.   
   \end{proof}       
 \begin{remark} 
 \label{rmk6.2}  
 We  note that  if  $ \mathcal{A} \in M_p (\al)  $ for fixed $\alpha\in(1,\infty)$  and $  F $ is the  fundamental solution
  with pole at  $ \{0\}$  when  $ p  = n$ defined in Definition \ref{def4.1}  then   
 $  \{ x :  F ( x) <  t   \} $  is convex when  $  t   \in   (- \infty,  \infty ) $   as follows
   from our construction of  $ F $   and  essentially the same proof we gave for $ G. $  Also observe that 
 this remark  is not needed when  $ p > n  $  since the  $\mathcal{A}$-harmonic Green's function $G$ 
 with pole at infinity  for  $ \rn{n} \sem  \{0\} $  and fundamental solution $F$ with pole at $\{0\}$ defined in Definition \ref{def4.2}  are the same.
 \end{remark}   
\subsection{Proof of the Brunn-Minkowski inequality}
\begin{proof}[Proof of \eqref{BMn} and \eqref{BMpn} in Theorem \ref{theoremA}]
Let $E_1$ and $E_2$ be compact convex sets.  Note  from    \eqref{5.23} that if $ p > n $ 
 and either $E_1$  or $ E_2$ is a single point, then equality holds in
     \eqref{BMpn} and  Theorem \ref{theoremA} is trivially true.    
 Thus we assume  $ E_1$ and $E_2 $  each contain at  least two points
when $p\geq n$. For $i=1,2$, let $G_i$ be the $\mathcal{A}$-harmonic 
 Green's function for $\mathbb{R}^{n}\setminus E_i$ with pole at $\infty$ 
 obtained in Lemmas \ref{lem5.2} and \ref{lem5.3} for $n\leq p<\infty$. 
 Let $\mathcal{C}_{\mathcal{A}}(\cdot)$ be defined as in Definition \ref{defn5.4}. 
 For fixed $\lambda\in (0,1)$, let $G$ be the $\mathcal{A}$-harmonic Green's 
 function for $\mathbb{R}^{n}\setminus [\lambda E_1+(1-\lambda)  E_2]$ with 
 pole at $\infty$. As observed in  \cite{B1,CS,AGHLV},  it is enough to prove  
  for arbitrary convex compact sets $ E_i'$ containing at least two points when $p\geq n$ for  $i =1, 2$ that
\begin{align}
\label{eqn4.16pn}   
\mathcal{C}_{\mathcal{A}}( E_1'   + E_2')^{\frac{1}{p-n}}     \geq    
\mathcal{C}_{\mathcal{A}} ( E_1')^{\frac{1}{p-n}}  \, + \,        \mathcal{C}_{\mathcal{A}} ( E_2')^{\frac{1}{p-n}}  \quad \mbox{when}\, \, n<p<\infty
\end{align}                                    
 and
 \begin{align}
\label{eqn4.16n}   
\mathcal{C}_{\mathcal{A}}( E_1'   + E_2') \geq    \mathcal{C}_{\mathcal{A}} ( E_1') \, + \,        \mathcal{C}_{\mathcal{A}} ( E_2') \quad \mbox{when}\, \, p=n. 
\end{align}   
To get \eqref{BMpn} from   \eqref{eqn4.16pn} when 
 $n<p<\infty$ (and \eqref{BMn} from \eqref{eqn4.16n} when $p=n$) put  
 \[
 E_1'  =  \la E_1 \quad \mbox{and} \quad E_2' =  (1-\la) E_2
 \]
and use $(p-n)$-homogeneity of $\mathcal{C}_{\mathcal{A}}(\cdot)$ 
(and $1$-homogeneity of $\mathcal{C}_{\mathcal{A}}(\cdot)$ when $p=n$).  To get \eqref{eqn4.16pn} from
  \eqref{BMpn} (and to get \eqref{eqn4.16n} from  \eqref{BMn}) one 
  can take $\lambda=1/2$ and use $(p-n)$-homogeneity of
   $\mathcal{C}_{\mathcal{A}}(\cdot)$ when $n<p<\infty$ (and $1$-homogeneity
    of $\mathcal{C}_{\mathcal{A}}(\cdot)$ when $p=n$) once again. 
    On the other hand, to  prove  \eqref{eqn4.16pn} when $n<p<\infty$ and \eqref{eqn4.16n} when $p=n$   
it suffices to show,  for  all $  \la  \in  (0,1) $, that    
\begin{align}  
\label{eqn4.17}
\mathcal{C}_{\mathcal{A}}(\la E_1''   + (1-\la)  E_2'')     \geq    \min  \left\{ \mathcal{C}_{\mathcal{A}}( E_1''),     
\mathcal{C}_{\mathcal{A}}( E_2'')  \right\}
\end{align}   
whenever  $ E_i''$ for $i =1, 2$  are compact convex sets containing at
 least two points when $p\geq n$ and  for fixed $p$ with $n\leq p<\infty$. 
 To  get   \eqref{eqn4.16pn} from \eqref{eqn4.17} when $n<p<\infty$ let  
\[  
E_i''  =  \frac{E_i'}{\mathcal{C}_{\mathcal{A}} ( E_i')^{\frac{1}{p- n} } }  \quad \mbox{for}\, \,  i  = 1, 2 
\quad \mbox{and}\quad
  \la  = \frac { \mathcal{C}_{\mathcal{A}}( E_1')^{\frac{1}{p-n}} }{ 
 \mathcal{C}_{\mathcal{A}}( E_1')^{\frac{1}{p-n}} +
 \mathcal{C}_{\mathcal{A}}( E_2')^{\frac{1}{p-n}}}
  \]   
 then use $(p-n)$-homogeneity of $\mathcal{C}_{\mathcal{A}}(\cdot)$ and do
  some algebra. Similarly, to get \eqref{eqn4.16n} from \eqref{eqn4.17} when $p=n$ we let
 \[  
E_i''  =    \frac{E_i'}{\mathcal{C}_{\mathcal{A}} ( E_i')}  \quad \mbox{for}\, \,  i  = 1, 2 
\quad \mbox{and}\quad
  \la  = \frac {\mathcal{C}_{\mathcal{A}}( E_1')}{  \mathcal{C}_{\mathcal{A}} ( E_1')+  \mathcal{C}_{\mathcal{A}} ( E_2')}.
  \]
Finally, one can easily get \eqref{eqn4.17} from \eqref{BMpn} 
when $n<p<\infty$ (from \eqref{BMn} when $p=n$). Hence we 
conclude that \eqref{BMpn}, \eqref{eqn4.16pn}, and \eqref{eqn4.17} 
are all equivalent when $n<p<\infty$. Similarly, \eqref{BMn}, \eqref{eqn4.16n}, 
and \eqref{eqn4.17} are all equivalent when $p=n$.  
Therefore, we focus on proving \eqref{eqn4.17} for $n\leq p<\infty$ and also for ease of notation 
we shall just use $E_i$ instead of $E''_i$ in \eqref{eqn4.17}.  For fixed $ \la  \in (0,1) $, we claim that
\begin{align}
\label{3uustar}
\begin{split}
G(x)\leq G^*(x)  :=\inf
\left\{
\max\{G_1(y), G_2(z)\};
\begin{array}{l} 
x=\lambda y+(1-\lambda) z, \\
\lambda\in [0,1], y,  z   \in \mathbb{R}^n 
\end{array}
\right\}
\end{split}
\end{align}
whenever $x\in\mathbb{R}^{n}$. Assume that \eqref{3uustar} 
holds for the moment and we show how to 
get \eqref{eqn4.17} from \eqref{3uustar}. It follows from \eqref{3uustar} and definition of the $\mathcal{A}$-harmonic Green's function that
\begin{align*}
-k(x)=F(x)-G(x) &\geq F(x)-G^*(x) \\
&\geq \min\{F(x)-G_1(x), F(x)-G_2(x)\} =\min\{-k_1(x), -k_2(x)\} 
\end{align*}
where $F$ is as in Lemma \ref{lem1.4} when $p=n$ and as in 
Lemma \ref{lem1.5} when $n<p<\infty$. Here $k_1,k_2,k$ are the functions 
appearing  in Lemma \ref{lem5.2} when $p=n$ and in Lemma \ref{lem5.3} 
when $n<p<\infty$ associated to $E_1, E_2, \lambda E_1+(1-\lambda) E_2$ 
respectively. Using the definition of $\mathcal{C}_{\mathcal{A}}(\cdot)$ and this inequality  when $n<p<\infty$ we have   
\begin{align*}
\mathcal{C}_{\mathcal{A}}(\lambda E_1+(1-\lambda)E_2) &=(-k(\infty))^{p-1}=\lim\limits_{|x|\to\infty} [F(x)-G(x)]^{p-1} \\
&\geq \min\left\{ \lim\limits_{|x|\to\infty}[F(x)- G_1(x)]^{p-1}\, \,, \, \,\lim\limits_{|x|\to\infty} [F(x)-G_2(x)]^{p-1}\right\} \\
&=\min\{(-k_1(\infty))^{p-1},(-k_2(\infty))^{p-1}\} =\min\{\mathcal{C}_{\mathcal{A}}(E_1), \mathcal{C}_{\mathcal{A}}(E_2)\}.
\end{align*}
While when $p=n$, the inequality above gives us (where $\gamma$ is as in Remark \ref{rmk5.5})
\begin{align*}
\mathcal{C}_{\mathcal{A}}(\lambda E_1+(1-\lambda)E_2) &=e^{-k(\infty)/\gamma}=\lim\limits_{|x|\to\infty} e^{\frac{F(x)-G(x)}{\gamma}} \\
&\geq \min\left\{ \lim\limits_{|x|\to\infty} e^{\frac{F(x)-G_1(x)}{\gamma}}\, \,, \, \, \lim\limits_{|x|\to\infty} e^{\frac{F(x)-G_2(x)}{\gamma}}\right\} \\
&= \min\{e^{-k_1(\infty)/\gamma}, e^{-k_2(\infty)/\gamma}\} = \min\{\mathcal{C}_{\mathcal{A}}(E_1), \mathcal{C}_{\mathcal{A}}(E_2)\}.
\end{align*}
Hence \eqref{eqn4.17} is true for all $p$ with $n\leq p<\infty$ and in view of 
our earlier remarks we conclude that  the proof of \eqref{BMpn}
when $n<p<\infty$ and \eqref{BMn} when $p=n$ in Theorem \ref{theoremA}   are complete assuming
 \eqref{3uustar}. 

We now return to the proof of \eqref{3uustar}. As in  the proof of Lemma \ref{lem6.1}, for $R>>1$   
we consider $\tilde{v}(\cdot)=\tilde{v}(\cdot,R)$ which is the solution to the 
Dirichlet problem as in \eqref{6.2} where $v$ is replaced by $\tilde{v}$ 
whenever $\tilde{v}\in \{v_1, v_2, v\}$ corresponding to $\ti{E}\in \{E_1, E_2, \lambda E_1+(1-\lambda)E_2)\}$.
 Extend $\tilde{v}$ to $\mathbb{R}^{n}$ by putting $\tilde{v}\equiv 0$ on $\tilde{E}.$ Also if $ p = n, $ set  $ \ti v = \gamma\log R$  and if $ p > n,$  set  $ \ti v = R^{\xi}$ 
on $ \rn{n} \sem B (0, R ). $  (where $\xi=(p-n)/(p-1)$ and $\gamma$ is as in Definition \ref{defn5.4}) when $n<p<\infty$. We know from the proof of Lemmas \ref{lem5.2} 
 and \ref{lem5.3} that a subsequence of $\tilde{v}(\cdot)=\tilde{v}(\cdot, R)\in \{v_1, v_2, v\}$  
converges uniformly on compact subsets of $\mathbb{R}^{n}$ to $\tilde{G}\in \{G_1, G_2, G\}$. 
From this observation, we see that in order to prove \eqref{3uustar}, it is enough to show that
\begin{align}
\label{vvstarv1v2}
\begin{split}
v(x)\leq v^*(x)  :=\inf
\left\{
\max\{v_1(y), v_2(z)\};
\begin{array}{l} 
x=\lambda y+(1-\lambda) z, \\
\lambda\in [0,1], y,  z   \in \mathbb{R}^n 
\end{array}
\right\}
\end{split}
\end{align}
whenever $x\in\mathbb{R}^{n}$. Once again as in Lemma \ref{lem6.1}, 
we consider $\tilde{u}(\cdot)=\tilde{u}(\cdot, R)\in\{u_1, u_2, u\}$ which 
is defined as in \eqref{eqn6.2} with $u$ is replaced by $\tilde{u}$ whenever
 $\tilde{v}\in \{v_1, v_2, v\}$. From this, we observe that $0\leq \tilde{u}\leq 1$ 
 in $\mathbb{R}^{n}$, $\tilde{u}\equiv 1$ in $\tilde{E}$, $\tilde{u}\equiv 0$ 
on $\mathbb{R}^{n}\setminus B(0,R)$ continuously, and $\tilde{u}$ is 
$\tilde{\mathcal{A}}$-harmonic in $B(0,R)\setminus \tilde{E}$ where
 $\tilde{\mathcal{A}}(\eta)=- \mathcal{A}(-\eta)$ for $n\leq p<\infty$. 
 As observed earlier $\tilde{\mathcal{A}}$ also satisfies the same structural properties
  as $\mathcal{A}$. We see that \eqref{vvstarv1v2} is equivalent to
\begin{align}
\label{uustaru1u2}
\begin{split}
u(x)\geq u^*(x)  :=\sup
\left\{
\min\{u_1(y), u_2(z)\};
\begin{array}{l} 
x=\lambda y+(1-\lambda) z, \\
\lambda\in [0,1], y,  z   \in \mathbb{R}^n 
\end{array}
\right\}
\end{split}
\end{align}
whenever $x\in\mathbb{R}^{n}$. A proof of \eqref{uustaru1u2} can be found in \cite{AGHLV} at (5.13). Once again the range of $p$ in that
 article is for  $1<p<n$ and   $\mathcal{A}$-capacitary functions   but  uses only Lemmas \ref{lemma2.1}-\ref{lemma2.3}   
 and  the maximum principle for $ \mathcal{A}$-harmonic functions so is valid in our situation.   This finishes the proof of
  \eqref{uustaru1u2} and in view of our earlier observations proof of \eqref{BMpn} 
when $n<p<\infty$ as  well as  \eqref{BMn} when $p=n$ in Theorem \ref{theoremA}.
\end{proof}
\subsection{Final proof of Theorem \ref{theoremA}}
In this subsection our aim is to prove that  if equality occurs in \eqref{BMpn}
when $n<p<\infty$ or in \eqref{BMn} when $p=n$ in Theorem \ref{theoremA}  and
 if $\mathcal{A}$ satisfies the additional structural assumptions in \eqref{eqn1.8} 
  then $E_2$ is a translation and dilation of $E_1$. 
  To do this, using the additional structural assumptions on $\mathcal{A}$, 
  we first construct the corresponding fundamental solution explicitly.
\subsubsection{Construction of the fundamental solution}
In this subsection   we begin  with some observations when   
$   \mathcal{A}  \in  M_p ( \al ) $  and additionally satisfies \eqref{eqn1.8}  (ii): 
\begin{align}
\label{eqn6.3}
\mathcal{A}_{i}=\frac{\partial f}{\partial \eta_i}(\eta), \, \, 1\leq i\leq n, \, 
\mbox{where}\, \, f(t\eta)=t^{p} f(\eta)\, \, \mbox{when}\, \, t>0,\,\,\eta\in\mathbb{R}^{n}\setminus \{0\}
\end{align}
  so $ f $  has continuous  second partials on $ \rn{n} \sem\{0\}$. 
 Using some ideas from \cite{CS1}, the fundamental solution was 
 constructed in \cite[section 7.2]{AGHLV} when $1<p<n$ associated to $\mathcal{A}$-harmonic PDEs. In this 
 part   we extend this   construction to  include   $n\leq p<\infty$. 

 We can write $f(\eta)=(k (\eta))^{p}$ and from \eqref{eqn6.3} we see that 
$ k ( \eta )$ for $\eta \in  \rn{n} \sem  \{0\}$ is  homogeneous of degree $1$ 
and has continuous second partials on $ \rn{n} \sem \{0\}. $  Also  in (7.4)  of  \cite{AGHLV}  it is  shown that  
\begin{align}
\label{eqn6.4} 
  k^2 \, \,  \mbox{is strictly convex  on}\, \,  \mathbb{R}^n.   
  \end{align}  
From        \eqref{eqn6.4} and  well-known properties of  the  support 
function for  a  convex set    we see  that if $ X \in  \rn{n} \sem \{0\},$  then      
\[   
h  ( X )   =  \sup  \{  \lan \eta,  X  \ran :\, \,   \eta \in  \{k \leq  1\}  \}  
\] 
has continuous second partials  and  $ h $ is homogeneous of  degree $1$.   Moreover,  
\begin{align}
\label{eqn6.5} 
\nabla  h ( X ) =   \eta  ( X )\, \, \mbox{where}\, \,  \eta  \, \, \mbox{is the point in}
\, \, \{  k = 1 \}\, \, \mbox{with}\, \,   \frac{X}{|X|}=  \frac{\nabla k (\eta  )}{| \nabla k  (\eta )|}.
\end{align}    
From  calculus and  Euler's formula for $1$-homogeneous
 functions it now follows that  if  $ X  \in   \mathbb{S}^{n-1}$ then   
  \begin{align*}
  h (X)  =   \lan  \eta ( X ),  X  \ran  =  |X|  \lan \, \eta (X),  \frac{ \nabla k (\eta)}{ | \nabla k (\eta) | } \, \ran  =  \frac{ |X|  }{ | \nabla k (\eta) |}.
  \end{align*}  
  Using this equality  we obtain first  
  \begin{align*}
\nabla k  (  \nabla h ( X ) )   \, =  \, \frac{ |  \nabla k (\eta)|  X}{ |X| } =  
\frac{ X }{ h ( X ) }
\end{align*}
and  second using  $1$-homogeneity of  $ k, h $ as well as 0-homogeneity of  $ \nabla k,  \nabla h, $  that 
\begin{align}
\label{eqn6.8} 
  k [ h (X)  \, \nabla h ( X ) ]    \, \,   \nabla k  [ h (X)  \nabla h ( X ) ]  =
h (X) \,  k  [ \nabla h (X)]   ( X/ h (X) )  = X. 
\end{align}
Thus  $ k  \, \nabla k  $  and  $ h \,  \nabla h $  are inverses of  each other on $  \rn{n} \sem \{0\}. $ 

For fixed $p$, $n< p<\infty$, $\xi=(p-n)/(p-1),$ and for $X \in \rn{n} \sem \{0\}$ we define
\[
\hat{\mathcal{F}}(X)=
\left\{
\begin{array}{ll}
(h(X))^\xi &  \mbox{when}\, \,  n<p<\infty,\\
\log h(X) &\mbox{when}\, \, p=n.
\end{array}
\right.
\]
We claim that  $ \hat{\mathcal{F}}$ is  a constant multiple of
  the fundamental solution for  $\mathcal{A} $ in  \eqref{eqn6.3} in the sense of Definition \ref{def4.1}. 
  Indeed,  if    $ X \in \mathbb{R}^{n}\setminus \{0\}, $  it  follows from \eqref{eqn6.5}-\eqref{eqn6.8} that
\begin{align}
\label{eqn6.9}
\begin{split}
(\nabla f) (\nabla \hat{\mathcal{F}}(X)) &=  p\,k^{p-1} (\nabla \hat{\mathcal{F}} (X))\, \, (\nabla k (\nabla \hat{\mathcal{F}}(X)) ) \\
 & =p\,  \frac{X}{h(X)}  \, k^{p-1} (\nabla \hat{\mathcal{F}} (X) )  \\ 
  &=  p\,  X \xi^{p-1} \, h^{[( \xi  - 1 )(p - 1) - 1 ] } (X) \, k^{p-1}  (\nabla  h (X))  \\
  &=   p\, X\, \xi^{p-1}  h^{-n}(X)
\end{split}
\end{align}
when $n<p<\infty$.  If   $p=n$, then using \eqref{eqn6.5}-\eqref{eqn6.8}, we also have 
\begin{align}
\label{eqn6.9p=n}
\begin{split}
(\nabla f) (\nabla \hat{\mathcal{F}}(X)) &=  n\, k^{n-1} (\nabla \hat{\mathcal{F}} (X))\, \, (\nabla k (\nabla \hat{\mathcal{F}}(X)) ) \\
 & = n\frac{X}{h(X)} \,  \, k^{n-1} (\nabla \hat{\mathcal{F}} (x) )  \\ 
  &=  n \frac{X}{h(X)}\, h^{1-n} (X) \, k^{n-1}  (\nabla  h (X))  \\
  &=   n\, X h^{-n}(X).
\end{split}
\end{align}
In both cases when $n<p<\infty$ and $p=n$ we have that the right hand side in 
\eqref{eqn6.9} and \eqref{eqn6.9p=n} are a constant times $X h^{-n}(X)$.   
Now  $ X \mapsto   h^{-n} ( X/|X|) $ is homogeneous of  degree $0$  so   
\[
\lan  X,  \nabla [ h^{-n} (X/|X| )] \ran  = 0
\]
by Euler's formula.  From this  observation and  \eqref{eqn6.9} when $n<p<\infty$ and \eqref{eqn6.9p=n} when $p=n$ we deduce    
\begin{align*}  
\begin{split}
   \nabla \cdot \left(   (\nabla f) (\nabla \hat{\mathcal{F}}(X)) \right) &= h^{-n} (X/|X| ) \, \, \nabla \cdot ( X |X|^{-n} )  +   |X|^{-n}  \lan X, \nabla [ h^{-n} (X/|X| )]  \ran \\
    &= 0
    \end{split}
\end{align*}   
whenever  $  X  \in  \rn{n}  \sem  \{0\}$ and for fixed $p$, $n\leq p<\infty$. Hence  $  \hat{\mathcal{F}}$ is  $  \mathcal{A}=\nabla f$-harmonic in 
$ \rn{n} \sem \{0\}. $  Also  from  \eqref{eqn6.9} when $n<p<\infty$ and \eqref{eqn6.9p=n} when $p=n$  we  note  that   
\[  
 | \nabla f(\nabla \hat{\mathcal{F}} (X) ) | \approx  |  X  |^{1-n} \quad \mbox{on}\, \,   \rn{n}  \sem \{0\}.  
 \]   
 If   $ \he  \in  C_0^\infty ( \rn{n} ) $  then from the above display we deduce  that  the function $ X  \mapsto  
 \lan   \nabla f  (\nabla\hat{\mathcal{F}} (X) ),  \nabla  \he (X) \ran $ is integrable on  $ \rn{ n}. $ 
 Using this fact,  smoothness of  $  f,  h,  $  and an  integration by parts,  we get   
\begin{align}
\label{eqn6.11}
\begin{split}
\int_{\rn{n}}  \lan   \nabla f  (\nabla \hat{\mathcal{F}} (X)),  \nabla  \he (X)  \ran dx  &=
 -  \lim\limits_{r \to  0}  \int_{\ar  B (0, r )} \,  \he (X) \,  \, \lan \nabla f ( \nabla \hat{\mathcal{F}} (X) ), X/|X| \ran 
\, d \mathcal{H}^{n-1}    \\
&=  C \,  \he (0).
\end{split}
\end{align}
Using   \eqref{eqn6.9} once again  when $n<p<\infty$ it follows that   
\begin{align}
\label{eqnbp}
\begin{split}   C&=  - \, \lim_{r\to  0 }    \int_{\ar B (0, r) } \, \lan \nabla f ( \nabla \hat{\mathcal{F}} (X) ), X/|X| \ran 
\, d \mathcal{H}^{n-1} =  p \xi^{p-1}     \, \int_{\ar B (0, 1)}  h^{-n} ( X/|X| ) \, d \mathcal{H}^{n-1}  \\
&= p \xi^{p-1}\, \int_{\mathbb{S}^{n-1}}  h^{-n}(\omega)  \, d\omega. 
\end{split}
\end{align}
While when $p=n$ we use  \eqref{eqn6.9p=n} to get
\begin{align}
\label{eqnbn}
\begin{split}
C&=  - \, \lim_{r\to  0 }    \int_{\ar B (0, r) } \, \lan \nabla f ( \nabla \hat{\mathcal{F}} (X) ), X/|X| \ran 
\, d \mathcal{H}^{n-1} = n  \, \int_{\ar B (0, 1)}  h^{-n} ( X/|X| ) \, d \mathcal{H}^{n-1}\\
&=n\, \int_{\mathbb{S}^{n-1}}  h^{-n}(\omega)  \, d\omega. 
\end{split}
\end{align}
 \begin{remark} 
\label{rmk7.1}
In view of \eqref{eqn6.11} and \eqref{eqn6.9} when $n<p<\infty$ and \eqref{eqn6.9p=n} when $p=n$
\begin{align}
\label{fundamentalsolutionF}
\begin{split}
\mathcal{F}(x)=C^{\frac{-1}{p-1}}\hat{\mathcal{F}}(x)=
\left\{
\begin{array}{ll}
C^{\frac{-1}{p-1}} h(x)^{\xi}  &\mbox{when} \, \, n<p<\infty,\\
C^{\frac{-1}{n-1}} \log h(x)  & \mbox{when} \, \, p=n
\end{array}
\right.
\end{split}
\end{align}
where $\xi=(p-n)/(p-1)$ and $C$ is as in \eqref{eqnbp} when $n<p<\infty$ and as in \eqref{eqnbn} when $p=n$. 
\end{remark}

Finally using the fact that $ k  \nabla k $ and $ h \nabla h $  are inverses it  follows, as in 
the argument from  (7.10) in the proof of  Lemma 6.2 of  \cite{AGHLV},  
that for some $ \tau >0,$ depending only on the data,
 \begin{align}
\label{eqn5.1} 
 \frac{\mathcal{F}_{\om \om} (X) }{ |\nabla \mathcal{F}(X) |}  \geq  \tau  >  0  \quad \mbox{whenever}\, \,   \om, X  \in \mathbb{S}^{n-1}\, \, \,  \mbox{with}\, \, \,  \lan \nabla \mathcal{F}(X), \om \ran  =  0.
\end{align} 
To  set the stage for our next lemma we now assume that in  addition to  $ \mathcal{A} = \nabla  f$ (as in  \eqref{eqn6.3}) that  \eqref{eqn1.8}  $(i)$ holds.    Let $E_1$ and $E_2$ be convex compact sets containing at least two points  and 
let $\lambda \in (0,1)$ be fixed.  Let $\tilde{G}\in \{G_1, G_2, G\}$ be the 
$\mathcal{A} =   \nabla f$-harmonic Green's function for $\mathbb{R}^{n}\setminus \tilde{E}$ 
with pole at infinity whenever $\tilde{E}\in\{E_1, E_2, \lambda E_1+(1-\lambda) E_2\}$. 
Let $ \mathcal{F} $ be the corresponding  fundamental solution as in \eqref{fundamentalsolutionF}. 
Using \eqref{eqn5.1} we next show  
\begin{lemma}  
\label{lemma5.1}
There  exists $ R_1 =  R_1 (  \tilde{G},  \al, \La, p, n ),    $  such that  if  $ \tilde{G}   \in  \{ G_1, G_2, G  \}, $ then   
\begin{align*}
  \frac{\tilde{G}_{\ti\om \ti \om} (x) }{ |\nabla  \tilde{G}(x) |}  \geq  \frac{\tau}{2 |x|}  >  0  \quad  \mbox{whenever}\,\, \ti \om  \in \mathbb{S}^{n-1} \, \, \mbox{and} \, \, |x| >  R_1\, \,   \mbox{with}\, \,  \lan \nabla \tilde{G}     (x), \ti \om \ran  =  0.
\end{align*} 
 \end{lemma}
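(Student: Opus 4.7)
The plan is a blow-up argument at infinity: rescale $\ti G$ so that the limit is the explicit fundamental solution $\mathcal{F}$ from Remark \ref{rmk7.1}, transfer the strict convexity estimate \eqref{eqn5.1} from $\mathcal{F}$ to the rescaled functions via $C^2$ convergence, and unscale to recover the $1/|x|$ factor. For $R \gg 1$ define
\[
\ti G_R(y) := \begin{cases} R^{-\xi}\, \ti G(Ry) & \text{when } n < p < \infty,\\[2pt] \ti G(Ry) - C^{-1/(n-1)}\log R & \text{when } p = n, \end{cases}
\]
with $C$ the constant from \eqref{eqnbp}--\eqref{eqnbn}. Using the homogeneity of $\mathcal{F}$, the expansion $\ti G = \mathcal{F} + k$ from Lemmas \ref{lem5.2} and \ref{lem5.3}, and the decay $|k(x) - k(\infty)| \le c|x|^{-\be}$, one finds that $\ti G_R \to \mathcal{F}$ on compact subsets of $\rn{n}\sem\{0\}$ (up to an additive constant when $p = n$, which is immaterial for gradients and Hessians). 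Each $\ti G_R$ is $\mathcal{A}$-harmonic on $\{y : Ry \notin E\}$, which for $R$ large contains the annulus $\{1/4 \le |y| \le 4\}$.

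Next I would establish $C^2_{{\rm loc}}(\rn{n}\sem\{0\})$ convergence. From Lemma \ref{lem6.1}(a) (together with a Caccioppoli-type gradient estimate for $k$, which satisfies the linear uniformly elliptic equation \eqref{1.43}--\eqref{1.45} and is $O(|x|^{-\be})$ at infinity) one obtains $|\nabla \ti G(x)| \approx |x|^{(1-n)/(p-1)}$ for $|x|$ large; after scaling, $|\nabla \ti G_R(y)|$ is bounded above and below on $\{1/2 \le |y| \le 2\}$, uniformly in $R$. Under \eqref{eqn1.8}(i), the Schauder-type estimate \eqref{eqn2.3} of Lemma \ref{lemma2.2} then provides uniform-in-$R$ bounds on $\ti G_R$ in $C^{2,\ti\he}(\{1/2 \le |y| \le 2\})$, and Ascoli's theorem upgrades the uniform convergence to $C^2$ convergence on $\mathbb{S}^{n-1}$. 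On the compact set $K := \{(y,\om)\in\mathbb{S}^{n-1}\times\mathbb{S}^{n-1} : \lan\nabla\mathcal{F}(y),\om\ran = 0\}$, the continuous quotient $\mathcal{F}_{\om\om}(y)/|\nabla \mathcal{F}(y)|$ is $\ge \tau$ by \eqref{eqn5.1}. The $C^2$ convergence together with the nondegeneracy $|\nabla\mathcal{F}| \ne 0$ on $\mathbb{S}^{n-1}$ allow us to transfer this estimate: for $R \ge R_1$ depending on $\ti G$ and the data,
\[
\frac{(\ti G_R)_{\ti\om\ti\om}(y)}{|\nabla \ti G_R(y)|} \;\ge\; \frac{\tau}{2}
\]
whenever $y,\ti\om \in \mathbb{S}^{n-1}$ with $\lan\nabla\ti G_R(y),\ti\om\ran = 0$.

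Finally I would unscale. The chain rule gives $\nabla\ti G_R(y) = R^{1-\xi}\nabla\ti G(Ry)$ and $\nabla^2\ti G_R(y) = R^{2-\xi}\nabla^2\ti G(Ry)$ when $n<p<\infty$, and $\nabla\ti G_R(y) = R\nabla\ti G(Ry)$, $\nabla^2\ti G_R(y) = R^2\nabla^2\ti G(Ry)$ when $p=n$; in both cases the quotient picks up exactly a factor of $R$, and the orthogonality condition $\lan\nabla\ti G_R(y),\ti\om\ran = 0$ is equivalent to $\lan\nabla\ti G(Ry),\ti\om\ran = 0$. Setting $R = |x|$ and $y = x/|x|$ yields $\ti G_{\ti\om\ti\om}(x)/|\nabla\ti G(x)| \ge \tau/(2|x|)$ for $|x| \ge R_1$, as required. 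The main technical obstacle is the uniform-in-$R$ two-sided gradient bound on $\ti G_R$ needed by \eqref{eqn2.3}; this rests on the fact that $|\nabla k|$ decays strictly faster than $|\nabla \mathcal{F}|$ at infinity, so that $\nabla\ti G$ inherits its leading asymptotic from $\nabla \mathcal{F}$.
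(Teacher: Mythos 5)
Your proposal is correct and reaches the conclusion, but it packages the argument differently from the paper. The paper works directly with the perturbation $\ti k = \ti G - \mathcal{F}$: it exhibits $\ti k$ as a weak solution of a uniformly elliptic divergence-form equation with coefficients $\bar a_{ij}$ whose ellipticity is quantified by $|\nabla\ti G| + |\nabla\mathcal{F}| \approx |x|^{(1-n)/(p-1)}$, uses the pointwise decay $|\ti k - \ti k(\infty)| \lesssim |x|^{-\beta}$ together with Caccioppoli and Schauder-type estimates to prove the decay of derivatives $|\nabla\ti k| = o(|x|^{(1-n)/(p-1)})$ and $|\nabla^2\ti k| = o(|x|^{(2-n-p)/(p-1)})$ (this is \eqref{eqn5.17}), and then feeds this into \eqref{eqn5.1}. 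You instead rescale $\ti G$ at scale $R$, prove uniform $C^{2,\ti\he}$ bounds on the annulus via \eqref{eqn2.3}, and pass to the limit by Ascoli, transferring \eqref{eqn5.1} to $\ti G_R$ for large $R$ before unscaling. After rescaling, your statement that $\nabla^2\ti G_R \to \nabla^2\mathcal{F}$ on the unit annulus is exactly equivalent to the paper's \eqref{eqn5.17} (the exponents match: $(2-\xi) + (2-n-p)/(p-1) = 0$), so the two proofs rest on the same ingredients — the decay rate of $k$ from Lemmas \ref{lem5.2}/\ref{lem5.3}(c), the two-sided gradient estimate of Lemma \ref{lem6.1}(a), the Schauder regularity of Lemma \ref{lemma2.2} (hence assumption \eqref{eqn1.8}(i)), and the explicit curvature bound \eqref{eqn5.1}. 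The blow-up formulation has the advantage of making the scale invariance transparent and localizing all estimates to a fixed compact annulus, at the mild cost of having to argue a little more carefully about how the orthogonality set $\{\lan\nabla\ti G_R(y),\ti\om\ran = 0\}$ moves with $R$: one should pass through a thickened neighborhood $K_\ep = \{|\lan\nabla\mathcal{F}(y),\om\ran| \le \ep\}$ where the $\mathcal{F}$-quotient is $\ge 3\tau/4$, then use uniform $C^1$ convergence to force the $\ti G_R$-tangent pairs into $K_\ep$ and uniform $C^2$ convergence to bound the quotient's perturbation by $\tau/4$. You gesture at this, and the details are routine. One small point worth making explicit in the $p=n$ case: $\ti G_R$ can change sign on the annulus, so if one wishes to apply \eqref{eqn2.3} verbatim one should first add a large constant (which changes neither gradient nor Hessian) so that $\ti G_R$ is positive there; alternatively, observe that the substance of \eqref{eqn2.3} is a Schauder interior estimate that only needs the uniform nondegeneracy $|\nabla\ti G_R|\approx 1$ on the annulus and the Lipschitz modulus of $\nabla\mathcal{A}$ from \eqref{eqn1.8}(i).
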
        
\begin{proof}  
The statement and proof of   Lemma   \ref{lemma5.1}   is  essentially the same as in Lemma  6.1  of  \cite{AGHLV}.  It should be noted  though that in  Lemma 6.1,  $ G $ is the   $ \mathcal{A}$-harmonic fundamental solution with pole at  0   while  $ \ti u $  plays the role of the   $ \mathcal{A}$-harmonic  Green's function.   Also since the  fundamental solution  tends to zero at  $ \infty $  when  $ 1 < p < n$  the  curvatures on levels of this function are necessarily negative.   

To give a  brief outline of the proof of Lemma  \ref{lemma5.1},  let   $ \ti G = 
  \mathcal{F}  +  \ti k. $   From  Lemmas \ref{lem1.4} and  \ref{lem5.2} when $p=n$ and 
      Lemmas   \ref{lem1.5} and \ref{lem5.3} when $ p > n $  we  see as in  \eqref{1.43}-\eqref{1.45}  that  $ \ti k  $  is  a weak  solution to   
   \begin{align}  
    \label{eqn5.5}
\mathcal{L} \ti k :=\sum_{i,j=1}^n\frac{ \ar }{ \ar y_i} \biggl(  \bar a_{i j }( y )  \frac{ \ar \ti k}{ \ar y_j}\biggr )=0  
\end{align}  
 on  $ B (x,  |x|/2)$  with  $|x|  \geq  R_0$    when $ \ti E  \subset  B ( 0,  R_0/2) $. Here   
\begin{align*}
\bar a_{ij} ( y )   =   \int_0^1   \frac{ \ar^2  f }{\ar \eta_i \ar \eta_j }   \left( t  \nabla  \ti   G(y)  +  (1-t)  \nabla \mathcal{F} (y)   \right) d t.
\end{align*}
Moreover, for  some  $ c  \geq 1, $ independent of $ x, $  we also have
\begin{align}
\label{eqn5.7}   
c^{-1} \bar \si (y)  \,   | \xi |^2  \, \leq \,   \sum_{i,j=1}^n   \bar a_{ij} ( y  )  \xi_i  \xi_j   \leq c \, \bar \si (y) \, |\xi|^2  
\end{align}
whenever $\xi  \in \rn{n} \sem \{0\}$ where  $\bar \si$ satisfies 
\begin{align}
\label{eqn5.8} 
\bar \si ( y  )  \approx  ( |  \nabla \bar G (y) | +    |  \nabla \mathcal{F}(y)  |  )^{p-2}  \approx |y|^{\frac{(1 - n)(p-2)}{p-1}} 
 \end{align}
for $|y|  \geq  R_0$.  Also  from  \eqref{5.2} $(c)$   when $ p = n $  and   
\eqref{5.12}  $(c)$ when $n<p<\infty$ we  see there exists  $ \hat r_0  >  R_0$ and  $\be > 0$  such that  
\begin{align}  
\label{6.31} 
|\ti k (y) - \ti k  ( \infty)  |   \leq   \hat r_0 |y|^{- \be }  
\end{align}
when  $ | y|  \geq \hat r_0. $    Constants depend  on various quantities but are independent
 of  $ y $ provided  $ |y|  \geq  \hat r_0 .$   From  well-known  results for  
 uniformly elliptic  PDE (see \cite{GT})  we deduce from  \eqref{eqn5.7}-\eqref{6.31} that  
   \begin{align}   
\label{eqn5.9}
\begin{split}
|x|^{-n/2} \left(  \int_{ B ( x, |x|/4)} |\nabla \ti k |^2   \, dy \right)^{1/2}  \,    &\leq c \, |x|^{-1}  \max_{B ( x, |x|/2)}  \, | \ti k   - \ti  k ( \infty) |   \\
& =  O\left( |x|^{- 1 - \be } \right)\, \, \mbox{as}\, \, x \to  \infty
\end{split}
\end{align}
where $ c $  as above  depends on various quantities but is  independent of  $ x. $     Using  \eqref{eqn5.9},   Lemma \ref{lem6.1}, 
as well as  Lemma \ref{lemma2.2}  for $ \mathcal{F},  \ti G, $  and  arguing as in  (6.8)-(6.15) of  \cite{AGHLV}, we  eventually  obtain  
\begin{align}
\label{eqn5.17}      
|\nabla  \ti k  |  =   o \left(|x|^{\frac{1-n}{p-1}}\right)  \quad \mbox{and}\quad  \sum_{i, j =1}^n   \left|  \frac{ \ar^2  \ti k}{ \ar x_i  \ar x_j }  \right| 
=  o\left( |x|^{\frac{2-n-p}{p-1}} \right)  \mbox{ as  } x \to  \infty. 
\end{align}
Now \eqref{eqn5.17} and \eqref{eqn5.1}    imply   Lemma \ref{lemma5.1} as in the paragraph following  (6.15)    of  \cite{AGHLV}.    
 
We next consider $G^*$ defined as earlier in \eqref{3uustar}.
If equality   holds in   either  \eqref{BMn} when $ p  = n$  or   \eqref{BMpn}  when $ p > n,  $ 
then using  convexity  of  the  domains bounded by the levels of $ G,G_1, G_2, $ and    
repeating the    argument   above  with  $u_1, u_2, u$  replaced by $G_1, G_2, G$, we see that   $ G^* 
= G $  and  so    
\begin{align}
\label{u=ustar}
\{G(x)\leq t\} = \lambda \{G_1(y)\leq t\} + (1-\lambda)\{G_2(z)\leq t\}
\end{align}
whenever $n\leq p<\infty$ and $t\in (0,\infty)$. 

  We now  use  \eqref{u=ustar} and Lemma \ref{lem6.1}   to study the 
  support functions of     the  convex  domains bounded by the levels of $G_1, G_2$, 
and $G.$     
Let $t>0$ be fixed and let $h_i(\cdot, t)$ be the support function 
of  $\{G_i\leq t\}$ for $i=1,2$ while  $h(\cdot, t)$ is the support 
function of $\{G\leq t\}$ defined for $X\in\mathbb{R}^{n}$ and $0<t<\infty$ by
\[
h_{i}(X,t):=\sup_{x\in \{G_i\leq t\}}\langle X,x\rangle, \, \,\mbox{for}\,\, i=1,2, 
\quad \mbox{and}\quad h(X,t):=\sup_{x\in \{G\leq t\}}\langle X,x\rangle.
\] 
From the properties of a support function and \eqref{u=ustar} we have 
\begin{align}
\label{h=h1h2}
h(X,t)=\lambda h_1(X,t)+(1-\lambda)h_2(X,t)
\end{align}
whenever  $X\in\mathbb{R}^{n}$ and $t>0$.

We  note from  Lemma \ref{lem6.1}  and Lemma \ref{lemma2.2} 
 that  
$   \nabla \tilde{G}  \neq 0 $  and  
   $ \tilde{G}$ has  locally H\"{o}lder continuous second partials  
   in    $ \{ \tilde{G}  >   0\}  $ whenever  $  \tilde{G}   \in \{ G_1, G_2,  G   \}. $  
   From Lemma \ref{lemma5.1}  we see there exists  $ t_0$ large 
   and  $\tau_0  >0   $  small  and  $ R_0 $  large such that 
   if $ \tilde{G}   \in \{ G_1, G_2,  G  \}$ then 
 \begin{align}
 \label{eqn6.25}
\begin{split} 
 & (\star)  \hs{.4in}  \{ \tilde{G} \geq  t\}   \subset  \rn{n} \sem \bar B (0, R_0) \,  \mbox{ for  } \, 0<t_0<t,  \\ 
 &(\star\star)  \hs{.23in} \frac{\tilde{G} _{\ti\om \ti \om} (x) }{ |\nabla  \tilde{G}(x) |}  \geq  \tau_0/|x| \,  \mbox{ whenever }\, \ti \om  \in \mathbb{S}^{n-1} \, \, \mbox{and}\, \,  |x| \geq   R_0 \, \,  \mbox{with} \, \, \lan \nabla \tilde{G}  (x), \ti \om \ran  =  0. 
\end{split}
\end{align} 
 Hence we conclude from  \eqref{eqn6.25}  that  the curvatures  
 at  points on   $ \{ \tilde{G} = t \} $  are bounded away from  $0$ 
 when  $  t  \geq  t_0 $  and  $\tilde{G}\in \{G_1, G_2, G\}$.  
 This yields 
  \[
  \frac{\nabla \tilde{G} }{ | \nabla \tilde{G}  | }\, \, \mbox{is  a  1-1  mapping    from} \, \,  \{ \tilde{G} = t \}\, \,  \mbox{onto}\, \,  \mathbb{S}^{n-1}
\]
and  
  \[
   \left(\frac{\nabla \tilde{G} }{ | \nabla \tilde{G}  | }, \tilde{G} \right)\, \,  \mbox{is  a  1-1 mapping  from}\, \,   \{ \tilde{G} > t_0 \} \, \,  \mbox{onto} \, \, \mathbb{S}^{n-1}  \times  (t_0,\infty). 
   \]
It follow from \eqref{eqn6.25}  and  the inverse function theorem  
that if    $ \tilde{h} $ is the support function corresponding to  
$ \tilde{G} \in \{G_1, G_2, G \} $ and  
$ t_0<t<\infty, $   then   $\tilde{h}$ has  H\"{o}lder 
continuous second partials in $X$  and     
\begin{align} 
\label{eqn5.26}
\nabla_X \tilde{h} ( X, t ) =  \tilde{x}( X, t ) \in  \{ x_1 ( X, t ),  x_2 ( X, t ),  x ( X, t ) \}  
\end{align}
where  $ \tilde{x}$ is the point in $ \{ \tilde{G}= t \} $ with
\[
\frac{X}{|X|}  =\frac{\nabla \tilde{G} (\tilde{x})}{| \nabla \tilde{G} (\tilde{x}) |}.
\]
We now repeat the argument from (6.22) to (6.42) in \cite{AGHLV} to eventually conclude  for  fixed $ t > t_0 $ and all  $ X \in \mathbb{S}^{n-1} $ that
 
\begin{align}
\label{eqn6.45}
  \frac{\ar}{\ar X_i } \left(\frac{| \nabla G_2 |  (x_1) }{ | \nabla G_1 |  (x_2)} 
\right) = 0   \quad \mbox{and}\quad  \frac{\ar}{\ar X_i }  \left(    x_1    -     x_2  \,   
\frac{| \nabla G_2 |  (x_1) }{ | \nabla G_1 |  (x_2) } 
\right)   = 0 .  
\end{align}

Since   $ x_1, x_2 $ are  smooth for every fixed $t$, there exists $ a = a(t),  b = b(t) \in \re $ with 
\[
x_2 ( X, t )   =  a \,  x_1 ( X, t )  +  b \quad \mbox{whenever} \, \,  X  \in  \mathbb{S}^{n-1}.
\]   It now follows from  uniqueness of  the  $ \mathcal{A}$-harmonic Green's   function  in  Lemmas 
\ref{lem5.2},    \ref{lem5.3},  and  Remark  \ref{rmk1.3}   for  $  \mathcal{A}$-harmonic functions that     
\[
G_2  ( x )  =  G_1  ( a x + b )\quad \mbox{whenever}\, \,  G_2 ( x )   >  t\, \,  \mbox{and}\,\, t > t_0.
\]   
Next  from  Lemma \ref{lem6.1},    \eqref{eqn2.3},  homothetic invariance of 
$ \mathcal{A}$-harmonic functions, and the same argument as in \eqref{1.43}-\eqref{1.45}  
or   \eqref{eqn5.5}-\eqref{eqn5.8} we see  that  $  G_2 (x)   - G_1 ( a x + b) $  is a solution 
to a locally uniform   elliptic divergence form PDE  with Lipschitz   coefficients.   
This fact and a unique continuation theorem in   \cite{GL} imply as after (6.44) in  \cite{AGHLV}  that the above  
equality  holds whenever  $ x \in \rn{n}  \sem E_2 $  or equivalently 
that $ E_1  = a E_2+ b. $  The proof of Theorem  \ref{theoremA} is now complete.    
\end{proof}  
\part{A Minkowski problem for $\mathcal{A}$-harmonic Green's function}
\setcounter{equation}{0} 
\setcounter{theorem}{0}
\setcounter{section}{6}
\section{Introduction and  statement of  results}
\label{section7}
In  this  section  we  study  the   Minkowski problem  associated with  an  $ \mathcal{A}  =  \nabla f $-harmonic  Green's 
function with pole at  $ \infty $   when    $ f $  is as in Theorem \ref{theoremA}.  To be more specific,  
suppose  $ E  \subset  \rn{n} $  is  a  compact  convex set with nonempty interior.  
Then for  $ \mathcal{H}^{n-1} $  almost every $ x \in \ar E,  $   there is  a  well defined  
outer unit normal, $ \mathbf{g} ( x, E) $  to $ \ar E.  $   The function $ \mathbf{g}(\cdot, E): \ar E  \mapsto  \mathbb{S}^{n-1}$  
(whenever  defined) is called the  Gauss map for $ \ar E.$     Let  $ \mu $  be  a  finite   positive Borel  measure on  $ \mathbb{S}^{n-1} $  satisfying
\begin{align}  
\label{eqn7.1} 
\begin{split}
(i)&\, \,   {\ds \int_{ \mathbb{S}^{n-1}} } | \lan \he, \ze  \ran | \, d \mu ( \ze )  >  0  \quad \mbox{for all} \, \,  \he \in \mathbb{S}^{n-1},\\
(ii)& {\ds  \int_{ \mathbb{S}^{n-1}} } \ze   \, d \mu ( \ze )  = 0. 
\end{split}
\end{align}  
We prove 
\begin{mytheorem}  
\label{mink}  
Let   $  \mu $ be   as in    \eqref{eqn7.1} and  $p$  be fixed,  $ n \leq   p < \infty$.   
Let  $   \mathcal{A} =  \nabla f $  be  as in \eqref{eqn1.8} and Definition \ref{defn1.1}. 
Then there exists a  compact              convex set $ E $ with nonempty interior   
and $  \mathcal{A}$-harmonic Green's function  $u$ for $\mathbb{R}^{n}\setminus E$ with a pole at infinity satisfying  
\beq \bea{l}  
\label{eqn7.2} 
      (a) \hs{.2in}  {\ds  \lim_{y\to x} } \nabla u (y)  =  \nabla u (x)  \, \, \mbox{exists for $\mathcal{H}^{n-1}$-almost every  $ x  \in   \ar E$}  \\ \hs{.43in}  \mbox{ as $ y \in \rn{n} \sem E $   approaches $ x $ non-tangentially.}  \\ \\  (b)  \hs{.2in}  {\ds \int_{\ar E}  f ( \nabla u(x) ) \, d\mathcal{H}^{n-1}  <  \infty}.    
  \\  \\ (c) \hs{.2in}     {\ds \int_{\mathbf{g}^{-1} ( K, E )  }  f ( \nabla  u(x)  )  \, d \mathcal{H} ^{n-1}} =  \mu  (K)  \quad  \mbox{whenever }      K \subset \mathbb{S}^{n-1}\, \,\mbox{is a Borel set}. \\ \\
         (d) \hs{.2in}     \mbox{$E$  is  the unique  set up to translation for which  $ (c) $  holds.} 
\ea \eeq 
     \end{mytheorem}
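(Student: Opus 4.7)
The plan is to adapt the classical Aleksandrov--Fenchel--Jessen variational method to the $\mathcal{A}$-harmonic setting, using the Brunn--Minkowski inequality of Theorem \ref{theoremA} to obtain the uniqueness statement (d).

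First I would establish a Hadamard-type variational formula for $\mathcal{C}_\mathcal{A}$. Given a compact convex set $E$ with nonempty interior, let $h_t = h_E + t\phi$ be a perturbation of the support function, with $\phi \in C(\mathbb{S}^{n-1})$ such that the associated sets $E_t$ remain convex for $|t|$ small; let $u_t$ denote the $\mathcal{A}$-harmonic Green's function for $\rn{n}\setminus E_t$ with pole at infinity. I expect the identity
\[
\frac{d}{dt}\bigg|_{t=0} \mathcal{C}_\mathcal{A}(E_t) \,=\, c_{p,n} \int_{\mathbb{S}^{n-1}} \phi(\xi)\, d\mu_E(\xi),
\]
where $\mu_E(K) := \int_{\mathbf{g}^{-1}(K,E)} f(\nabla u_E)\, d\mathcal{H}^{n-1}$ and $c_{p,n}$ is an explicit normalization. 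The derivation combines Green-type identities on $B(0,R)\setminus E_t$ applied to $u_{E_t}$ against the difference quotient $(u_{E_t}-u_{E_0})/t$, with a careful passage to the limit $R\to\infty$ using the decay $|k(x)-k(\infty)| = O(|x|^{-\beta})$ from Lemmas \ref{lem5.2}(c) and \ref{lem5.3}(c). Properties (a) and (b) of Theorem \ref{mink}, which are needed to make sense of $d\mu_E$, follow from convexity of $E$, the boundary regularity in Lemma \ref{lem6.1}, Schauder-type estimates as in \eqref{eqn2.3}, and the fact that the outer unit normal exists $\mathcal{H}^{n-1}$-a.e.\ on $\partial E$.

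For existence, I would first treat the discrete case $\mu = \sum_{j=1}^N a_j\delta_{\xi_j}$ satisfying \eqref{eqn7.1} by restricting to polytopes $P(\mathbf{h}) = \{x : \langle x,\xi_j\rangle \leq h_j,\ j=1,\dots,N\}$ and minimizing the linear functional $\Phi(\mathbf{h}) = \sum_j a_j h_j$ subject to $\mathcal{C}_\mathcal{A}(P(\mathbf{h})) = 1$. Condition \eqref{eqn7.1}(i) prevents a minimizing sequence from collapsing into a lower-dimensional slab, while \eqref{eqn7.1}(ii) allows us to quotient out translations, so that Blaschke selection yields a minimizing polytope $E_\mu$; the Hadamard formula then produces the Lagrange multiplier equation $d\mu_{E_\mu} = \lambda\, d\mu$ on $\{\xi_1,\dots,\xi_N\}$, and rescaling via Remark \ref{rmk5.5} normalizes $\lambda = 1$. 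For a general $\mu$ satisfying \eqref{eqn7.1} I would weak-$*$ approximate by discrete measures $\mu_k$ also obeying \eqref{eqn7.1}, use an a priori diameter bound derived from \eqref{eqn7.1}(i) and the capacity normalization to extract a Hausdorff-convergent subsequence $E_{\mu_k} \to E$ with $E$ having nonempty interior, and pass to the limit in (c) via weak convergence of the corresponding surface-type measures, which rests on uniform boundary estimates for the family $\{u_{E_{\mu_k}}\}$.

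For uniqueness, suppose $E_1,E_2$ both satisfy (c) with the same $\mu$. Combining the Hadamard formula with Theorem \ref{theoremA}, I expect to derive a Minkowski-type inequality of the form
\[
\int h_{E_2}\, d\mu_{E_1} \;\geq\; \kappa\, \mathcal{C}_\mathcal{A}(E_1)^{(p-n-1)/(p-n)}\, \mathcal{C}_\mathcal{A}(E_2)^{1/(p-n)}
\]
when $p>n$, with an analogous inequality when $p=n$, and equality iff $E_1 = a E_2 + b$. Since $\mu_{E_1} = \mu = \mu_{E_2}$, both sides can be re-expressed in terms of $\mathcal{C}_\mathcal{A}(E_i)$ via Euler's identity $\int h_E\, d\mu_E = \kappa\, \mathcal{C}_\mathcal{A}(E)$; symmetrizing over $E_1,E_2$ forces equality in the Brunn--Minkowski inequality and Theorem \ref{theoremA} gives $E_1 = a E_2 + b$. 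The scaling relation $\mu_{aE+b} = a^{(1-n)/(p-1)}\,\mu_E$ when $p>n$ (respectively $\mu_{aE+b} = a^{-1}\mu_E$ when $p=n$), obtained from the $p$-homogeneity of $f$, the $(n-1)$-homogeneity of $\mathcal{H}^{n-1}$, and the dilation behavior of $u$ in Lemmas \ref{lem5.2} and \ref{lem5.3}, then forces $a = 1$. The main obstacle I anticipate is the rigorous proof of the Hadamard formula: controlling the boundary behavior of $\nabla u_{E_t}$ uniformly in $t$ so as to differentiate under the boundary integral, and quantitatively handling the Green's function at infinity so that the contribution from $\partial B(0,R)$ vanishes as $R \to \infty$.
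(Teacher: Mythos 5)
Your high-level strategy matches the paper's: derive a Hadamard-type variational formula for $\mathcal{C}_{\mathcal{A}}$ (the paper's Proposition \ref{proposition9.2}), solve the discrete Minkowski problem by minimizing $\sum_j c_j q_j$ over polytopes $E(q)$ with $\mathcal{C}_{\mathcal{A}}(E(q)) \geq 1$, pass to the continuous case by weak-$*$ approximation together with the continuity of $\mu_E$ in $E$ (Proposition \ref{proposition9.1}), and prove uniqueness up to translation from concavity of $t \mapsto \mathcal{C}_{\mathcal{A}}((1-t)E_0 + tE_1)$ (Theorem \ref{theoremA}) plus the variational formula. Your scaling relations for $\mu_{aE+b}$, the use of \eqref{eqn7.1}(ii) to quotient out translations, and the Euler-type identity $\int h_E\,d\mu_E = \kappa\,\mathcal{C}_{\mathcal{A}}(E)^{1/(p-1)}$ are all consistent with what the paper does.

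The genuine gap is your one-line assertion that ``Condition \eqref{eqn7.1}(i) prevents a minimizing sequence from collapsing into a lower-dimensional slab.'' It does not, and ruling out degenerate minimizers is by far the hardest part of the existence argument — the paper spends Propositions \ref{proposition10.1}, \ref{proposition11.1}, \ref{proposition11.2}, and Lemmas \ref{lemma10.2}, \ref{lemma11.3} on it. Condition \eqref{eqn7.1}(i) gives a two-sided diameter bound (cf.\ \eqref{eqn10.7}, \eqref{eqn11.4}), but that does not preclude the minimizing polytope, or its Hausdorff limit in the continuous case, from flattening to an $l$-dimensional convex set with $1 \leq l \leq n-1$, since when $p \geq n$ the Green's function and hence $\mathcal{C}_{\mathcal{A}}$ are defined even for lower-dimensional convex sets. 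The paper rules this out by showing the boundary integral $\int_{\partial(E_1+tE_2)} h_2 f(\nabla u(\cdot,t))\,d\mathcal{H}^{n-1}$ blows up as $t \to 0^+$ when $E_1$ is degenerate (so the Lagrange-multiplier derivative of the cost functional is $-\infty$, contradicting minimality). When $\dim E_1 = l < n-1$, the blowup rate $t^{(l+1-n)/(p-1)}\to\infty$ comes from the behavior of $\mathcal{A}$-harmonic functions vanishing on an $l$-plane (Lemma \ref{lemma10.2}). But this exponent vanishes at $l = n-1$, so the codimension-one case needs a genuinely new ingredient: Lemma \ref{lemma11.3}, an independent construction showing that for the two-variable restriction $\tilde{f}$ of $f$ there is a homogeneous $\nabla\tilde f$-harmonic function on $\mathbb{R}^2\setminus(\text{a ray})$ of exact degree $1-1/p$, from which Proposition \ref{proposition11.2} deduces $\int_E f(\nabla U_+)\,d\mathcal{H}^{n-1} = \infty$, contradicting the uniform mass bound on $\mu_j(\mathbb{S}^{n-1})$. (In the discrete case the codimension-one possibility is sidestepped by the auxiliary hypothesis \eqref{eqn10.3}, but that hypothesis is dropped when approximating a general $\mu$, so the codimension-one argument is unavoidable.) Without an argument for this case, the existence proof is incomplete.
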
  
We  remark  that   Minkowski originally  considered a  similar  problem   
for surface area measure (in  Theorem \ref{mink} omit $(a), (b), $  and  replace $ f ( \nabla u (x))$  in    $(c) $ by  $1$).   
We also mention that Jerison in \cite{J} proved  a  Minkowski type  theorem  similar to the above when  $ u $ 
 is  the Newtonian capacitary  function of  a  compact convex set.   His result was generalized in 
      \cite{CNSXYZ}  to  $p$-harmonic functions when  $ 1 < p < 2. $  
      In  \cite{AGHLV}, the first,  second, and fourth authors of this article, along with Jasun Gong and Jay Hineman,  
      obtained an analogue  of  Theorem \ref{mink}  when $ 1 < p < n $  for  
    the   $\mathcal{A} = \nabla f$-capacitary function of  a  
    compact convex set $ E$ with nonempty interior. For more historical details see  \cite[section 8]{AGHLV}.   

As a  broad outline of our proof  of Theorem  \ref{mink},  we  follow   \cite{AGHLV}  who 
in  turn  used ideas from \cite{J}  and   \cite{CNSXYZ}.  Part of  the preliminary work  
for the  analogue of  Theorem  \ref{mink}  in  \cite{AGHLV}  involved generalizing   
work from   \cite{LN,LN1,LN2,LN3} for  positive  $p$-harmonic functions  
vanishing on a portion  of  the boundary of  a Lipschitz domain to  positive   
$ \mathcal{A}  =  \nabla f$-harmonic  functions vanishing on a portion of 
the boundary of  a  Lipschitz domain when $  1 < p < n.$    
Much of this work extends  without  change    to the  $  p  \geq n$ and  
  $  \mathcal{A}  =  \nabla f$  situation  so   we shall often refer to  
  Lemmas  in  \cite{AGHLV} for proofs. This  generalization is done in  \S\ref{section8}.  
 From  our  work  in  \S\ref{section8}  it follows that  if $ \ti E $  is  a  compact convex set with non empty interior  and if $ \ti u $ is  the  $\mathcal{A}$-harmonic  Green's function for  $ \ti E $ with pole at $ \infty, $   then  \eqref{eqn7.2} $(a), (b) $ hold with $ u, E $  replaced by  $ \ti u, \ti E. $  The Gauss map and corresponding measure $ \ti \mu $     can then be defined relative to $ \ti u, \ti E $  as  in  \eqref{eqn7.2} $(c)$ 

In  \S\ref{section9}   we consider   a sequence of  compact convex  sets, say 
$ \{\ti E_m\}_{m\geq 1}$  with nonempty  interiors  which converge in the sense of  
Hausdorff  distance  to   $ \ti  E$, a  compact convex set with $0$  in the  interior of $\ti E$.  
For a  fixed $ \mathcal{A} =  \nabla f $ as  in   Theorem  \ref{mink},   
let  $\ti u_m$ for $m=1,2,\ldots,$ and   $ \ti u$  be the $\mathcal{A}$-harmonic Green's  functions 
with pole at $ \infty $  for $\mathbb{R}^{n}\setminus \ti E_m$  and $\rn{n} \sem \ti E$ respectively.  If     
  $\ti \mu_m$ for $m=1,2,\ldots,$ and $\ti \mu$  denote the  Borel  measures  corresponding to  $ \ti u, 
\ti u_m $ as    in the above discussion,   we    show that  
$  \{\ti \mu_m\} $  converges weakly to  $ \ti \mu $  on    $ \mathbb{S}^{n-1}. $     In 
\S\ref{section10}  we  use  this  result  to derive the Hadamard variational formula for
  the  derivative of  $  t  \to \mathcal{ C}_{\mathcal{A}} ( t  E_1  +  (1-t)  E_2 )  $  whenever $  t  \in  [0, 1) $  
  and  $ E_1,  E_2 $  are compact  convex sets with nonempty interiors.   
   Proofs  for  $  p  \geq n $ are  more delicate  than in the  case  $ 1 < p < n $ considered in \cite{AGHLV},  primarily because our  $\mathcal{A}$-harmonic
  Green's function blows up at  $ \infty $   when  $ p  \geq n, $  whereas  $ \mathcal{A}$-harmonic   capacitary  functions
    have limit $0$ at $ \infty $  when $ 1 < p < n.  $  In  \S\ref{section11},   we give  the proof  of  Theorem  \ref{mink}.  
As  in    \cite{AGHLV}   the  proof  essentially consists in showing  that  a  certain   minimum   problem has  a   solution, say  $ \ti  E, $ in the class of compact convex sets with nonempty interior.   To rule out the possibility that  $ \ti E $  has 
  Hausdorff  dimension  $ k \leq n - 1, $    
   we  argue as in   \cite{AGHLV}  when   $ k  < n - 1. $  However if  $ k = n - 1 $ 
  we  are not able to use the same argument as in  the case $ 1 < p < n. $   
  Instead  we  study positive  $  \mathcal{A}$-harmonic  solutions vanishing  
  on  a   ray in  $  \rn{2}$  when  $ p  \geq n  $  and use our results from this study to show   that $ \ti E $ cannot  be  
 $ n - 1 $  dimensional.    Finally, uniqueness in  Theorem  \ref{mink} is proved in  the last part of  section  \ref{section11} using  Theorem \ref{theoremA}.  
  \setcounter{equation}{0} 
 \setcounter{theorem}{0}
    \section{Boundary behavior of $\mathcal{A}$-harmonic functions in  Lipschitz domains}   
    \label{section8} 
    Throughout this and later  sections, the data continues to be  $ p, n,  \al, \La. $ 
We  begin this section with  several definitions and  lemmas  copied from   \cite{AGHLV}. 
Recall that    $ \ph :   K  \to \mathbb R $ is said to be Lipschitz on $ K  $ provided there
exists $  \hat b,  0 < \hat b  < \infty,  $ such that
\begin{align} 
\label{eqn8.1}   
| \ph ( z ) - \ph ( w ) |  \,  \leq \, \hat b   \, | z  - w | \quad  \mbox{whenever}\, \, z, w \in  K.  
\end{align}
The infimum of all  $ \hat b  $ such that  \eqref{eqn8.1} holds is called the
Lipschitz norm of $ \ph $ on $ K, $ denoted $ \| \ph  \hat  \|_{K}$.   
It is well-known that if $ K  \subset \mathbb R^{n-1} $ is compact, then  $ \ph $  has  an  extension to    
$ \rn{n} $   (also denoted $ \ph$)  which is differentiable  almost everywhere in $ \rn{n},  $
and     
\[  
\| \ph  \hat \|_{\mathbb R^{n-1}} = \| \, | \nabla \ph | \,  \|_\infty  \leq c   \| \ph  \hat \|_{K
}. 
\]    
Now suppose that  $ D $ is  an open set, $ w \in \ar D, $  and  
\begin{align}   
\label{eqn8.2}   
\begin{split}
    \partial D\cap B(w, 4  \hat r )&=\{y=(y',y_n)\in\mathbb R^{n} : y_n=
  \phi ( y')\}\cap B(w, 4 \hat r), \\
     D\cap B(w, 4  \hat r )&=\{y=(y',y_n)\in\mathbb R^{n} : y_n > 
  \phi ( y')\}\cap B(w, 4 \hat r)
  \end{split}
\end{align}   
  in an appropriate coordinate system  for some   Lipschitz function $\phi$ on $ \mathbb{R}^{n-1}.$
 Note from elementary geometry  that   if   $ \ze   \in\partial D  \cap  B ( w, 2\hat r) $  and  $0<s<\hat r  $,  we can find points    \[ 
 a_s(\ze )\in D\cap B(\ze ,s)\quad \mbox{with} \quad d(a_s(\ze),\ar D)\geq c^{-1}s
 \]
for a constant $c$ depending on  $ \| \nabla \ph \hat \| $. In the following,    
we let $a_s(\ze )$ denote one such point.   Also if  $ \ze  \in \ar D \cap B ( w, 2 \hat r ), $ and $ t > 1 $ 
 let   \[   
\Ga ( \ze  ) = \Ga ( \ze, t ) = \{ y \in D \cap B( w, 4 \hat r)   :  | y - \ze | <  \, t  \, d ( y, \ar D ) \}. 
\]   Unless otherwise stated  we always assume     $ t $  is fixed and   so large that   $ \Ga ( \ze ) $  contains the  inside of  a  truncated cone with vertex at $ \ze,  $  height  $  \hat r ,$  axis along the positive $ e_n $ axis, and of  angle opening $ \he  = \he ( t ) > 0. $  
 Given a measurable function  $ g $ on $ D \cap B ( w, 4 r ) $, where $0 < r <   \hat r$,  put  $  \De ( w, r  )  = \ar D \cap  B ( w, r)  $  and  define  the \textit{non-tangential maximal
function}
\[
\mathcal{N}_r (g)  :  \De  ( w,  r ) \to \re 
\]
 of $ g $ relative to  $ D \cap B ( w, 4r) $ by  
\[     
\mathcal{N}_r(g)( x ) =  \sup_{y \in \Ga ( x ) \cap  B ( w, 4 r ) }  |g| ( y ) 
\quad  \mbox{whenever}\,\,  x \in \De ( w,  r ).  
\]
Next   we    note  as in       Lemmas   \ref{lemma2.3} and \ref{lemma2.4}: 
\begin{lemma} 
\label{lem8.1}  
Let  $D, w, \hat r, \ph$ be  as  in \eqref{eqn8.2} and     $ 1 <  p  <  \infty$. 
Suppose  $w \in\partial D$, $0< 4 r<\hat r$, and  $ v $ is a  
positive $ \mathcal{A}$-harmonic function in $ D \cap B (w,4r)$ with  $ v \equiv 0 $ on 
$\ar D \cap  B ( w, 4r ) $  in the  $ W^{1,p}  $  Sobolev sense.  Then  $ v  $  
has a representative in  $ W^{1,p} ( D \cap  B ( w,  s )), s < 4r$   which extends to
  a  H\"{o}lder continuous function on   $ B  ( w, s) $ (denoted also by $ v $)  with $  v  \equiv  0 $  on  $  B ( w, s )  \sem D.$      
  Also,
    there exists $ \bar c \geq 1, $ depending only on the data and  $ \| \ph  \hat \|,$  
such that   if  $ \bar r  =  r/\bar c, $ then   
\begin{align}  
\label{eqn8.3}  
\bar  r^{ p - n}   \int\limits_{B ( w, \bar  r)}   | \nabla v |^{ p }  dx  \leq \bar c    (v ( a_{2\bar r} (w)))^{p}.
   \end{align} 
Moreover, there exists $\hat \si \in(0,1),  $ depending only on the data and $\| \ph \hat \|$,  such that 
\begin{align}   
\label{eqn8.4}  
| v ( x ) -  v ( y ) | \leq \bar c \left( \frac{ | x - y |}{\bar r}\right)^{\hat \si}     v   (  a_{2\bar r}  ( w ) ) \quad \mbox{whenever}\, \, x, y \in B ( w, \bar r ).
 \end{align} 
  Finally there  exists a unique finite positive
Borel measure  $ \tau$ on   $ \mathbb{R}^{n}$, with support contained in
$ \bar \Delta(w,r)$, such that
\begin{align}
\label{eqn8.5} 
\begin{split}
&(a)  \hs{.2in}    
{\ds \int  \lan  \nabla f   ( \nabla v  ),   \nabla \psi \ran dx  =  -   \int   \psi \,  d \tau} \quad \mbox{whenever} \, \, \psi  \in C_0^\infty (  B(w,r) ), \\
 &  (b)   \hs{.2in}  
\bar c^{ - 1} \,  \bar r^{ p - n}   \tau ( \Delta (  w,  \bar r ))\leq (v  ( a_{2\bar r} ( w ) ))^{ p - 1}\leq \bar c  \, 
\bar r^{ p - n }   \tau (\Delta (  w, \bar  r )).  
\end{split}
\end{align}
   \end{lemma}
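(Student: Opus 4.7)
The plan is to reduce everything to the interior/fatness results of Lemmas \ref{lemma2.1}--\ref{lemma2.4} via the Lipschitz geometry. The key geometric observation is that for each $w \in \partial D$ and $0 < \rho < \hat r$, the complement $\mathbb{R}^n \setminus D$ contains a truncated open cone with vertex at $w$, apex angle bounded below in terms of $\|\varphi\hat{\phantom{.}}\|$, and height comparable to $\rho$. Hence $\mathcal{H}^{n-1}_\infty((\mathbb{R}^n\setminus D)\cap \bar B(w,\rho))\geq c \rho^{n-1}$, so $\mathbb{R}^n\setminus D$ is uniformly $(\hat r, n-1)$-thick. By Frostman's lemma (cf.\ Corollary 5.1.14 in [AH]) this gives uniform $(\hat r, p)$-fatness of $\mathbb{R}^n\setminus D$ with constant depending only on the data and $\|\varphi\hat{\phantom{.}}\|$. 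Extending $v$ by zero outside $D$ on $B(w,s)$ is then justified, and the H\"older continuous extension plus the boundary H\"older estimate \eqref{eqn8.4} for points with one endpoint on $\partial D$ follow from Lemma \ref{lemma2.3} applied with $\tilde E = (\mathbb{R}^n \setminus D)\cap \bar B(w, \hat r)$. For pairs $x,y$ both well inside $D$ use Lemma \ref{lemma2.1}(iii); the general case is glued together by a standard interior/boundary dichotomy.

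For the Caccioppoli estimate \eqref{eqn8.3}, I would use the test function $\theta = v \eta^p$, where $\eta \in C_0^\infty(B(w, 2\bar r))$ with $\eta \equiv 1$ on $B(w, \bar r)$ and $|\nabla \eta|\leq C/\bar r$. Since $v\equiv 0$ on $\partial D \cap B(w, 4r)$ in the Sobolev sense, the zero extension of $v\eta^p$ lies in $W^{1,p}_0(B(w, 2\bar r)\cap D)$, so \eqref{eqn1.2} applies. Testing and using $\lan \mathcal{A}(\nabla v),\nabla v\ran \approx |\nabla v|^p$ along with Young's inequality yields $\int_{B(w,\bar r)}|\nabla v|^p\,dx \leq C \bar r^{-p}\int_{B(w,2\bar r)} v^p\,dx \leq C \bar r^{n-p}(\max_{B(w,2\bar r)\cap D} v)^p$. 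A Harnack chain along a broken segment inside $\Gamma(w)$ from $a_{2\bar r}(w)$ to any interior point, using Harnack's inequality (Lemma \ref{lemma2.1}(ii)) at each step, then converts this into $(v(a_{2\bar r}(w)))^p$ with constants depending only on the data and $\|\varphi\hat{\phantom{.}}\|$.

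The Riesz measure $\tau$ in \eqref{eqn8.5}(a) is obtained by noting that the zero extension $\tilde v$ of $v$ across $\partial D\cap B(w,4r)$ is an $\mathcal{A}$-subsolution in $B(w,r)$ (since for nonnegative $\theta\in C_0^\infty(B(w,r))$, $\tilde v\theta$ reduces to a legitimate test in $D$ via approximation); applying the representation theorem from [HKM, Theorem 21.2] or [KZ] to $\tilde v$ then produces the unique nonnegative Borel measure $\tau$ supported in $\bar\Delta(w,r)$ with the claimed divergence identity. The left inequality in \eqref{eqn8.5}(b) follows by using $\psi\in C_0^\infty(B(w, 2\bar r))$ with $\psi\equiv 1$ on $\bar\Delta(w,\bar r)$ and $|\nabla\psi|\leq C/\bar r$ in (a), estimating $|\mathcal{A}(\nabla v)|\leq c|\nabla v|^{p-1}$ by Definition \ref{defn1.1}(ii), applying H\"older's inequality, and invoking \eqref{eqn8.3} together with Harnack. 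The right inequality in \eqref{eqn8.5}(b) uses uniform $(r_0,p)$-fatness of $\mathbb{R}^n\setminus D$ and is the analogue of Lemma \ref{lemma2.4}(ii), proved exactly as in [EL] or [KZ] after the zero extension.

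The main obstacle, beyond correctly identifying the nontangential approach region relative to the Lipschitz graph, is the control of all constants solely in terms of the data and $\|\varphi\hat{\phantom{.}}\|$: this requires ensuring that the Harnack chain connecting $a_{2\bar r}(w)$ to arbitrary interior points in $B(w,\bar r)\cap D$ has length bounded in terms of $\|\varphi\hat{\phantom{.}}\|$ alone, which is where the choice $\bar r = r/\bar c$ with $\bar c$ depending on $\|\varphi\hat{\phantom{.}}\|$ enters. Uniqueness of $\tau$ is standard once existence is established: any two such measures paired against $C_0^\infty(B(w,r))$ give the same integrals, hence coincide by density.
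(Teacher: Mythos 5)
Your approach is essentially what the paper intends (the paper itself gives no proof here, only the remark ``as in Lemmas~\ref{lemma2.3} and~\ref{lemma2.4}''): use the truncated-cone geometry of a Lipschitz graph to show that $\rn{n}\sem D$ is uniformly $(\hat r,p)$-fat with constant controlled by $\| \ph \hat \|$, then transplant the fatness-based estimates of Lemmas~\ref{lemma2.1}--\ref{lemma2.4}. Two points deserve more care.

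First, you invoke Lemma~\ref{lemma2.3} with $\ti E = (\rn{n}\sem D)\cap\bar B(w,\hat r)$, but that lemma is stated for \emph{convex} compact sets, and this $\ti E$ is generically non-convex. This is fine only because the proof of Lemma~\ref{lemma2.3} (Caccioppoli plus [HKM, Theorem 6.18]) uses uniform $(r_0,p)$-fatness and nothing else; you should say so rather than apply the statement verbatim. Second, and more substantively, the step converting $\max_{B(w,2\bar r)\cap D}v$ into $v(a_{2\bar r}(w))$ is not a Harnack chain ``inside $\Ga(w)$'': the maximum in the Caccioppoli inequality is taken over all of $B(w,2\bar r)\cap D$, and points there near $\ar D$ cannot be joined to $a_{2\bar r}(w)$ by a Harnack chain of uniformly bounded length. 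What is actually needed is the Carleson estimate
\[
\max_{B(w,2\bar r)\cap D} v \,\leq\, C\, v(a_{2\bar r}(w)),
\]
with $C$ depending only on the data and $\| \ph \hat \|$, for positive $\mathcal{A}$-harmonic functions vanishing on $\ar D\cap B(w,4r)$. Its proof is not a one-line Harnack chain but an iterative covering argument combining the boundary H\"{o}lder decay you record in \eqref{eqn8.4} (Lemma~\ref{lemma2.3}(ii)) with interior Harnack; this is standard in the starlike-Lipschitz setting (see [LN], [LLN], and the sections of [AGHLV] the paper cites). With the Carleson estimate in hand, the rest of your argument — the subsolution extension and Riesz representation for \eqref{eqn8.5}(a), the test-function/H\"{o}lder computation for the left inequality in \eqref{eqn8.5}(b), the [EL]/[KZ] argument for the right — is exactly what the paper intends.
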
      
   Next we  state   as  lemmas   some   results  given  for  starlike Lipschitz domains   in  Lemma 9.5, Proposition 9.7,  Lemma 10.9,  and Corollary 10.10  of \cite{AGHLV}    when  $ 1 < p < n.$    For the definition of  a    starlike Lipschitz domain and justification for using these results in  the present setting,  see  Definition \ref{defn8.4}  and the  discussion following this definition.      
\begin{lemma} 
\label{lem8.2}  
Let  $D, w, \hat r, r, \ph, p,  v, \tau $ be  as  in Lemma  \ref{lem8.1}.     
There exists  $  c_{\star}   \geq 1,  $  depending only on  the data and  $ \| \phi  \hat \| $ 
such that if   $  4 \ti r  =   r/  c_{\star} $ and $ x  \in  B ( w,\ti  r )  \cap   D,   $   then 
\begin{align}
\label{eqn8.6}   
\begin{split}
&(a)  \hs{.2in}  c_{\star}^{-1}  { \ds \frac{v (x)}{ d ( x, \ar D )}  \leq          \lan  \nabla v (x),   e_n \ran   \leq   |  \nabla v (x)  |  \leq    
    c_{\star} \frac{v (x)}{ d (x, \ar D )} } \,   ,\\
&(b) \lim_{\substack{x\to y \\ x \in  \Ga ( y)\cap B(w,2r)}}  \nabla v ( x ) \stackrel{def}{=} 
\nabla v ( y ) \, \, \mbox{exists}  \quad \mbox{for}\, \,  \mathcal{H}^{n-1}\mbox{-almost every}\, \, y \in\Delta( w, \ti r ).
\end{split}
\end{align}
Moreover, $ \De ( w,  \ti r ) $ has a tangent plane
  for  $ \mathcal{H}^{n-1}$-almost every $ y  \in   \De ( w, \ti r )  $.   If
 $ \mathbf{n}( y ) $ denotes the unit normal to this tangent plane pointing
 into
 $ D \cap B ( w, 2 \ti r ), $  then
\begin{align}
\label{eqn8.7}  
\nabla v  ( y )
=  | \nabla v ( y )  | \, \mathbf{n} ( y ) \quad \mbox{for $\mathcal{H}^{n-1}$-almost every} \,\,  y\in\De ( w, 2 \ti  r ) 
\end{align}
and 
\begin{align}   
\label{eqn8.8}  
 \frac{ d \tau }{ d  \mathcal{H}^{n-1}} (y) =   p  \frac{ f ( \nabla v(y))}{|\nabla v (y)|} \quad \mbox{for}\, \, \mathcal{H}^{n-1}\mbox{-almost every}\, \, y\in \Delta(w,2\ti r). 
 \end{align}
\noindent Finally,   there exists    $  q >  p/(p-1)$ and $c_{\star \star}  $  with the same dependence  as  $ c_{\star}  $   such that  
\begin{align}
  \label{eqn8.9} 
  \begin{split}
  &(a)  \hs{.2in}  {\ds   \int_{ \De ( w,     \ti  r )  } \,  
 \left(\frac{ f ( \nabla v )}{|\nabla v |}  \right) ^q \, d\mathcal{H}^{ n - 1 } 
\,   \leq \, c_{\star \star}  \,    r^{ (n -  1)(1 - q) }    \left( \int_{\De ( w,   \ti  r )  
 } \, \frac{ f ( \nabla v )}{ |\nabla v|} \, d\mathcal{H}^{ n - 1} \, \right)^{q}.  
 }       \\
& (b) \hs{,2in}  {\ds   \int_{ \De ( w,     \ti  r )  } \,  
   \mathcal{N}_{\ti r}   ( \nabla v ) ^{q (p-1)} } \, d\mathcal{H}^{ n - 1 } 
\,   \leq \, c_{\star \star}  \,    r^{ (n -  1)(1 - q) }   {\ds  \left( \int_{\De ( w,   \ti  r )  
 } \, \mathcal{N}_{\ti r}  ( \nabla v )^{(p-1)}   \, d\mathcal{H}^{ n - 1} \, \right)^{q}.}  
\end{split}
\end{align}
  
 \end{lemma}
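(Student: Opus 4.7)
My plan is to import verbatim the proofs from \cite[Lemma 9.5, Proposition 9.7, Lemma 10.9, Corollary 10.10]{AGHLV}, after first reducing to the starlike Lipschitz setting in which those results are stated. In the coordinate system \eqref{eqn8.2} the subdomain $D \cap B(w, 2r)$ can be identified with a piece of a domain starlike with respect to a point $a_{2r}(w) + \rho e_n$ for $\rho \approx r$; this identification respects $\mathcal{A}$-harmonicity and boundary vanishing, so all assertions are reduced to the setting of \cite{AGHLV}. I then need to verify that no step in the \cite{AGHLV} arguments uses the bound $p < n$ imposed there; the only substantive inputs are Lemma \ref{lemma2.2}, the boundary Harnack inequality for $\mathcal{A}$-harmonic functions vanishing on a Lipschitz portion of $\partial D$ (valid for all $1 < p < \infty$ under hypothesis \eqref{eqn1.8}, see \cite{LN}), and a Hopf-type barrier construction -- none of which requires $p < n$.

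To establish (a), the upper bound $|\nabla v(x)| \leq c_\star v(x)/d(x, \partial D)$ follows at once from Lemma \ref{lemma2.2}(a) applied on $B(x, d(x, \partial D)/2)$. For the matching lower bound $\langle \nabla v(x), e_n \rangle \geq c_\star^{-1} v(x)/d(x, \partial D)$, I would build a barrier $\underline{v}(y) = A \cdot [\Phi(y_n - \phi(y')) - \Phi(0)]$ from an explicit $\mathcal{A}$-subsolution $\Phi$ on a truncated cone above the graph, choose $A$ so that $\underline{v} \leq v$ on $\partial(D \cap B(w, 2\tilde r))$, apply the maximum principle, and differentiate in $e_n$; the boundary Harnack inequality then globalizes the pointwise comparison from a single distinguished point $a_{2\tilde r}(w)$ to all $x \in B(w, \tilde r) \cap D$. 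For (b), Rademacher's theorem applied to $\phi$ yields the tangent plane at $\mathcal{H}^{n-1}$-a.e.\ $y \in \Delta(w, \tilde r)$, with inner normal $\mathbf{n}(y)$. Combining (a), interior H\"older continuity of $\nabla v$ from Lemma \ref{lemma2.2}(a), and a Dahlberg-type argument giving nontangential limits of $v/d(\cdot, \partial D)$ $\mathcal{H}^{n-1}$-a.e.\ on $\Delta$, one constructs the nontangential limit $\nabla v(y)$. That $\nabla v(y) = |\nabla v(y)|\mathbf{n}(y)$ is then forced because $v \equiv 0$ along the tangent plane, so the tangential derivatives vanish in the nontangential sense. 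Formula \eqref{eqn8.8} comes from testing \eqref{eqn8.5}(a) against $\psi \in C_0^\infty(B(w, 2\tilde r))$, integrating by parts over $D \cap B(w, 2\tilde r)$, using $\mathcal{A}$-harmonicity to kill the interior term, and then applying Euler's formula $\langle \mathcal{A}(\nabla v), \nabla v \rangle = p f(\nabla v)$ together with $\nabla v = |\nabla v|\mathbf{n}$ to rewrite the boundary term as $\int \psi \cdot p f(\nabla v)/|\nabla v|\, d\mathcal{H}^{n-1}$.

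The hard part will be the reverse H\"older inequality \eqref{eqn8.9}. The strategy is to first prove \eqref{eqn8.9}(b) for $\mathcal{N}_{\tilde r}(\nabla v)^{p-1}$ and then deduce \eqref{eqn8.9}(a) from the pointwise equivalence $f(\nabla v)/|\nabla v| \approx |\nabla v|^{p-1}$ (which follows from the $p$-homogeneity of $f$ and the bound $f(\eta) \approx |\eta|^p$) together with H\"older's inequality. For \eqref{eqn8.9}(b), the key tool is the Rellich-Pohozaev identity available thanks to the variational structure $\mathcal{A} = \nabla f$: for any smooth vector field $\alpha$,
\begin{align*}
\nabla \cdot \bigl( f(\nabla v)\, \alpha - \langle \alpha, \nabla v \rangle\, \mathcal{A}(\nabla v) \bigr) = f(\nabla v)\, \nabla \cdot \alpha - \sum_{i,j} \mathcal{A}_i(\nabla v)\, v_{x_j}\, \partial_j \alpha_i
\end{align*}
whenever $v$ is $\mathcal{A}$-harmonic. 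Taking $\alpha$ to be a smooth cutoff of the inner normal vector field, integrating over $D \cap B(w, 2\tilde r)$, and using (a) to control interior terms by boundary data, produces a reverse H\"older inequality for $\mathcal{N}_{\tilde r}(\nabla v)^{p-1}$ on $\Delta(w, \tilde r)$ with some initial exponent $q_0 > 1$; Gehring's self-improvement lemma then upgrades this to any $q > p/(p-1)$. The main technical point to check carefully is that (a) and Lemma \ref{lem8.1} together provide enough boundary integrability for all terms in the Pohozaev identity to make sense, and that the cutoff arguments in the \cite{AGHLV} proof (written for $1 < p < n$) transfer to $p \geq n$ without modification -- which they should, since all cutoffs are compactly supported in the Lipschitz graph patch and no decay conditions at infinity enter.
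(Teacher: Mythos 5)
Your plan matches the paper's high-level strategy --- reduce to the starlike Lipschitz setting and port Lemma 9.5, Proposition 9.7, Lemma 10.9, Corollary 10.10 from \cite{AGHLV} --- but the crucial claim that ``no step in the \cite{AGHLV} arguments uses the bound $p < n$'' is precisely what the paper identifies as false and spends this section repairing. Lemma 10.9 and Corollary 10.10 of \cite{AGHLV} depend, through Lemma 10.7, on Lemmas 10.4 and 10.5, where the $\mathcal{A}$-harmonic Green's function $G'$ of a bounded starlike Lipschitz domain $D'$ with pole at the center $w'$ is constructed and the analogue of \eqref{eqn8.6}(a) is proved for $v = G'$. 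For $1 < p < n$ this $G'$ is a positive solution blowing up at $w'$ with a unit point mass there; for $p \geq n$ a point has positive $p$-capacity, the fundamental solution's behavior at the pole changes qualitatively, and whether $G'$ should carry a positive or a negative point mass at $w'$ becomes an issue the paper explicitly declines to resolve. This makes the verbatim import impossible at exactly this node of the dependency graph. The paper's new ingredient is Lemma \ref{lem8.5}: replace $G'$ by the capacitary-type function $v'$, $\mathcal{A}$-harmonic in the ring $D' \setminus \bar{B}(w', s')$ with boundary values $1$ on $\ar B(w', s')$ and $0$ on $\ar D'$, and prove the analogue of \eqref{eqn8.6}(a) for $v'$ via the starlike-dilation quotient $(v'(x) - v'(\lambda x))/(\lambda - 1) \geq v'(x)/\tilde c$, exactly as in \eqref{1.7}. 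Without this substitution your proposal has no anchor; with it, the \cite{AGHLV} chain runs and yields Lemmas \ref{lem8.2} and \ref{lem8.3} essentially simultaneously.

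Two secondary problems. The barrier $\underline{v}(y) = A[\Phi(y_n - \phi(y')) - \Phi(0)]$ is not an $\mathcal{A}$-subsolution in general: $\phi$ is only Lipschitz, so $\nabla\underline{v}$ is merely $L^\infty$, and there is no reason for $\nabla\cdot\mathcal{A}(\nabla\underline{v})\geq 0$ to hold even distributionally. The paper's barriers are the smooth functions $\ph_1, \ph_2$ of \eqref{1.8}, combined with the starlike dilation argument, which never requires differentiating $\phi$. Separately, citing \cite{LN} for a boundary Harnack inequality for $\mathcal{A}$-harmonic functions valid for all $1 < p < \infty$ is circular here: \cite{LN} treats $p$-harmonic (isotropic) equations only, and the boundary Harnack inequality for general $\mathcal{A} = \nabla f$ with $p \geq n$ is exactly Lemma \ref{lem8.3}, which is being proved in tandem with Lemma \ref{lem8.2}, not available in advance.
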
            
            
 \begin{lemma} 
\label{lem8.3} 
  Let  $D, w, \hat r, r,  p, $    be  as  in Lemma  \ref{lem8.1}. 
  Also  let $ v_i$, for $i = 1, 2$  be as in this lemma with  $v$  replaced by  $v_i$.    
  There exist  $  \al_+ \in (0, 1)$ and $c_{+}  \geq  1$, depending only on  the data and 
  $ \| \ph \hat \|$,  such that if $r^+  =   r/c^+$ then   
\begin{align}   
\label{eqn8.10} 
\left|   \frac{ v_1 ( x )}{ v_2 (x)}  -  \frac{ v_1 (y )}{ v_2 (y)} 
  \right|  \, \leq \,   c_+    \left( \frac{ | x - y | }{ r^+ }   \right)^{\al_+ } \,  \frac{ v_1 (a_{r^+} (w)  )}{v_2 ( a_{r^+} (w))}
  \end{align}
whenever $x, y   \in  D  \cap B (w, 2r^+)$. 
\end{lemma}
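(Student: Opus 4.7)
The plan is to follow the strategy of Lemma 10.9 and Corollary 10.10 of \cite{AGHLV}, verifying that their proof extends from $1<p<n$ to the present range $p\geq n$. The range of $p$ enters the argument only through the structural assumptions in Definition \ref{defn1.1} and the boundary estimates of Lemmas \ref{lem8.1}--\ref{lem8.2}, all of which have already been established here for $p\geq n$. I can thus regard those two lemmas, together with Harnack's inequality, as my toolbox and work through the classical Dahlberg--Caffarelli--Fabes--Mortola--Salsa scheme in its Lewis--Nystr\"{o}m nonlinear form.

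First I would set $\rho = r/c_\star$ with $c_\star$ as in Lemma \ref{lem8.2} and establish a two-sided boundary Harnack comparison of the form
\[
c^{-1}\,\frac{v_1(a_\rho(w))}{v_2(a_\rho(w))}\,\leq\,\frac{v_1(x)}{v_2(x)}\,\leq\,c\,\frac{v_1(a_\rho(w))}{v_2(a_\rho(w))}\qquad \text{for }x\in D\cap B(w,\rho/c),
\]
with $c$ depending only on the data and $\|\varphi\hat{}\|$. The upper bound is obtained by bounding $v_1(x)$ above by the non-tangential gradient estimate $|\nabla v_1|\leq c\, v_1(a_\rho(w))/\rho$ from Lemma \ref{lem8.2}(a) and integrating along a segment to $\partial D$, and then bounding $v_2$ from below by $c^{-1}\,v_2(a_\rho(w))\,d(x,\partial D)/\rho$, again by Lemma \ref{lem8.2}(a). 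Carleson--Harnack chains interior to $D$ transfer the estimate from points of the form $a_s(\zeta)$ to arbitrary $x\in D\cap B(w,\rho/c)$.

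Next I would iterate. Put $\rho_k=4^{-k}r^+$ and let $M_k,m_k$ denote the essential sup and inf of $v_1/v_2$ on $D\cap B(w,\rho_k)$. Writing $M_k\,v_2-v_1$ and $v_1-m_k\,v_2$ as solutions of the linearized divergence-form elliptic equation obtained from \eqref{1.43}--\eqref{1.45} with coefficients controlled by $(|\nabla v_1|+|\nabla v_2|)^{p-2}$, I can apply Harnack's inequality on the balls centered at $a_{\rho_k}(w)$ to each of these non-negative solutions and combine with the two-sided comparison above to obtain the oscillation decay
\[
M_{k+1}-m_{k+1}\,\leq\,\theta\,(M_k-m_k)
\]
for some $\theta\in(0,1)$ depending only on the data and $\|\varphi\hat{}\|$. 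This is the main obstacle: unlike the linear case the functions $M_k v_2-v_1$ and $v_1-m_k v_2$ are not themselves $\mathcal{A}$-harmonic, so one must verify that the linearized equation they satisfy is uniformly elliptic on the non-tangential cones and that its ellipticity constants are controlled uniformly in $k$, using Lemma \ref{lem8.2}(a) to guarantee $|\nabla v_i|\approx v_i/d(\cdot,\partial D)$ and thereby an $L^\infty$ comparison of the degenerate weight on balls well inside $D$.

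Iterating yields $M_k-m_k\leq c\,\theta^k\,v_1(a_{r^+}(w))/v_2(a_{r^+}(w))$. Converting the geometric decay on dyadic scales to a H\"{o}lder modulus for $|x-y|$ gives \eqref{eqn8.10} with $\alpha_+=\log(1/\theta)/\log 4$ and $c_+$ absorbing the constants from Steps 1 and 2. As in \cite{AGHLV} no part of the scheme uses $p<n$ once Lemmas \ref{lem8.1}--\ref{lem8.2} are in hand, so the lemma follows.
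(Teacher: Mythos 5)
There is a genuine gap in your proposal, and it lies in the two claims you use to launch the argument: that Lemma \ref{lem8.2} is already ``in hand'' independently of Lemma \ref{lem8.3}, and that the only adaptations needed for $p\geq n$ are the ones you can read off from Definition \ref{defn1.1} and Lemmas \ref{lem8.1}--\ref{lem8.2}. The paper explicitly says, when outlining the AGHLV scheme, that the perturbation argument in Lemma 10.9 and Corollary 10.10 of \cite{AGHLV} yields \eqref{eqn8.10} of Lemma \ref{lem8.3} \emph{and essentially simultaneously} \eqref{eqn8.6}$(a)$ for $v_1,v_2$. So the nontangential gradient estimate $|\nabla v_i|\approx v_i/d(\cdot,\partial D)$ that you invoke at every step of your two-sided comparison, your Carleson-chain transfer, and your uniform-ellipticity check for the linearized equation, is not available as a precursor to Lemma \ref{lem8.3}; it is a co-output of the very argument you are trying to write. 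Your iteration thus hides a circularity, which is the reason the nonlinear CFMS scheme cannot simply be run ``with Lemmas \ref{lem8.1}--\ref{lem8.2} in my toolbox.''

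The missing ingredient that resolves the circularity is Lemma \ref{lem8.5}. In \cite{AGHLV} ($1<p<n$) the perturbation argument is anchored by first establishing the analogue of \eqref{eqn8.6}$(a)$ for a single fixed reference function, namely the $\mathcal{A}$-harmonic Green's function $G'$ of a starlike Lipschitz domain with pole at the center (their Lemmas 10.4, 10.5). Both $v_1$ and $v_2$ are then compared to this reference function, and the ratio estimate and \eqref{eqn8.6}$(a)$ for $v_1,v_2$ drop out together. The paper points out that this is precisely where $p\geq n$ is a problem: the Green's function $G'$ with pole at $w'$ behaves qualitatively differently (positive versus negative point mass, and $G'\leq 0$ in one regime), so the $G'$ construction of Lemmas 10.4--10.5 of \cite{AGHLV} ``can no longer be used.'' The substitute the authors provide is the capacitary-type function $v'$ in $D'\setminus\bar B(w',s')$, with $v'\equiv 1$ on $\partial B(w',s')$ and $v'\equiv 0$ on $\partial D'$, for which Lemma \ref{lem8.5} proves the key directional and two-sided gradient estimates \eqref{eqn8.13}. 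The stated ``proof'' of Lemma \ref{lem8.3} in the paper is then one sentence: run the AGHLV scheme with Lemma \ref{lem8.5} substituted for their Lemmas 10.4--10.5. Any correct proof of Lemma \ref{lem8.3} for $p\geq n$ must therefore either reproduce Lemma \ref{lem8.5} and carry out the comparison to $v'$ before appealing to \eqref{eqn8.6}$(a)$ for $v_1,v_2$, or supply an alternative anchor; your proposal does neither, and in particular never mentions Lemma \ref{lem8.5} or the reference function $v'$.
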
  
To  outline the proof  of  these lemmas  we need  a  definition.            
\begin{definition}[\bf Starlike Lipschitz domain]
 \label{defn8.4} 
A bounded domain $ D \subset \rn{n} $   is said to be starlike Lipschitz
 with respect to $ z \in D$ provided  
 \begin{align*}
 \ar D = \{  z + \mathcal{R} ( \om  )  \om &: \om \in \ar B ( 0, 1 ) \} \\
 &\mbox{where}\, \, \log   \mathcal{R} :  \ar B ( 0, 1 )\to\re\,\,  \mbox{is Lipschitz on}\,\, \ar B ( 0, 1 ). 
 \end{align*}
 \end{definition}   
\noindent Under the above scenario  we say   that $ z  $  is the center of  $ D$
  and  $ \| \log \mathcal{R}  \hat \|_{\mathbb{S}^{n-1}}$ is the starlike Lipschitz constant for $ D$.  
      We note that if  $ D, w,  \phi$, and $\hat r$  are as in   \eqref{eqn8.2},  $ 0 < 4r <  \hat r$  and   $   w'  =  w   +  r e_n/2,  $ then there 
     exists   $  c'   \geq 100, $  depending only on  $  \| \ph  \hat   \|$,   such that  the  following is true:  Let $ r' = r/c' $ and  let 
     $   D'  $ denote  the interior of the  set  obtained 
     from the union of  all line segments connecting  points  in $ \ar D  \cap B (w, r' )  $  
     to points in  $  B  ( w',  r' ).  $  Then  $  D' $  is  starlike Lipschitz with respect  to  $ w'$.  
     Moreover, if       $  \mathcal{R}'  $ is the  graph function for  $ D', $  then  
\begin{align} 
\label{eqn8.11}    
\|   \log  \mathcal{R}'   \hat  \|_{\mathbb{S}^{n-1}}   \leq   c'  (  \| \phi \hat \|   +  1   ).  
\end{align}
Also if  $  D'   $  is a starlike Lipschitz domain with center  at  $w'$,  graph function  
     $  \mathcal{R}'$,  $ w \in \ar  D' $,   $   \frac{w' - w}{ |w' - w|} =   e_n,$      and   $  10^{-2}   d ( w', \ar D')    <  s  <   
   10^{-1}     d(w', \ar D' ),   $  then  there exists   $  c''   \geq 1,  $   depending only on  
      $    \|\log \mathcal{R}'  \hat   \|_{\mathbb{S}^{n-1}} $,   such that if $ s'  =  s/c''$  then  
\[    
\ar D'  \cap  B( w, s' )   = \ar D'  \cap  \{ ( y',  \ph' (y')  \} \quad 
  \mbox{and}\quad     D'  \cap  B( w, s' )   = D'  \cap  \{  y  :   y_n   >   \ph' (y')  \}     
  \]  
where   $ \ph' $  is Lipschitz on  $ \rn{n-1} $   and    
\begin{align} 
\label{eqn8.12}  
\| \ph' \hat \| \leq   c''   (  | \log \mathcal{R}'  \hat  \|_{\mathbb{S}^{n-1}} + 1  ).
\end{align}
   
         In    \cite{AGHLV}  results  analogous   to   Lemmas  \ref{lem8.2},  \ref{lem8.3} were first  proved  for   $  1  < p  <  n $  when  $ D $  is  a starlike  Lipschitz  domain.   The results obtained were later  used    as in   \eqref{eqn8.11},  to  prove  similar results in   the  Lipschitz graph  setting  (see Lemma 10.11  in  \cite{AGHLV}).       
         To briefly outline the proofs  given in  \cite{AGHLV} for  $ 1  < p   < n $   and  starlike  Lipschitz  domains, we note that       \eqref{eqn8.6}$(b)$,  \eqref{eqn8.7},  \eqref{eqn8.8},   \eqref{eqn8.9},   were proved in  \cite{AGHLV}  under the assumption  that   
  an analogue of \eqref{eqn8.6} $(a)$  holds  for  $ v. $   This  was  done in Lemmas  9.5, 9.6, and Proposition 9.7.       In section 10  of   \cite{AGHLV}  the authors  used  the  results in  Proposition 9.7   
 to  state and prove   some rather difficult estimates  for  a certain  elliptic measure  in Lemmas 10.1-10.3.  In  Lemmas 10.4,  10.5,  the  authors  define and study the  $ \mathcal{A}$-harmonic  Green's  function,  say $ G' $    for  a  starlike Lipschitz domain,$ D',$  with pole at  
  the center, $w'$,   of this domain.  They  obtain an analogue of   \eqref{eqn8.6} $(a)$  for $ v  = G' $    when   $ 1 <  p  < n. $     These  results are  then used   in Lemma  10.7  to show  that the  ratio of  $ v_1/v_2$,    in the analogue  of   Lemma  \ref{lem8.3} for starlike  Lipschitz  domains,   is at least bounded.      Finally using  a  perturbation         type argument in  Lemma   10.9 and Corollary  10.10   they  eventually get   
 their  version of Lemma  \ref{lem8.3} for $ 1 < p < n $  and  essentially  simultaneously  \eqref{eqn8.6} $(a)$
   for $  v_1,  v_2.  $       
 This result is then restated for  Lipschitz domains  in  Lemma 10.11 of  \cite{AGHLV} for $ 1 < p < n.  $ 

The proof of   Lemmas 10.4,  10.5,  in  \cite{AGHLV}   required   a somewhat  lengthy  study   
  of  the  Green's  function,  which however  was also used (see  Lemma 13.7 in \cite{AGHLV})   to prove the important  Proposition 13.6  of  that  paper.     If $ p > n $  this approach  can no longer be used  to  get  an analogue  of  Proposition  13.6 in \cite{AGHLV}.  
There  are also certain  questions  
    which we do not want to consider when  $ p > n$,  such as  should  $ G' $    
    have a  positive point mass (in which case $ G' \leq 0$),  or a negative point 
    mass at $w'$. To avoid  these deliberations,    one  can replace  $ G' $   in  the  above  proof  scheme  for fixed $ p \geq n, $   by     the $ \mathcal{A}$-harmonic   function $  v' $  in   $  D'  \sem \bar B ( w', s' )$ 
    with  continuous boundary values: $ v' \equiv 1 $ on $ \ar B ( w', s' ) $ and  $ v' \equiv 0 $  on  
$ \ar  D' .$   Here    $  D'  $  is   a   starlike  Lipschitz  domain with  center at $ w' $ and graph function $ \mathcal{R}' .$    Also    $  s'   =  d ( w', \ar  D' )/c' $   where $ c'  \geq 100 $  is fixed.        We prove      

 \begin{lemma} 
 \label{lem8.5}
  Let  $ p,   D',  v',  w',s' $  be as above.   
  There exists  $ c    \geq 1 $  depending only on the data, $ c', $  and $ \| \log \mathcal{R}' \hat \|_{\mathbb{S}^{n-1}} $  such that 
\begin{align} 
\label{eqn8.13}  
\begin{split}
&(a) \hs{.2in}     
     0 < | \nabla v'  ( x ) |    \, \leq \, c \,    \lan {\ts  \frac{w' -
x}{ | w' - x |} } \,,  \, \nabla  v'  ( x ) \ran    
    \quad  \mbox{whenever} \,\,  x \in   D' \sem \bar B ( w', s' ).    \\
&  (b) \hs{.2in}  
c^{-1}  \,    \frac{ v'  ( x )}{d ( x, \ar    D'  ) }\leq    \, 
|   \nabla  v' ( x ) |  \,  \leq  \, c  \frac{v'  ( x )}{d ( x, \ar
  D' )} \quad \mbox{whenever}\,\,   x \in    D'  \sem \bar B ( w', 2 s'). 
\end{split}
 \end{align}  
\end{lemma}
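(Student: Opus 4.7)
\smallskip

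\textbf{Proof proposal.} The plan is to follow the strategy of Lemmas 10.4--10.5 of \cite{AGHLV}, with the auxiliary function $v'$ taking the place of the $\mathcal{A}$-harmonic Green's function used there; this substitution sidesteps the delicate questions about the sign and size of a point mass at $w'$ that arise for $p\geq n$. After translation we may assume $w'=0$, so $D'$ is starlike with respect to $0$, and I write $s=s'$, $\mathcal{R}=\mathcal{R}'$.

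\smallskip

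For (a), the key is a dilation--comparison argument. By the starlike Lipschitz hypothesis, there exists $\ep=\ep(\|\log\mathcal{R}\hat\|_{\mathbb S^{n-1}})>0$ such that for $\la\in(1-\ep,1)$, $\la\bar{D'}\subset D'$ with
\[
d(\la x,\ar D')\gtrsim (1-\la)\mathcal{R}(x/|x|)\quad\text{for}\ x\in\ar D'.
\]
Set $V_\la(x)=v'(\la x)$. By Remark \ref{rmk1.3}, $V_\la$ is $\mathcal{A}$-harmonic on $\la^{-1}D'\sem \bar B(0,s/\la)\supset D'\sem \bar B(0,s/\la)$. I compare $V_\la$ and $v'$ on this overlap region. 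On $\ar D'$, $v'=0$ while $V_\la(x)=v'(\la x)$ with $\la x$ at distance $\gtrsim(1-\la)$ from $\ar D'$; applying the interior/barrier construction modeled on $\ph_1,\ph_2$ from \eqref{1.8}--\eqref{1.9} in local Lipschitz coordinates given by \eqref{eqn8.12} yields $V_\la\geq c^{-1}(1-\la)$ on $\ar D'$. On $\ar B(0,s/\la)\cap D'$, $V_\la\equiv 1$ and $v'\leq 1-c^{-1}$ by H\"older continuity of $v'$ at $\ar B(0,s)\subset B(0,s/\la)$ together with Harnack. Combining these two boundary estimates and comparing with $v'$ itself (which is $\leq 1$ and vanishes on $\ar D'$) gives
\[
V_\la -v'\geq c^{-1}(1-\la)\,v'\quad\text{on the boundary of}\ D'\sem\bar B(0,s/\la).
\]
Since $V_\la-v'$ is a weak solution of a uniformly elliptic linear divergence-form PDE of the type \eqref{1.43}--\eqref{1.45} on compact subsets of the overlap, the same inequality propagates to the interior by the maximum principle. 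Dividing by $(1-\la)$ and passing to the limit $\la\uparrow 1$, using the $C^{1,\si}$ regularity from Lemma \ref{lemma2.2}, gives $-\lan x,\nabla v'(x)\ran\geq c^{-1}v'(x)$ on $D'\sem\bar B(0,s)$. Combined with $|\nabla v'(x)|\leq c\,v'(x)/d(x,\ar D')$ from Lemma \ref{lemma2.2} and the starlike geometry (which ensures $|x|\approx d(x,\ar D')$ up to constants once $|x|\geq 2s$), this yields (a).

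\smallskip

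For (b), the upper bound $|\nabla v'(x)|\leq c\,v'(x)/d(x,\ar D')$ follows from standard interior gradient bounds (Lemma \ref{lemma2.2}$(\hat a)$) applied in a ball of radius $d(x,\ar D')/2$. For the lower bound, observe that by (a) the radial inward derivative satisfies $\lan (w'-x)/|w'-x|,\nabla v'(x)\ran\geq c^{-1}|\nabla v'(x)|$, so integrating along the radial segment from $x$ out to $\ar D'$ (which by starlikeness has length comparable to $d(x,\ar D')$) gives $v'(x)\leq c|\nabla v'(x)|\,d(x,\ar D')\cdot\sup$, where the supremum is controlled by the interior Lipschitz bound on $\nabla v'$ in a Harnack chain. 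This yields $|\nabla v'(x)|\geq c^{-1}v'(x)/d(x,\ar D')$.

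\smallskip

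The main obstacle is the quantitative Hopf-type lower bound $V_\la\geq c^{-1}(1-\la)$ on $\ar D'$, which must scale linearly in $(1-\la)$ with a constant depending only on the data and $\|\log\mathcal{R}\hat\|$. This requires combining the exponential barrier $\ph_2$ of \eqref{1.8} (known to be an $\mathcal{A}$-subsolution when $N$ is large depending only on the data) with the local Lipschitz graph representation \eqref{eqn8.12}; the construction must be uniform in the boundary point and in $\la$ close to $1$, and it is here that starlikeness with a quantitative constant is essential. Once this uniform Hopf barrier is in hand, the rest of the argument is, as above, a linearization of the starlike dilation combined with the standard elliptic regularity of Lemmas \ref{lemma2.1}--\ref{lemma2.2}.
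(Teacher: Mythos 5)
There is a genuine gap in your proof of \eqref{eqn8.13}$(a)$. The dilation--comparison argument (done with $\lambda\uparrow 1$ in your version, $\lambda\downarrow 1$ in the paper's; both are fine) produces the radial inequality
$-\langle x,\nabla v'(x)\rangle\geq c^{-1}v'(x)$, i.e.\ $\langle -x/|x|,\nabla v'(x)\rangle\geq c^{-1}v'(x)/|x|$. You then try to close $(a)$ by combining this with the interior gradient bound $|\nabla v'(x)|\leq c\,v'(x)/d(x,\partial D')$ and the assertion that ``$|x|\approx d(x,\partial D')$ once $|x|\geq 2s$''. That last assertion is false: for $x$ near $\partial D'$ (with $w'=0$ and $\mbox{diam}(D')=1$), $|x|$ stays of order $1$ while $d(x,\partial D')\to 0$, so the quotient of the two estimates only gives $|\nabla v'(x)|/\langle -x/|x|,\nabla v'(x)\rangle\lesssim |x|/d(x,\partial D')$, which blows up at the boundary. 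The radial lower bound \emph{alone} cannot give the boundary-uniform statement $(a)$; this is precisely why the paper introduces the auxiliary function $\mathcal{P}(x)=-\langle\nabla v'(x),x\rangle$, notes that $\mathcal{P}$, $v'$, and each $v'_{x_i}$ solve the \emph{same} linearized divergence-form equation \eqref{eqn8.15}, temporarily assumes $\mathcal{R}'\in C^\infty$ so Lieberman's theorem gives continuity of $\nabla v'$ up to $\partial D'\cup\partial B(w',s')$, establishes $\breve c\,\mathcal{P}\geq\pm v'_{x_i}$ on that boundary (using the Lipschitz starlike geometry on $\partial D'$ and \eqref{eqn8.14} on $\partial B(w',s')$), propagates this via the boundary maximum principle for \eqref{eqn8.15}, and finally removes the smoothness assumption by approximating $\mathcal{R}'$ by smooth $\mathcal{R}_m$. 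Your proposal skips this entire mechanism and never addresses the need for boundary regularity of $\nabla v'$.

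Two smaller inaccuracies. First, on $\partial B(0,s/\lambda)$ you cite H\"older continuity to claim $v'\leq 1-c^{-1}$, but H\"older continuity bounds $1-v'$ from \emph{above}, not below, and for $\lambda$ close to $1$ the claim $1-v'\gtrsim 1$ is false: what the linearization actually needs is a Hopf-type bound $1-v'(x)\gtrsim |x|-s\approx 1-\lambda$ at the inner sphere; the paper gets the analogous ingredient from the barrier construction in \eqref{1.8}--\eqref{1.10}. Second, the boundary inequality you propose to propagate involves the non-constant function $c^{-1}(1-\lambda)v'$; if you intend to use the maximum principle for the averaged linearized operator, note that $v'$ is generally \emph{not} a subsolution of that operator (only of the linearization at $v'$ itself), so you should instead compare the two $\mathcal{A}$-harmonic functions $V_\lambda$ and $(1+c^{-1}(1-\lambda))v'$ directly, as the paper does in the analogue of \eqref{1.7}. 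Your sketch of $(b)$ is essentially the paper's argument (radial segment, mean value theorem, Harnack for $\mathcal{P}$), but it presupposes $(a)$, so it inherits the gap.
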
 
  \begin{proof}  
 The proof of this lemma is essentially the same as Lemma   10.5   in  \cite{AGHLV}  
 however since we are using a different function we give some details.     
 To start the proof  we assume, as we may,  
 (since $  \mathcal{A}$-harmonic functions are invariant under
translation and dilation  and \eqref{eqn8.13} is also invariant under translation and dilation) 
 that 
\begin{align*}
w' = 0 \quad  \mbox{and} \quad \mbox{diam}(D')= 1.
\end{align*}    
Using Lipschitz  starlikeness   of $ \ar  D', $  the maximum principle for $  \mathcal{A}$-harmonic functions, 
and arguing as in   the proof of   \eqref{1.7}   we find   for some  $ \ti  c  \geq  1 $   and $  \la  >  1 $   near 1, that 
\[ 
\frac{v' (  x ) - v'  ( \la x )}{  \la - 1 }  \geq   \frac{v'(x)}{\ti{c}} \quad \mbox{whenever} \,\, x  \in D' \sem \bar B (0, s')   
\] 
where $ \ti c $   depends  only on  the data, $ c' $  and   $ \| \log \mathcal{R}' \hat \|_{\mathbb{S}^{n-1}}$.  Letting  $ \la \to 1 $   we obtain      
\begin{align}
\label{eqn8.14}   
- \ti c \, \lan \nabla v'  ( x ),  x \ran  \geq  v'  ( x )  \quad \mbox{whenever}\,\,    x  \in    D'   \sem 
\bar B( 0, s' ).
  \end{align}   
      To   get estimates  near  $ \ar  D',  $   let     
\[
        \mathcal{P} ( x )  = -  \lan  \nabla v'  (x), x   \ran \quad \mbox{whenever} \,\,   x \in D' \sem   \bar B (0, s').
\]
As in    \eqref{1.15}       we  note that     
$  \psi  =  v' _{x_i}$ for  $1 \leq i \leq n$,   $\psi  = v'$, and  $  \psi   =  \mathcal{P}$   
      are all  weak solutions in    $ D'  \sem B ( 0, s' )  $  to  
\begin{align} 
\label{eqn8.15}   
\sum_{i, j = 1}^n   \frac{\ar}{ \ar x_i }  ( \hat  b_{ij} \psi_{x_j} )  = 0     
\end{align}
   where  
  \begin{align}  
  \label{eqn8.16}   
  \hat b_{ij} ( x )  =   f_{\eta_i \eta_j} ( \nabla v' (x) ) \quad \mbox{whenever} \quad  x \in   D' \sem \bar B  (0, s').  
\end{align}
We  temporarily assume that $  \mathcal{R}' $  has an extension to $ \rn{n}$ (also denoted   
$\mathcal{R}'$)  with  
\begin{align}  
\label{eqn8.17}  
 \mathcal{R}' \in C^\infty ( \rn{n} ). 
\end{align}
 \noindent Then  from a theorem of Lieberman in \cite{Li}    we deduce that 
 $ \mathcal{P} $ and  $ v'_{x_i}$ for  $1 \leq  i  \leq n$  have   continuous  extensions to 
 the  closure of  $  D' \sem B ( 0,  s' )$.      Using     Lipschitz   starlikeness of  $   D',  $  and  
 \eqref{eqn8.14}  we find for some  $  \breve{c} \geq 1 $   depending only on the data, 
 $ c' $,   and   $ \| \log \mathcal{R}' \hat   \|_{\mathbb{S}^{n-1}}  $  that   
  \[      
  \breve{c}\,  \mathcal{P} (x) \, \geq \pm   v'_{x_i}  (x) \quad \mbox{on}\quad  \ar   D'   \cup  \ar B ( 0,  s' )  
  \]    
  when $ 1 \leq i  \leq n.  $   From  this inequality  and the boundary maximum principle for the  PDE in   \eqref{eqn8.15}, 
  we  conclude that   \eqref{eqn8.13} $(a)$ is valid when  \eqref{eqn8.17} 
  holds with constants  depending only on the data, $ c' , $    and  $ \| \log \mathcal{R}' \hat \|_{\mathbb{S}^{n-1}}  $.   
   As for     \eqref{eqn8.13} $(b)$ the right-hand inequality in this display   
   follows      from  \eqref{eqn2.2} $(\hat a)$.     Thus we prove only the left-hand inequality in 
  \eqref{eqn8.13} $(b)$.    To accomplish  this 
  if   $ x  \in  D' \sem \bar B ( 0,  2 s'), $ 
                we   draw a ray $ l $  from 0  through  $ x $  to  a
point   in $  \ar D'.$  Let $ y  $ be the first point on  $ l $  (starting from $x$)
with    $ v' ( y ) = v' ( x )/2. $ 
  Then from 
 elementary calculus there exists $ \hat w  $ on the part of $ l $
between $ x, y $ with 
\begin{align} 
\label{eqn8.18}   
v' ( x )/2   =   v' ( x ) -  v' (y)    \, \leq  \, | \nabla   v'  ( \hat w ) | \,  | y -  x |.
\end{align}
From    \eqref{eqn8.4}   we deduce the existence of $  c \geq 1 $ depending only on the data,
$c' , $  
and  $  \| \log \mathcal{R}' \hat \|_{\mathbb{S}^{n-1}}  $ with 
\begin{align}  
\label{eqn8.19}   
y, \hat w \in B [ x,  (  1 - c^{ - 1} )  d ( x, \ar D') ]. 
\end{align}    
Using \eqref{eqn8.19},      Harnack's  inequality for $ \mathcal{P},  $  and  $  \eqref{eqn8.13} (a),$   it follows 
 for some $ c, $  depending only on  the data, $  c',  $  and  $ \| \log \mathcal{R}' \hat \|_{\mathbb{S}^{n-1}},  $ that   
 \[ 
 | \nabla v'    ( \hat w  ) | \, \leq \, c \, | \nabla v'   ( x ) |.
 \]     
This  inequality,     \eqref{eqn8.18}, and \eqref{eqn8.19} imply that  
 \[   
 v' ( x ) \, \leq   \, c \, | \nabla v' (x  )  | \,  d ( x,  \ar    D'   ).  
\]
   We conclude that the   left-hand inequality   in  \eqref{eqn8.13} $(b)$ is valid for   
   $ x  \in   D' \sem \bar B ( 0,  2s' )  $    when  $ c $ 
   is suitably large and      \eqref{eqn8.17}  holds.  To remove this assumption choose 
   $  \mathcal{R}_m  \in C^\infty (  \rn{n} )   $     for $ m = 1, 2, \dots, $ with 
\[ 
\|  \log   \mathcal{R}_m  \hat \|_{\mathbb{S}^{n-1}}     \leq c    \| \log   \mathcal{R}' \hat \|_{\mathbb{S}^{n-1}} 
 \] 
 and $  \mathcal{R}_m \to  \mathcal{R}' $ as $ m \to \infty $ uniformly on $  \mathbb{S}^{n-1}$.   Here $ c $ depends only on $n.$ 
  Let $ (D_m)  $ be the    corresponding sequence of  starlike  Lipschitz domains  and for large $ m,  $    
      let   $ v_m $ be  the  $\mathcal{A}$-harmonic function in $ D_m \sem \bar B (0, s') $ with $ v_m   \equiv    
   1 $  on $ \ar B (0, s') $     and $ v_m \equiv 0 $ on $ \ar D_m.$     Using   Lemmas  \ref{lemma2.2} and \ref{lem8.1},  we see that 
\begin{align*}
\{v_m, \nabla v_m\}& \, \, \mbox{converge to}\, \, \{v', \nabla v'\} \\
&\mbox{ uniformly on compact subsets of}\, \,  D' \sem \bar B(0,s'). 
\end{align*} 
Applying  Lemma \ref{lem8.5} to  $ v_m $,  and using the fact that 
 the constants in this lemma 
 are independent of 
 $ m $    we conclude  upon taking limits  that Lemma \ref{lem8.5}   holds for $ v' $  without hypothesis  \eqref{eqn8.17}.  \end{proof}

  Other than substituting    Lemma   \ref{lem8.5} for    Lemmas  10.4,  10.5  in  
  \cite{AGHLV},    the  proof  in \cite{AGHLV},   outlined  above,   can  also  be  used if  $p \geq n  $ to  prove  Lemmas  \ref{lem8.2},  \ref{lem8.3}.   Thus  we  omit further details in the proofs of these lemmas.      
 
 For use in  later sections    we make the following definition:        
 \begin{definition}[\bf  Lipschitz domain]
 \label{defn8.6}
 A   domain $D \subset\mathbb{R}^{n}$  is   called a bounded Lipschitz domain provided   there exists a finite set of    
balls $\{B(x_i,r_i)\}$   covering  an 
open neighborhood of $\partial D,$  such that 
 \eqref{eqn8.2} holds  with $r_i = \hat r $ and $ \ph_i  = \ph. $  
        The Lipschitz constant of $ D$ is defined 
  to be $M=\max_i\||\nabla\phi_i|\|_\infty$.  
  \end{definition} 

   \setcounter{equation}{0} 
   \setcounter{theorem}{0}
\section{Weak convergence of  certain  measures on  $\mathbb{S}^{n-1}$}    
\label{section9}       
Let   $   E $ be a  compact convex set with $ 0 $ in the interior of    $ E$, $p  \geq n,  $  
and  let $ u $   be  the   $ \mathcal{A}$-harmonic Green's  function for $ \mathbb{R}^{n}\setminus E$ with pole at infinity.  
We note that $   \mathbb{R}^{n}\setminus E $ is  a Lipschitz domain   so Lemma  \ref{lem8.2} holds with $ v =  u,  
D = \rn{n}  \sem E $ provided $ \hat r > 0 $  is small enough. From Lemma  \ref{lem8.2} we see that if   
$\mathbf{g}(x, E)=\mathbf{g}: \partial E\to \mathbb{S}^{n-1}$ is  defined by
\[
\mathbf{g}( x, E) = -\frac{\nabla u (x)}{ |\nabla u  (x) |}
\]
then this equality is well-defined  on a set  $ \He    \subset   \ar E   $ with   $  \mathcal{H}^{n-1} ( \ar E \sem \He  ) =  0. $   Also from   Lemma \ref{lem8.2} 
we  see that  if   $  F  \subset  \mathbb{S}^{n-1}$  is  a  Borel  set,  then  $\mathbf{g}^{-1} (F, E) $ is 
 $ \mathcal{H}^{n-1} $  measurable.    Define  a measure  
$ \mu(\cdot)= \mu_{E,f} ( \cdot )   $  on  $  \mathbb{S}^{n-1}$  by    
\begin{align} 
\label{eqn9.1}  
\mu ( F )  :=   \int_{ \He   \cap \mathbf{g}^{-1} ( F, E)}  f ( \nabla u ) \,  d \mathcal{H}^{n-1}  \quad \mbox{whenever}\, \, F  \subset  \mathbb{S}^{n-1}\, \, \mbox{is a Borel set}. 
\end{align}   
Next suppose that $ \{E_m \}_{m\geq 1}$        is  a  sequence of  compact 
convex  sets with nonempty interiors  which converge to  $  E $ 
 in  the  sense of  Hausdorff distance. That is,  $ d_{\mathcal{H}} ( E_m, E ) \to 0 $ as 
 $ m \to \infty $ where   $d_{\mathcal{H}}$  was defined at  the beginning of section \ref{NSR}.  
 Let     $ u_m  $  be   the  corresponding  $ \mathcal{A} = \nabla f$-harmonic Green's  
 function for $\mathbb{R}^{n}\setminus E_m$ when $m=1,2, \dots,$ with pole at infinity.   
 Then for $ m $ large enough say  $ m  \geq m_0 $  we see that Lemma  \ref{lem8.2} is valid with  
 $ D_m  = \rn{n} \sem E_m$, $u_m = v$, and $ w \in   \ar D_m$. Here  $ \hat r  > 0 $ can be chosen independent of   
 $m$   and  constants depend only on the data as well as the Lipschitz constant  for   $ E. $   
 For fixed $  m,  $  let  $  \mu_m  $  be the measure on  $  \mathbb{S}^{n-1}$   
 defined  as in   \eqref{eqn9.1} relative  to  $ f$,  $u_m$, and $\mathbf{g} (  \cdot, E_m)$.       
  We prove  
  \begin{proposition}
  \label{proposition9.1}  
   Fix   $  p \geq n  $    and define  $  \mu_m$ and $\mu $  relative to  $ E_m$ and $E $ respectively as above. 
   Then
   \[  
   \mu_m \rightharpoonup  \mu \quad \mbox{weakly as} \,\,   m \to \infty.
   \] 
   \end{proposition}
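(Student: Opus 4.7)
The plan is to combine three ingredients: locally uniform convergence of the Green's functions and their gradients, weak convergence of an auxiliary Riesz-type measure $\tau_m$ supported on $\partial E_m$, and the relation $d\mu_m = (|\nabla u_m|/p)\,d\tau_m$ on $\partial E_m$ from Lemma 8.2(c) to transfer weak convergence to $\mathbb{S}^{n-1}$.

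\textbf{Step 1 (convergence of $u_m$ and $\nabla u_m$).} Since $0 \in \mbox{int}(E)$ and $E_m \to E$ in Hausdorff distance, for all large $m$ we have $\overline{B(0,r_0)} \subset E_m \subset \overline{B(0,R_0)}$ for fixed $0<r_0<R_0$. Hence each $E_m$ is uniformly $(r_0,p)$-fat (using Remark 2.4 when $p=n$), each complement $\mathbb{R}^n\setminus E_m$ is Lipschitz with a uniformly bounded Lipschitz constant by convexity, and the constants $\hat r_0(E_m)$ in Lemmas 5.2(c), 5.3(c) can be taken uniform in $m$. An Ascoli argument based on Lemmas 2.1--2.3 extracts a subsequence $u_{m_k} \to \tilde u$ locally uniformly on $\mathbb{R}^n$; uniform boundary H\"older continuity forces $\tilde u \equiv 0$ on $\partial E$, and the uniform decay of $k_m = u_m - F$ at infinity identifies $\tilde u$ as the $\mathcal{A}$-harmonic Green's function for $\mathbb{R}^n \setminus E$. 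Uniqueness (Lemmas 5.2, 5.3) gives $\tilde u = u$, so the full sequence converges. Interior estimates (Lemma 2.2) upgrade this to $\nabla u_m \to \nabla u$ locally uniformly on $\mathbb{R}^n \setminus E$, and Lemmas 8.1--8.3 (with constants uniform in $m$) provide non-tangential boundary convergence together with uniform $L^{q(p-1)}$ control on $\mathcal{N}_r(\nabla u_m)$ for some $q > p/(p-1)$.

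\textbf{Step 2 (weak convergence of the auxiliary measure $\tau_m$).} Let $\tau_m$ be the Borel measure associated to $u_m$ by Lemma 2.5: $\int \phi\,d\tau_m = -\int \langle \mathcal{A}(\nabla u_m), \nabla \phi\rangle\,dx$ for $\phi \in C_0^{\infty}(\mathbb{R}^n)$. Local $L^p$ convergence $\nabla u_m \to \nabla u$ together with continuity and homogeneity of $\mathcal{A}$, uniform gradient bounds, and dominated convergence yield $\mathcal{A}(\nabla u_m) \to \mathcal{A}(\nabla u)$ in $L^{p/(p-1)}_{\mbox{\tiny loc}}$, hence $\int \phi\,d\tau_m \to \int \phi\,d\tau$. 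Since $\mbox{supp}(\tau_m) \subset \overline{B(0,R_0)}$ uniformly and $\sup_m \tau_m(\mathbb{R}^n) < \infty$ (by Lemma 2.5(ii) and the uniform boundary gradient bounds from Lemma 8.2), this extends by density to all continuous $\phi$ supported in a fixed ball, i.e., $\tau_m \rightharpoonup \tau$ weakly as Radon measures on $\mathbb{R}^n$.

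\textbf{Step 3 (transfer to $\mu_m$ on $\mathbb{S}^{n-1}$).} For $\psi \in C(\mathbb{S}^{n-1})$, let $\tilde{\Psi}(\xi) := |\xi|\,\psi(\xi/|\xi|)$ be its $1$-homogeneous extension. On $\partial E_m$, $\nabla u_m/|\nabla u_m|$ equals the outer unit normal $\mathbf{n}_m$ so, using Lemma 8.2(c),
\[
\int_{\mathbb{S}^{n-1}} \psi\,d\mu_m \;=\; \int_{\partial E_m} \psi(\mathbf{n}_m)\,f(\nabla u_m)\,d\mathcal{H}^{n-1} \;=\; \frac{1}{p}\int \tilde{\Psi}(\nabla u_m)\,d\tau_m.
\]
The integrand $x \mapsto \tilde{\Psi}(\nabla u_m(x))$ is continuous and uniformly bounded on $\mathbb{R}^n\setminus E_m$, and by Step~1 converges to $\tilde{\Psi}(\nabla u)$ locally uniformly on $\mathbb{R}^n\setminus E$ and, in the appropriate non-tangential trace sense, up to $\partial E_m$. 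Combined with $\tau_m \rightharpoonup \tau$ and the uniform total mass bound, this yields $\int \tilde{\Psi}(\nabla u_m)\,d\tau_m \to \int \tilde{\Psi}(\nabla u)\,d\tau$, and therefore $\int \psi\,d\mu_m \to \int \psi\,d\mu$ for every $\psi \in C(\mathbb{S}^{n-1})$, completing the proof. The hard part is precisely this last passage: the ``test function'' $\tilde{\Psi}(\nabla u_m)$ varies with $m$ and must be evaluated on the moving support $\partial E_m$, where $\nabla u_m$ possesses only $\mathcal{H}^{n-1}$-a.e. non-tangential boundary limits. The resolution depends on the uniform-in-$m$ Lipschitz regularity of $\partial E_m$ (from convexity plus Hausdorff convergence) and the uniform boundary estimates of Lemmas 8.1--8.3, which permit an $\epsilon$-tube argument around $\partial E$ in which $\tilde{\Psi}(\nabla u_m)$ converges uniformly to $\tilde{\Psi}(\nabla u)$; this is where the laborious generalization of \cite{AGHLV} from $1<p<n$ to $p\geq n$ carried out in Section~\ref{section8} becomes indispensable.
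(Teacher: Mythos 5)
The high‑level architecture of your argument (extract locally uniform convergence of $u_m$ and $\nabla u_m$, obtain weak convergence of the boundary Riesz measures $\tau_m$, then convert to $\mu_m$ via the density formula $d\tau_m = p\,f(\nabla u_m)|\nabla u_m|^{-1}\,d\mathcal{H}^{n-1}$) is reasonable, and Steps~1 and~2 are essentially sound — they recapitulate the stability machinery that the paper (following \cite{AGHLV}) builds from Lemmas~\ref{lem8.1}--\ref{lem8.3}. However, Step~3 is precisely where the mathematical substance of Proposition~\ref{proposition9.1} lies, and there you have only gestured at the difficulty without resolving it.

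The key claim you need — that ``$\tilde\Psi(\nabla u_m)$ converges uniformly to $\tilde\Psi(\nabla u)$ in an $\epsilon$-tube around $\partial E$'' — is simply false in general. For a convex $E$ with an edge, corner, or flat face the normal map $\mathbf{g}(\cdot,E)$ is discontinuous, $\nabla u$ is bounded but \emph{not} continuous up to the boundary, and $\nabla u_m$ is defined only $\mathcal{H}^{n-1}$-a.e.\ on the moving set $\partial E_m$ as a non‑tangential trace. One cannot treat $\tilde\Psi(\nabla u_m)$ as a sequence of continuous test functions on a fixed neighborhood and pair against the weakly convergent $\tau_m$; the integrand and the carrier both vary with $m$, and near edges there is no local uniform convergence to appeal to. Worse, $\mu$ may have atoms (for instance when $E$ is a polytope), and a direct ``integrand times measure'' limit argument must account for how mass concentrates under the Gauss map as $E_m$ smooths out or develops facets.

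The proof in the paper (deferred to \cite[Proposition~11.1]{AGHLV} and adapted to $p\geq n$) confronts exactly this: one needs quantitative control, uniform in $m$, on the $\mu_m$-mass of small spherical caps, and this control is obtained by a barrier comparison. This is the point of the auxiliary $\mathcal{A}$-harmonic function $v$ in the wide cone $K(\hat\theta)=\{x:x_n>|x|\cos\hat\theta\}$ with $\hat\theta>\pi/2$ close to $\pi/2$ (constructed in the paper by a limit of Perron‑type approximants $v_m$), together with the reverse H\"older inequality \eqref{eqn8.9} and the boundary Harnack estimates of Lemma~\ref{lem8.3}. Your proposal never produces — or even identifies the need for — such a barrier and therefore leaves a genuine gap where the hard estimate should go. (Separately, a minor sign slip: since $\mathbf{g}(x,E)=-\nabla u/|\nabla u|$, your $1$-homogeneous extension should be $\tilde\Psi(\xi)=|\xi|\psi(-\xi/|\xi|)$, not $|\xi|\psi(\xi/|\xi|)$.)
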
  
   \begin{proof}    
  The proof of this proposition is given in  \cite[Proposition 11.1]{AGHLV}  for  $  1  <  p   < n.$ The proof  just uses  
  Lemmas \ref{lem8.1}-\ref{lem8.3},  although in one place  (see the proof of  claim 11.9 in \cite{AGHLV}) 
  the authors  use  $\mathcal{A}$-capacitary functions   to construct  an   $ \mathcal{A}$-harmonic function
   $ v $   in    $  K=K (\hat \he )  =  \{ x :  x_n  > |x| \cos (\hat \he)  \}.$ Here $v$ is   continuous in  $ \rn{n} $  
   with  $ v \equiv  0 $ on  $ \rn{n} \sem K $  for  $ \hat \he > \pi/2, $  but near $ \pi/2$ satisfying $ v (e_n)  = 1. $     
    To get $ v $ in our situation let  
  $ v_m$   be   $  \mathcal{A}$-harmonic function  in   $  B ( 0,  2m  ) \sem  [ \bar B ( 0, m ) \sem  K]  $
  and  continuous on  $ \bar B  (0, 2m ) $   with  $  v  \equiv 0  $ on  $ \bar B (0, m ) \sem K $ 
  and  $v \equiv  a_m  $  on  $  \ar  B  (0, 2m) $ where $ a_m$  is chosen so that  $ v_m ( e_n ) = 1. $     
  Existence of $ v_m $   follows  from uniform $ (2m, p)$-fatness of $  \ar  B (0, 2m)$  and  
  $ \bar B (0, m )  \sem K. $  Using   Harnack's inequality, Lemmas  \ref{lem8.1} and  \ref{lemma2.2},     
  and  Ascoli's theorem we see that a  subsequence of  $ \{v_m\} $  converges to $ v $  with the desired properties  as $ m \to \infty. $    
  For the rest of the proof  of   Proposition  \ref{proposition9.1}  see \cite[section 11]{AGHLV}. 
  \end{proof}    
\setcounter{equation}{0} 
   \setcounter{theorem}{0}
\section{The Hadamard variational formula for $\mathcal{A}$-harmonic PDEs}  
\label{section10}
Let $ E_1$ and $E_2$ be   compact convex sets  and suppose  $ 0 $ is in the interior of  
$ E_1 \cap E_2. $  Fix $ p  \geq n $  and let   $ u ( \cdot, t ) $   
be the $ \mathcal{A} = \nabla f$-harmonic Green's  function  
for  $  \mathbb{R}^{n}\setminus (E_1  + t  E_2) $  with pole at infinity when $ t  \geq 0$.  
Also  let   $  \mu_{E_1 + t E_2} $ be the measure defined  on  $ \mathbb{S}^{n-1}$   in   \eqref{eqn9.1}  relative to   $ u (  \cdot,  t )$.   
In this section  we  prove 
\begin{proposition} 
\label{proposition9.2}  
With  the  above  notation let  $  h_1$ and  $h_2 $   be  the  support   functions for $ E_1$ and $E_2, $  
respectively and let  $\mathbf{g}( \cdot,  E_1 + t E_2 ) $  be the  Gauss map for $  \ar  (  E_1 + t E_2 )$.  
Then   for  $  t   \geq  0 $ and $ p = n$  ($\gamma$ is as in Definition \ref{defn5.4})  we have   
 \begin{align} 
 \label{eqn9.2} 
 \begin{split}   
 \frac{d \,\mathcal{C}_{\mathcal{A}} (E_1 + t E_2 )  }{dt} = n  \ga^{-1}  \,   \mathcal{C}_{\mathcal{A}}(E_1 + t E_2 )     
      \, {\ds  \int_{  \ar  ( E_1 + t  E_2) } }  h_2  ( \mathbf{g}(x,  E_1 + t E_2 ) )   f( \nabla u  ( x,  t ) ) \, d  \mathcal{H}^{n-1}.
\end{split}
\end{align}      
While for $n<p$ we have  
\begin{align}
 \label{eqn9.3}  
\begin{split} 
   \frac{d \,\mathcal{C}_{\mathcal{A}} (E_1 + t E_2 )  }{dt}  =        p (p-1)   \mathcal{C}_{\mathcal{A}}(E_1 + t E_2 )^{\frac{p-2}{p-1}}  
 {\ds  \int_{  \ar  ( E_1 + t  E_2) } }  h_2  ( \mathbf{g}(x,  E_1 + t E_2 ) )   f( \nabla u  ( x,  t ) ) \, d  \mathcal{H}^{n-1}.
      \end{split}
      \end{align} 
\end{proposition}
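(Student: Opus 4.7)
By Definition~\ref{defn5.4} and the chain rule, both formulas reduce to establishing the single identity
\[
\partial_t k(\infty,t) \;=\; - p \int_{\partial(E_1+tE_2)} h_2(\mathbf{g}(x,E_1+tE_2))\, f(\nabla u(x,t))\, d\mathcal{H}^{n-1}(x),
\]
where $u(\cdot,t)=F+k(\cdot,t)$ is the decomposition from Lemmas~\ref{lem5.2}--\ref{lem5.3}. Indeed, for $p=n$ one has $\partial_t\mathcal{C}_{\mathcal{A}}=-\gamma^{-1}\mathcal{C}_{\mathcal{A}}\partial_t k(\infty,t)$, and for $p>n$ one has $\partial_t\mathcal{C}_{\mathcal{A}}=(p-1)\mathcal{C}_{\mathcal{A}}^{(p-2)/(p-1)}(-\partial_t k(\infty,t))$, which produce the prefactors $n/\gamma$ and $p(p-1)$ respectively. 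The plan is to prove the displayed identity by a shape-calculus/Green's identity argument after first reducing to a smooth setting.

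\textbf{Smoothing and shape derivative.} First establish the formula under the additional hypothesis that $E_1,E_2$ are $C^{\infty}$ strictly convex; the general case then follows by taking $\delta\to 0$ along $C^{\infty}$ approximations $E_i^{\delta}$ (e.g.\ smoothings of $E_i+\delta B(0,1)$), using Proposition~\ref{proposition9.1} (weak convergence of the boundary measures), uniform Hausdorff convergence of the $\partial(E_1^{\delta}+tE_2^{\delta})$, and continuity of $h_2$. In the smooth case, the boundary regularity of Lemmas~\ref{lemma2.2}, \ref{lem8.2}, \ref{lem8.5} combined with the implicit function theorem show that $\dot u(x,t_0):=\partial_t u(x,t_0)$ exists in $\mathbb{R}^n\setminus(E_1+t_0E_2)$ and is a weak solution of the linearized equation
\[
\nabla\cdot\bigl(\mathcal{A}'(\nabla u)\,\nabla\dot u\bigr)=0.
\]
For a boundary trajectory $x_t\in\partial(E_1+tE_2)$, the Minkowski-sum characterization of the boundary gives $\langle \dot x_{t_0},\mathbf{g}\rangle=h_2(\mathbf{g})$, and differentiating the identity $u(x_t,t)\equiv 0$ in $t$ then yields the boundary condition $\dot u=-|\nabla u|\,h_2(\mathbf{g})$ on $\partial(E_1+t_0E_2)$.

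\textbf{Green's identity on an annulus.} Apply integration by parts to the pair $(u,\dot u)$ on $\Omega_R:=B(0,R)\setminus(E_1+t_0E_2)$, using both $\nabla\cdot\mathcal{A}(\nabla u)=0$ and the linearized equation, and the fact that $u\equiv 0$ on the inner boundary, to get
\[
\int_{\partial B(0,R)}\!\!\Bigl[\dot u\,\bigl\langle\mathcal{A}(\nabla u),\tfrac{x}{|x|}\bigr\rangle - u\,\bigl\langle\mathcal{A}'(\nabla u)\nabla\dot u,\tfrac{x}{|x|}\bigr\rangle\Bigr]\,dS \;=\; \int_{\partial(E_1+t_0E_2)}\!\!\dot u\,\langle\mathcal{A}(\nabla u),\mathbf{g}\rangle\,dS.
\]
Evaluate the right-hand side using Euler's relation $\langle\mathcal{A}(\eta),\eta\rangle=pf(\eta)$, the $p$-homogeneity of $f$, and the boundary identity for $\dot u$; it equals $-p\int h_2(\mathbf{g})f(\nabla u)d\mathcal{H}^{n-1}$. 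The first term on the left converges to $\partial_t k(\infty,t_0)$ as $R\to\infty$, using the flux conservation $\int_{\partial B(0,R)}\langle\mathcal{A}(\nabla u),x/|x|\rangle dS=1$ (consequence of Lemmas~\ref{lem5.2}$(d)$, \ref{lem5.3}$(d)$ and Euler) together with uniform convergence $\dot u\to\partial_t k(\infty,t_0)$ at infinity, which follows by a Liouville/H\"older-decay argument in the spirit of Lemma~\ref{lem1.3} applied to the linearized equation (whose coefficients $\mathcal{A}'(\nabla u)\approx|\nabla u|^{p-2}$ become uniformly elliptic on dyadic annuli after rescaling).

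\textbf{Main obstacle.} The delicate step is showing that the second boundary integral on $\partial B(0,R)$ vanishes as $R\to\infty$, because here $u$ itself is unbounded (growing like $\gamma\log|x|$ when $p=n$ and like $|x|^{(p-n)/(p-1)}$ when $p>n$), in sharp contrast to the capacity case $p<n$ treated in \cite{AGHLV}, where $u\to 0$ at infinity. What is needed is quantitative decay of $\nabla\dot u$ at infinity---roughly $o(R^{-1}(\log R)^{-1})$ when $p=n$ and $o(R^{-1})$ when $p>n$---obtained by iterating Harnack-type estimates for the linearized equation on rescaled annuli, parallel to the argument producing \eqref{1.26}. The same quantitative estimates, applied uniformly in $\delta$, are what is required to pass to the limit in the smoothing approximation, and together they form the main technical content of the proof.
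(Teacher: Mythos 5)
Your overall strategy parallels the paper's, but it is reframed in terms of the shape derivative $\dot u=\partial_t u$ rather than the difference quotient $\zeta(x,t_1)=\frac{u(x,t_1)-u(x,t_2)}{t_2-t_1}$ that the authors actually work with. The paper deliberately avoids assuming $\dot u$ exists a priori: it proves convergence of $\zeta$ along subsequences to a limit $\breve\zeta$ through a substantial chain of uniform estimates (\eqref{eqn9.8}--\eqref{eqn9.18}), and only then identifies $\breve\zeta(\infty)=-\partial_t k(\infty,t)$. Your appeal to the implicit function theorem to get $\dot u$ directly glosses over exactly this technical work. The bigger substantive difference is that the paper does not apply Green's second identity between $u$ and $\dot u$; instead it uses the divergence theorem on $u\,(\nabla f)(\nabla u)$ to write $I_i(R)=p\int f(\nabla u(\cdot,t_i))$ as a flux integral on $\partial B(0,R)$, decomposes $(I_1(R)-I_2(R))/(t_2-t_1)=T_1(R)+T_2(R)$, and identifies the thin-shell term $T_2(R)$ with the boundary integral of $h_2f(\nabla u)$ while showing $\lim_{R\to\infty}\hat T_1(R)=0$.

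The concrete error in your argument is the displayed Green's identity. Since $u$ vanishes on $\partial E$ but $\nabla\cdot\mathcal{A}(\nabla u)=0$ and $\nabla\cdot(\mathcal{A}'(\nabla u)\nabla\dot u)=0$ are \emph{different} equations, the naive second-identity cancellation fails: integrating by parts leaves the bulk term $(2-p)\int_{\Omega_R}\langle\nabla\dot u,\mathcal{A}(\nabla u)\rangle\,dx$, which vanishes only for $p=2$. The right observation is that $u$ \emph{also} solves the linearized equation, because Euler's relation for the $(p-1)$-homogeneous $\mathcal{A}$ gives $\mathcal{A}'(\nabla u)\nabla u=(p-1)\mathcal{A}(\nabla u)$ and hence $\nabla\cdot(\mathcal{A}'(\nabla u)\nabla u)=0$. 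Green's second identity for the single (symmetric, by $\mathcal{A}=\nabla f$) operator $L v=\nabla\cdot(\mathcal{A}'(\nabla u)\nabla v)$ then yields, after using $u\equiv0$ on the inner boundary,
\begin{align*}
(p-1)\int_{\partial B(0,R)}\!\dot u\,\bigl\langle\mathcal{A}(\nabla u),\tfrac{x}{|x|}\bigr\rangle\,dS
-\int_{\partial B(0,R)}\!u\,\bigl\langle\mathcal{A}'(\nabla u)\nabla\dot u,\tfrac{x}{|x|}\bigr\rangle\,dS
=(p-1)\int_{\partial(E_1+t_0E_2)}\!\dot u\,\langle\mathcal{A}(\nabla u),\mathbf{g}\rangle\,dS.
\end{align*}
Your displayed identity is missing the factor $(p-1)$ on the $\dot u\,\langle\mathcal{A}(\nabla u),\cdot\rangle$ terms; the derivation you sketch is therefore incorrect for $p>2$ (hence for all the $p\geq n\geq 2$ cases of interest except $p=n=2$). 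By a lucky cancellation of $(p-1)$, the formula you ultimately extract ($\dot u(\infty)=-p\int h_2(\mathbf{g})f(\nabla u)\,d\mathcal{H}^{n-1}$) is still the correct one, but you should record the $(p-1)$ factor explicitly. Your identification of the main obstacle—decay of $\nabla\dot u$ at infinity to kill the $\int u\,\langle\mathcal{A}'(\nabla u)\nabla\dot u,\cdot\rangle$ term—is correct, and corresponds to the paper's estimates \eqref{eqn9.10}--\eqref{eqn9.11a} and the vanishing of $\hat T_1(R)$ as $R\to\infty$.
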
  
\begin{proof}  
To lay the groundwork for the  proof of  Proposition  \ref{proposition9.2}  we first argue as in  (12.2)-(12.15) of   \cite{AGHLV}.  
We  begin by assuming for  $  i = 1, 2 $  that 
\begin{align} 
\label{eqn9.4} 
\begin{split}
\mbox{$ \ar E_i $ is  locally the graph of}&\mbox{ an infinitely differentiable} \\
& \mbox{and strictly convex  function on  $  \mathbb{R}^{n-1}. $ } 
\end{split}
\end{align}   
We  note from Lemma  \ref{lemma2.2}   that   $ u ( \cdot,  t_i) $, for $i=1,2$,   
has  H\"{o}lder  continuous  second  partials  in    $\mathbb{R}^n \sem (E_1 + t_2 E_2).   $  
Let   
\[     
 \zeta (x, t_1)  =   \frac{ u ( x, t_1 )  -  u ( x, t_2)}{ t_2 - t_1} \quad \mbox{whenever} \,\, x  \in  \mathbb{R}^n \, \,\mbox{and}\,\,  0 <  t_1    <  t_2. 
 \]
Then from  Lemmas  \ref{lem5.2}, \ref{lem5.3},   and the maximum principle for $ \mathcal{A}$-harmonic functions 
we see that $     \ze  \geq  0.$  
   Moreover, from Lemma \ref{lem6.1} we find  as in  \eqref{1.43}-\eqref{1.45}  that  $ \zeta$  
 is  a   weak  solution to    
\begin{align}  
\label{eqn9.5}  
 \sum_{i, j = 1}^n  \frac{ \ar}{ \ar x_i} (\bar{d}_{ij}  \ze_{x_j} ) 
= 0   
\end{align}  
in    $ \mathbb{R}^n  \sem  ( E_1 + t_2  E_2)  $ where  
\begin{align*}   
\bar{d}_{ij} (x)  =  \int_0^1 f_{\eta_i \eta_j} ( s  \nabla u ( x, t_1 ) + (1-s) \nabla u ( x, t_2 ) ) ds. 
\end{align*} 
Also,   
\begin{align}  
\label{eqn9.6}   
c^{-1} \bar \si (x)  \,   | \xi |^2  \, \leq \,   \sum_{i,j=1}^n   \bar d_{ij} ( x  )  \xi_i  \xi_j   \leq c \, \bar \si (x) \, |\xi|^2  
\end{align}
whenever $\xi  \in \mathbb{R}^n \sem \{0\}$ and $ x   \in  \mathbb{R}^n  \sem  ( E_1 + t_2  E_2)$ with
\begin{align}
\label{eqn9.7} 
\bar \si ( x  )  \approx  ( |  \nabla  u (x, t_2 ) | + |  \nabla  u (x, t_1 ) |  )^{p-2}.
\end{align}
The constants  in \eqref{eqn9.6} and  \eqref{eqn9.7} may   depend  on  $ t_2 $  and  the  radius  
of the largest ball contained in $ E_2 $  but  are  independent of  $x$  as above  and  $  t_1 $ 
whenever  $ t_1 \in [t_2/2, t_2). $          
  Also  from  Lemma   \ref{lemma2.2},   \eqref{eqn8.6},  Lemma \ref{lem6.1}  and  
  the  theorem  in \cite[Theorem 1]{Li}  mentioned earlier,  
  we see that   $ \nabla  u (\cdot, t_i  )$ for $i = 1, 2, $      extend   to   H\"{o}lder continuous  
  functions in the closure of  $  \mathbb{R}^n \sem  (E  _1+ t_i E_2)$.  More  specifically, if  
  \[
   t_2/2 \leq t_1 <  t_2 \quad \mbox{and} \quad \rho  =   4 (t_2 +1 ) \left(\mbox{diam}(E_1) +  \mbox{diam} (E_2) \right)
   \]
    then there  exist $ \be \in (0,1)$ and  $C^{\star}  \geq 1, $ independent of $ t_1, $  such that for $ i = 1, 2, $ 
\begin{align}
\label{eqn9.8} 
\begin{split}
&(a)  \hs{.2in} \, | \nabla  u ( x, t_i  ) - \nabla  u ( y,  t_i ) | \, \leq \,
      C^{\star}     | x - y |^{ \be },
      \\  
  &    (b) \hs{.2in} (C^{\star})^{-1}  \leq    | \nabla u (x, t_i )  |  \leq C^{\star}  
\end{split}
      \end{align}
 whenever $x,y$ are  in the closure of   $B ( 0, \rho )  \sem ( E_ 1 + t_i E_2 )$. 
  We point out that  the lower bound  in    \eqref{eqn9.8} $(b)$  follows from  a contradiction type argument using  Ascoli's   theorem and   Lemma  \ref{lem6.1}.    
   From \eqref{eqn9.8} $(b)$ and the  mean value theorem from  calculus we see that   there exists $C^{\star\star}$ independent of $ t_1  \in  [t_2/2, t_2 )  $  and  $ x $ such that
\begin{align}   
\label{eqn9.9}   
  0   \leq    \ze   \leq C^{\star\star} 
\end{align}  on      $ \ar  (E_1 + t_2  E_2 ).$
  From  \eqref{eqn9.9},  Lemmas  \ref{lem5.2},  \ref{lem5.3}, and the  maximum principle for  $ \mathcal{A}$-harmonic functions,   we  deduce for   given  $ \ep  > 0 $  that there exists  $ s = s ( \ep)  > \rho $ such that  
\[  \ze  \leq  C^{**}  +  \ep   u ( \cdot, t_1 )  \mbox{ in }   B ( 0,  s ( \ep)) \sem (E_1 + t_2 E_2).  \]  Letting $ \ep \rar 0 $  it follows that  \eqref{eqn9.9}  is  valid  in  the closure of  $ \rn{n} \sem (E_1 + t_2 E_2). $  In a similar way we get  that   
$ M ( \cdot,  \ze )$ is decreasing and  $ m ( \cdot, \ze ) $ is increasing on  $ ( \rho, \infty). $  Using this fact and arguing as   in  \eqref{6.0} $(b), $   we find  that  if  $ u ( \cdot, t_i ) =   F  + k_i ,$ for  $ i  = 1, 2, $  then there exists     $ \hat \he > 0$     such that   
\begin{align}
\label{eqn9.10}  
\left|  \ze (x )  -   \ze (\infty) \right|  =   \left|  \ze (x )  -   \frac{ k_1 (\infty)  - k_2 (\infty)}{t_2- t_1} \right|  \leq  2 C^{\star\star} \left( \frac{\rho}{|x|}\right) ^{ \hat \he}  \mbox{ for  }  | x | \geq  \rho. 
\end{align}
     In this discussion,   $F$ is the Fundamental solution as in Lemma \ref{lem1.4} when $p=n$ and as in Lemma \ref{lem1.5} 
   when $n<p<\infty$. Also, $k_i$ is the function associated to $E_1+t_i E_2$ for $i=1,2$ as in 
   Lemma \ref{lem5.2} when $p=n$ and as in Lemma \ref{lem5.3} when $n<p<\infty$. Moreover, 
   the constant    $\hat \he $  is  independent of  $ t_1  \in  [t_2/2, t_2 )  $  and  $ x $ as above.      
   
   From \eqref{eqn2.3}  and  Lemmas      \ref{lem6.1} we see that   $  \bar d_{ij} (x) $ in \eqref{eqn9.5} are Lipschitz    continuous in     $ \rn{n}  \sem B ( 0, \rho) $  
   with  Lipschitz norm independent of  $  t_1   \in  [t_2/2,  t_2). $   Using this  fact,  elliptic 
   PDE theory  (see  \cite{GT})  and  \eqref{eqn9.10}  we see that  
   $ \nabla \ze $ is   locally  H\"{o}lder  continuous in  $ \rn{n}  \sem B (0, \rho  ) $  and 
\begin{align} 
\label{eqn9.11a}  
 |\nabla   \ze  ( x ) |  \leq   \ti C  | x |^{-1 - \hat \he } \quad \mbox{whenever}\, \,   |x| \geq \rho 
 \end{align}  
 where $ \ti C $ has the same dependence as $ C^{\star\star}$.  From    \eqref{eqn9.8} $(a)$,   
 Lemma  \ref{lem6.1},  and   uniqueness of  $ u  ( \cdot, t_i )$ for $i  = 1, 2,  $         we deduce that   
\begin{align}
\label{eqn9.11} 
\nabla u  ( \cdot, t_1 ) \to  \nabla u ( \cdot, t_2 ) \, \, \mbox{uniformly on  the  closure of}\, \,  \mathbb{R}^n \sem (E_1 + t_2 E_2)\, \,  \mbox{as}\, \, t_1 \to t_2.   
\end{align}  
From  \eqref{eqn9.9}-\eqref{eqn9.11}, Lemma \ref{lem6.1},   \eqref{eqn9.5}-\eqref{eqn9.7},  
 and  Caccioppoli type estimates for locally uniformly elliptic  PDE,  
we deduce that if  $ t_1 \to  t_2  $  through  an increasing   sequence,  say $\{s_m\}$,   
then a  subsequence of $\{ \zeta =  \ze ( \cdot, s_m) \}$, also denoted   by $ \{\ze ( \cdot, s_m) \}$,   converges uniformly  on   compact subsets of   
$ ( \mathbb{R}^n   \cup   \{ \infty \}  )\sem ( E_1 + t_2 E_2 )    $  to  a locally   H\"{o}lder $  \hat \be $  continuous    function,   say  $ \breve \ze.  $  
 Moreover,  this  subsequence also converges      to   $  
\breve \ze $ locally  weakly in  $  W^{1,2} $ of   $ \mathbb{R}^n   \sem ( E_1 + t_2 E_2 ). $ Finally,     
\begin{align}
\label{eqn9.12}   
\sum_{i, j = 1}^n  \frac{ \ar}{ \ar x_i} ( f_{\eta_i \eta_j} ( \nabla u ( x, t_2 ) ) \,  \breve \ze_{x_j} (x))  
= 0
\end{align}    
locally in the weak sense in  $ \mathbb{R}^n   \sem ( E_1 + t_2 E_2 ) $
 and   \eqref{eqn9.10}, \eqref{eqn9.11a} are valid   with   $ \ze $ replaced by  $ \breve \ze$.   

Next  we  show that  $ \breve  \ze $  has boundary values that are  independent of  the choice of  sequence.   To do this, for $k = 1, 2$ we let      
$x_k  ( Z) =     \nabla  h_k  (  Z  ) $   whenever  $ Z  \in  \mathbb{S}^{n-1}$   and  recall that   
$  x_k (Z) \in \ar E_k $  with   
             $ Z = \mathbf{g}( x_k (Z), E_k)  $  for $ k = 1, 2$ .
We fix  $ X, Y   \in  \mathbb{S}^{n-1}, $    write  $ x, y $  for   $ x_1 ( X ) + t_2  x_2 (X),   x_1 (Y)  +  t_2   x_2 (Y)$  respectively and note that   
\[
x  =  \mathbf{g}^{-1} ( X,  E_1  + t_2 E_2 )\in   \ar ( E_1 + t_2 E_2) \quad \mbox{and}\quad  y   =  \mathbf{g}^{-1} ( Y,  E_1  + t_2 E_2 )\in   \ar ( E_1 + t_2 E_2).
\]   
 We  consider two cases.  First if  
\[
|  x  - y |  \leq   d ( x,  E_1 +  t_1 E_2 )/2
\]  
then from  \eqref{eqn9.8} $(a)$ and the mean value theorem of   calculus    we have    
\begin{align}
\label{eqn9.13}   | \ze (x)  -   \ze (y)  -  \lan \nabla  \ze (x),  x -  y \ran  |  \leq   \hat C
| x - y |^\be  
\end{align} 
where  $ \hat C $ is independent of  $ t_1. $   Second,   if   
\[
  | x - y | >  d ( x,  E_1 + t_1 E_2   )/2
  \]   
  then   using $ u ( \cdot, t_1  )  \equiv 0 $ on 
 $ \ar ( E_1 + t_1 E_2) $  and the  same strategy as  above we see that 
\begin{align}   
\label{eqn9.14} 
\begin{split}
  | \ze (x)  -    \lan \nabla  & u ( x_1 ( X ) + t_1 x_2 (X),  t_1 ),   x_2  (X) \ran  |  \\
   &+  | \ze (y)  -     \lan \nabla  u (x_1 (Y ) +  t_1  x_2 (Y), t_1 ),  y_2 (Y) \ran  |  \\
   & \hs{.8in} \leq   \hat  C | x - y |^\be.  
\end{split}
\end{align}   
Now   $ h_1  + t_1 h_2 $ is the support  function  for  $  E_1 + t_1  E_2 $  and so     
\begin{align}
\label{eqn9.15} 
\begin{split}
 \lan  \nabla u ( x_1 ( X ) + t_1 (x_2 (X)),  t_1 ),   x_2  (X) \ran &=   | \nabla u ( x_1 ( X ) + t_1 (x_2 (X)),  t_1 ) | \lan X,  x_2 ( X ) \ran \\
&=    | \nabla u ( x,  t_1 ) | h_2 ( \mathbf{g} (x, E_1 + t_2 E_2 ) )   +  \la (x)  \\
&=III_1+ \la (x)
\end{split}
\end{align}  
where   $  |\la (x) | \leq  \bar C   | x - y |^\be $  and  $  \bar C $ is independent of   $ t_1. $   Similarly,  
\begin{align}  
\label{eqn9.16} 
\begin{split}
 \lan \nabla  u ( x_1 ( Y ) + t_1 x_2 (Y),  t_1 ),   x_2  (Y) \ran &=   | \nabla u ( y,  t_1 ) |   h_2 ( \mathbf{g} (y,  E_1   + t_2  E_2  )  )  +  \bar \la (y) \\
 &=III_2+\bar \la (y) 
\end{split}
\end{align} 
where $ \bar \la (y) $  satisfies the same inequality  as  $  \la (x). $ 
From  \eqref{eqn9.14}-\eqref{eqn9.16} and the triangle  inequality we find that  
\begin{align} 
\label{eqn9.17} 
\begin{split}
 | \ze ( x ) -  \ze (y) | &  \leq  \left| III_1-III_2\right| +|\la ( x ) | + |\bar \la (y) | \leq  \ti C | x  - y |^{\be}
\end{split}
 \end{align}  
 where $  \ti C $ is independent of $ t_1 \in  [t_2/2, t_2)$.  Here we have also  used  Lipschitzness of  $ h_2 $  and \eqref{eqn9.8} $(a)$ to estimate  $ | III_1-III_2| .$
  From  \eqref{eqn9.17},  we deduce that     $ \ze  = \ze (\cdot, t_1)  $ is  H\"{o}lder  $  \be$-continuous  on  $ \ar ( E_1 + t_2 E_2)$ with  H\"{o}lder norm    
  bounded  by  a  constant independent of  $ t_1 \in   [t_2/2, t_2 ). $  From well-known theorems for  
  divergence form  uniformly elliptic  PDE  with bounded measurable coefficients  
   it now follows   that   $ \ze $ is  H\"{o}lder  $ \tau  $ continuous   
   on the closure of   $ B (0,  \rho)  \sem  ( E_1 + t_2 E_2) $  for some  $ \tau  > 0$    with  H\"{o}lder norm   
   independent of    $ t_1 \in [t_2/2, t_2) $     
  This fact and  the same reasoning as in   \eqref{eqn9.11a} yield       
 \begin{align}
   \label{eqn9.17a}  
   |\nabla \ze (x) |    \leq C  d ( x, \ar (E_1 + t_2E_2) )^{-1  + \tau}.
   \end{align}   
Taking limits we conclude from  Ascoli's  theorem  that  $ \breve  \ze $ is  
the uniform limit of    $ ( \ze ( \cdot,  s_m ) ) $ in  the closure of   $ B (0, \rho )  \sem (E_1  +   t_2 E_2) $.  
Thus $ \breve  \ze $ is also  H\"{o}lder  $ \tau$-continuous  in the closure of    $ B (0, \rho )  \sem (E_1  +   t_2 E_2)$ 
and  \eqref{eqn9.17a} holds for $ \breve{\ze}. $  Finally,   using \eqref{eqn9.8} and  arguing as in \eqref{eqn9.15} we see that  
\[  
\left|   \ze (x, t_1) -       | \nabla u ( x,  t_2 ) |\,  h_2 ( \mathbf{g} (x,  E_1   + t_2  E_2  ) ) \right|   \leq   c   |  t_1  -  t_2 |^{\be}. 
\] 
From this estimate we conclude that 
 \begin{align}  
\label{eqn9.18}   
 \ze (x, t_1)  \to     | \nabla u ( x,  t_2 ) | \, h_2 ( \mathbf{g} (x,  E_1   + t_2  E_2  ) ) \quad  \mbox{as}\quad   t_1   \to   t_2 
  \end{align}
 whenever  $  x  \in  \ar  (E_1 + t_2 E_2)$.  From \eqref{eqn9.18} we see that  
 every convergent subsequence of   $  \{ \ze ( \cdot, t_1 ) \} $ converges to  a  weak solution of      
\eqref{eqn9.12} satisfying   \eqref{eqn9.17a}  with continuous boundary values   
\[ 
\left.  |  \nabla  u  ( \cdot, t_2  ) \right|  h_2 (\mathbf{g} ( \cdot,  E_1  + t_2  E_2 )  )  \quad   \mbox{on}\quad   \ar ( E_1 + t_2 E_2 ).
  \]    
   
To  begin the  proof  of  Proposition \ref{proposition9.2}  in the smooth 
case and when $ t_2/2  \leq t_1 <  
t_2 $  recall  from the display above  \eqref{eqn9.8},   that   $  E_1 +  t_2 E_2  \subset   B ( 0, \rho )$. 
If $ R > 4 \rho $ we apply the divergence theorem to    $    u  ( \cdot, t_i )  (\nabla f ) ( \nabla u  ( \cdot, t_i ) $  for $ i = 1, 2$ 
and use $ \mathcal{A}$-harmonicity  of $ u ( \cdot, t_i)$,   $p$-homogeneity of $ f $,  
and smoothness  of  $ u ( \cdot, t_i )$ on $\ar (E_1 + t_i E_2), i = 1, 2 $ to get 
\begin{align}
\label{eqn9.19}
\begin{split}   
I_i (R)  &=    p  \int_{B(0, R) \sem (E_1 + t_i E_2 )}  f ( \nabla u (x, t_i ) )  dx     \\
&   =       \int_{\ar B(0, R) } u ( x, t_i )  \lan ( \nabla    f ) ( \nabla u ( x, t_i ) ), x/|x| \ran \, d \mathcal{H}^{n-1}. 
\end{split}
\end{align}
For brevity  we write  for fixed $ t_2, $ 
\[  
\frac{ I_1 (R) -  I_2 (R)}{t_2 - t_1}  =   \int_{\ar B(0, R)} J  (  \cdot, t_1 ) d \mathcal{H}^{n-1}. 
\] 
To make calculations first observe that 
 \[     
\lan \frac{(\nabla f) (  \nabla  u ( x, t_1 ) )  -    (\nabla  f) ( \nabla u ( x,  t_2 )  )}{t_2-t_1}, x/|x| \ran  =   \sum_{ l, j = 1}^n   \bar d_{lj} ( x )    \ze _{x_j} (x)   ( x_l/|x|)   
 \]
where  $ (\bar d_{lj} ), 1 \leq l, j \leq n, $  is as  in  \eqref{eqn9.5}. 
Letting $ t_1 \to  t_2$ in $(s_m)$   we see that   
\begin{align}  
\label{eqn9.20}  
  \lan \frac{(\nabla f) (  \nabla  u ( x, t_1 ) )  -    (\nabla  f) ( \nabla u ( x,  t_2 )  )}{t_2-t_1}, x/|x| \ran  \to     \sum_{ l, j = 1}^n  \bar a_{lj} ( x )   \breve  \ze _{x_j} (x)   ( x_l/|x|)   \end{align}
 where  $  \bar  a_{lj} (x )   =    \frac{ \ar^2  f }{\ar \eta_l \ar \eta_j }   \left(   \nabla u (x, t_2)   \right). $ 
 Next  we use  the same algebra as in  the calculus  argument  for finding the derivative of  a product.  
 After that we note from  \eqref{eqn9.20} and  \eqref{eqn9.11a}   that  
\begin{align}  
\label{eqn9.21}
J  (x, t_1 )  \to  J_1 (x)  =    \breve \ze (x)           \lan (\nabla f ) ( \nabla u ( x, t_2 ) ),  x/|x| \ran  +  u ( x, t_2) \sum_{l, j = 1}^n  \bar a_{lj} (x)  
   \breve \ze_{x_j} (x)  x_l/|x|
\end{align}   
as $ t_1  \to  t_2 $ in $(s_m)$. 

Using   \eqref{6.0} $(a)$ and $(b)$  with  $ k_2  = k $ and arguing   in  a now well known  way   
we deduce that  $ k_2 $  is  a  solution in  $ B (x, |x|/2) $ to  a  
uniformly elliptic PDE in divergence form with Lipschitz continuous coefficients when 
$|x| > 4 R_0 $.   From \eqref{6.0}$(b)$   and   elliptic PDE  theory  it  follows  that   
\begin{align}  
\label{eqn9.22} 
| \nabla  k_2 (x) |  \leq  C  |x|^{ - 1  -  \ti \he} \quad \mbox{for}\, \, | x| = R
\end{align}
 where  once again $ C $ is independent of $ x $. 
        Using  \eqref{eqn9.22}, \eqref{eqn1.8} $(i)$,  and  our  knowledge of  $ F $    it follows that  
\begin{align}   
\label{eqn9.23}  
u ( x,  t_2 )   \bar a_{lj} (x )   =  F ( x )   \frac{ \ar^2  f }{\ar \eta_l \ar \eta_j }   \left(  \nabla F (x)  \right) +  o\left(   \de (R)  R^{\frac{(1-n)(p-2) + (p-n) }{p-1}} \right)  \quad \mbox{as}\, \,          R \to \infty  
\end{align}        
         where  the  $ o $ term is independent of  $ t_1 \in  [t_2/2, t_2). $   Here  
         $ \de  (R)  = 1 $ if  $ p  > n $  and  $ \de (R)  = \log R, $ for  $ p = n. $  Likewise, as $R\to\infty$, 
\begin{align}  
\label{eqn9.24}       
 \lan (\nabla f ) ( \nabla u ( x, t_2 ) ),  x/|x| \ran   =  \lan (\nabla f ) ( \nabla F ( x) ),  x/|x| \ran  + o(R^{1-n} ).
\end{align} 
Using     \eqref{eqn9.11a},  \eqref{eqn9.23},   \eqref{eqn9.24},  and   $  \breve \ze (x)  =   \breve \ze (\infty)   +  o(1)   $   as $ R  \to \infty$ in    
             \eqref{eqn9.21}  we conclude first that  
\begin{align}  
\label{eqn9.25}
   J_1 (x)  =   \, \breve \ze (\infty)    \lan (\nabla f ) ( \nabla F ( x ) ),  x/|x| \ran   \, + \,   o( R^{1-n}  ) 
\end{align}
       and  second  since $F$ is a fundamental solution   that  
\begin{align}  
\label{eqn9.26}  
\begin{split} 
\lim_{m\to  \infty }  \frac{  I_1 ( R ) -  I_2 ( R )}{ t_2  -  t_1}  &=    \, \breve \ze (\infty)   \int_{\ar B (0, R)}  \lan (\nabla f ) ( \nabla F ( x ) ),  x/|x| \ran  d \mathcal{H}^{n-1} 
     \, + \,   o(1) \\
     &=   \breve \ze (\infty)  + o(1).
\end{split} 
\end{align}           
Next    we  write  
\begin{align} 
\label{eqn9.30}
     \frac{  I_1 ( R ) -  I_2 ( R )}{ t_2  -  t_1}     =        T_1 ( R  )  +   T_2  (R) 
\end{align}     
where 
\[  
T_1  (R)    :=p  ( t_2 - t_1)^{-1} {\ds  \int_{B(0, R)  \sem ( E_1 + t_2 E_2 ) } } (  f (  \nabla  u  ( x,  t_1 ) )  -   f (  \nabla  u  ( x,  t_2 ) ) )  dx
\]
and
\[
T_2 (R) := p  ( t_2 - t_1)^{-1} {\ds  \int_{  (E_1 + t_2 E_2)  \sem ( E_1 + t_1 E_2) } }  ( f (  \nabla  u  ( x,  t_1 ) )  dx.
\]
From   \eqref{eqn9.11} and \eqref{eqn9.18} we have
\begin{align} 
 \label{eqn9.31}  
 \begin{split} 
T_2 (R)    &=   \, ( t_2 - t_1)^{-1} {\ds  \int_{  (E_1 + t_2 E_2) 
  \sem ( E_1 + t_1 E_2) } }  \nabla \cdot  [  (u  ( x,  t_1 )  )   \, \nabla f (  \nabla  u  ( x,  t_1 ) ) ]  dx  \\
  &  =      {\ds  \int_{  \ar  ( E_1 + t_2 E_2) }   \ze (x)  | \nabla u ( x, t_2 )  |^{-1}   
\lan \nabla   f( \nabla u  ( x,  t_1 ) ),   \, \nabla u  ( x,  t_2 )  \ran \, d \mathcal{H}^{n-1}   } \\
   & \to  \hat T_2:=     p {\ds  \int_{  \ar  ( E_1 + t_2 E_2) } }  h_2  ( \mathbf{g}(x,  E_1 + t_2 E_2 ) )   f( \nabla u  ( x,  t_2 ) ) \, d  \mathcal{H}^{n-1}       \quad \mbox{as} \, \, t_1 \to t_2\, \, \mbox{in} \,\, (s_m).
\end{split}
\end{align}    

	  To handle  $ T_1 (R)  $  observe  as  in \eqref{eqn9.20} that   
\begin{align}   
\label{eqn9.32} 
   p \, \frac{f (  \nabla  u ( x, t_1 ) )  -      f ( \nabla u ( x,  t_2 )  )} {t_2-t_1}   =   \sum_{  j = 1}^n  q_{j} ( x )    \ze _{x_j} (x) 
\end{align}
where
         \[
  q_{j} (x )   =  p  \int_0^1       \frac{ \ar  f }{\ar  \eta_j }   \left( s  \nabla  u (x, t_1)   +  (1-s)  \nabla u (x, t_2) \right)  d s.
\]   
Using   \eqref{eqn9.32}     in  the definition of  $  T_1 (R)  $  and letting $ t_1 \to t_2 $ in $(s_m)$  
we obtain  from  \eqref{eqn9.17a}   and  the  Lebesgue dominated convergence theorem that      
\begin{align*}
 T_1 (R)  \to   \hat T_1 (R)   &=  p 
{\ds \sum_{j=1}^n   \int_{B(0, R)  \sem ( E_1 + t_2 E_2 ) } }  \frac{\ar f }{\ar \eta_j} ( \nabla u ( x, t_2 ) )  \, \breve \zeta_{x_j} (x) dx  \\ 
&  =  \frac{p}{p-1} {\ds \sum_{l,j=1}^n   \int_{B(0, R)  \sem ( E_1 + t_2 E_2 ) } }  \bar a_{lj} (x)   u_{x_l}(x, t_2)  \breve \zeta_{x_j} (x) dx
\end{align*}
 where $\bar a_{lj} (x)= \frac{ \ar^2  f }{\ar \eta_l \ar \eta_j }   \left(   \nabla u (x, t_2)   \right)$  as earlier and 
 we have used the $(p-1)$-homogeneity of $\nabla f$. 
 From   \eqref{eqn9.17a}   we  see that  the above  integral  can be integrated by  parts to obtain,  
   \[  
   \hat T_1 (R)   = \frac{ p }{p-1}  \sum   \int_{\ar B(0, R)  } u(x, t_2)   \sum_{l, j  = 1}^n  \bar a_{lj} (x)   \breve \zeta_{x_j} (x)  x_l/|x| \,    d  \mathcal{H}^{n-1}
  \]   
  where we have also used \eqref{eqn9.12}.   Letting  $ R  \to \infty  $  we deduce as in the derivation  of  \eqref{eqn9.26}  that      
  $  \lim_{R\to  \infty}   \hat  T_1 (R)  =   0.  $   Combining    this equality,  \eqref{eqn9.31}    
  and  letting  $  R \to \infty $  in   \eqref{eqn9.26}     we  arrive at  
\begin{align} 
\label{eqn9.33}  
\breve \ze (\infty)  =   p  {\ds  \int_{  \ar  ( E_1 + t_2 E_2) } }  h_2  ( \mathbf{g}(x,  E_1 + t_2 E_2 ) )   f( \nabla u  ( x,  t_2 ) ) \, d  \mathcal{H}^{n-1}.    
\end{align}    
We  note that    \eqref{eqn9.33}, the remark after   \eqref{eqn9.18},  
and  the  usual maximum principle  argument imply  that $  \breve \ze $ is independent of the choice of  $ (s_m )$.    Thus we  put      
 \[ 
 \breve \zeta (\infty) = \lim_{t_1 \rar t_2^-  }\, \ze ( \infty, t_1) = -  \frac{d k}{dt_2} (\infty)   
 \] 
 where  $ - k  ( \cdot,  t  )  =    F ( \cdot)  -  u ( \cdot, t ) $  for    $ t \in  [0, \infty). $     
  Now  from  Theorem \ref{theoremA}  we  see  that   
  $ t   \mapsto  \mathcal{C}_{\mathcal{A}} (E_1 + t E_2 )    $  is concave on
    $[ 0, \infty)$ when $ p  = n $  and    $ t \mapsto \mathcal{C}^{\frac{1}{p - n }}_{\mathcal{A}} (E_1 + t E_2 )   $  is concave on $ [0, \infty) $ when $ p >n. $  
    Thus      $t   \mapsto     \mathcal{C}_{\mathcal{A}}  (E_1  + t E_2 )     $   is Lipschitz and differentiable off  a  countable set when $n\leq p<\infty$.  
    From this observation, \eqref{eqn9.33},   and the   chain rule 
  we have   when $p =   n$   
\begin{align} 
\label{eqn9.34}  
\begin{split}
&  \frac{d \,\mathcal{C}_{\mathcal{A}} (E_1 + t E_2 )  }{dt}  =    
      -  \ga^{-1}  \,   \mathcal{C}_{\mathcal{A}}(E_1 + t E_2 )   \, \frac{d k}{dt}  (\infty) \\
      & \hs{.4in}  =  n \ga^{-1}  \,   \mathcal{C}_{\mathcal{A}}(E_1 + t E_2 )     
      \, {\ds  \int_{  \ar  ( E_1 + t  E_2) } }  h_2  ( \mathbf{g}(x,  E_1 + t E_2 ) )   f( \nabla u  ( x,  t ) ) \, d  \mathcal{H}^{n-1}
\end{split}
\end{align}      
and  for $p>n$ 
\begin{align} 
\label{eqn9.34a}    
\begin{split}
&\frac{d \,\mathcal{C}_{\mathcal{A}} (E_1 + t E_2 )  }{dt} =    
         - (p-1)  \,   \mathcal{C}_{\mathcal{A}}(E_1 + t E_2 )^{\frac{p-2}{p-1}}    \, \frac{d k}{dt}  (\infty) \\ 
         &  \, \, = p (p-1)  \,   \mathcal{C}_{\mathcal{A}}(E_1 + t E_2 )^{\frac{p-2}{p-1}}    \, \,     
      \, {\ds  \int_{  \ar  ( E_1 + t  E_2) } }  h_2  ( \mathbf{g}(x,  E_1 + t E_2 ) )   f( \nabla u  ( x,  t ) ) \, d  \mathcal{H}^{n-1}
\end{split}
\end{align}     
except for at most $t$ in a countable set.    Now from properties of  
support functions and  \eqref{eqn9.11}   we see  that   the     right-hand side  in 
both  equalities  is  continuous as  a  function of  $ t  $  so    from  the usual calculus 
argument   $  \frac{d \,\mathcal{C}_{\mathcal{A}} (E_1 + t E_2 )  }{dt} $  exists on
 $ (0, \infty) $ and has a left-hand derivative at $ t = 0. $ Thus Proposition  \ref{proposition9.2}   is valid under  assumption       \eqref{eqn9.4}.       

To show this  assumption can be removed,   choose  sequences of uniformly bounded
 convex domains  $\{E_1^{(l)}\}_{l\geq 1}$ and $\{E_2^{(l)}\}_{l \geq 1}$  with
   $ E_i \subset  E_i^{(l)} $  for $ i = 1, 2 $  and $l=1,2,\ldots,$ satisfying   \eqref{eqn9.4}   
   with $ \ar  E_i $  replaced by  $  \ar E_i^{(l)}, i = 1, 2$ and $l=1,2,\ldots$   
   We also choose these sequences so  that  $  E_i^{(l)} $  converges to  $ E_i $ in the sense of Hausdorff distance as  $ l  \to \infty$.     
 Let $ u^{(l)} ( \cdot, t ) $  
be the  $\mathcal{A}$-harmonic Green's  function  for the complement of $ E_1^{(l)} + t  E^{(l)}_2 $  with pole at $ \infty $. We  set
  $ -  k^{(l)} ( \cdot, t ) = F ( \cdot ) - u^{(l)} ( \cdot, t ) $  while  as earlier  $ u ( \cdot, t )$ and $k  ( \cdot, t )  $  have  the  same  meaning relative to  $ E_1  + t E_2. $      We claim   that 
\begin{align}  
\label{eqn9.35} 
\lim_{l \to \infty}  k^{(l)} ( \infty, t) = k ( \infty, t )  \quad   \mbox{for}\, \,   t  \in  [0, \infty). 
\end{align}  
     This    claim  will be proved in more generality in  \eqref{eqn10.0}  
 so  we  reserve its proof  until  the next section.  
Let   \[   
   \Ph (t) =  \int_{\ar (E_1 + t  E_2)}   h_2 ( \mathbf{g}(x,  E_1 + t E_2) )\,    f(\nabla u (x, t ) )  d\mathcal{H}^{n-1}   
\]  and  let  $  \Ph_l ( t ) $ denote the function  
 in this display with  $ E_i, $ replaced by $ E_i^{(l)},   i = 1, 2, l =1,2, \dots.$   Given  $ 0 < a  < \infty $ we claim   there exist  $ m =  m (a)$ and $M =  M (a)$  such that   for  $ l  \geq  m, $  we have  
 \begin{align}   
 \label{eqn9.36}   
 0 <  \Ph_l ( t )   \leq   M   \quad \mbox{for} \quad  t  \in  [0,a].  
 \end{align} 
 To verify this  assertion   fix  $l,  t,  $  let    
 \[
 E_0   =  E_1^{(l)}  + t  E^{(l)}_2
 \]
   and let  $ h_0$ be the support function for $E_0$ and let  $\mathbf{g}(\cdot, E_0)$  be the Gauss map for $\partial E_0$ while   $u_0$ is  the $  \mathcal{A}  =   \nabla  f$-harmonic Green's function for $\mathbb{R}^{n}\setminus E_0$ with pole at infinity.   Applying  Proposition \ref{proposition9.2}  in this  case  with $ E_1,  E_2,  t,  $  replaced by  $ E_0,  E_0, 0, $  and  using the fact  that  
\[    
\mathcal{C}_{\mathcal{A}} ( (1+t)  E_0)    = 
\begin{cases}
  (1+t) \mathcal{C}_{\mathcal{A}} (  E_0)  & \mbox{when} \, \, p=n, \\
   (1+t)^{p - n}     \mathcal{C}_{\mathcal{A}} (  E_0)  & \mbox{when} \, \, n<p<\infty
\end{cases}
\] 
 we obtain     when $ p = n$         
\begin{align}
 \label{eqn9.37}  
 \ga      = n \int_{\ar E_0 }  
 h_0 (\mathbf{g}(x,E_0))  \,   f ( \nabla u_0(x)) \, d\mathcal{H}^{n-1}.  
 \end{align} 
While when  $ p > n $  we have
\begin{align}
 \label{eqn9.38}  
 \frac{p-n}{p-1}\,   \mathcal{C}_{\mathcal{A}} (  E_0)^{1/(p-1)}      = p   \int_{\ar E_0 }  
 h_0 (\mathbf{g}(x,E_0))  \,   f ( \nabla u_0(x)) \, d\mathcal{H}^{n-1}.  
\end{align} 
Since  $  E_0 $  is  uniformly bounded and  $ h_0  \geq \min_{\mathbb{S}^{n-1}}  h_1  >  0, $  where $ h_1 $  is the support function for $ E_1, $ 
it follows from \eqref{eqn9.37},  \eqref{eqn9.38},    and properties of $\mathcal{C}_{\mathcal{A}} (  \cdot ) $ that    \eqref{eqn9.36}  is  true.          
  From  \eqref{eqn9.35}, \eqref{eqn9.36},  Proposition \ref{proposition9.1}, Proposition 
\ref{proposition9.2} in the smooth case,  and    Lipschitzness  of  support functions,   
as well as   the  Lebesgue dominated  convergence  theorem,  we  find that 
\begin{align}
  \label{eqn9.39}  
  \begin{split}
     k  ( \infty, t ) -  k  ( \infty, 0 )        & =   \lim_{l\to \infty}   [ k^{(l)}  ( \infty, t ) -  k^{(l)}  ( \infty, 0 ) ]  \\
&=- \lim_{l \to \infty}    p \int_0^t \Ph_l (s) ds= -  p  \int_0^t \Ph (s) ds.
  \end{split}
\end{align}
     Also $\Ph $ is  continuous on $ [0, \infty)$   by   Proposition  \ref{proposition9.1}.  
     Now \eqref{eqn9.39}, the  definition of  $ \mathcal{C}_{\mathcal{A}} (  \cdot),  $     and  the Lebesgue differentiation theorem yield  Proposition 
\ref{proposition9.2} without assumption  \eqref{eqn9.4}.   
\end{proof}
    \begin{remark}  
\label{remark9.3}  
We note   that  Proposition \ref{proposition9.2} remains valid   for  $ t > 0 $  if    we  assume only  that  $0 \in E_1,  $  rather than  
$0$  is in the interior of  $ E_1 $  (so  $ \mathcal{H}^n(E_1)=0 $  is possible but from
 the definition of $E_2$ we still have $0$ in the interior of $E_2$).  
 To handle  this case  we    put  $ E'_1 = E_1 + t E_2$ and 
 $ E'_2   = E_2 .$   Then    $ E_1',  E_2' $  are compact convex  sets and  $0$ is in the interior of  $ E_1'  \cap E_2' .$       
 Applying  Proposition \ref{proposition9.2}  with  $ E_1, E_2 $  replaced by  $  E_1',  E_2'  $  
 respectively     and   at  $ t=0 $  we obtain  the above generalization  of     
Proposition \ref{proposition9.2}.
\end{remark}   
\setcounter{equation}{0} 
   \setcounter{theorem}{0}
   \section{Proof of  Theorem \ref{mink}} 
       \label{section11}      
Finally we are in  a  position to prove existence of the measure in Theorem \ref{mink} in the discrete case. 
Once again our argument is similar to the one in  \cite[section 13]{AGHLV} for $ 1 < p < n $.  However, in  our  opinion  
not  all of  this argument is    so  well  known  and  involves  different calculations for $ p \geq n$,  
so we  will  give mostly full details.     
To  begin   we  note that  if  $ p  \geq n $  is  fixed,  $ E' $  is  a  compact convex set 
and  $ ( E'_l ) $  is  a sequence  of  convex compact sets  converging to $ E'$ in the sense of  Hausdorff distance,   
then  either  $ E'  $  is  a  single  point  in which case  $ \lim_{l \to \infty}   \mathcal{C}_{\mathcal{A}} (E'_l )  = 0 $  or  
\begin{align} 
\label{eqn10.0}  
\lim_{l \to \infty}   \mathcal{C}_{\mathcal{A}} (E'_l )  =   \mathcal{C}_{\mathcal{A}} (E' )  > 0.  
\end{align} Note  that    \eqref{eqn10.0} and  the definition of     
$   \mathcal{C}_{\mathcal{A}} ( \cdot ) $ give claim \eqref{eqn9.35}. 
If  $ E' $  is  a  single point it follows from \eqref{5.23}   that the above  limit is  zero.   
   Otherwise,  let  $G'_l$, $l=1,2,\ldots,$ be the corresponding  sequence of 
   $\mathcal{A}$-harmonic Green's  functions  for  $ \mathbb{R}^{n}\setminus E_l'$ for $l=1,2,\ldots,$ 
   and let $ G' $  be the $\mathcal{A}$-harmonic Green's  functions  for  $ \mathbb{R}^{n}\setminus E'$ with pole at infinity.    
   Then   from  Lemma  \ref{lemma2.3}  and  uniqueness of  $ G' $   we  see  that   $ (G'_l)$    converges  
   uniformly on  compact subsets of  $ \rn{n} $  to  $ G'.$   
   From   Lemma \ref{lem6.1} we see that     $  \He  =  \{ x :\,  G'(x)  \leq  1 \} $ 
    is convex and has nonempty interior.  Also    $ G' - 1$ is the 
    $ \mathcal{A}$-harmonic  Green's   function for $ \mathbb{R}^{n}\setminus \He$ with pole at infinity.  
    Translating and  dilating  $ \He $ if necessary  we     may assume     that     $ 0 \in \He $  
    and $\mbox{diam}(\He)= 1 $ thanks to    \eqref{5.23}.    
    Applying  \eqref{6.0} $(a)$  to $ G' - 1 $   it follows as in   the  proof  of  \eqref{6.0} $(b),$  
    that there exists  $ R > 0  $  independent  of  $ l $  so  that  
\[   
| G_l' - G'| (x)  \leq   M (R, | G_l' - G'|) \quad \mbox{and} \quad 0  <  \min_{ \ar B (0, R ) }  ( F - G' )   \leq   (F  -  G') (x) 
\]    
when  $ x \in \rn{n} \sem B ( 0, R). $  Using  these  inequalities and the 
definition of   $ \mathcal{C}_{\mathcal{A}} (\cdot ) $ in  Definition  \ref{defn5.4}  we obtain   \eqref{eqn10.0}.  
   \subsection{Proof of  existence in    Theorem \ref{mink} in the  discrete case}  
Let   $  c_1,  c_2,  \dots,  c_m  $  be positive numbers  and  $   \xi_i \in  
\mathbb{S}^{n-1}$  for  $   1 \leq  i   \leq  m$.     
Assume that  $ \xi_i \not = \xi_j$ for $i \not = j$  and  let  $ \de_{\xi_i} $  denote the  measure with  a point mass at  $ \xi_i. $  
Let  $ \mu $  be a measure on   $ \mathbb{S}^{n-1}$  with    
\[   
\mu ( K )  = \sum_{i=1}^m   c_i   \de_{\xi_i} (K) \quad \mbox{whenever} \quad K\subset \mathbb{S}^{n-1} \, \,  \mbox{is a Borel set}.   
\]   
We also  assume that  $ \mu $  satisfies  \eqref{eqn7.1} $ (i)$ and $(ii). $   That is,  
\begin{align} 
\label{eqn10.1}   
\sum_{i=1}^m   c_i  \,  |  \lan \he,  \xi_i \ran |   >   0  \quad \mbox{for all}  \quad \he \in  \mathbb{S}^{n-1}     
\end{align}
and
\begin{align}
\label{eqn10.2} 
\sum_{i=1}^m   c_i  \,  \xi_i  =  0.
\end{align}    
For technical   reasons  we  also first assume that    
\begin{align}
\label{eqn10.3}  
 \mbox{either}\, \, \, \mu  ( \{\xi\} ) \, \, \, \mbox{or} \, \, \, \mu (\{- \xi\} )= 0  \quad \mbox{whenever} \quad \xi \in \mathbb{S}^{n-1}.
 \end{align}   
 This assumption will be removed in the  general  proof  of  existence.   
 For   $  \mu $  as above and fixed  $ p \geq n, $   we    show that   there is  a   
 compact   convex  polyhedron  $E$  with  $0$  in the interior of  $ E $   and  
\[
\mu (K)  = \int_{\mathbf{g}^{-1}(K, E)}  f(\nabla U ) \, d\mathcal{H}^{n-1} \quad \mbox{whenever} \quad K\subset\mathbb{S}^{n-1} \, \, \mbox{is a Borel set}
\]
where   $ \mathbf{g}(\cdot, E)$  is the  Gauss map for  $ \ar E $  and    $U$ is 
the $ \mathcal{A}$-harmonic Green's  function for  $  \rn{n}\sem E$ with pole at infinity.  
 Moreover,  if  $  F_i $  denotes the  face of  $ \ar E $ with outer normal  
 $ \xi_i$ for $1 \leq i  \leq  m$  then  $\mathbf{g}(F_i, E) = \xi_i  $  and   
\begin{align}  
\label{eqn10.4}     
\mu ( \{\xi_i\} )  = c_i   =  \int_{F_i} f(\nabla U ) \, d\mathcal{H}^{n-1}  \quad \mbox{for}\,\,  1 \leq i   \leq m.
\end{align}  
Existence  in   \eqref{eqn10.4}  follows  from  a    variational  type  
argument  apparently due to  Minkowski (see \cite[Section 8.2]{G})).
    Using this  method  one   needs  to  show  that  a  certain  minimum  
    problem  has  a  solution within the  given class of  compact convex  sets with nonempty interior.  
     To  be more specific       let 
$ q=(q_1,\ldots, q_m)\in \rn{m} $ with  $ q_i   \geq 0$ for $1 \leq i  \leq m. $ Let  
\[  
E(q):=  \bigcap_{i=1}^m  \{x:\,  \lan x,  \xi_i \ran  \leq q_i  \}    \quad \mbox{and}\quad  \Ph:=   \{E (q):\,   \mathcal{C}_{\mathcal{A}}  (E(q)) \geq  1 \}.
 \]
We also set     
\[ 
\he (q) =  \sum_{i=1}^m  c_i  \, q_i \quad \mbox{and}\quad     \la =     \inf \{ \he (q):\,\,  E (q)  \in  \Ph \}. 
\]    
We want to show there exists  
\begin{align} 
\label{eqn10.5}    
\breve q  =  (\breve q_1, \dots, \breve q_m),\, \,  \breve  q_i  > 0 \, \, \mbox{for}\, \, 1 \leq i  \leq m, \, \,  \mbox{with}\, \,  \he ( \breve q ) = \la\,\,  \mbox{and} \,\,  \mathcal{C}_{\mathcal{A}} (   E (\breve q  )  ) \geq  1.
\end{align} 
Using    \eqref{eqn10.5}   and   a  variational type argument,    it  follows  easily     
from  Proposition  \ref{proposition9.2}   that   $  \mu $  is  a  constant multiple of  
the  measure  defined   in   \eqref{eqn7.2} $ (c)$   relative  to  the 
$ \mathcal{A}$-harmonic  Green's  function for    $  \mathbb{R}^{n}\setminus E (\breve q)$ with a pole at infinity.    
    
 To begin the proof of \eqref{eqn10.5}   we first note  that  if  $ E (q)\in \Ph$  
 then  $ E(q) $  is   a closed convex set.   Also  we note from   \eqref{eqn10.2} that 
 \[   
 \int_{ \mathbb{S}^{n-1}}   \lan \tau, \xi \ran^+  d \mu ( \xi ) =  \int_{ \mathbb{S}^{n-1}}   \lan \tau, \xi \ran^-  d \mu ( \xi )  \quad \mbox{whenever}\quad    \tau \in \mathbb{S}^{n-1}
  \]   
  where $ a^+ =  \max{(a, 0) } $  and  $ a^-   = \max{(- a, 0)}$.   From this note and  \eqref{eqn10.1} 
 we see that   for some $ \ph >  0, $ 
 \begin{align}  
 \label{eqn10.6}
  \ph  <    \int_{ \mathbb{S}^{n-1}}   \lan \tau, \xi \ran^+  d \mu ( \xi )   \quad \mbox{for all} \quad  \tau  \in \mathbb{S}^{n-1}.
 \end{align}
If    $ h = h  (  \cdot,  E (q) ) $  is the support function for $ E (q) $  and    $ r \tau  \in  E (q ) $ 
for some $r>0$ and $\tau\in\mathbb{S}^{n-1}$  then    it  follows from \eqref{eqn10.6}  and the definition of  $ h $  that  
\begin{align} 
\label{eqn10.7}  
\begin{split}
0  <  \ph  r     \leq  { \ds  \int_{\mathbb{S}^{n-1}} \,  \lan  r \tau,   \xi   \ran^+   \,  d \mu(\xi)  \leq   \int_{\mathbb{S}^{n-1}} h ( \xi )  d \mu ( \xi ) } \leq    \he (q). 
\end{split}
\end{align}
From \eqref{eqn10.7}  we first see that  $E(q)   \subset   \{ x:\,\,  |x| \leq  \he (q) /\ph \}$. 
We then conclude the existence of       $ q^l  = (q_1^l,  \dots, q_m^l ),  q_i^l \geq 0$ for $1 \leq i \leq m$  such that  $  E_l = 
E (q^l)$, $l = 3, 4,  \dots,  $ is  a  sequence of  uniformly bounded compact convex sets in the class   $ \Ph $   with    
\[
\hat  q  =  \lim_{l\to \infty} q^l \quad \mbox{and}\quad  \lim_{l \to \infty}  \he ( q^l )  =  \la =  \he ( \hat q ). 
\]   
From   finiteness of  $ \la $   we also may assume  that       $   E_l \to  E (\hat q )=  E_1$, 
a compact convex set  containing $0$,  where convergence is uniform in the Hausdorff distance sense. From  \eqref{eqn10.0}  we see  that   
\begin{align}
\label{eqn10.8} 
\lim_{l \to \infty}   \mathcal{C}_{\mathcal{A}} (E_l ) =  \mathcal{C}_{\mathcal{A}} (E_1 ). 
\end{align} 
Thus   $ \mathcal{C}_\mathcal{A}(  E_1 )  \geq  1$  and in    fact    
$ \mathcal{C}_{\mathcal{A}} (   E_1 )  = 1$.   Otherwise, we  would  have 
$ \he (\ti q) < \he(\hat q)$  for  $  \ti E  = \ti E (\ti q) \in \Ph $   where  for 
$ j \in \{1,2, \dots, m \}, $     
\begin{align*}
\ti q_j  =
\begin{cases}
 \frac{\, {\ds \hat q_j}}{{\ds\mathcal{C}_{\mathcal{A}} (  E_1 )}}  & \mbox{when}\, \, p=n, \\
  \frac{\, {\ds \hat q_j }}{{\ds \mathcal{C}_{\mathcal{A}} (  E_1 )^{1/(p-n)}}} & \mbox{when}\, \, p   >  n.  
    \end{cases}   
\end{align*}


If $ E_1 $  has nonempty interior,  say    $ z $ is an  interior point of $  E_1,  $   
then  $  \breve E   =    E_1  -  z   \in   \Ph $  and   $  \mathcal{C}_{\mathcal{A}} (\breve  E ) = 1  $  
as we deduce  from translation invariance of  $ \mathcal{C}_{\mathcal{A}} (  \cdot ) .$    
Moreover,   if    $  \breve  E  =  E (\breve  q ) $       then from \eqref{eqn10.2}  and    $  \he (\hat q)  =  \la$  
 we see that
  $ \he (\breve q ) = \la.      $   Thus,      \eqref{eqn10.5}  is  valid if $  E_1 $  has nonempty interior.  
      
If $   E_1 $  has empty interior,  then  from convexity of  $  E_1 $  and \eqref{eqn10.3} we find that 
      $    E_1 $ is  contained in an  $l$-dimensional plane with $l< n - 1 $  and  $  0  < \mathcal{H}^l (E_1) 
<  \infty. $   Also  $E_1$  must  contain  at least two points  since            
  $  \mathcal{C}_{\mathcal{A}} (  E_1 )  = 1 $  so $ l \geq 1$.   
  We assume,  as  we  may,  that  $0$ is an interior point of  $  E_1 $   
  relative to the  $ l$-dimensional plane containing  $   E_1 $  since otherwise we  
  consider $  E_1 - z $ for some $ z $ having this property and  argue as above.      
  From the definition of  $ \Ph $   we see that  there exists  a subset, say  $  \La $  of  $ \{ 1, \dots, m \}$,  with $ \hat q_i = 0 $ when 
$ i  \in \La. $   Since  $    \hat q  $  gives    a minimum for  $ \he $    we observe  that  if  $  s    \not \in  \La, $  then  $ \hat q_s \not = 0$   and  
\[    
\{ x:\,\,  \lan x, \xi_s  \ran = \hat q_s  \}  \cap   E_1   \neq  \es.
\]   
Let     $ a   =   \frac{1}{4}    \min \{  \hat q_i  :   i   \not  \in  \La  \} $  and  for   small $ t > 0 $   let   
\begin{align} 
\label{eqn10.9}  
\begin{split}
\ti E  (t)  =   {\ds \bigcap_{i = 1}^m  \{ x:\,\, \lan x,  \xi_i \ran    \leq  \hat q_i + a t  \} } \quad \mbox{and} \quad  E_2   =  {\ds \bigcap_{i = 1 }^m }  \{ x: \,\,\lan x,  \xi_i \ran  \leq   a   \}.
\end{split}
\end{align}  
 Put  
\begin{align}
\label{eqn10.10}   
E_t     =       \frac{ \ti  E  ( t )  }{ \mathcal{C}_{\mathcal{A}} ( \ti E  (t) )}\, \,  \mbox{when $ p  = n$}\quad \mbox{and}\quad E_t =   \frac{ \ti  E  ( t )  }{ \mathcal{C}_{\mathcal{A}} ( \ti E  (t) )^{ 1/(p-n)}} \, \,
\mbox{when  $ p  > n.$}    
\end{align}
 We note that, in view of \eqref{eqn10.10},  $  E_t  =   E ( q (t) )  $  where $q(t)=(q_1(t), \ldots, q_m(t))$ and for $j=1,2,\ldots, m$ 
\begin{align}    
\label {eqn10.11}     
q_j (t) =   \frac{\hat q_j +  a t}{  \mathcal{C}_{\mathcal{A}} ( \ti  E (t) )} \, \, \mbox{when $p=n$}\quad \mbox{and}\quad  q_j (t) =   \frac{\hat q_j +  a t}{  \mathcal
{C}_{\mathcal{A}} ( \ti  E (t) )^{1/(p - n)}}\, \, \mbox{when $ p > n$.} 
\end{align}   
From \eqref{5.23} we have $  \mathcal{C}_{\mathcal{A}}  (E_t ) = 1 $  so  $ E_t \in \Ph. $ 
To get a  contradiction to our assumption that $   E_1 $ has empty interior  we show that  
\begin{align}  
\label{eqn10.12}  
\he ( q (t) )  <   \la  \quad \mbox{for some  small}\, \,  t > 0. 
\end{align} 
 To prove \eqref{eqn10.12},   we first note that     
 $   E_1   +  t   E_2   \subset    \ti E (t )  $  for  $ t \in (0, 1) $ so    
 \[    
 \mathcal{C}_{\mathcal{A}} ( E_1 + t E_2  ) \leq    \mathcal{C}_{\mathcal{A}} ( \ti  E (t ) ). 
\]      
From this  inequality and    \eqref{eqn10.10} and \eqref{eqn10.11},    we conclude  that  to prove     \eqref{eqn10.12}  it suffices  to show  if     
\begin{align} 
\label{eqn10.13} 
\begin{split}
 \chi  ( t )  =
 \begin{cases}
 [\mathcal{C}_{\mathcal{A}} ( E_1  +  t  E_2  )]^{- 1}  {\ds  \sum_{i = 1 }^m  }   c_i  ( \hat q_i  + at )  & \mbox{when}\, \,   p = n, \\
[\mathcal{C}_{\mathcal{A}} ( E_1  +  t  E_2  )]^{- 1/(p -n)}   { \ds  \sum_{i = 1 }^m  }   c_i  ( \hat q_i  + at ) & \mbox{when} \, \, p > n
 \end{cases}
\end{split}
\end{align} 
then  
\begin{align}   
    \label{eqn10.14}  \chi  (t)  <  \la \quad  \mbox{for}\, \,    t  > 0 \, \, \mbox{near  0}.  
\end{align}  
 \noindent    
To  prove   \eqref{eqn10.14}, we   let, as in section \ref{section10}, $ u ( \cdot, t )  $  
be the  $ \mathcal{A} = \nabla f$-harmonic Green's function for    
$\mathbb{R}^{n}\setminus (E_1  + t E_2)$ with pole at infinity. We also 
let   $\mathbf{g} ( \cdot,  E_1 + t E_2  )     $ be the   Gauss map for  $ \ar (E_1 + t E_2 )  $    
while   $ h_1$ and $h_2  $  are   the support  functions for  $  E_1$ and $E_2$,  respectively.  
Then  from   Remark \ref{remark9.3}  and  Proposition \ref{proposition9.2}  we have    for  $ t  \in   (0, 1), $  
\begin{align}
\label{eqn10.15}   
\begin{split}
& \frac{d}{dt}  \mathcal{C}_{\mathcal{A}} (  E_1 +  t E_2 )\\
 &=\begin{cases} 
n \ga^{-1}   \mathcal{C}_{\mathcal{A}} (  E_1  +  t E_2 )   {\ds  \int_{  \ar  ( E_1 +  t E_2  ) }   h_2  ( \mathbf{g}(x, E_1  +  t E_2)  )   f ( \nabla u (x, t ) ) d\mathcal{H}^{n-1} } & \mbox{when}\, \, p =  n,  \\
 p  (p - 1 )  \mathcal{C}_{\mathcal{A}} (  E_1  +  t E_2 )^{\frac{p-2}{p-1}}   {\ds \int_{  \ar  ( E_1 +  t E_2  ) }   h_2  ( \mathbf{g}(x, E_1  +  t E_2)  )   f ( \nabla u (x, t ) ) d\mathcal{H}^{n-1} }   
&\mbox{when}\,\, p  >  n.  
\end{cases}
\end{split}
\end{align}
We shall prove     
\begin{proposition}  
\label{proposition10.1}  
\begin{align} 
\label{eqn10.16} 
 \lim_{\tau \to 0}   \int_{\ar  (E_1 + \tau E_2 )}   h_2   ( \mathbf{g}( x,  E_1  + \tau E_2 ) )  f ( \nabla u (x, \tau) ) d\mathcal{H}^{n-1}  \, = \, \infty.   
 \end{align}
 \end{proposition}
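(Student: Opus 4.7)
The guiding idea is that as $\tau \to 0^+$ the convex body $E_0 := E_1 + \tau E_2$ collapses onto $E_1$, a set of Hausdorff dimension $l \leq n-2$, so $\mathcal{H}^{n-1}(\ar E_0)$ vanishes; however the total mass of the boundary measure $\nu$ associated to $u(\cdot,\tau)$ via Lemma \ref{lemma2.4} is pinned at $1$ by Lemmas \ref{lem5.2}(d) and \ref{lem5.3}(d). This mismatch forces an $L^{p-1}$-average of $|\nabla u|$ on $\ar E_0$ to diverge, which yields the proposition since $h_2$ is bounded below away from zero on $\mathbb{S}^{n-1}$.

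First I will derive a H\"{o}lder-type lower bound for $\int_{\ar E_0} f(\nabla u)\,d\mathcal{H}^{n-1}$. Since $\ar E_0$ is Lipschitz (by convexity of $E_0$ with nonempty interior), Lemma \ref{lem8.2} yields the Radon--Nikodym derivative $d\nu/d\mathcal{H}^{n-1} = p\,f(\nabla u)/|\nabla u|$ at $\mathcal{H}^{n-1}$-a.e.\ point of $\ar E_0$. Using $p$-homogeneity of $f$ and $\nabla u = |\nabla u|\,\mathbf{g}(\cdot,E_0)$ on $\ar E_0$, the total-mass identity $\nu(\ar E_0)=1$ rewrites as $\int_{\ar E_0} f(\mathbf{g})\,|\nabla u|^{p-1}\,d\mathcal{H}^{n-1} = 1/p$. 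Applying H\"{o}lder's inequality with exponents $p/(p-1)$ and $p$ to the factorization $f(\mathbf{g})|\nabla u|^{p-1} = \bigl(f(\mathbf{g})|\nabla u|^p\bigr)^{(p-1)/p}\,f(\mathbf{g})^{1/p}$ produces
\[
\int_{\ar E_0} f(\nabla u)\,d\mathcal{H}^{n-1}\, \geq \,\frac{p^{-p/(p-1)}}{\left(\int_{\ar E_0} f(\mathbf{g})\,d\mathcal{H}^{n-1}\right)^{1/(p-1)}}.
\]
Positivity of $f$ on $\mathbb{S}^{n-1}$, which is tacitly used here, follows by taking $\xi = \eta$ in the ellipticity bound of Definition \ref{defn1.1}(i) and invoking Euler's relation, giving $p(p-1)f(\eta) \geq \al^{-1}|\eta|^p$.

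Next I will show the denominator vanishes. Since $f\leq C$ on $\mathbb{S}^{n-1}$, it suffices to prove $\mathcal{H}^{n-1}(\ar E_0) \to 0$. Fix $R$ with $E_2\subset B(0,R)$; then $E_0 \subset N_\tau := \{x : d(x,E_1)\leq R\tau\}$, and monotonicity of perimeter for nested convex bodies gives $\mathcal{H}^{n-1}(\ar E_0) \leq \mathcal{H}^{n-1}(\ar N_\tau)$. Because $E_1$ sits in an $l$-dimensional affine subspace, the Steiner expansion of $N_\tau$ has only terms of order $\tau^k$ with $k \geq n-l$ (the quermassintegrals $W_k(E_1)$ vanish for $k<n-l$), so $\mathcal{H}^{n-1}(\ar N_\tau) = O(\tau^{n-1-l}) \to 0$ as $\tau\to 0$ since $n-1-l \geq 1$. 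Combining with the H\"{o}lder estimate yields $\int_{\ar E_0} f(\nabla u)\,d\mathcal{H}^{n-1}\to \infty$. Finally, \eqref{eqn10.2} forces $\{\xi_i\}$ to positively span $\mathbb{R}^n$, so the origin lies in the interior of $E_2$ and thus $h_2 \geq c_0 > 0$ on $\mathbb{S}^{n-1}$; multiplying the preceding lower bound by $c_0$ gives \eqref{eqn10.16}.

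The principal technical obstacle is the surface-area estimate $\mathcal{H}^{n-1}(\ar E_0) = O(\tau^{n-1-l})$, which rests on the Steiner-type polynomial expansion for the parallel body of a degenerate convex set together with the classical monotonicity of perimeter for nested convex bodies; both facts must be assembled carefully even though they are standard. A minor technical point is that the identity \eqref{eqn8.8} applies only off the $\mathcal{H}^{n-1}$-null set of edges of the polyhedral boundary $\ar E_0$, which is precisely the exceptional set admitted by Lemma \ref{lem8.2}.
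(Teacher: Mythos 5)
Your proof is correct, and it takes a genuinely different route from the paper. The paper's proof of Proposition \ref{proposition10.1} is built around a barrier: it imports from \cite{LN4} an explicit $\mathcal{A}$-harmonic function $\hat V(x)\approx |x''|^\psi$ with $\psi=(p-n+l)/(p-1)$ vanishing on $\mathbb{R}^l$, propagates this (Lemma \ref{lemma10.2}) to a lower bound $u(w,t)\gtrsim t^\psi$ at corkscrew points, converts that via the local boundary estimates \eqref{eqn10.25}--\eqref{eqn10.27} into $\int_{B(\hat x,t/\ti c)\cap \ar E_0} f(\nabla u)\,d\mathcal{H}^{n-1}\gtrsim t^{p(\psi-1)+n-1}$, and then sums over a disjoint family of $\approx t^{-l}$ such balls. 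Your proof avoids the barrier construction entirely: you use the ``soft'' fact that the measure $\nu$ associated to the Green's function $u(\cdot,\tau)$ has total mass $1$ (Lemmas \ref{lem5.2}(d), \ref{lem5.3}(d)), couple it with the Radon--Nikodym identity \eqref{eqn8.8}, and let H\"{o}lder's inequality plus the Steiner-formula estimate $\mathcal{H}^{n-1}(\ar E_0)=O(\tau^{n-1-l})$ do the rest. Both arguments in fact deliver the same quantitative rate $\tau^{-(n-l-1)/(p-1)}$, but yours exposes the mechanism more cleanly: the total $\mathcal{A}$-capacitary mass on the boundary is pinned while the surface area collapses, and H\"{o}lder converts that tension into divergence of the $f(\nabla u)$ integral. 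Two small presentational points: the paper's convention is $\mathbf{g}=-\nabla u/|\nabla u|$, so the homogeneity identity should read $f(\nabla u)=|\nabla u|^p f(-\mathbf{g})$; the sign is irrelevant since you only use $|\pm\mathbf{g}|=1$ and boundedness of $f$ on $\mathbb{S}^{n-1}$. And it is really \eqref{eqn10.1} together with \eqref{eqn10.2} (not \eqref{eqn10.2} alone) that ensures $\{\xi_i\}$ positively span; but more directly, $0$ lies in the interior of $E_2=\bigcap_i\{x:\lan x,\xi_i\ran\leq a\}$ simply because $a>0$, so $h_2\geq c_0>0$ on $\mathbb{S}^{n-1}$ without further argument.
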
 
Assuming Proposition  \ref{proposition10.1} we get  \eqref{eqn10.14} and so    a contradiction to our assumption that 
$ E_1 $  has empty interior as follows.  First observe from \eqref{eqn10.15}  and  \eqref{eqn10.13}   that   for $ p  = n, $  
\begin{align}
\label{eqn10.17} 
\begin{split}
 \mathcal{C}_{\mathcal{A}} &(  E_1 + t E_2 )  \left.{\ds  \frac{d}{dt} } \chi  (t) \right|_{t=\tau}  \\
&=     \sum_{i   = 1}^m  c_i  a   -  n \ga^{-1}  \,  \,   [  \sum_{i  = 1}^m     c_i ( \hat q_i   +  a \tau  )]  \int_{  \ar ( E_1  + \tau  E_2 ) } 
 h_2   ( \mathbf{g}(x,  E_1 + \tau  E_2  ) )  f ( \nabla u (x, \tau) ) d\mathcal{H}^{n-1} 
 \end{split}
\end{align} 
and if $ p > n, $ 
\begin{align}  
\label{eqn10.18}
\begin{split}
  [\mathcal{C}_{\mathcal{A}} &(  E_1 + t E_2 ) ]^{  \frac{ 2p - (n+1)}{(p-n)(p - 1)} }  \left. {\ds  \frac{d}{dt} } \chi  (t) \right|_{t=\tau}  = 
    \mathcal{C}_{\mathcal{A}} (  E_1 + \tau  E_2 )^{1/(p-1)}{\ds \sum_{i   = 1}^m  c_i  a}\\ 
 &-   \, \frac{p(p-1)}{(p-n)} \,  {\ds  [  \sum_{i  = 1}^m     c_i ( \hat q_i   +  a \tau  )]  \int_{  \ar ( E_1  + \tau  E_2 ) } 
 h_2   ( \mathbf{g}(x,  E_1 + \tau  E_2  ) )  f ( \nabla u (x, \tau) ) d\mathcal{H}^{n-1}}.
 \end{split}
 \end{align}  
Now    $  E_1 + \tau  E_2   \to   E_1 $ as $ \tau  \to 0 $  in  the sense of  
Hausdorff distance so by   \eqref{eqn10.0},   we have      (for all $n\leq p<\infty$) 
\begin{align} 
\label{eqn10.19}  
\lim_{\tau \to 0}   \mathcal{C}_{\mathcal{A}} (  E_1 + \tau  E_2   )  = \mathcal{ C}_{\mathcal{A}} (   E_1  ) =1.  
\end{align}   
Clearly,   \eqref{eqn10.16}-\eqref{eqn10.19}            imply for  some  $ t_0 > 0  $  small   that   
\begin{align}     
\label{eqn10.20}    
\left.\frac{d}{dt} \chi (t)\right|_{t=\tau} < 0 \quad   \mbox{ for } \, \, \tau \in ( 0, t_0].  
\end{align}
On the other hand, from  \eqref{eqn10.13} and  \eqref{eqn10.19} we see that  
\[
\lim_{\tau \to 0} \chi  ( \tau ) = \la.
\]     
From this observation, the mean value theorem from calculus,  
and   \eqref{eqn10.20} we conclude that \eqref{eqn10.14}  holds  so   $  E_1 $  has interior points.     
Now \eqref{eqn10.5}  follows from our earlier remarks.  
\begin{proof}[Proof of   Proposition  \ref{proposition10.1}] 
  Recall  that  $  E_1 $ is  contained in  a  $1\leq  l  <  n - 1 $ dimensional plane.   
  We assume as we may that    
\begin{align}
\label{eqn10.21} 
 E_1    \subset  \{ x = ( x',  x'' ) :  x' = (x_1, \dots, x_l) \, \,    \mbox{and} \,\, x''  =  (x_{l+1}, \dots  x_n )   = (0, \dots, 0)  \}  = \mathbb{R}^l.  
\end{align}   
Indeed, otherwise  we  can   rotate  our coordinate system to get  \eqref{eqn10.21}   
and  corresponding  $ \hat{\mathcal{A}}$-harmonic Green's functions, 
say   $  \bar u ( \cdot, t )$ for $\mathbb{R}^n\setminus E_1$ with a pole at infinity.      
Proving    Proposition \ref{proposition10.1}  for  $ \bar  u ( \cdot, t ) $   
and transferring back we obtain Proposition  \ref{proposition10.1}.     
   
We   also note   that
\begin{align}    
\label{eqn10.22}     
\bar B (0,  4 a )   \cap  \mathbb{R}^l    \subset   E_1    \subset  E_1  +  E_2   \subset  \bar B (0, \rho )    
\end{align}  
which follows from our choice of  $a$ and for some   $\rho  >  0 $  
depending only on the data, since  $ \mathcal{C}_{\mathcal{A}}(E_1)  = 1$.  
Next   we  observe  from   Lemma  5.3  in    \cite{LN4}  that    for fixed  $ p  > n -  l, $  
there exists     an    $\mathcal{A}$-harmonic  function $  \hat  V $  on   
 $  \mathbb{R}^n  \sem  \mathbb{R}^l   $ with continuous boundary value $0$   on  $\mathbb{R}^l$  satisfying  
\begin{align}
\label{eqn10.23}  
\hat V   (x)  \approx  | x'' |^{\psi} \quad \mbox{whenever}\, \,  x = (x', x'')   \in      \mathbb{R}^l\times \mathbb{R}^{n-l}  \, \,  \mbox{where}\,\,   \psi   =   \frac{ p -  n   + l }{p-1}.   
\end{align}
Ratio constants depend only on $ p, n, l $  and the structure 
constants for $ \mathcal{A}$.   Using   \eqref{eqn10.23}  we prove    
   \begin{lemma}  
 \label{lemma10.2}
 Fix  $ p  \geq n.  $   Then  there exists  $   C_1 \geq 10^{10},  $   depending on  $ p, n, l$, and $\rho$ but independent of $ t  \in (0, 1] $,  such   that 
   if   $ \psi = ( p  -  n  + l )/(p-1) $  and $ x=(x', x'') \in B (0, \rho )$  then    
\begin{align}   
\label{eqn10.24}    
|x''|^\psi   \leq  C_1  u ( x, t )  \quad \mbox{whenever}  \quad    C_1  t  \leq  |x''|.
\end{align}  
\end{lemma}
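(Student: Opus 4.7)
The plan is to compare $u(\cdot,t)$ from below against a translate of a small multiple of the auxiliary function $\hat V$ from \eqref{eqn10.23} and then invoke the maximum principle for $\mathcal{A}$-harmonic functions. Since $\hat V$ is $\mathcal{A}$-harmonic in $\mathbb R^n\sem\mathbb R^l$ with $c_H|x''|^\psi\leq\hat V(x)\leq C_H|x''|^\psi$ for constants $c_H,C_H$ depending only on $p,n,l$ and the structure of $\mathcal{A}$, it provides a natural lower barrier for $u$ away from the slab in which $E_1+tE_2$ sits.

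First I would observe, using $E_2\subset\bar B(0,\rho)$ from \eqref{eqn10.22} and $E_1\subset\mathbb R^l$ from \eqref{eqn10.21}, that $E_1+tE_2\subset\{|x''|\leq Mt\}$ with $M:=\rho$; also I may assume $C_1t\leq\rho$, since otherwise \eqref{eqn10.24} is vacuous. Next, I would fix a radius $R>2\rho$ (depending only on $\rho$ and the data) large enough to obtain a uniform lower bound $u(x,t)\geq c_0>0$ on $\partial B(0,R)$ valid for all $t\in(0,1]$. This step uses that $u(\cdot,t)\to u(\cdot,0)$ uniformly on compact subsets of $\mathbb R^n$ as $t\to 0$ (by the convergence argument underlying \eqref{eqn10.0} applied to $E_1+t_jE_2$), that $u(\cdot,0)>0$ on $\partial B(0,R)$ once $R>\rho$, and a continuity/compactness argument in $t\in[0,1]$.

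With $\lambda:=c_0/(C_HR^\psi)$ and $\epsilon_t:=\lambda C_H(Mt)^\psi$, define
\[
w(x):=\lambda\hat V(x)-\epsilon_t.
\]
Since $\mathcal{A}$ depends only on the gradient, adding a constant preserves $\mathcal{A}$-harmonicity, so $w$ is $\mathcal{A}$-harmonic on $\Omega_t:=B(0,R)\cap\{|x''|>Mt\}$, where $u(\cdot,t)$ is also $\mathcal{A}$-harmonic (since $\Omega_t$ is disjoint from $E_1+tE_2$). On $\partial B(0,R)\cap\{|x''|\geq Mt\}$ we have $w\leq c_0-\epsilon_t\leq c_0\leq u$, while on $\{|x''|=Mt\}\cap B(0,R)$ we have $w\leq\lambda C_H(Mt)^\psi-\epsilon_t=0\leq u$. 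The comparison principle for $\mathcal{A}$-harmonic functions on the bounded domain $\Omega_t$ then yields $u(\cdot,t)\geq w$ on $\Omega_t$. For $x\in B(0,\rho)$ with $|x''|\geq C_1t$ and $C_1\geq M$, we have $x\in\Omega_t$, so
\[
u(x,t)\geq\lambda c_H|x''|^\psi-\lambda C_H(Mt)^\psi.
\]
Requiring $C_1\geq M(2C_H/c_H)^{1/\psi}$ forces the second term to be at most $(\lambda c_H/2)|x''|^\psi$, giving $u(x,t)\geq(\lambda c_H/2)|x''|^\psi$. Taking
\[
C_1:=\max\{10^{10},\,M,\,M(2C_H/c_H)^{1/\psi},\,2C_HR^\psi/(c_0c_H)\}
\]
yields $|x''|^\psi\leq C_1u(x,t)$, which is \eqref{eqn10.24}.

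The main obstacle is the uniform lower bound $u(\cdot,t)\geq c_0$ on $\partial B(0,R)$ independent of $t\in(0,1]$: it relies on joint continuity of $u(\cdot,t)$ in $t$ together with uniform two-sided bounds on $\mathcal{C}_\mathcal{A}(E_1+tE_2)$ (which follow from $\mathcal{C}_\mathcal{A}(E_1)=1$ and the continuity in \eqref{eqn10.0}) and uniform control of the remainder $k(x,t)-k(\infty,t)$ via \eqref{5.2}(c) or \eqref{5.12}(c). The remaining steps are routine applications of the comparison principle once the barrier $w$ is in place.
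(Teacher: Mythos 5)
Your proposal is correct and follows the same strategy as the paper's proof: use the auxiliary function $\hat V$ from \eqref{eqn10.23} as a barrier, subtract a small constant so that it drops below $u(\cdot,t)$ on the slab $\{|x''| = Mt\}$ containing $E_1 + tE_2$, and then apply the comparison principle in a region bounded away from that slab and by $\partial B(0,R)$. The main technical points that make it go through are exactly what you identify.

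Two remarks on the comparison with the paper's own argument. First, your choice $\epsilon_t = \lambda C_H (Mt)^\psi$ has the correct scaling in $t$: with $E_1+tE_2 \subset \{|x''| \le Mt\}$ and $\hat V \approx |x''|^\psi$, subtracting something proportional to $t^\psi$ kills $\hat V$ exactly on that slab uniformly in $t\in(0,1]$ (recall $\psi = (p-n+l)/(p-1) < 1$ in Case A since $l \le n-2$). The paper writes $v=\max(\hat V - C_2 t, 0)$ and asserts $v=0$ on $E_1+tE_2$ for a $t$-independent $C_2$; because $\psi<1$ that literal choice does not scale correctly as $t\to 0$, so the $t^\psi$ scaling you adopt is in fact the robust one. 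Second, for the uniform lower bound $u(\cdot,t)\ge c_0$ on $\partial B(0,R)$, the cleanest route (and the one the paper takes) is not continuity of $t\mapsto u(\cdot,t)$ but rather the mass normalization: by Lemmas \ref{lem5.2}$(d)$/\ref{lem5.3}$(d)$ the Riesz measure $\nu_t$ of $u(\cdot,t)$ has total mass $1$ with support in $\bar B(0,\rho)$ for all $t\in(0,1]$, so Lemma \ref{lemma2.4}$(ii)$ gives $\max_{\partial B(0,2\rho)} u(\cdot,t) \ge c^{-1}$ independent of $t$, and Harnack's inequality on $\partial B(0,2\rho)$ then yields the uniform $c_0$. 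Your appeal to uniform convergence $u(\cdot,t)\to u(\cdot,0)$ and compactness in $t$ is not wrong, but it is more roundabout and relies on facts about the dependence of the Green's function on the domain that are themselves established via exactly this Harnack-plus-normalization argument (see the discussion around \eqref{eqn10.0}).
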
    
\begin{proof}[Proof of Lemma \ref{lemma10.2}]  For  fixed $ t \in (0, 1 )$,  
let  $ v =  \max ( \hat V - C_2 t, 0 )$ for $C_2>0$ which will be fixed soon. 
Then  $  v $ is  $  \mathcal{A} $-harmonic  in       $\mathbb{R}^n \sem W  $ 
and  continuous on  $\mathbb{R}^n$   with  $ v \equiv 0 $ on $ W = \{ x :  \hat V (x) \leq  C_2 t \}. $    
    From  the definition of  $ E_1 + t E_2 $ and $ v $  we see  for $ C_2 $ large enough,  
    depending on $ p, n, l, $  the structure constants for  $ \mathcal{A}$,  and  $  \rho,  $  that 
        $ v  = 0 $ on  $ E_1 + t E_2 .$  Also    $  C_3  u  ( \cdot, t  )   \geq  v $  on  $  \ar  B ( 0, 2 \rho ) $  
        for  $ t \in [0, 1 ] $  as we deduce from  Harnack's  inequality and   
         $  E_1  +  E_2  \subset  B (0, \rho). $ 
         Here  $ C_3$  has the same dependence as  $ C_2.$    Using the maximum principle 
         for  $  \mathcal{A}$-harmonic functions it now follows that  
         $   v  \leq  C_4  u ( \cdot, t ) )$  in    $  B ( 0, 2 \rho ). $    From this fact and our 
         knowledge of  $  \hat  V $   we get   Lemma  \ref{lemma10.2}.   
         \end{proof}    
      
To begin the proof of  Proposition \ref{proposition10.1}   we  assume  $ 0  < t \leq \ti t_0,  $  
where $ \ti t_0   < <  a$.  We  also  observe    
 that     $ E_1 +  t  E_2  $   is  a compact  convex set   with  nonempty interior.   
 From   \eqref{eqn8.6} $(b)$ we find that            for  $ \mathcal{H}^{n-1}$-almost every  $ \hat x  \in  \ar  ( E_1 + t E_2) $  
 \[
\nabla u ( y, t ) \to  \nabla u ( \hat x, t )\quad \mbox{as} \quad  y \to \hat x
\]
 non-tangentially  in  $   \mathbb{R}^n \sem (E_1 + t E_2 ). $  
 Moreover,  there exists  $ \ti c $   such that     
 $  B ( \hat x, 4t/\ti c ) \cap  \ar ( E_1 + t  E_2 ) $ is  the graph of  a  Lipschitz  function     whenever 
 \[ 
 \hat x \in     B (0, 2a) \cap  \ar ( E_1 + t  E_2 )\quad   \mbox{and} \quad   0  < t \leq  \ti t_0  
 \]
 with Lipschitz constant independent of $ \hat x, t. $   
 It then    follows  from \eqref{eqn8.5}, \eqref{eqn8.3}, \eqref{eqn8.8}, and  \eqref{eqn8.9} $(a)$ with $ q = p $   that   
 \begin{align}  
 \label{eqn10.25}  
\begin{split}
  c   \int_{B ( \hat x, t/\ti c ) \cap \ar (E_1 + t E_2)   }  f ( \nabla u ( \cdot, t ) ) d\mathcal{H}^{n-1} & \geq (  u ( w, t ) )^p   t^{ n - 1 -p}  \\
   & \geq   c^{-1}   \int_{B ( \hat x, t/\ti c ) \cap  \ar ( E_1 + t  E_2)    }  f ( \nabla u ( \cdot, t ) ) d\mathcal{H}^{n-1} 
   \end{split}
\end{align}
 where  $ c$  has the  same  dependence as in  Lemma  \ref{lem8.2}.    Also     
     $ w = w (\hat x, t ) $ denotes a point in  $ B (\hat  x, t/\ti c ) \cap (\mathbb{R}^{n}\setminus (E_1+t E_2))$ 
     whose distance from  $ \ar (E_1+ t E_2  ) $ is   $ \geq  t/ c^2.$

Using   Harnack's inequality in  a  chain of  balls of radius $ \approx t $  connecting $ w $  to a  point 
 $ x \in  B (0, a )  $  with    $ 2 C_1  t  = |x''|   $   we deduce from  \eqref{eqn10.24} of  Lemma 
 \ref{lemma10.2} that  
\begin{align}   
\label{eqn10.26}  
 u (w, t ) \geq  C^{-1}  t^{\psi}
\end{align}
where $ C $ is independent of $ t \in (0,1). $    
 Using   \eqref{eqn10.26} in  \eqref{eqn10.25} we obtain   for some $ C'   \geq 1, $ 
 independent of  $ t, 0 < t \leq  \ti t_0,   $     that  
\begin{align}
\label{eqn10.27}    
   C' \int_{B ( \hat x, t/\ti c ) \cap  \ar (E_1  + t   E_2) }  f ( \nabla u ( \cdot, t ) )  d\mathcal{H}^{n-1} \,   \geq  t^{p( \psi - 1) + n-1}. 
\end{align}    
Now since  $  \ar (E_1 +  t E_2 )  \cap  B (0, 2a )  $ projects onto a  set  
containing   $  B (0, 2a )  \cap  \rn{l} $   for $ 0 < t \leq  \ti t_0$,    
we see there is  a  disjoint collection of  balls   $  B ( \hat x,  t/\ti c ) $  
for $ \hat  x \in  \ar ( E_1 + t E_2) $   of  cardinality  approximately  $ t^{-l}  $   for which  \eqref{eqn10.27}  holds.    
Since     
\[  
p  ( \psi - 1)  +  ( n - 1) -  l  =  (l + 1 - n ) / (p-1) <  0 
\]   
   we conclude from  
    \eqref{eqn10.27}    that  for some $ C^*  $  independent of  small positive $ t $       
 \begin{align} 
 \label{eqn10.28a}    
 C^*  \int_{\ar (E_1 + t E_2 ) \cap B(0,2a)}  f ( \nabla u ( \cdot, t ) )    d\mathcal{H}^{n-1}   \,  \geq   t^{ (l + 1 - n)/ (p-1) } \to \infty  \quad \mbox{as} \quad t  \to 0.  
\end{align}    
      Finally, note that for  $ 0 < t  \leq \ti t_0, $   
      \[
       \mathbf{g}(x, E_1 + t  E_2   )  \in  \{ \xi_i : i \in \La \}
       \]  
       for   $\mathcal{H}^{n-1}$-almost  every  $ x  \in \ar ( E_1 + t E_2) \cap  B (0, 2a )   $   
       and      $ h_2  ( \xi_i  )  \equiv   a  $  whenever $ \xi_i \in \La. $    
  From this note and  \eqref{eqn10.28a},  we   obtain first 
  the validity  of \eqref{eqn10.14}  in  Proposition  \ref{proposition10.1} 
  and thereupon that  \eqref{eqn10.5} is true.      
   \end{proof}    
  Armed with   \eqref{eqn10.5}, we can use  the  well-known  variational  argument mentioned
   after this display to  complete the  proof  of   existence in  Theorem  \ref{mink}
    in the discrete case.  To  do so we first  observe  that if  $ E_1$ and $E_2 $  are  compact 
    convex  sets  with nonempty interiors and if   $ h_1$ and $h_2$  are the support  
    functions  for these  sets with the corresponding  $ \mathcal{A}$-harmonic  Green's functions $ u_1$ and $u_2$ then   
\begin{align}
 \label{eqn10.28} 
 \begin{split}    
  \frac{d}{ds} & \mathcal{C}_{\mathcal{A}}  ( (1-s) E_1 + s  E_2 )  |_{s=0} \\
 & =
  \begin{cases} 
  \hs{.1in}  n \ga^{-1}  \mathcal{C}_{\mathcal{A}} (E_1)   {\ds \int_{ \ar  E_1}  (h_2 - h_1 ) ( \mathbf{g} ( x,   E_1 )  )     f   ( \nabla u_1 (x ) ) d \mathcal{H}^{n-1}} &
  \mbox{when} \, \,p = n,  \\
 \hs{.1in}      p  (p-1)  \,   \mathcal{C}_{\mathcal{A}}(E_1  )^{\frac{p-2}{p-1}}    \, \,     
      \, {\ds  \int_{  \ar   E_1  } }  (h_2 - h_1)  ( \mathbf{g}(x,  E_1  ) )   f( \nabla u_1  ( x ) ) \, d  \mathcal{H}^{n-1} &  \mbox{when}\,\,  p > n. 
\end{cases}
\end{split}
\end{align}      
 Here      \eqref{eqn10.28} follows from  Proposition \ref{proposition9.2}, \eqref{eqn9.37},  \eqref{eqn9.38},   
 and the  chain rule  using  $ t =   s/ (1-s) $    and    
\[ 
 \mathcal{C}_{\mathcal{A}}  ( (1-s) E_1 + s  E_2 )  =
 \begin{cases}  
 (1-s)  \mathcal{C}_{\mathcal{A}}  (  E_1 + t  E_2 ) & \mbox{when}\, \, p = n, \\
 (1-s)^{p-n}    \mathcal{C}_{\mathcal{A}}  (  E_1 + t  E_2 ) & \mbox{when}\, \, p > n.
\end{cases}
\] 
Let    $   q^*    = ( q^*_1,  \dots, q^*_m ) \in \mathbb{R}^m $   with  $ q^*_i  > 0$ for $1 \leq i  \leq m$ 
and  $  \breve  q  $  as in   \eqref{eqn10.5}.    From the note   after  \eqref{eqn10.12} and  \eqref{eqn10.5}  
we deduce   for $ \bar t_0   > 0 $ sufficiently small,    that     
$  E ( q^* (t) ) \in   \Ph $   when   $  0 < t  \leq  \bar t_0, $     where  
\[ 
q^* (t )   =
\begin{cases}
 {\ds \frac{  (1-t) \breve  q  +  t q^* }{\mbox{C}_{\mathcal{A}}( (1-t) E (\breve   q)  + t E (q^*))} }& \mbox{when} \, \, p  = n, \\   
  {\ds    \frac{  (1-t) \breve  q  +  t q^* }{\mbox{C}_{\mathcal{A}}( (1-t) E (\breve   q)  + t E (q^*))^{1/(p-n)} }} & \mbox{when}\,\,    p  > n.
\end{cases}      
         \]     
Also,    $  \he (q^* (t)) \geq  \la  $  for  $ 0 \leq t  \leq  \bar{t}_0 $ thanks to  \eqref{eqn10.5}.    
Differentiating  $ \he  ( q^* (t) ) $ with respect to $ t $  and evaluating at  0  we  obtain from   
\eqref{eqn10.28} with $ E_1 = E (\breve q ) ,$   $  E_2  = E ( q^* ),  $    and $  u_1 $ the  
$ \mathcal{A}$-harmonic Green's function for $\mathbb{R}^{n}\setminus E (\breve q ) $, 
as well as     \eqref{eqn10.5} and the fact that $\mathcal{C}_{\mathcal{A}}  (  E_1)=1$ by \eqref{eqn10.19} that  when 
$ p = n, $    
\begin{align}
 \label{eqn10.29} 
 \begin{split}
  0   & \leq   \left. \frac{ d \he ( q^* (\tau) )}{d\tau}\right|_{\tau=0}  \\
  &=     { \ds  \sum_{i=1}^m  c_i  ( q^*_i -  \breve  q_i ) } -  n \la    \ga^{-1}   \, {\ds  \int_{\partial E (\breve  q) }  
 ( h_2  -   h_1)  ( \mathbf{g} ( x, E ( \breve q) ) )   \,  f  ( \nabla  u_1  ( x ) )  d \mathcal{H}^{n-1} }  \\
 &=    { \ds   \sum_{i=1}^m  c_i  ( q_i^* - \breve q_i ) -   n \la  \ga^{-1}  \, \sum_{i=1}^m  ( q_i^* - \breve q_i ) \int_{ \mathbf{g}^{-1} ( \xi_i,  E ( \breve q )  ) } f ( \nabla u_1 ( x) ) d \mathcal{H}^{n-1} } 
 \end{split}
 \end{align}
 provided  $ q^*  $  is  near enough $ \hat q. $  Similarly, if  $ p  > n, $ 
\begin{align}
 \label{eqn10.30} 
 \begin{split}
  0   & \leq   \left. \frac{ d \he ( q^* (\tau) )}{d\tau} \right|_{\tau=0}  \\
  &=     { \ds  \sum_{i=1}^m  c_i  ( q^*_i -  \breve  q_i ) } -  p  \la  {\ts  \frac{p-1}{p-n}}   \, {\ds  \int_{\partial E (\breve  q) }  
 ( h_2  -   h_1)  ( \mathbf{g} ( x, E ( \breve q) ) )   \,  f  ( \nabla  u_1  ( x ) )  d \mathcal{H}^{n-1} }  \\
 &=    { \ds   \sum_{i=1}^m  c_i  ( q_i^* - \breve q_i ) -   p  \la   {\ts \frac{p-1}{p-n}} \, \sum_{i=1}^m  ( q_i^* - \breve q_i ) \int_{ \mathbf{g}^{-1} ( \xi_i,  E ( \breve q )  ) } f ( \nabla u_1 ( x ) ) d \mathcal{H}^{n-1} } 
 \end{split}
 \end{align} 
From arbitrariness  of $  \breve  q_i  -  q_i^* $  we conclude  that   
\begin{align}
 \label{eqn10.31} 
 c_i  =  b   \,   \int_{ \mathbf{g}^{-1} ( \xi_i,  E ( \breve  q )  ) } f ( \nabla u_1(x))   \, d \mathcal{H}^{n-1} \quad  \mbox{for}\, \,   1 \leq i \leq m. 
 \end{align}
  where  
  \[
   b  = 
   \begin{cases}
   n \la/\ga  &\mbox{when}\, \,  p =n \\
     p  \la   {\ts \frac{p-1}{p-n}}  & \mbox{when}\, \, p > n.
\end{cases}
\]       
         Observe   
that  $ \la  > 0  $   since   otherwise  $  E ( \breve q )  =  \{0\}  $  a contradiction  to    
$ \mathcal{C}_{\mathcal{A}} ( E (\breve q )  )  =  1. $    
From  \eqref{eqn10.31}  and  $ p$-homogeneity of  $ f $   we  find that if  $ p \geq n  $  and 
$  E  =  \de   E ( \breve  q ) $   where $ \de^{-1}    =  n \la/\ga  $ when $  p  = n $  while   
$ \de ^{n - p -  1} = p { \ts (\frac{p-1}{p-n}) } \,  \la    $  for  $ p > n ,$   then  \eqref{eqn10.4} holds.   
  This completes the proof of existence in the discrete case when     \eqref{eqn10.1}-\eqref{eqn10.3} are valid.  
  \begin{remark} 
  \label{remark10.3}
For later use we note from     \eqref{eqn9.37} and \eqref{eqn9.38} with $ E_0 = E,   $   
    that if   $ h $ is the support function for $ E $  as in  \eqref{eqn7.2}  when $ p  \geq  n, $    then  
\begin{align}
 \label{eqn10.32} 
\begin{split}
p {\ds \int_{ \mathbb{S}^{n-1} }} &    \,   h ( \xi   ) d \mu  ( \xi  ) =
 \begin{cases}  
 \ga  &  \mbox{when}\, \, p  = n, \\
    \, \frac{p-n}{p-1}  \, \mathcal{C}_{\mathcal{A}}  (E)^{1/(p-1)}  
   \leq  c      (\mbox{diam}(E))^{\frac{p   - n }{p - 1}}  & \mbox{when}\, \, p  > n.
   \end{cases}
 \end{split}
 \end{align}
\end{remark} 
 \subsection{Existence in Theorem \ref{mink} in   the continuous case} 
Armed with existence in Theorem  \ref{mink} in the discrete case when $ p  \geq n$, we now  
consider existence when  $ \mu  $  is  a  finite positive Borel measure on $ \mathbb{S}^{n - 1}$  satisfying  \eqref{eqn7.1}.  We  choose  a  sequence of  discrete measures     
$ \{\mu_{j}\}_{j\geq 1}  $  satisfying  \eqref{eqn10.1}-\eqref{eqn10.3} when $ p \geq n  $ is fixed  with  
\[
\mu_j  \rightharpoonup \mu\quad   \mbox{weakly as}\quad   j  \to  \infty.  
\]   
Let  $  E_j, j = 1, 2, \dots,  $  be a  corresponding sequence of  compact convex sets with 
$ 0 $ in the interior of  $ E_j $  and  corresponding $ \mathcal{A}$-harmonic Green's  
functions $U_j$ for $\mathbb{R}^{n}\setminus E_j$ with pole at infinity  for which    \eqref{eqn10.4}  holds  at support points of  $ \mu_j. $   
From the  definition of  weak convergence we  may  assume  for  some  $ C \geq 1 $ that  
\begin{align} 
\label{eqn11.1}   
C^{-1}  \leq   \mu_j  (  \mathbb{S}^{n-1} )  \leq   C  \quad  \mbox{for}\, \, j = 1, 2, \dots 
\end{align}  
We  claim that we may also assume  
\begin{align} 
\label{eqn11.2}   
  E_j  \subset  \bar B ( 0,  \rho ) \quad  \mbox{for $ j = 1, 2, \dots, $ and some}\, \,   \rho  <  \infty.
\end{align} 
 To prove   \eqref{eqn11.2}   we  observe  from   weak convergence of  
 $ ( \mu_j) $  to $ \mu $  and  \eqref{eqn10.1},   \eqref{eqn10.2},  that  
 for some  $  \hat C  \geq 1, $  independent of  $ j =1, 2, \ldots, $  
\begin{align}  
\label{eqn11.3} 
{\hat C }^{-1}    \leq   \int_{ \mathbb{S}^{n-1}}   \lan \tau, \xi \ran^+  d \mu_j ( \xi )  \quad \mbox{for all}\, \, \tau  \in \mathbb{S}^{n-1}.
\end{align}  
Using   \eqref{eqn11.3}, \eqref{eqn10.32},   and arguing   as in   \eqref{eqn10.7}  
we deduce    that  if $  \hat \tau_j \in  \mathbb{S}^{n-1}$ so that $ \hat s_j  \hat \tau_j  \in  E_j  $  
with  $  \hat s_j   \geq \frac{1}{10}  $   $\mbox{diam}(E_j)$  and $ h_j $ is the  support function   for   $ E_j$   then  
\begin{align} 
\label{eqn11.4}   
   \hat  C^{-1} \,  \hat  s_j  \leq     \int_{\mathbb{S}^{n-1}}       \lan    \hat s_j  \hat \tau_j,  \xi   \ran^+  \,  d \mu_j  (\xi)    \leq     \int_{\mathbb{S}^{n-1}}     h_j ( \xi )  d\mu_j ( \xi )    \leq   
 \ti C s_j^{\frac{p - n}{p-1}}
 \end{align}    
  where all  constants are positive  and  independent of $ j. $  
   Thus, claim  \eqref{eqn11.2}  is true.  
       
From  \eqref{eqn11.2}  we   see   that  a  subsequence of  
$ \{E_j\}_{j\geq 1}$  (also denoted $\{E_j\}$)  converges to a  compact convex set  $  E   \subset  \bar B ( 0, \rho ) $
 in the sense  of  Hausdorff distance.  Choosing   another  subsequence if  necessary we may assume 
 from the same argument used in proving \eqref{eqn10.0}  that  either  $ E = \{0\}$ or  $ U_j  \to U $ uniformly in $ \rn{n} $ where   $ U  $ is
the   $ \mathcal{A}$-harmonic Green's function  for  $ \mathbb{R}^{n}\setminus E$ with pole at infinity.    
If  $ E $ has  nonempty  interior then  from  Proposition  \ref{proposition9.1}  we conclude that   \eqref{eqn7.2} $(b)$ and $(c)$  hold for $ U$,  $E$, and $\mu. $  
 
 If  $ E $ has empty interior we  consider the following three  cases :\\     

\noindent{\bf Case A:} $E $ has dimension $ l,   1 \leq l   <  n - 1. $
 In this case  we once again use the argument in \cite[section 13.2, Case B1]{AGHLV} to get a contradiction. 
Translating $ E $ if  necessary we may assume that $0$  is an interior point of the $l$ 
dimensional plane $ P $  containing $ E  $.   So as in the discrete case  we  assume that 
\begin{align} 
\label{eqn11.5}  
B ( 0, 4a ) \cap P  \subset E   \subset    B ( 0, \rho) \cap  E 
\end{align}
 and   
 \[  
  t_j  =     d_{\mathcal{H}} (  E_j,  E ) \quad \mbox{for}\, \, j  = 1, 2, \dots.    
\]  
Then for $  j $  large  enough we can argue as in  Lemma  \ref{lemma10.2}  with 
$  t $  replaced by $ t_j,     $     and  $  u(\cdot, t ), E_1 + t E_2$   by  $ U_j, E_j$.
We  obtain from the analogue of  \eqref{eqn10.24}  for $ j \geq j_0,  $  that  
\begin{align}  
\label{eqn11.6}   
 U_j  ( x  )    \geq  C_1^{-1}   \, |x''| ^{\psi }  \quad    \mbox{for}\, \,    x = ( x', x'' ) \in  B (0,  \rho ) \, \,   \mbox{and}\, \,     C_1 \,  t_j  \leq  |x''|
\end{align}  
where $ x' \in \mathbb{R}^l $  and  $ \psi   = (p-n+l)/(p-1). $   
Fix  $ j  \geq  j_0, $   and given  $  y   \in \ar E_j  \cap  B ( 0,  a)  $,  
let $ T_j (y),  $     be  a  supporting hyperplane  to  $  \ar E_j $  at  $ y. $  
 Let  $\hat H_j $  be the open half space  with  
 $\hat H_j \cap E_j = \es $  and  $ \ar \hat H_j  =  T_j( y ). $  
  Let   $  y^* $  denote the point  in  $ \hat H_j $  which lies on the normal line through $ y  $  
  with  $  | y - y^*  |  =  2  C_1  t_j $ where $  C_1  $ is as in  \eqref{eqn11.6}.   
  Note that  for $ j_0 $ sufficiently large and $ j \geq j_0 $   that  $ d ( y^*, P )  >  C_1 t_j $,  
  since otherwise  it  would follow from the triangle inequality that there exists  $ z  \in B  (0, 2 a ) \cap E $ 
  with  $ d ( z, E_j ) >  t_j$.  Thus  \eqref{eqn11.6} holds with $ x = y^*$.     
  Let   $ \ph $  be the  $ \mathcal{A}$-harmonic function in    $ \hat  H_j \cap  B ( y,  8    C_1  t_j )  \sem 
  \bar B ( y^*,  C_1 t_j)   $  with continuous boundary values
  \[
\phi\equiv 
\left\{
\begin{array}{ll}
 U_j & \mbox{on} \,\,\, \ar B ( y^*,   C_1   \, t_j), \\
0 & \mbox{on}\, \,\, \ar ( \hat H_j  \cap B ( y,  8 C_1  t_j )).
\end{array}
\right.  
  \]
  Then  from the maximum principle for 
     $ {\mathcal{A}}$-harmonic functions we have  $ \ph \leq    U_j$ on 
     $ \hat  H_j \cap  B ( y,  8    C_1  t_j )  \setminus B ( y^*,   C_1   \, t_j)$.    
     Comparing $ \ph $ to a linear function and using a boundary Harnack inequality from  \cite{LLN} in  
  $\hat  H_j  \cap  B ( y,  8 C_1   t_j) $  we deduce for some $ c^* $ depending only on the data   and  $\rho $ that     
  \[ 
   U_j (  y^{*} ) ) /t_j  \, \leq \,  c^*    U_j  (  \hat  z ) ) / d (  \hat z,  T_j (y)) 
  \]
when $  j \geq j_0 $  and $  \hat z  \in      \hat H_j \cap B ( y,   C_1   \,  t_j )$.  
Letting  $ \hat z  \to y $ non-tangentially,   we conclude from      this  inequality and  \eqref{eqn11.6} with $ x  = y^* $  that         
\begin{align}
\label{eqn11.7}   
\, t_j^{\psi  - 1}\,    \leq  C^{**}  |   \nabla U_j ( y ) |  \quad   \mbox{for}\, \,  \mathcal{H}^{n-1}\mbox{-almost  every}\, \,  y \in \ar E_j \cap B (0, a) 
  \end{align}
  and  $ j  \geq j_0.  $ Here  $ C^{**} $  has the same dependence as $ C_1. $  

Let $ \nu_j $ be the positive Borel  measure corresponding to  $ U_j $ with support contained in $E_j$ as in  Lemmas \ref{lem5.2}.  \ref{lem5.3}.    
Then  from  \eqref{eqn8.8}  we deduce  for $ j = 1, 2, \dots $ that  
\begin{align} 
\label{eqn11.8}   
   \frac{d \nu_j }{ d \mathcal{H}^{n-1}} (y)   =   p   \frac{ f  ( \nabla U_j (y) )}{| \nabla U_j ( y ) |} \quad \mbox{for}\quad  \mathcal{ H}^{n-1}\mbox{-almost every}\, \, y  \in  \ar  E_j.
   \end{align} 
From  \eqref{eqn11.8},  the  definition of  $ \mu_j,  $   the  structure assumptions on  $f$,  
and  \eqref{eqn11.7}  we   conclude for some $ \breve C \geq  1,  $ independent of  $ j  \geq j_0, $  that 
for fixed $ p  \geq n $   
\begin{align}
\label{eqn11.9}  
t_j^{\psi  - 1}\,  \nu_j ( \ar E_j \cap B (0,a) )   \leq   C'  \mu_j  ( \mathbf{g}( \ar  E_j  \cap  B (0, a ), E_j))  .
  \end{align}
     Here $\mathbf{g}(\cdot, E_j)$ is the Gauss map for $\partial E_j$.  Now     $ \psi  -  1  =    ( 1 - n + k )/(p  -1) < 0$   
     and  from  weak  convergence of  $ (\nu_j) $  to  $ \nu $   we  
  have    
     \[  
     \liminf_{j  \to  \infty}  \nu_j (\ar E_j  \cap B (0, a )  )  \geq  \nu  ( \ar E  \cap B (0, a/2)) > 0 
     \]  where the  right-hand inequality follows from the fact that    $ E \cap B ( 0, a/2) $   is 
     uniformly  $ (a/2, p)$-fat.   
     Putting this inequality in  \eqref{eqn11.9} we see that    $  \mu_j  ( \mathbb{S}^{n-1})  \to \infty $  in  
     contradiction  to   \eqref{eqn11.1}.  Thus  $ E $  does not have dimension $ l, 1 \leq l < n - 1. $ \\ 

\noindent  {\bf Case B:}  $E = \{0\}$.   In  this case  we put   $ t_j   =\mbox{diam}(E_j)$  and  let  $ \nu_j $  be  the positive Borel measure, as  above,     relative to 
$ U_j$ with support contained in $E_j$ as in Lemma \ref{lemma2.4}.     Then from  $(d)$ of Lemmas  \ref{lem5.2} and \ref{lem5.3}  we have  
$  \nu_j ( E_j )  = 1$ for $j  = 1, 2,  \dots  $   From  this  fact  and   \eqref{eqn2.6} $(ii)$      we   see that   
\begin{align}  
\label{eqn11.10}    
U_j   \approx  t_j^{\frac{p-n}{p-1}}  \quad \mbox{on}\,\,  \ar B ( 0, 2 t_j ) \, \, \mbox{for}\, \, j  = 1, 2,  \dots 
\end{align}
  where ratio constants depend only on the data.  Using 
\eqref{eqn11.10}  and   the same argument as in  the derivation of   \eqref{eqn11.7}  it follows that for some $ c \geq 1 $  depending only on the data,    
\[   
| \nabla  U_j ( y ) |  \geq    c^{-1}  t_j^{\frac{1-n}{p-1}} \quad \mbox{for} \, \,  \mathcal{H}^{n-1}\mbox{-almost every}\, \,   y \in \ar E_j.  
\]   
Since $ \nu_j  (E_j) = 1$ for $j = 1, 2, \dots, $  we  again  obtain  as in  {\bf Case A} that  
 \[  
 c \mu_j  (  \ar E_j )  \geq    t_j^{\frac{1-n}{p-1}}  \to  \infty  \quad \mbox{as}\, \,  j \to \infty  
 \]  
   in contradiction to  
\eqref{eqn11.1}.    So  $ E  \not =  \{0\}. $    \\

\noi {\bf Case C:} $ E $ has dimension  $ n - 1. $ In this case, we essentially  copy the  proof  from  \cite{AGHLV}  through the statement of  
Proposition \ref{proposition11.2}.  However  to  prove   Proposition  \ref{proposition11.2}    we state  and  prove   Lemma 
\ref{lemma11.3}  which  in   \cite{AGHLV}   was  only  available  when   
$ f (\eta)  =  p^{-1} |\eta|^p, $  i.e, for the  $p$-Laplace equation.  Using    Lemma  \ref{lemma11.3}   we  can  then copy the  
so called simple proof given in    \cite{AGHLV}   of  \eqref{eqn11.18}   for the  $p$-Laplace   equation when   $ 2 <  p <  n. $   
  
To  begin the proof  we assume, as we may,   that   $ P  = \{ x: x_n  = 0  \}  $   
and 
\begin{align}  
\label{eqn11.11}   
B  ( 0,  4a)  \cap  P  \subset     E   \subset  B ( 0, \rho ) \cap P.
\end{align}
   Also  translating  $  E_j $ slightly upward  if  necessary  we  may assume  that   
\[   
\lim_{j \to  \infty}    d_{\mathcal{H}} ( E_j,  E ) = 0   \quad \mbox{and}\quad   E_j  \subset \{  x: x_n > 0  \}.   
\]  
 Let  $ \nabla U_{+} (x) $  denote  the  limit (whenever it exists)  as   $y \to  x \in  E $     
 non-tangentially  through values with $  y_n  > 0 .$    We  prove   
 \begin{proposition}  
 \label{proposition11.1}    
  There exists $ C \geq 1 $  such that    
\begin{align}
\label{eqn11.12}   
C  \liminf_{j \to \infty }   \int_{\ar  E_j }  f ( \nabla U_j )  d\mathcal{H}^{n-1}  \geq    
    \int_{E}  f ( \nabla U_+)  d\mathcal{H}^{n-1}   
     -  C^2    \mathcal{H}^{n-1} ( E ).  
\end{align}
 \end{proposition}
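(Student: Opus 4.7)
The plan is to parametrize $\partial E_j$ via the convexity of $E_j$ and its proximity to the flat $(n-1)$-dimensional convex set $E \subset P$. For $j$ large, $\partial E_j = S_j^+ \cup S_j^-$ decomposes into the graph of a concave function $\phi_j^+$ and a convex function $\phi_j^-$ over the horizontal projection $\pi(E_j) \subset P$, with $0 \leq \phi_j^- \leq \phi_j^+$ and $\pi(E_j)\to E$ in Hausdorff distance. Since the integrand is nonnegative, I lower bound $\int_{\partial E_j} f(\nabla U_j)\, d\mathcal{H}^{n-1}$ by the integral over $S_j^+$ alone. Classical results on uniform convergence of concave functions give $\phi_j^+ \to 0$ uniformly on compact subsets of the relative interior of $E$ and $\nabla \phi_j^+ \to 0$ at $\mathcal{H}^{n-1}$-a.e.\ point of the relative interior of $E$; in particular the area element $\sqrt{1+|\nabla \phi_j^+|^2}$ is bounded below by $1$ and tends to $1$ a.e.

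Next I would show that $\nabla U_j(x', \phi_j^+(x')) \to \nabla U_+(x')$ for $\mathcal{H}^{n-1}$-a.e.\ $x'$ in the relative interior of $E$. Four ingredients go into this: the uniform convergence $U_j \to U$ on $\mathbb{R}^n$ established above \eqref{eqn11.11}; the interior H\"older regularity \eqref{eqn2.2} of $\nabla U_j$, which upgrades $U_j \to U$ to uniform convergence of $\nabla U_j \to \nabla U$ on compact subsets of $\mathbb{R}^n \setminus E$; the non-tangential boundary-limit statement \eqref{eqn8.6}$(b)$ of Lemma \ref{lem8.2} applied to $U$ on $\{x_n>0\}$, giving the value $\nabla U_+$ $\mathcal{H}^{n-1}$-a.e.\ on $E$; and the boundary gradient bound \eqref{eqn8.6}$(a)$ together with \eqref{eqn2.2} applied to $U_j$ in a thin slab above $S_j^+$, with constants uniform in $j$ because the Lipschitz constants of $\phi_j^+$ shrink to zero on compact pieces of the relative interior of $E$. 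Concretely, for a fixed small $\delta>0$ and $j$ large I compare $\nabla U_j(x', \phi_j^+(x'))$ to $\nabla U_j(x',\delta)$ by H\"older regularity, then to $\nabla U(x',\delta)$ by uniform convergence, and finally to $\nabla U_+(x')$ by letting $\delta \to 0$ along a non-tangential cone.

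With this pointwise convergence in hand, Fatou's lemma yields
\[
\liminf_{j\to\infty} \int_{S_j^+} f(\nabla U_j)\, d\mathcal{H}^{n-1} \geq \int_K f(\nabla U_+)\, d\mathcal{H}^{n-1}
\]
for every compact $K$ contained in the relative interior of $E$. Exhausting $E$ by such compact sets delivers the stated estimate, with the constant $C$ on the left absorbing the graph-area Jacobian and the contribution of $S_j^-$ and any lateral piece, and with $-C^2\mathcal{H}^{n-1}(E)$ on the right compensating for the part of $E$ where the a.e.\ convergence argument fails but where $f(\nabla U_+)$ is controlled by a universal bound coming from \eqref{eqn8.6}$(a)$. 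The main obstacle is the fourth ingredient above: getting boundary-gradient estimates for $U_j$ with constants independent of $j$ as $\mathbb{R}^n \setminus E_j$ collapses onto $\mathbb{R}^n \setminus E$. This is resolved by observing that the Lipschitz constants of the flattening graphs $\phi_j^+$ tend to $0$ on compact pieces of the relative interior of $E$, so Lemma \ref{lem8.2} applies in local flat coordinates with $j$-independent constants, and a routine rescaling argument makes the uniformity transparent.
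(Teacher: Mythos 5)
Your proposal is a genuinely different argument from the paper's, and it has a real gap. The paper never attempts pointwise convergence of boundary gradients. Instead it covers the relative interior of $E$ by $(n-1)$-dimensional Whitney cubes $\{Q_k\}$, and for each cube $Q$ with $s(Q)\geq\epsilon^{\tilde\sigma}$ it picks a corkscrew point $y_Q$ and compares the two surface integrals through the value $U(y_Q)^p\,s(Q)^{n-1-p}$, using \eqref{eqn8.5}$(b)$, \eqref{eqn8.8}, and the reverse H\"older estimate \eqref{eqn8.9}$(a)$ together with the uniform comparison $U\leq\hat C(U_j+\epsilon^{\tilde\sigma})$. Cubes where $U(y_Q)$ is too small to run this comparison are handled by \eqref{eqn11.16}, which is exactly where the error term $-C^2\mathcal{H}^{n-1}(E)$ comes from. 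The argument is entirely about integrated quantities and is insensitive to the boundary regularity of $\nabla U_j$.

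The gap in your approach is the ``fourth ingredient.'' You want to compare $\nabla U_j(x',\phi_j^+(x'))$ to $\nabla U_j(x',\delta)$ ``by H\"older regularity,'' citing \eqref{eqn2.2} and \eqref{eqn8.6}$(a)$ and asserting that the small Lipschitz constant of $\phi_j^+$ makes the constants uniform in $j$. Neither tool gives what you need: \eqref{eqn2.2} is an interior estimate, requiring $\mathcal{A}$-harmonicity in a full ball $B(w,4r)$, so it cannot reach across the boundary of $E_j$; and \eqref{eqn8.6}$(a)$ is a Hopf-type bound controlling the magnitude $|\nabla U_j|\approx U_j/d$ near $\partial E_j$, not the oscillation of $\nabla U_j$ as you approach it. What Lemma~\ref{lem8.2} provides is $\mathcal{H}^{n-1}$-a.e.\ existence of non-tangential limits of $\nabla U_j$ for each fixed $j$, plus an $L^{q(p-1)}$ bound on the non-tangential maximal function \eqref{eqn8.9}$(b)$; none of this is a uniform-in-$j$ modulus of continuity up to $\partial E_j$, and without one you cannot interchange the double limit $j\to\infty$, $s\to\phi_j^+(x')^+$ to deduce $\nabla U_j(x',\phi_j^+(x'))\to\nabla U_+(x')$. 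What you would actually need is a Lieberman-type $C^{1,\beta}$-up-to-boundary estimate with constants that degrade gracefully as a Lipschitz graph flattens; the paper invokes Lieberman's theorem only under a smoothness hypothesis (see \eqref{eqn8.17} in the proof of Lemma~\ref{lem8.5}), and quantifying it uniformly over a sequence of merely Lipschitz (here piecewise linear) boundaries is not a ``routine rescaling argument'' but a separate result that would need its own proof.

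A secondary red flag: if your pointwise-convergence-plus-Fatou strategy went through, you would obtain the cleaner inequality without the multiplicative $C$ on the left or the subtractive $C^2\mathcal{H}^{n-1}(E)$ on the right. You explain these away as bookkeeping for Jacobians, lateral pieces, and ``the part of $E$ where a.e.\ convergence fails,'' but the last explanation is untenable: you cannot control $f(\nabla U_+)$ on that exceptional set by a universal bound, since Proposition~\ref{proposition11.2} shows $\int_E f(\nabla U_+)\,d\mathcal{H}^{n-1}=\infty$, i.e.\ $|\nabla U_+|$ genuinely blows up near $\partial'E$. In the paper the error term has a precise origin (case two of the Whitney cube dichotomy, where cubes have side length $\geq\epsilon^{\tilde\sigma}$ so the estimate \eqref{eqn11.16} applies), and appreciating that is a good check that you have the right architecture for the proof.
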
   
\begin{proof} 
 Given  
    $ \ep  >  0  $  choose  $ j_1 $ so large that  
   $  d_{\mathcal{H}} ( E_j,  E)  \leq  \ep $ for 
    $ j \geq j_1. $  Using  uniform convergence of  $(U_j)$  to $ U $ on $ \rn{n} $                       
    and   comparing   boundary values of   $ U$ and $U_j  $ in  
$ B (0,  2 \rho )  \sem  E_j,   $   we deduce from    Lemma  \ref{lemma2.3} that there exist   $ 0 < \ti  \si  \leq  1/2$ and $\hat C \geq  1$ 
  such that      
\begin{align}
\label{eqn11.13}  
  U  \leq  \hat  C   (    U_j     +  \ep^{\ti \si}  ) 
    \end{align}
    for  $ j  \geq j_1$.     Next  we  divide  the interior of   $ E $ into   
    $(n  - 1)$-dimensional closed  Whitney cubes   $ \{ Q_k  \}$.   Let    $ \ar'   E  $  denote  
the  boundary of  $  E  $  considered  as  a  set in  $ P$ and let $d'(\cdot,\cdot)$ denote the distance between sets considered in $P$. 
Then  the cubes in  $  \{ Q_k \} $  have disjoint interiors with  
side length  $ s ( Q_k ) $  and the property that considered as  sets in  $  
    P,   $  the distance say  $ d' ( Q_k,  \ar'  E  ) $  from $ Q_k $ to  the boundary of  $ E $    satisfies  
\begin{align}
\label{eqn11.14} 
10^{-n} s(Q_k)  \leq   d' ( Q_k, \ar'  E  )  \leq  10^n  s ( Q_k). 
\end{align}  
Let         
  $  Q  \in  \{ Q_k \}  $  with  $ s ( Q )  \geq   \ep^{\ti  \si }    $  and put    
  $   Q_+  =  Q \times (0, s(Q))$.
Suppose  $y_Q= (y_1, \dots, y_n )$  is   a  point in    $  Q_+ \sem E_j $  with    
 $ d( y_Q,  \ar'  E )   \geq y_n/ 2  \geq    s(Q)/4$.  

 We  consider two possibilities.  If   $   U  ( y_Q )  \geq 2 \hat C \ep^{\ti \si } $ ($\hat C $  as in 
\eqref{eqn11.13}),  then from \eqref{eqn11.13} we have    $       U(y_Q)     \leq 2 \hat C  U_j ( y_Q)   $  
and using    \eqref{eqn8.5} $(b)$,    \eqref{eqn8.8},  \eqref{eqn8.9} $(a)$,       for   $ U_j,  U  $   we  get 
\begin{align}  
\label{eqn11.15}          
\begin{split}    
    \bar C^3 \int_{\ar  E_j  \cap  Q_+ }  f ( \nabla U_j )  d\mathcal{H}^{n-1} &\geq  \bar C^2  (U_j )^p (y_Q) \,   s ( Q )^{ n-1- p}  \\
    & \geq  \bar C  (U )^p (y_Q)  \, s ( Q )^{ n-1 - p}  \\
    &\geq { \ds \int_{ Q  }  f ( \nabla U_+ )  d\mathcal{H}^{n-1} }.
  \end{split}
  \end{align}
      If   $   U ( y_Q )  <   2 \hat C   \ep^{\ti \si}, $ then since  $ s ( Q )  \geq \ep^{\ti \si}  ,$  an argument similar to the above gives  
\begin{align}
\label{eqn11.16}   
\int_{ Q  }  f ( \nabla U_+ )  d\mathcal{H}^{n-1}   \leq  C_+ \,  s ( Q )^{n-1} 
\end{align}
 where  $ C_+  $ is independent of $ j \geq j_2 \geq j_1 $  provided $ j_2 $  is  large enough. 
  Combining   \eqref{eqn11.15},  \eqref{eqn11.16},  and using    \eqref{eqn11.14}    we find  after summing over  
  $  Q  \in  \{ Q_k \} $  that  for some  $ \breve C  \geq 1, $ independent of $ j  \geq j_2, $     
\begin{align} 
\label{eqn11.17}     
\breve C \int_{\ar E_j }  f ( \nabla U_j )  d\mathcal{H}^{n-1}  \geq  \int_{ \{ x \in  E :\,  d' (x, \ar' E) \geq \breve C  \ep^{\ti \si } \} }   f  ( \nabla  U_+ )  d\mathcal{H}^{n-1}  -  
    \breve C^2  \mathcal{H}^{n-1}  ( E ).
    \end{align}

Letting first $ j \to \infty $  and after that  $ \ep \to 0 $   we obtain from \eqref{eqn11.17}   
and  the monotone convergence theorem or  Fatou's Lemma that  \eqref{eqn11.12} is true. 
This finishes the proof of   Proposition \ref{proposition11.1}.    
\end{proof}

Next we prove   

  \begin{proposition}  
  \label{proposition11.2} 
\begin{align}   
\label{eqn11.18}     
\int_{ E }  f ( \nabla U_+)  d\mathcal{H}^{n-1} =  \infty.    
    \end{align}
     \end{proposition}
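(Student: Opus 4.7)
The plan is to deduce \eqref{eqn11.18} from a non-integrable blow-up of $|\nabla U_+|$ at the relative boundary $\partial'E$ of $E$ inside $P = \{x_n = 0\}$, by localizing to a two-dimensional model problem. First I recall that $E$ is an $(n-1)$-dimensional compact convex set with nonempty relative interior in $P$, so its relative boundary $\partial'E$ is an $(n-2)$-dimensional convex hypersurface of $P$ with $0 < \mathcal{H}^{n-2}(\partial'E) < \infty$. By convexity, at $\mathcal{H}^{n-2}$-a.e.\ point $x_0 \in \partial'E$ there is a unique supporting $(n-2)$-plane in $P$. I fix one such $x_0$; using Remark \ref{rmk1.3}, after a rigid motion I may assume $x_0 = 0$, the supporting plane is $\{x_{n-1}=x_n=0\}$, and for some $\delta_0>0$ we have $E\cap B(0,\delta_0) \subset \{x_{n-1}\geq 0,\,x_n=0\}$ together with $\{0\leq x_{n-1}\leq \delta_0,\,|x''|\leq\delta_0,\,x_n=0\}\subset E$, where $x'' = (x_1,\ldots,x_{n-2})$.

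Next I introduce a two-dimensional auxiliary problem, which one would isolate as a separate lemma. The operator $\mathcal{A}$ is $x$-independent, and letting $\mathcal{A}_2\in M_p(\al)$ on $\rn{2}$ denote its restriction to the $(\eta_{n-1},\eta_n)$-plane, the reduced model concerns positive $\mathcal{A}_2$-harmonic functions in $\rn{2}\sem \{(s,0):s\geq 0\}$ vanishing continuously on the ray. I would then show that every such $v$ (normalized to take value $1$ at a fixed reference point) has an asymptotically homogeneous form $v(r\cos\he,r\sin\he)\approx r^{\al_0}\psi(\he)$ as $r\to 0$, for a continuous angular profile $\psi>0$ on $(0,2\pi)$ and a characteristic exponent $\al_0=\al_0(\mathcal{A}_2,p)$ satisfying $0<\al_0\leq (p-1)/p$. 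Existence of the homogeneous profile would come from a separation-of-variables/shooting argument for the resulting degenerate ODE on $(0,2\pi)$; the sharp upper bound $\al_0\leq (p-1)/p$ is the most delicate part and would be obtained by comparison with an explicit barrier modelled on $r^{(p-1)/p}\psi_0(\he)$ (for $p=n=2$ the classical barrier $r^{1/2}\sin(\he/2)$ shows the bound is sharp).

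Using this two-dimensional model I build a barrier in $\rn{n}$. Set $\ti v(x) = v(x_{n-1},x_n)$; since $\mathcal{A}$ is translation invariant in the $x''$ directions, $\ti v$ is $\mathcal{A}$-harmonic in $\rn{n}\sem\{x_{n-1}\geq 0,\,x_n=0\}$ and vanishes on that half-hyperplane, which locally contains $E$ near $0$. By the boundary Harnack principle of Lemma \ref{lem8.3} applied to $U$ and $\ti v$ on an annular region inside $B(0,\delta_0)\sem E$, together with Harnack's inequality to normalize at interior reference points, I obtain
\[
c^{-1}\,\ti v(y)\leq U(y)\leq c\,\ti v(y) \quad \mbox{for}\,\,y\in B(0,\delta_0/2)\sem E.
\]
Combining this with \eqref{eqn8.6}(a) of Lemma \ref{lem8.2} applied to $U$ on the Lipschitz graph portions of $\ar(\rn{n}\sem E)$ near $\partial'E$ then yields the non-tangential gradient lower bound $|\nabla U_+|(y)\geq c\,d_{\partial'E}(y)^{\al_0-1}$ for $\mathcal{H}^{n-1}$-a.e.\ $y\in E$ in a neighborhood of each $\mathcal{H}^{n-2}$-typical $x_0\in\partial'E$, where $d_{\partial'E}(y)$ denotes the distance in $P$ from $y$ to $\partial'E$.

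To conclude, I integrate. By $p$-homogeneity of $f$ (Definition \ref{defn1.1}(ii)),
\[
\int_{E} f(\nabla U_+)\,d\mathcal{H}^{n-1}\,\geq\, c\int_{\{y\in E:\,d_{\partial'E}(y)<\delta_1\}} d_{\partial'E}(y)^{\,p(\al_0-1)}\,d\mathcal{H}^{n-1}(y),
\]
and the coarea formula within $P$ bounds the right-hand side below by a constant multiple of $\mathcal{H}^{n-2}(\partial'E)\int_0^{\delta_1}r^{\,p(\al_0-1)}\,dr$. Because $\al_0\leq (p-1)/p$ we have $p(\al_0-1)\leq -1$, so the last integral diverges, giving \eqref{eqn11.18}. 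The main obstacle is precisely the sharp upper bound $\al_0\leq (p-1)/p$ in the two-dimensional auxiliary lemma: the existence of a homogeneous profile is standard, but excluding smaller growth exponents for the full class $\mathcal{A}\in M_p(\al)$ satisfying \eqref{eqn1.8} requires a careful ODE/barrier argument in the angular variable and is the real reason the authors single out the two-dimensional problem for separate study.
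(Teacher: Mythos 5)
Your overall scheme --- reduce to a two-dimensional slit-domain model in the $(x_{n-1},x_n)$-variables, show that the characteristic vanishing exponent of the model solution is $\leq 1-1/p$, transfer this back as a lower barrier for $U$ near $\partial' E$, and integrate the resulting $d^{p(\al_0-1)}$ blow-up to get divergence --- is indeed the same high-level strategy the paper follows. The transfer step via the boundary Harnack inequality (Lemma \ref{lem8.3}) and \eqref{eqn8.6}$(a)$ is essentially \eqref{eqn11.34}--\eqref{eqn11.37}, and your distance/coarea accounting is a cosmetic variant of the paper's Whitney-cube summation.

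The genuine gap is in the two-dimensional lemma, which you correctly flag as ``the main obstacle'' but then do not prove. You propose (a) constructing the homogeneous profile by separation of variables/shooting for a degenerate ODE on $(0,2\pi)$, and (b) obtaining the upper bound $\al_0\leq (p-1)/p$ by comparison with an explicit barrier modelled on $r^{(p-1)/p}\psi_0(\he)$. Neither is how the paper argues, and (b) in particular cannot be carried out as stated for general $\mathcal{A}=\nabla f$. For the $p$-Laplacian one has Krol's explicit half-angle barrier, but for a general anisotropic $f$ there is no closed-form angular profile to write down; constructing a subsolution with degree exactly $(p-1)/p$ on the slit domain is precisely as hard as the statement you are trying to prove. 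The paper instead (Lemma \ref{lemma11.3}) constructs $w(\cdot,\tau)$ in the sector $D(\tau)$ for $\pi/2\leq\tau<\pi$, obtains homogeneity of degree $\la(\tau)$ by a dilation/uniqueness argument (not separation of variables), passes to the limit $\tau\to\pi$, and proves the critical equality $\la(\pi)=1-1/p$ by a quite different route: it compares $w(\cdot,\tau)$ to the $\mathcal{\ti A}$-harmonic Green's function $\ze(\cdot,\tau)$ of the truncated wedge $K(\tau)$, invokes the already-established existence in the \emph{discrete} Minkowski problem together with the identities \eqref{eqn9.37}/\eqref{eqn9.38}, and then uses the Hopf boundary maximum principle to localize the total mass $\int \ti f(\nabla\ze)\,d\mathcal{H}^1$ near the slit tip (\eqref{eqn11.27}--\eqref{eqn11.33}). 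In other words, the ``barrier'' that forces the exponent to be $1-1/p$ is not explicit at all; it is manufactured from the variational/Minkowski machinery of \S\ref{section9}--\S\ref{section11}. Without an argument of this type, your step (b) is an assertion rather than a proof, so the proposal as written does not establish \eqref{eqn11.18}.

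A minor additional point: you only need the one-sided bound $\al_0\leq 1-1/p$, which is also all the divergence argument uses in the paper (the other direction, $\la(\pi)\geq 1-1/p$, comes for free from \eqref{eqn11.25}); but even this one-sided bound is the hard direction, and it is the one you leave unproven.
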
   
   We  note that   Propositions  \ref{proposition11.1} and \ref{proposition11.2}  give a 
   contradiction to      \eqref{eqn11.1}.    From  this  contradiction we  conclude  
   first  that  $ E $  does not have dimension  $ n-1$.  From  our  previous work  
   it now follows that $ E $  is  a  compact convex set with nonempty interior  and   
   \eqref{eqn7.2}  $(a)-(c) $  are valid with  $  u = U.$ 
   This finishes the proof of existence in Theorem \ref{mink} up  to  proving  Proposition  
   \ref{proposition11.2}.    
To  prove    Proposition \ref{proposition11.2} we shall  need
\begin{lemma}  
\label{lemma11.3}  
Given  $  \hat \eta =  ( \hat \eta_1,  \hat \eta_2 )   \in  \rn{2} \sem \{0\} $, put   $  \ti  f ( \hat \eta )  =  f ( \hat \eta_1,  \hat \eta_2,  0, \dots, 0 ). $      
 Then  there  exists   a  continuous function  $ v  $  on $ \rn{2}$  that is    $  \mathcal{ \ti A}:= \nabla \ti f $-harmonic in    
 $ \rn{2} \sem \{ x = (x_1, 0) \in \rn{2}   :  x_1 \leq 0  \} $     
 with  $ v  \equiv  0 $  on $ \{(x_1, 0) \in \rn{2}   :  x_1 \leq 0  \} $.   Moreover,   $ v (1, 0)  = 1 $  and  
\begin{align}  
\label{eqn11.19}   
v (t x  )  =  t^{ 1 - 1/p} v ( x )  \quad \mbox{whenever}\, \, t > 0 \, \, \mbox{and} \, \,  x \in \rn{2}.  
\end{align}
\end{lemma}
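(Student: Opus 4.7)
Write $L = \{(x_1, 0) \in \rn{2} : x_1 \leq 0\}$. My approach is exhaustion plus compactness, exploiting the exact rescaling identity coming from dilation invariance of $L$ and $(p-1)$-homogeneity of $\ti{\mathcal{A}}$, in the spirit of the fundamental-solution constructions of Lemmas \ref{lem1.4} and \ref{lem1.5}. For each $R > 1$ put $L_R = L \cap \bar B(0, R)$, $D_R = B(0, R) \sem L_R$. The segment $L_R$ is $(R, 1)$-thick; hence by the Frostman-type criterion recalled after Definition \ref{def3.2}, $L_R$ is uniformly $(R, p)$-fat in $\rn{2}$ for every $p > 1$. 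So every boundary point of $D_R$ is Wiener-regular for $\ti{\mathcal{A}}$-harmonic functions, and one can solve the Dirichlet problem for an $\ti{\mathcal{A}}$-harmonic $u_R$ on $D_R$ with continuous boundary values $0$ on $L_R$ and $R^{1-1/p}$ on $\ar B(0, R)$, extended continuously to $\rn{2}$ by these values.

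Since $L$ is dilation-invariant and $\ti{\mathcal{A}}$-harmonicity is preserved under the rescaling $u \mapsto \la^{-(1-1/p)} u(\la\, \cdot)$ (Remark \ref{rmk1.3} combined with $(p-1)$-homogeneity of $\ti{\mathcal{A}}$), uniqueness of the Dirichlet problem yields the exact identity
\begin{equation*}
u_R(x) = R^{1 - 1/p}\, u_1(x/R), \qquad x \in \bar D_R,
\end{equation*}
which already encodes the target homogeneity for any limit. Set $v_R := u_R/u_R(1, 0)$; then $v_R(1, 0) = 1$, $v_R \equiv 0$ on $L_R$, and $v_R(x) = u_1(x/R)/u_1(1/R, 0)$. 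Iterated Harnack along chains of balls together with the boundary H\"older estimate of Lemma \ref{lemma2.3} (applied at interior points of $L_R$) give uniform-in-$R$ bounds for $v_R$ on compacta, provided one controls the rate at which $u_1(1/R, 0) \to 0$. Assuming the two-sided estimate $u_1(1/R, 0) \asymp R^{-(1-1/p)}$ (discussed below), Lemmas \ref{lemma2.1}--\ref{lemma2.2} give uniform H\"older continuity, and Arzel\`a--Ascoli extracts a locally uniform subsequential limit $v$, continuous on $\rn{2}$, $\ti{\mathcal{A}}$-harmonic on $\rn{2} \sem L$, vanishing on $L$, positive off $L$ (by Harnack), and satisfying $v(1, 0) = 1$. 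Passing to the limit in the scaling identity gives $v(tx) = t^{1-1/p} v(x)$ for every $t > 0$.

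The crux is the two-sided bound $u_1(1/R, 0) \asymp R^{-(1-1/p)}$. The lower bound follows from iterated Harnack along a chain of balls from the unit circle to $(1/R, 0)$, combined with the boundary H\"older estimate at $(1/R, 0)$ (which is an interior point of $L_1$, hence a Lipschitz boundary point where Lemma \ref{lem8.1} applies). The upper bound is the hard part and the main obstacle: the tip $(0, 0)$ fails to be a Lipschitz boundary point, so Lemma \ref{lem8.3} does not apply at the origin. I would supply a barrier at the tip built from the 2D fundamental solution (Lemma \ref{lem1.4} when $p = n$, Lemma \ref{lem1.5} when $p > n$) restricted to sectors of opening approaching $2\pi$, combined with a Phragm\'en--Lindel\"of / maximum-principle argument to force the decay rate $R^{-(1-1/p)}$. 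The exponent $1 - 1/p$ is exactly the critical scaling exponent for which the rescaled sequence $u_R$ has a non-degenerate limit; at $p = 2$ the construction recovers the classical slit-plane harmonic $v(x) = \sqrt{r}\cos(\he/2)$.
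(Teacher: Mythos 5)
Your construction of a candidate limit $v$ by exhaustion on the slit plane is plausible and is indeed in the spirit of Krol's original proof for the $p$-Laplace equation, but there is a genuine gap at precisely the step you acknowledge as ``the crux'': nothing in your argument actually forces the homogeneity exponent of the limit to equal $1-1/p$. The prescribed boundary value $R^{1-1/p}$ on $\partial B(0,R)$ is irrelevant after you normalize by $u_R(1,0)$; what survives in the limit is the intrinsic decay exponent of $\tilde{\mathcal A}$-harmonic functions near the slit tip, which for a general $\tilde{\mathcal A}\in M_p(\alpha)$ is \emph{a priori} unknown. Your proposed ``barrier built from the 2D fundamental solution restricted to sectors'' does not work as stated: $F$ from Lemmas \ref{lem1.4}--\ref{lem1.5} does not vanish on any wedge boundary, so restricting it gives no supersolution vanishing on $L$; and a crude Phragm\'en--Lindel\"of / Harnack-chain argument from $\partial B(0,1)$ to $(1/R,0)$ only yields $u_1(1/R,0)\gtrsim R^{-C}$ with $C$ a Harnack constant, not the sharp exponent $1-1/p$.

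The paper's proof sidesteps this by working first on finite-angle wedges $D(\tau)$, $\pi/2\le\tau<\pi$, where the domain is Lipschitz and the boundary Harnack inequality of Lemma \ref{lem8.3} gives \emph{uniqueness} of the normalized positive solution $w(\cdot,\tau)$; uniqueness then yields homogeneity $w(tx)=t^{\lambda(\tau)}w(x)$ and monotonicity of $\lambda(\tau)$. The easy direction $\lambda(\pi)\ge 1-1/p$ follows from finiteness of the energy integral \eqref{eqn11.25} via the reverse H\"older inequality \eqref{eqn8.9}. The hard direction $\lambda(\pi)\le 1-1/p$ is where the paper does something non-obvious and genuinely new for general $\mathcal A$: it applies the \emph{already-proved} discrete Minkowski existence theorem and Hadamard variational formulas \eqref{eqn9.37}--\eqref{eqn9.38} to the $\tilde{\mathcal A}$-harmonic Green's function $\zeta$ of the complement of the compact convex set $K(\tau)$, splits the resulting boundary integral near the corner using a comparison function $q$ (see \eqref{eqn11.29}--\eqref{eqn11.32}), and lets $\tau\to\pi^{-}$ to extract the exponent. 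No Phragm\'en--Lindel\"of argument replaces this; it is a surface-area/variational identity, not a decay estimate. To repair your proof you would need to either reproduce this variational computation or find an alternative derivation of the sharp upper bound on the slit-tip decay rate, and the latter is not known for general $\mathcal A\in M_p(\alpha)$.
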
   
\begin{proof}
We point out that   Krol in  \cite{Kr}   proved    Lemma   \ref{lemma11.3} 
 when  $ \ti f (\hat \eta) =  p^{-1}  |\hat \eta |^p,  $  so that  $ v $ in this case is  
 a  solution to the  $p$-Laplace equation.     
 We introduce polar coordinates, $ r = |x|,   x_1 = r  \cos \he,   x_2 = r \sin \he,   $   and put   
\[   
D (\tau )  =   \{ x :\, \,  r > 0, \, \,    |\he |   <  \tau\}  \quad  \mbox{for}\, \,      \pi/2  \leq   \tau  \leq  \pi. 
\]   
We  claim  for  $ \pi/2 \leq   \tau   <  \pi  $ that   there exists  a  unique  
   positive  $ \mathcal{\ti A}=  \nabla  \ti f$-harmonic function $  w  = w ( \cdot, \tau )  $  in  $ D ( \tau ) $ 
   which is continuous  in  $  \rn{2} $  with    $ w  \equiv  0  $  on   $ \rn{2} \sem D ( \tau)$  
 and  $  w (1, 0 ) =  1 $.  Moreover,   
\begin{align}
\label{eqn11.20} 
w (  t x )    =  t^{\la} \, w ( x ) \quad \mbox{whenever}\, \, x \in \rn{2} 
\end{align}
 for some  $ \la = \la  (\tau) > 0$.  To construct  $ w $ for fixed  $ \tau$ with $\pi/2  \leq \tau < \pi$, 
 let   $  w_l   $ be the  continuous   function   in  $  \bar B (0, 2l) $    
 with  $ w_l $   an   $  \mathcal{ \ti A} =  \nabla  \ti f$-harmonic   function in 
   $  B ( 0,  2 l)  \sem  [( \rn{2} \sem D ( \tau )  ) \cap  \bar B ( 0, l ) ] $ and      
 $ w_l  \equiv  0  $  on  $    ( \rn{2} \sem D ( \tau )  ) \cap  \bar B ( 0, l )  $ 
 while $ w_l  =  M_l $ on  $ \ar B ( 0,  2l)  $  where $ M_l $  is chosen so that  $ w_l ( 1, 0 ) = 1.   $   
 Using Lemmas \ref{lemma2.1}-\ref{lemma2.3} and taking limits of  a  certain 
 subsequence of  $ \{ w_l\}_{l\geq 1} $, we  see there exists $ w  \geq 0,$   
 a H\"{o}lder  continuous function on $  \mathbb{R}^2 $  which is                  
     $  \mathcal{ \ti A}  =  \nabla  \ti  f$-harmonic in   $ D ( \tau )  $
 for fixed  $ p  \geq 2 $      with  $ w \equiv 0  $  on  $  \mathbb{R}^2  \sem D ( \tau )  $  
     and     $ w ( 1, 0  )  = 1. $       Uniqueness  of   $ w  $  follows  from \eqref{eqn8.10}
      applied 
     in $ D (\tau ) \cap    B (0, 2r^+)  $ to  $ w$ and $w'$  where $ w' $ has the same 
     properties  as $ w$ (and $r^+$ is as in Lemma \ref{lem8.3}). 
     Letting $ r^+ \rar  \infty$, we conclude first that $ w/w' $  is bounded in $ D ( \tau ) $ 
     and after that from H\"{o}lder    continuity of the ratio that  $ w' = w. $       
 To prove  \eqref{eqn11.20} observe  that  $ w ( t x )$ for $x  \in \mathbb{R}^2$ is 
 also  $ \mathcal{\ti A} =  \nabla  \ti  f$-harmonic  in   $ D(\tau) $  and  $ \equiv 0 $  on 
 $  \rn{2} \sem D ( \tau )  $ as follows from $p$-homogeneity of  $ \ti  f. $ 
 From  uniqueness   of  $ w $  we   conclude  that   
 $ w ( t x  )  =  w (t, 0)  w (x).$  Differentiating   this  expression  with  respect to $ t $ 
 and evaluating at $ t =  1 $  we  arrive at   
\[         
\lan x,  \nabla w ( x ) \ran =    \frac{\ar w }{\ar x_1}  (1,0)  \,  w (x)   \quad \mbox{whenever} \quad x  \in D ( \tau ). 
\] 
  Thus in polar coordinates,    
\[
r  w_r  ( r,  \he ) =    w_ r  (1,0) \,  w ( r, \he  ).  
\]
Dividing this equality by  $ r  w   ( r,  \he )  $   and integrating with respect to $ r, $  
we find  after exponentiating that   \eqref{eqn11.20} holds  with   $ \la  =   w_r  (1,0) .$   To avoid  confusion we  now write 
        $ \la ( \tau ) $  for $ \la $   in  \eqref{eqn11.20}.  
        Next we show  that       
\begin{align} 
   \label{eqn11.21}   
   \mbox{$ \la $  is  non-increasing  on $[\pi/2, \pi ) $  with   $  \la ( \pi/2) = 1 $  and  $  \la ( \pi )  = {\ds  \lim_{ \tau \rar \pi} } \la ( \tau )   \geq  1 -   1/p. $   } 
 \end{align} 
  To  prove  \eqref{eqn11.21}  let  $  K ( \tau ) $ be the compact convex set  
  $   ( \rn{2}  \sem  D ( \tau ) )  \cap  \{(x_1, x_2)\in\mathbb{R}^2:\, \,  x_1 \geq - 1 \}  $  and  let  $  \ze  = \ze  ( \cdot, \tau ) $  be the   $ \mathcal{\ti A}  =  \nabla  \ti  f$-harmonic  
Green's function  for     $ \rn{2}  \sem K ( \tau) $  with pole at $ \infty. $

 Let   $ z  $  be  a point  on  
  $  \ar  K ( \tau )  $  with polar coordinates  $  r = 1/2,  \he =  \tau.  $   
  Define    $ w^+$ and $\ze^+ $ in  $ B ( z, 1/4 )  $  by  $ w^+  = w   $ and $\ze^+  =  \ze   $ at points $ x $  in  
  $ D ( \tau )    \cap  B (  z, 1/4 )      $ with  $ x_2 > 0 $    while  $ w^+ = \ze^+  \equiv 0  $   in  the  rest of  
 $  B (  z, 1/4 )$.    Using  Lemma  \ref{lem8.3}  in $ D( \tau ) \cap B (z, 1/4 )  $    to 
    compare  $ w^+/\ze^+ $   and then  Harnack's  inequality  we  see that 
  $   w  ( 1/2,  \he  )  \approx  \ze ( 1/2,  \he ) $  for  $ 0 \leq  \he  < \tau   $  
  where ratio  constants depend only on the data so are independent   of  $ \tau  \in [\pi/2,  \pi).$       
  Applying the same argument  below the $ x_2 $  axis  we  get the  above  ratio  
  for  $ r= 1/2$ and $0 \leq |\he| < \tau.$   So by the maximum principle for 
  $\mathcal{\ti A}$-harmonic  functions we have   
\begin{align}
  \label{eqn11.22}   
  w ( x  )  \approx  \ze  ( x) \quad \mbox{for}\, \, x  \in D (  \tau )  \cap \bar  B ( 0, 1/2)
\end{align}  
where ratio constants depend only on the data.    
From  \eqref{eqn11.22}  and the maximum principle for 
$ \mathcal{\ti A}$-harmonic functions, we  have  with $ x_1 = r$, $x_2 = 0$,  $ 0 \leq r \leq 1/2, $  and
   $ \pi/2 \leq \tau_1 < \tau_2 < \pi, $    
\begin{align} 
\label{eqn11.23} w ( r, 0, \tau_1 )  \approx   \ze ( r, 0, \tau_1 )  <  \ze ( r, 0, \tau_2 ) 
\approx   w ( r, 0, \tau_2 ). 
\end{align}
Letting $ r \to 0 $  it follows that necessarily $ \la ( \tau_2 )  
\leq \la ( \tau_1 ).$   Also  $ w ( \cdot, \pi/2 )  = x_1$ is 
$\mathcal{\ti A}$-harmonic in $\{(x_1, x_2)\in\mathbb{R}^2;\, \, x_1>0\}$ 
with continuous zero boundary value on $\{x_1=0\}$ and has value $1$ at $(1,0)$. 
From the uniqueness of $w$, we conclude that $\la(\pi/2)=1$
To prove  the right hand inequality in   \eqref{eqn11.21}  
we first observe from comparison of     $ w^+ $ to  a linear 
function  that  vanishes at points with polar coordinate  $ \he  =  \tau $   and also to  
one  that  vanishes  at  points  with  coordinate  $  \he  =  - \tau, $    as well as  
 the   argument used   in  the derivation of  \eqref{eqn11.7}  
that  
\begin{align}
\label{eqn11.24} 
c^{-1}  \leq   | \nabla  w ( 1/2, \pm\tau ) | \leq  c 
\end{align}
 where $ c $  depends only on the structure assumptions for $ f $ and $p$  so  is independent of $ \tau.  $   
 Using this fact, \eqref{eqn8.9}$(a)$, and the homogeneity of $ w $  we see that 
\begin{align} 
\label{eqn11.25} 
\infty  >   \int_{ \ar D ( \tau )  \cap  B ( 0, 1/2)}  \ti f (  \nabla  w  ( \cdot, \tau) )  d \mathcal{H}^{1} 
\approx   \int_0^{1}   r^{(\la (\tau)  - 1 ) p}  dr 
\end{align}
 where ratio constants are independent of 
$ \tau \in (3\pi/4,  \pi) .$  Clearly,   \eqref{eqn11.25}  implies   the exponent in 
the integral is larger than $ -  1 $  so  $ \la (\tau) > 1 - 1/p.$ 
Letting  $ \tau \to \pi $  we get the right hand inequality in \eqref{eqn11.21}.   
Next  using   \eqref{eqn2.1}-\eqref{eqn2.4},  \eqref{eqn11.20}, and  Ascoli's  theorem 
 we see that  a  subsequence of  $ \{ w ( \cdot, \pi - 1/m) \} $ 
converges to  $v$ which is  continuous on $ \rn{2} $  with $v\equiv 0$ on  $\{(x_1,0)\in \mathbb{R}^2:\, \,  x_1\leq 0\}$, $ v ( 1, 0) = 1$, and  
$\mathcal{\ti A}$-harmonic in $\mathbb{R}^2\setminus\{(x_1, 0)\in \mathbb{R}^2:\, \,  x_1\leq 0\}$    satisfying   
\begin{align}
\label{eqn11.26}   
v ( tx ) =  t^{ \la ( \pi )}  v ( x )  \quad  \mbox{where}\, \,  \la ( \pi )   \geq 1 - 1/p. 
\end{align}
To  show   $ \la ( \pi ) = 1 - 1/p  $  we let  $  0 < \de    < 10^{-10}$  be  a small  
but fixed  positive number.   Also  $ \ep > 0,  $   $ 0 <  \ep  < <  \de^{100} $ is   allowed  to vary. 
 We put   $ \tau  =  \pi  - \ep   $  and   write $ \ze$ and $w$  for  
$   \ze  (  \cdot,  \tau )$ and $w ( \cdot, \tau ). $     
If     $ p =2 $  we note from    \eqref{eqn9.37}  and  existence  for discrete measures in Theorem \ref{mink} as well as \eqref{eqn10.2} that       
\begin{align}
\label{eqn11.27} 
\begin{split}
 \ga   &=  {  \ds 2 \int_{ \ar K(\tau)}  
\lan y, \nabla \ze (y) \ran  \ti f ( \nabla \ze ( y))   |  \nabla \ze ( y ) |^{-1}  d \mathcal{ H }^{1}}  \\
&  = 2  {\ds \int_{ \ar K(\tau)} \lan y + (1, 0), \nabla \ze (y) \ran  \ti f ( \nabla \ze ( y))   |  \nabla \ze ( y ) |^{-1}  d \mathcal{ H }^{1}}.
\end{split}
\end{align}
While if  $ p  >  2 $ we get  from the same reasoning and   \eqref{eqn9.38}  that  
\begin{align} 
  \label{eqn11.28}  
\begin{split}  
   1  & \approx  \frac{p - 2}{ p - 1} \mathcal{C}_{\mathcal{\ti A}}  (  K  ( \tau ) )^{1/(p-1)}  \\
   & = p \int_{ \ar K ( \tau ) }   \lan y + (1, 0 ), \nabla \ze (y) \ran \ti  f ( \nabla \ze ( y ))   |  \nabla \ze ( y ) |^{-1}  d \mathcal{ H }^{1} 
\end{split}
\end{align}   
Let  $  J ( \tau )  =   K ( \tau ) \cap   \{ y  :  y_1   \geq   - 1  +  \de  \}  $   and let   
$  q ( \cdot ) $ be the  $ \mathcal{\ti A}$-harmonic Green's  function for  the  complement of 
$ I  ( \tau)  =  K ( \tau )   \cap  \{ x :  x_1   \leq     -  1 + 3 \de \}$ with a pole at infinity.  We note that  
$ q ( \cdot ) \geq  \ze ( \cdot ) $ in  $ \rn{2}. $ If $ p >  2 $   then from   the  Hopf  boundary maximum  principle we deduce  
\begin{align}
\label{eqn11.29} 
\begin{split}
   { \ds \int_{ \ar K(\tau) \sem J  (\tau)}} & {\ds 
\lan y + (1, 0),\nabla \ze (y) \ran  \ti f ( \nabla \ze ( y ))   |  \nabla \ze ( y ) |^{-1}  d \mathcal{ H }^{1}}  \\  
& \leq  
   { \ds \int_{ \ar  I(\tau) }  
\lan y + (1, 0), \nabla q  (y) \ran  \ti f ( \nabla q  ( y ))   |  \nabla q ( y ) |^{-1}  d \mathcal{ H }^{1}} \\
&  \leq c  \de^{\frac{p-2}{p-1}}  
\end{split}
\end{align}
  where  $ c $  depends only on the data. Here we once again used the existence  for discrete measures in Theorem \ref{mink}, \eqref{eqn10.2}, and Remark \ref{remark10.3}.
If  $ p  =  2 $  let  $ z_0 $  be the point  with   polar coordinate  $ r   =  1 + \de$ and $\he =  \pi.  $     
Applying  the  same  argument  as in the derivation of  \eqref{eqn11.22}       we obtain   
\begin{align}  
\label{eqn11.30}  
\ze ( x )/q (x )  \approx   \ze( z_0 )/ q ( z_0 )   \quad
\mbox{when $ x \in  D (\tau)$  and}\, \,  |  x   +  ( 1, 0 )  |    <   2 \de.  
\end{align}
From  \eqref{eqn2.6} $(ii)$    we observe that    $ q   ( z_0 )  \approx 1.  $   Also  since  
 $  \ar K ( \tau ) $ is uniformly  $ (1, 2 )$-fat  we  conclude from   \eqref{eqn2.1} 
 $(iii)$  that there exists $  \ti \si $  with  
 $ \ze ( z_0)  \leq     \de^{\ti \si}. $  Using these inequalities,   \eqref{eqn11.30},    \eqref{eqn11.27} 
 with $ \ze $  replaced by  $q,$    and  the  Hopf  boundary maximum   principle  (as in   \eqref{eqn11.29}) 
 it follows that  if $ p = 2, $  
\begin{align}  
 \label{eqn11.31}
     { \ds \int_{ \ar K(\tau) \sem J  (\tau)}  
\lan y + (1, 0), \nabla \ze (y) \ran \ti f ( \nabla \ze ( y ))   |  \nabla \ze ( y ) |^{-1}  d \mathcal{ H }^{1}}  \leq  c   \de^{\ti \si}  
\end{align}
 where $ c$ and  
$  \ti  \si  $  depend only on the data.   To finish the proof   of  Lemma  \ref{lemma11.3} 
we use \eqref{eqn11.29},  \eqref{eqn11.31},   \eqref{eqn11.27},  \eqref{eqn11.28},     and the  fact  that  $  \lan x + (1,0), \nabla \ze \ran      
=  \sin(\ep)  |  \nabla  \ze  |  $   on  $  \ar J ( \tau ) $  to first  get  for  $  \de $  sufficiently small  that 
\begin{align} 
\label{eqn11.32}  
 c^{-1}  \leq   \ep  \int_{ \ar J ( \tau ) \cap \ar K ( \tau)  }  \ti   f ( \nabla \ze ( y ))     d \mathcal{ H }^{1} .  
\end{align} 
  Second as in \eqref{eqn11.22}   we see that  
    $  \ze    \leq c (  \de )   w  $  in  $   D ( \tau )\cap  B  ( 0, 1 - \de/2). $   Using this  inequality  and 
    the  Hopf boundary maximum principle a  final time we find in view  of  \eqref{eqn11.24} and \eqref{eqn11.32}  that
\begin{align}
\label{eqn11.33}  
\begin{split}
 c^{-1}  &\leq  c ( \de )   \ep  
    {  \ds \int_{ \ar J ( \tau ) \cap \ar K ( \tau) }  \ti  f ( \nabla w  ( y ))    d \mathcal{ H }^{1}   } \\ 
    & \leq   c \,  c ( \de ) \ep { \ds\int_0^{1} r^{(\la (\tau)  - 1 ) p}  dr}   \\
    &= {\ds  \frac{ c\, c ( \de )  \ep }{ ( \la ( \tau )  - 1) p + 1}    } 
\end{split}
\end{align}    
Letting    $ \ep \to 0 $  we   conclude    that    $ (\la ( \tau )  - 1) p + 1   \rar 0 $  as $ \tau  \to \pi    $ so $  \la ( \pi ) = 1  -  1/p.$  
\end{proof} 
  
 Armed with  Lemma  \ref{lemma11.3} we  now 
  can prove \eqref{eqn11.18} and so finish the  proof of Proposition \ref{proposition11.2}. 
         We use the same  notation as in  Proposition \ref{proposition11.1} except we  
  assume  $  E  \subset P =   \{ x :  x_2 = 0  \}. $  Let    $ \{ Q_k \}  $  be   a  
  Whitney decomposition of  the interior of $ E $  considered as  a  subset of  $ P. $   
  Let  $ Q  \in  \{  Q_k \},   $   and  let $ z = (z_1, 0, z_3, \dots, z_n) $ be a  point in  $ \ar'  E $ 
   with  $ d'  ( z,    Q )  \approx  s ( Q). $   From convexity of  $  E  $  we see that  
   there is  a  $(n - 2)$-dimensional  plane, say  $ P_1  $  containing $ z $ 
   with the property that  $  E $  is contained in the closure of  
one of the components  of    $ P \sem  P_1$.    Rotating $  P_1 $ if necessary we may assume that   
\[    
P_1 = \{ x\in\mathbb{R}^n:\, \,  x_1 = z_1,  x_2 = 0 \} \quad \mbox{and}  \quad E  \subset  \Om = \{   x \in \mathbb{R}^n  : \,\,  x_1  - z_1 \leq  0  \, \, \mbox{ and  } \, \,  x_2 = 0   \}.   
\]   
Let  $ v $  be as in Lemma \ref{lemma11.3}.    We   extend $v $  continuously to  
$ \mathbb{R}^n $ (also denoted $v$)  by  defining  this  function to   
be constant in the other  $(n - 2)$ coordinate directions.  
Then  $ \hat v (x) = v (x-z)$ for   $x  \in \rn{n}$ is 
$ \mathcal{A}$-harmonic in $\mathbb{R}^n  \sem \Om. $       
Comparing boundary values  and using the maximum  principle, 
as  well as   Lemmas \ref{lem5.2} and \ref{lem5.3}   we  deduce that 
\begin{align}
\label{eqn11.34}     
 C  U ( x )  \geq  \hat v ( x )   \quad \mbox{whenever}\, \,   x  \in    B (0,  2 \rho  ),  
\end{align}   
 where  $ C $  depends only on  the data and    $ \mathcal{C}_{\mathcal{A}} ( E ). $    
 As  in  Proposition \ref{proposition11.1}    we see   that    
\begin{align} 
\label{eqn11.35} 
c'   \int_Q    f(\nabla  U_+)   \, d\mathcal{H}^{n-1}  \geq    U (y_Q) )^p  s (Q)^{n-1-p}.
\end{align} 
Now from Lemma \ref{lemma11.3}  we also deduce that 
\begin{align} 
\label{eqn11.36}  
c'' \hat v  (y_Q) \geq  s (Q)^{1-1/p}.
\end{align}  
Combining  \eqref{eqn11.34}-\eqref{eqn11.36}  we conclude that  for some  $ \ti c  $  with the same dependence as the above constants, 
\begin{align} 
\label{eqn11.37}    \ti c   \int_Q   f (\nabla  U_+ )   \, d\mathcal{H}^{n-1}  \geq   
  s (Q)^{n-2}.
  \end{align}
Now since  $  B (0, 4a)  \cap P \subset  E  $  we see for  $ l  $ large from  Lipschitz  starlikeness of  $ E$  that there are  at least $ \approx  2^{ l (n -2)}  $  members of  $ \{Q_k \} $ whose side length lies between  $  2^{- l-1} a $ and  $ 2^{-l} a. $     Using this fact  and  summing \eqref{eqn11.37}   we get   Proposition \ref{proposition11.2}.   

\subsection{Uniqueness in Theorem \ref{mink}}
\label{uniq}
Uniqueness in Theorem \ref{mink}  follows  from  Theorem \ref{theoremA}  as in \cite{CNSXYZ}  (see also \cite{AGHLV}).    To give the reader the gist of  the argument we consider  the simplest  $ p = n$  case.  Suppose  $  \mu $  
is a positive  finite   Borel measure on  $  \mathbb{S}^{n-1}$  satisfying   \eqref{eqn7.1}  and  let  $ E_0$ and  $E_1$ be two compact convex sets  with nonempty interiors for which the  corresponding  $ \mathcal{A}$-harmonic  Green's functions   satisfy\eqref{eqn7.2} $ 
(a)-(c) $  in Theorem \ref{mink}  relative to $ \mu. $       Let $h_0$ and $h_1$ be the support functions of $E_0$ and $E_1$ respectively. For $t\in[0,1]$ we let $ E_t=(1-t) E_0+t  E_1$  and put     $ \mathbf{m}(t) =     \mathcal{C}_{\mathcal{A}}(E_t)$ for $t \in [0,1]$.      
     Using     \eqref{eqn10.28} and \eqref{eqn9.37},  we deduce   that   
\begin{align}
\label{eqn11.39}
\begin{split}
\left.\frac{d}{dt}  \mathbf{m}(t)   \right|_{ t = 0 } \,  &= n\gamma^{-1} \mathcal{C}_{\mathcal{A}}(E_0) \int_{\mathbb{S}^{n-1}} (h_1(\xi)-h_0(\xi)) d\mu(\xi) = 0. 
\end{split}
\end{align}
From Theorem  \ref{theoremA} we see that $ m $ is concave on  $[0,1]. $  Using this fact  \eqref{eqn11.39} we find that 
 \begin{align}  
\label{eqn11.40} 
\mathbf{m}'(0) = 0  \geq   \mathbf{m}(1)  -  \mathbf{m}(0).
\end{align}  
with   equality  only  if   $  m $ is  constant on  [0,1].     From \eqref{eqn11.40} we first get    
 $\mathcal{C}_{\mathcal{A}}(E_0) = \mathbf{m}(0)\geq \mathbf{m}(1)=\mathcal{C}_{\mathcal{A}}(E_1)$ and second  by reversing the roles of  $ E_0$ and $E_1,  $  we  get  
  $ \mathcal{C}_{\mathcal{A}}(E_1) = \mathbf{m} ( 1)     =   \mathbf{m} ( 0) =\mathcal{C}_{\mathcal{A}}(E_0)$.   We conclude that $\mathbf{m}$ is constant and therefore equality holds in \eqref{BMn} of  Theorem \ref{theoremA} and so   $ E_0$ is a translation and dilation of $ E_1$.   Using Remark \ref{rmk5.5} and the fact that $\mathcal{C}_{\mathcal{A}}(E_0)=\mathcal{C}_{\mathcal{A}}(E_1)$ we see that honest dilations  are not possible    when $p= n$. For $p>n$, by considering $\mathbf{m}(t)=C_\mathcal{A}(E_t)^{1/(p-n)}$, a similar argument can be used to prove uniqueness of $E$ up to translation. This finishes our outline of the proof of uniqueness in  Theorem \ref{mink} for $n\geq p$. 
\section*{Acknowledgment}
This material is based upon work supported by National Science Foundation under Grant No. DMS-1440140 while the first and the third authors were in residence at the MSRI in Berkeley, California, during the Spring 2017 semester. The second author was partially supported by NSF DMS-1265996. The third author was partially supported by the Hausdorff Center for Mathematics as well as DFG-SFB 1060. Research for this article was carried out while the first and fourth author were visiting the department of mathematics at the University of Kentucky, the authors would like to thank  the department for its hospitality.
\newcommand{\etalchar}[1]{$^{#1}$}

      \end{document}